\documentclass[11pt]{article}
\usepackage[margin=1in]{geometry}
\usepackage{amsmath,amssymb,mathtools}
\usepackage{enumitem}
\usepackage{microtype}
\usepackage[hidelinks]{hyperref}
\usepackage{tikz}
\usepackage{xfrac}
\usepackage{xcolor}
\usepackage{authblk}
\usepackage{amsmath,amsfonts,amssymb,amsthm}

\usetikzlibrary{decorations.pathreplacing}

\hypersetup{
	colorlinks=true,       
	linkcolor=blue,          
	citecolor=blue,        
	filecolor=blue,      
	urlcolor=blue           
}

\definecolor{mygreen}{RGB}{60, 150, 60}
\definecolor{mypurple}{RGB}{150, 50, 209}

\newcommand{\R}{\mathbb{R}}
\newcommand{\N}{\mathbb{N}}
\newcommand{\HK}{\mathrm{HK}}

\newcommand{\Mcal}{\mathcal{M}}

\newcommand{\Pbb}{\mathbb{P}}
\newcommand{\Ebb}{\mathbb{E}}

\DeclareMathOperator{\Var}{Var}
\DeclareMathOperator{\Sen}{Sen}

\newcommand{\Pcal}{\mathcal{P}}

\newcommand{\Ncal}{\mathcal{N}}
\newcommand{\diff}{\,\textnormal{d}}

\DeclareMathOperator{\Div}{div}

\DeclareMathOperator{\Tan}{Tan}
\newcommand{\grad}{\nabla}

\newcommand{\estims}{\mathcal{T}}
\newcommand{\diagram}{\mathcal{D}}
\newcommand{\bdry}{\mathcal{B}}

\DeclareMathOperator{\MLE}{MLE}

\DeclareMathOperator{\WPE}{WPE}

\newcommand{\dH}{d_{\mathrm{H}}}
\newcommand{\dW}{d_{\mathrm{W}}}

\newcommand{\dHKlambda}{d_{\HK_\lambda}}

\DeclareMathOperator{\AC}{AC}

\newcommand{\CovSpace}{\mathcal{S}}

\usepackage{xcolor}

\theoremstyle{plain}
\newtheorem{lemma}{Lemma}[section]
\newtheorem{proposition}[lemma]{Proposition}
\newtheorem{corollary}[lemma]{Corollary}

\newtheorem{theorem}[lemma]{Theorem}

\theoremstyle{definition}
\newtheorem{definition}[lemma]{Definition}
\newtheorem{remark}[lemma]{Remark}

\newtheorem{example}[lemma]{Example}

\title{Optimal tradeoffs between variance and sensitivity, via Hellinger--Kantorovich geometry}

\date{\today}

\author[1]{Adam Quinn Jaffe\thanks{equal contribution}}
\author[2]{Congwei Yang\protect\footnotemark[1]}
\author[2]{Nicol\'as Garc\'ia Trillos\thanks{NGT gratefully acknowledges support from NSF-DMS Grant No. 2236447}}
\author[1]{Bodhisattva Sen\thanks{BS gratefully acknowledges acknowledges support from NSF (DMS-2311062 and DMS-2515520).}}

\affil[1]{Department of Statistics, Columbia University, New York, NY}
\affil[2]{Department of Statistics, University of Wisconsin, Madison, WI}

\date{\today}

\begin{document}
\maketitle

\begin{abstract}
    While the goal of classical statistical theory is to minimize the variance of unbiased estimators, an additional goal of some recent research is to minimize or constrain other notions of the instability of estimators.
    In particular, a subset of the authors recently introduced a notion of instability termed ``sensitivity'', which represents the expected amount that an estimator would change if its inputs were infinitesimally perturbed.
    In this work, we study the multiobjective problem of minimizing both the variance and sensitivity over unbiased estimators in a given statistical model, and we aim to determine Pareto optimal tradeoffs between these two objectives.
    Our main results show that the Pareto front for these tradeoffs is precisely determined by endowing the statistical model with the Hellinger--Kantorovich geometry, which interpolates between the Hellinger and Wasserstein geometries that are known to be associated, respectively, with the separate minimization of variance and sensitivity. We also develop a $Z$-estimation technique which constructs estimators that asymptotically achieve Pareto optimal pairs of variance and sensitivity.
    While computing the Pareto front and constructing the resulting estimators in practice requires solving a continuum of partial differential equations, we fully execute this program in some  parametric examples of interest where the analysis is already non-trivial.
    Our results also shed light on some classical statistical problems, for example by showing that the maximum likelihood estimator achieves the smallest asymptotic sensitivity among all estimators achieving the optimal asymptotic variance.

    \medskip
\noindent\textbf{Keywords:} {Cram\'er--Rao lower bound, geometries on spaces of probability measures, information geometry, optimal transport, Pareto optimality, stability, statistical tradeoffs, transport-source equation, Wasserstein geometry, $Z$-estimation}
\medskip

\noindent\textbf{2020 MSC:} 62B11, 62F10, 62F12, 35Q49, 49Q22
\end{abstract}


\section{Introduction}

Fix a statistical model $\Pcal=\{P_{\theta}:\theta\in\Theta\}$ on $\R^d$ with $\Theta\subseteq\R^p$, and suppose $X_1,\ldots,X_n$ are independent identically distributed (i.i.d.) samples from $P_{\theta}$ for some unknown $\theta\in\Theta$ and that the goal is to estimate $\theta$ from $X_1,\ldots, X_n$.
For notational simplicity we take $p=1$ in this Introduction, but the main body of the paper treats the general case $p\ge 1$.
Among unbiased estimators $T_n:(\R^d)^n\to\R$ of $\theta$, a classical statistical criterion of optimality (e.g., see \cite[Chapter~2, Definition~1.6]{LehmannCasella1998}) is to minimize the variance,
\begin{equation*}
    \Var_{\theta}(T_n)
    = \Ebb_{\theta}\left[
        \left(T_n(X_1,\ldots,X_n)-\theta\right)^2
      \right]
\end{equation*}
which, for unbiased estimators, coincides with the mean squared error (MSE) and hence provides a standard measure of estimation accuracy.
The classical Cram\'er--Rao theory provides fundamental lower bounds for the variance of unbiased estimators under suitable regularity conditions \cite[Section~3.1.3]{Shao2003}, and the maximum likelihood estimator (MLE) asymptotically attains this lower bound under appropriate regularity conditions (although it is typically biased for finite samples) \cite[Section~4.5.2]{Shao2003}.

Recent works have also emphasized various notions of \textit{stability} as complementary objectives in the design of estimators, in addition to accuracy; broadly speaking, the stability of an estimator quantifies the amount the estimator changes under appropriate perturbations of the data or model \cite{Yu2013}, and different notions of perturbation lead to different notions of stability.
For example, classical robust statistics \cite{Huber1964} and related recent developments \cite{LaiBayraktar2020,GiordanoMeagerBroderickI,GiordanoMeagerBroderickII} seek estimators stable under small amounts of contamination in the data or sampling distribution, bagging \cite{BreimanI,BreimanII,SoloffFoygelBarberWillettI,SoloffFoygelBarberWillettII} seeks to construct estimators stable under subsampling, and more.
More broadly, in the framework of \cite{YuKumbier}, it is argued that one should consider a variety of notions of stability, which ``includes, but is much broader than, the concept of sampling variability''.

In line with this theme, recent works \cite{WIM, WCR_achieve, WCR} study a notion of (in)stability, termed \emph{sensitivity}; for a sufficiently regular estimator $T_n$, its sensitivity is defined by
\begin{equation*}
    \Sen_{\theta}(T_n)
    := \Ebb_{\theta}\left[
        \sum_{i=1}^{n}
        \left\|\nabla_{x_i}T_n(X_1,\ldots,X_n)\right\|^2
       \right].
\end{equation*}
Roughly speaking, sensitivity measures the expected amount by which an estimator would change if its inputs were infinitesimally perturbed; smaller values indicate greater stability under such perturbations.
This notion of sensitivity is relevant in several statistical settings, including the propagation of small measurement errors \cite{CarollStefanksi}, adversarial robustness \cite{LaiBayraktar2020}, additive-noise mechanisms for local differential privacy \cite{DuchiJordanWainwright2018}, and distributionally robust optimization over $W_2$ balls \cite{BartlDrapeauOblojWiesel2021}.

Previous work \cite{WCR} by a subset of the authors developed a theory of sensitivity for unbiased estimation which parallels the classical Cram\'er--Rao theory of variance; since it is based on the Wasserstein geometry in the same way that the classical theory is based on the Hellinger (equivalently, Fisher--Rao) geometry, it was deemed the Wasserstein--Cram\'er--Rao theory.
Specifically, it establishes fundamental lower bounds on the sensitivity, characterizes exact attainment in certain models called transport families, and proves asymptotic efficiency of Wasserstein projection estimators (WPE) under suitable conditions.
These comprehensive results allow for analyzing many concrete statistical models of interest, sometimes leading to estimators which look and behave very differently from their classical counterparts.

The goal of this work is to develop a similarly comprehensive theory of the tradeoffs between accuracy and stability.
Specifically, we focus on the variance and sensitivity of unbiased estimators, and we aim to solve optimization problems like
\begin{equation}\label{eqn:Intro-Pareto}
    \inf_{\substack{T_n\ \text{unbiased}\\
                    \Sen_\theta(T_n)\le s_0}}
        \Var_\theta(T_n)
    \qquad \quad \text{and}\qquad \quad
    \inf_{\substack{T_n\ \text{unbiased}\\
                    \Var_\theta(T_n)\le v_0}}
        \Sen_\theta(T_n).
\end{equation}
for a given ``sensitivity budget'' $s_0>0$ or ``variance budget'' $v_0>0$; a solution $T_n$ to either problem is called a \textit{Pareto efficient} estimator, since it has the property that no other unbiased estimator can reduce either the variance or the sensitivity without increasing the other.
(For $p>1$, an appropriate version of~\eqref{eqn:Intro-Pareto} is studied in Corollary~\ref{cor:Trace}.)
As is common in the study of Pareto efficiency for multiobjective optimization (see \cite[Chapter~3]{Ehrgott}) and as we illustrate in Figure~\ref{fig:laplace-sensitivity-budget} for the Laplace location family, such problems can be reduced to studying the convex combination of the two objectives, i.e.,
\begin{equation}\label{eq:Cvx-Comb}
    \lambda\Var_{\theta}(T_n)+(1-\lambda)\Sen_{\theta}(T_n),
    \qquad 0\le\lambda\le1.
\end{equation}
Our central question is therefore:
\begin{center}
\hspace{3em}\textit{What are the fundamental lower bounds on the convex combinations of variance and sensitivity over all sufficiently regular unbiased estimators, and how can we construct consistent estimators that asymptotically attain these bounds?}\hspace{3em}
\end{center}

As discussed above, the classical Cram\'er--Rao theory of variance is associated with Hellinger (equivalently, Fisher--Rao) geometry \cite{Amari}, while the Wasserstein--Cram\'er--Rao theory is associated with Wasserstein geometry \cite{WCR}.
This suggests that lower bounds on convex combinations of variance and sensitivity should be associated with some combination of the Hellinger and Wasserstein geometries, but it is not immediately clear what is the appropriate way to combine these two geometries.
Interestingly, the na\"ive approach of directly combining the \textit{distances} does not work; instead, the appropriate construction is to consider a suitable combination of the \textit{Riemannian metric tensors}, which leads naturally to the Hellinger--Kantorovich geometry \cite{Liero_2016, LieroMielkeSavare,ChizatPeyreSchmitzerVialard}.
This geometric insight allows us to derive a comprehensive theory describing Pareto optimal tradeoffs between variance and sensitivity and constructing Pareto optimal estimators in a variety of statistical models of interest.

\begin{figure}[t]
    \centering
    \begin{tikzpicture}[font=\footnotesize]
      \def\PaneloneS{4.6}
      \begin{scope}[scale=\PaneloneS]
        \draw[thick, -stealth] (0,0) -- (1.2,0);
        \draw[thick, -stealth] (0,0) -- (0,1.2);
        \draw[thick, dashed] (0,0.2) -- (1,0.2);
        \draw[thick, dashed] (0.2,0) -- (0.2,1);
        \node at (0.5, -0.25) {variance};
        \node[rotate=90] at (-0.25, 0.75) {sensitivity};
        \draw[thick] (-0.05,0)   node[left]{$0$}      -- (0.05,0);
        \draw[thick] (-0.05,0.2) node[left]{$1$}      -- (0.05,0.2);
        \draw[thick] (-0.05,1)   node[left]{$\infty$} -- (0.05,1);
        \draw[thick] (0,-0.05)   node[below]{$0$}       -- (0,0.05);
        \draw[thick] (0.2,-0.05) node[below right]{$1$} -- (0.2,0.05);
        \draw[thick] (0.7,-0.05) node[below]{$2$}       -- (0.7,0.05);
        \path [fill=mygreen, thick, opacity=0.3, domain=0:1, samples=100]
          plot ({0.2+0.5*\x^2}, {0.2+0.8*(\x-1)^2}) -- (1,0.2) -- (1,1) -- cycle;
        \draw [color=mygreen, ultra thick, domain=0:1, samples=100]
          plot ({0.2+0.5*\x^2}, {0.2+0.8*(\x-1)^2}) -- (1,0.2);
        \node[color=mygreen,rotate=-90] at (0.1, 0.65) {Pareto front};
        \pgfmathsetmacro{\budgetX}{0.45}
        \pgfmathsetmacro{\budgetY}{0.2+0.8*(sqrt(0.5)-1)^2}
        \draw[red,thick]
          (0,\budgetY) -- (1,\budgetY);
        \node[red,anchor=south] at (0.74,\budgetY)
          {sensitivity budget};
        \draw[red,thick]
          (\budgetX,0) -- (\budgetX,\budgetY);
        \filldraw[blue]
          (\budgetX,\budgetY) circle (0.5pt);
        \draw[red,thick]
          (\budgetX,-0.05) -- (\budgetX,0.05);
        \draw[red,thick]
          (-0.05,\budgetY) node[left] {$s_0$}
          -- (0.05,\budgetY);
        \filldraw[color=blue] (0.2, 1) circle (0.5pt) node[above right] {sample median};
        \filldraw[color=blue] (0.7, 0.2) circle (0.5pt) node[below right] {sample mean};
      \end{scope}
      \begin{scope}[xshift=8.2cm,scale=\PaneloneS]
        \draw[thick, -stealth] (0,0) -- (1.2,0);
        \draw[thick, -stealth] (0,0) -- (0,1.2);
        \draw[thick, dashed] (0,0.2) -- (1,0.2);
        \draw[thick, dashed] (0.2,0) -- (0.2,1);
        \node at (0.5, -0.25) {variance};
        \node[rotate=90] at (-0.25, 0.75) {sensitivity};
        \draw[thick] (-0.05,0)   node[left]{$0$}      -- (0.05,0);
        \draw[thick] (-0.05,0.2) node[left]{$1$}      -- (0.05,0.2);
        \draw[thick] (-0.05,1)   node[left]{$\infty$} -- (0.05,1);
        \draw[thick] (0,-0.05)   node[below]{$0$}       -- (0,0.05);
        \draw[thick] (0.2,-0.05) node[below right]{$1$} -- (0.2,0.05);
        \draw[thick] (0.7,-0.05) node[below]{$2$}       -- (0.7,0.05);
        \path [fill=mygreen, thick, opacity=0.3, domain=0:1, samples=100]
          plot ({0.2+0.5*\x^2}, {0.2+0.8*(\x-1)^2}) -- (1,0.2) -- (1,1) -- cycle;
        \draw [color=mygreen, ultra thick, domain=0:1, samples=100]
          plot ({0.2+0.5*\x^2}, {0.2+0.8*(\x-1)^2}) -- (1,0.2);
        \node[color=mygreen,rotate=-90] at (0.1, 0.65) {Pareto front};
        \pgfmathsetmacro{\varianceBudgetX}{0.2+0.5/3}
        \pgfmathsetmacro{\varianceBudgetY}{0.2+0.8*(sqrt(1/3)-1)^2}
        \draw[red,thick]
          (\varianceBudgetX,0) -- (\varianceBudgetX,1);
        \node[red,anchor=west] at (\varianceBudgetX,0.85)
          {variance budget};
        \draw[red,thick]
          (0,\varianceBudgetY) -- (\varianceBudgetX,\varianceBudgetY);
        \filldraw[blue]
          (\varianceBudgetX,\varianceBudgetY) circle (0.5pt);
        \draw[red,thick]
          (\varianceBudgetX,-0.05) node[below] {$v_0$}
          -- (\varianceBudgetX,0.05);
        \draw[red,thick]
          (-0.05,\varianceBudgetY) -- (0.05,\varianceBudgetY);
        \filldraw[color=blue] (0.2, 1) circle (0.5pt) node[above right] {sample median};
        \filldraw[color=blue] (0.7, 0.2) circle (0.5pt) node[below right] {sample mean};
      \end{scope}
    \end{tikzpicture}
    \caption{Visualizing the tradeoffs between variance and sensitivity for unbiased estimators in the Laplace location family, where the sample median and the sample mean separately minimize the variance and sensitivity, respectively.
    We show the $(v,s)$ plane, the set of variance-sensitivity pairs achievable by all unbiased estimators (green), and a few estimators of interest (blue).    
    For a given sensitivity budget $s_0>0$, the minimal achievable variance is given by intersecting the horizontal line $s=s_0$ with the Pareto front (left); for a given variance budget $v_0>0$, the minimal achievable sensitivity is given by intersecting the vertical line $v=v_0$ with the Pareto front (right).
    Moreover, every point on the Pareto front can be realized as the asymptotic variance and sensitivity of a suitable $\HK_{\lambda}$ $Z$-estimator (HKZE).}
    \label{fig:laplace-sensitivity-budget}
\end{figure}

    \subsection{Summary of Results}

    All of our results begin by viewing our statistical models as submanifolds of the space of probability measures endowed with the \textit{Hellinger--Kantorovich (HK)} geometries, initially studied in \cite{LieroMielkeSavare, Liero_2016,ChizatPeyreSchmitzerVialard} with motivations from partial differential equations (PDE).
    This family of geometries describes the evolution of measures as a combination of two mechanisms: a change in mass governed by a scalar field (as in the Hellinger geometry), and movement of mass governed by a vector field (as in the Wasserstein geometry).
    As such, and as discussed above, the $\HK$ geometries have a parameter $0\le \lambda\le 1$ describing the relative strength of the mechanisms, and which interpolate between the Wasserstein ($\lambda=0$) and Hellinger ($\lambda=1$) geometries.

    To state our results, we introduce a notion of $\HK_\lambda$ regularity for the parametric statistical model $\Pcal=\{P_\theta:\theta\in\Theta\}$.
    Roughly speaking, this notion (Definition~\ref{def:HK_regular}) requires that for each $0\le \lambda \le 1$ the probability measures $\theta \mapsto P_{\theta}$ satisfy the following PDE called the \textit{transport-source equation}
    \begin{equation}\label{eq:TS-eq}
        \partial_\theta P_\theta+(1-\lambda)\Div\!\big(\nabla\phi_{\theta,\lambda}\,P_\theta\big)
        =\lambda\,\phi_{\theta,\lambda}\,P_\theta,
    \end{equation}
    for some scalar function $\phi_{\theta,\lambda}:\R^d\to\R$ called the \emph{$\HK_\lambda$ score function} of the model.
    We emphasize that~\eqref{eq:TS-eq} is understood in the weak sense (i.e., by integrating against infinitely differentiable compactly supported test functions), so $P_{\theta}$ and $\phi_{\theta,\lambda}$ are not required to be differentiable per se.
    Note also that $\phi_{\theta,1}$ coincides with the score function from classical statistics, since~\eqref{eq:TS-eq} reduces to $\partial_\theta P_\theta=\phi_{\theta,1}\,P_\theta$ when $\lambda=1$.
    But when $0<\lambda<1$, the $\HK_\lambda$ score function involves both a scalar field $\phi_{\theta,\lambda}$ and a vector field $\nabla\phi_{\theta,\lambda}$, governing the creation/destruction and the spatial transport of mass, respectively.
    Both fields are generated by the same function $\phi_{\theta,\lambda}$ through the energy minimization underlying $\HK_\lambda$ geometry.
    More precisely, among all scalar and vector fields producing the same infinitesimal change in $P_\theta$, the pair $(\phi_{\theta,\lambda},\nabla\phi_{\theta,\lambda})$ minimizes the weighted quadratic energy of mass change and transport.

    The $\HK_{\lambda}$ score function $\phi_{\theta,\lambda}$ is the fundamental object of interest in the theory of Pareto optimal tradeoffs between variance and sensitivity, and all of our lower bounds and estimation techniques are based on it in some form.
    However, computing the 
    $\HK$ score functions is non-trivial even in simple models, since it requires solving the PDE~\eqref{eq:TS-eq}; this contrasts the classical theory where the score function can be found simply by differentiating the log density.
    Nonetheless, we can compute the $\HK$ score functions in various models of interest (Laplace location family, uniform scale family, various Gaussian models) which we revisit as running examples throughout the paper.
    Additionally, we establish that~\eqref{eq:TS-eq} admits at most one solution under very mild conditions (Proposition~\ref{prop:HK-reg-suff}), so our results apply to a wide class of models and the only bottleneck is explicitly solving~\eqref{eq:TS-eq}. 
While, in general, closed-form solutions of $\phi_{\theta, \lambda}$ to \eqref{eq:TS-eq} may not be available, it is still possible to rely on the vast literature of classical methods in the numerical analysis of linear elliptic PDEs \cite{Brenner2008,LeVeque2007,Lapeyre2003} and on recent advances in methods used for high-dimensional PDEs \cite{RAISSI2019686,Aimsworth,Lee2025} to obtain numerical solutions to \eqref{eq:TS-eq}. We believe that studying the connections between fundamental statistical lower bounds, different geometries in the space of probability measures, and PDEs arising from suitable geometric representations of statistical models is a very promising direction that this paper aims to help pave.

    Our first main result is a Hellinger--Kantorovich Cram\'er--Rao lower bound for linear combinations of variance and sensitivity, in a $\HK$ regular model.
    To state this, we introduce a new notion of \emph{$\lambda$-Hellinger--Kantorovich (HK$_\lambda$) information} of a model $\Pcal$, provided it is $\HK_{\lambda}$ regular with $\HK_{\lambda}$ score function $\phi_{\theta,\lambda}$, via
    \begin{equation*}
        I_\lambda(\theta)
        :=\lambda \, \Ebb_\theta\left[|\phi_{\theta,\lambda}(X)|^2\right]
        +(1-\lambda) \, \Ebb_\theta\left[\|\nabla\phi_{\theta,\lambda}(X)\|^2\right].
    \end{equation*}
    In fact, the $\HK_{\lambda}$ information interpolates between the Wasserstein information of \cite{WCR} ($\lambda=0$) and the classical Fisher information ($\lambda=1$).
    Moreover, every sufficiently regular unbiased estimator $T_n$ of $\theta$ must satisfy (Theorem~\ref{thm:HK-CR-bound}) the bound
    \begin{equation}\label{eqn:intro-LB}
        \lambda\Var_\theta(T_n)+(1-\lambda)\Sen_\theta(T_n)
        \ge \frac{1}{nI_\lambda(\theta)}
    \end{equation}
    for each $0\leq\lambda\leq1$ with $I_\lambda(\theta)>0$.
    To understand the implications of this lower bound for our problem of interest, we introduce the quantities
    \begin{equation*}
        V_\lambda(\theta) :=\frac{\Ebb_\theta\left[|\phi_{\theta,\lambda}(X)|^2\right]}{(I_\lambda(\theta))^2}
        \qquad\textnormal{and}\qquad
        S_\lambda(\theta) :=\frac{\Ebb_\theta\left[\|\nabla\phi_{\theta,\lambda}(X)\|^2\right]}{(I_\lambda(\theta))^2},
    \end{equation*}
    and we note that~\eqref{eqn:intro-LB} implies:
    \begin{itemize}
        \item[(i)] if $\Sen_\theta(T_n)\le S_\lambda(\theta)/n$, then $\Var_\theta(T_n)\ge V_\lambda(\theta)/n$;
        \item[(ii)] if $\Var_\theta(T_n)\le V_\lambda(\theta)/n$, then $\Sen_\theta(T_n)\ge S_\lambda(\theta)/n$.
    \end{itemize}
    That is, the pair $(V_\lambda(\theta),S_\lambda(\theta))$ can be interpreted as a Pareto optimal variance-sensitivity pair.

    We also provide extensions of this lower bound, to go beyond the above setting.
    For example, we provide an analogous lower bound for unbiased estimation of a continuously differentiable function $\chi(\theta)$ of the parameter $\theta$; see Corollary~\ref{cor:HK-CR-function}.
    In another example, we use this result (by setting $\chi(\theta):=\Ebb_\theta[T_n]$) to obtain an asymptotic lower bound for certain biased estimators of $\theta$; see Corollary~\ref{cor:HK-CR-asymptotic}.
    In particular, this asymptotic bound applies to regular asymptotically linear (RAL) estimators satisfying the assumptions therein.

    In our second main result, we identify a way to construct estimators that asymptotically attain these Pareto optimal variance-sensitivity pairs.
    More precisely, we define the $\HK_\lambda$ \emph{$Z$-estimator} (HK$_{\lambda}$ZE), denoted $T_{n,\lambda}$, to be any solution $\theta\in\Theta$ to the following $Z$-equation
    \begin{equation*}
        \frac{1}{n}\sum_{i=1}^n\phi_{\theta,\lambda}(X_i)=0.
    \end{equation*}
    Then, under suitable regularity assumptions, we show (Theorem~\ref{thm:HK-Z-attainment}) that the HK$_{\lambda}$ZE satisfies
    \begin{equation*}
        \sqrt n(T_{n,\lambda}-\theta)\overset{\mathrm d}{\to}\Ncal(0,V_\lambda(\theta))\qquad \textnormal{and}\qquad
        n\sum_{i=1}^n\|\nabla_{x_i}T_{n,\lambda}\|^2\to S_\lambda(\theta)
        \quad\textnormal{in probability}.
    \end{equation*}
    Since $n\sum_{i=1}^n\|\nabla_{x_i}T_{n,\lambda}\|^2$ is the sample analogue of rescaled sensitivity (note that its expectation is $n\Sen_\theta(T_{n,\lambda})$), this shows that the HK$_\lambda$ZE asymptotically attains the Pareto optimal variance-sensitivity pair $(V_\lambda(\theta),S_\lambda(\theta))$, and it is therefore Pareto efficient.
    When specialized to the case $\lambda=0$, our theory yields a new $Z$-estimation technique for constructing asymptotically sensitivity-efficient estimators; this technique is much simpler than the Wasserstein projection estimator (WPE) technique of \cite{WCR} which requires very strict technical assumptions.
    A key ingredient of these proofs is a new result on the asymptotic sensitivity of general $Z$-estimators (Theorem~\ref{thm:z-est}), which may also be of independent interest.

    We gain further insight into our results by studying the geometry of the lower bounds discussed above.
    Indeed, the path $(V_\lambda(\theta),S_\lambda(\theta))$ for $0<\lambda<1$ traces a curve in the variance-sensitivity plane, and we prove (Corollary~\ref{cor:Pareto-optim-VSD}) that this curve is indeed a Pareto front for the set of all pairs of points that are realizable as the asymptotic variance and sensitivity of some sequence of unbiased estimators; in fact, we show (Lemma~\ref{lem:bdry-slope}) that this curve is the graph of a non-increasing convex function whose slope at $(V_\lambda(\theta),S_\lambda(\theta))$ is $-\lambda/(1-\lambda)$.
    See Figure~\ref{fig:VSD-cartoon} for an illustration of these concepts.

We also highlight a few consequences of our theory for the unconstrained problems of separately minimizing the variance or sensitivity. 
First, we obtain a new optimality property (Corollary~\ref{cor:extremal-efficiency}) for many classical estimators by setting $\lambda=1$ in our main results: the MLE  achieves the optimal asymptotic sensitivity among all estimators which achieve the optimal asymptotic variance.
Second, by sending $\lambda\to 0$ with a suitable limiting argument, we establish an impossibility result (Proposition~\ref{prop:Pareto-extremal}) that, in some models, there cannot exist any unbiased estimator which achieves optimal asymptotic variance and which achieves sensitivity vanishing at rate $n^{-1}$.

    Lastly, we emphasize a technical but fundamental aspect of the paper, namely that our notion of $\HK$ regularity is based on a notion of absolute continuity with respect to the $\HK$ metric, whereas the conditions of DQM from classical theory and DWS from \cite{WCR} are forms of differentiability with respect to the Hellinger and Wasserstein metrics, respectively. (Recall that, in general, neither differentiability nor absolute continuity implies the other.)
    Indeed, absolute continuity with respect to the $\HK_{\lambda}$ metric turns out to be closely related to solutions of the transport-source equation~\eqref{eq:TS-eq}; see \cite{Liero_2016,LieroMielkeSavare}.
    To be explicit, we note that there are three reasons that we use absolute continuity rather than differentiability in this work.
    First, although the HK distance admits a static formulation (see Appendix~\ref{app:HK-static-cone}) which can be used to define a notion of differentiability in the $\HK$ sense (denoted DHKS), solving equation~\eqref{eq:TS-eq} is typically easier than computing the static formulation; this contrasts the Hellinger and Wasserstein cases, where the static formulations can be easily computed via the score functions and the optimal transport maps, respectively.
    Second, we show (Proposition~\ref{prop:abs-cont-HK}) that absolute continuity with respect to at least one of Hellinger or Wasserstein metric automatically implies absolute continuity with respect to $\HK_{\lambda}$ for all $0<\lambda<1$, hence all statistical models studied in the classical theory can be analyzed within our framework.
    Third, we show (Theorem~\ref{thm:loc-Lip-test-funcs}) that         $\theta\mapsto\Ebb_{\theta}[T_n]$ can be differentiated for a broad class of estimators $T_n$, whenever $\theta\mapsto P_{\theta}$ is absolutely continuous with respect to $\HK_{\lambda}$; this both simplifies and generalizes the result \cite[Proposition~2.2]{WCR} from the Wasserstein case, and may be of independent interest. 

    \subsection{Extended Example: Laplace Location Family}\label{subsec:illustrative-example}
    
    To illustrate the implications of our main results, we include a brief discussion of one of the consequences for one of our many running examples, namely the Laplace location family.
    More precisely, define for $\theta\in\Theta:=\R$ the probability measure $P_{\theta}$ on $\R$ whose density with respect to the Lebesgue measure is $p_{\theta}(x)=\frac{1}{2}e^{-|x-\theta|}$ for all $x\in\R$, and consider the model $\Pcal=\{P_\theta:\theta\in\Theta\}$.
    In this model, the problems of separately minimizing the variance ($\lambda=1$) and sensitivity ($\lambda=0$) are already well-understood: the variance lower bound is $1/n$ and is asymptotically achieved by the sample median (which is the MLE); the sensitivity lower bound is $1/n$ and is asymptotically achieved by the sample mean (which is the WPE) \cite[Example~5.7]{WCR}.
    Yet, for the genuinely multiobjective version of the problem ($0<\lambda<1$), it is not clear how to understand such tradeoffs, let alone how to construct estimators achieving them.
    
    In this case, the transport-source equation~\eqref{eq:TS-eq} reduces to the following ordinary differential equation (ODE), for $0<\lambda<1$ and $x\neq\theta$:
    \begin{equation*}
        \lambda\phi_{\theta,\lambda}(x)
        -(1-\lambda)\left(
            \phi''_{\theta,\lambda}(x)
            -\operatorname{sgn}(x-\theta)\phi'_{\theta,\lambda}(x)
        \right)
        =\operatorname{sgn}(x-\theta).
    \end{equation*}
    We can explicitly solve this second-order linear ODE (Example~\ref{ex:Laplace-reg}) to determine the $\HK_{\lambda}$ score function $\phi_{\theta,\lambda}$ and hence compute $I_{\lambda}(\theta)$ as well as $(V_{\lambda}(\theta),S_{\lambda}(\theta))$.
    Then applying~\eqref{eqn:intro-LB} gives a sharp answer to the tradeoff question, namely, for any sufficiently regular unbiased estimator $T_n$ of $\theta$, we have (Example~\ref{laplace_lb}):
    \begin{equation}\label{eqn:VS-tradeoff-laplace}
        \lambda\Var_\theta(T_n)+(1-\lambda)\Sen_\theta(T_n)
        \ge\frac{1+\lambda+\sqrt{(1-\lambda)(1+3\lambda)}}{2n},
        \qquad \qquad\textnormal{for all}\qquad0\le\lambda\le1.
    \end{equation}
    This bound on~\eqref{eq:Cvx-Comb} allows us to solve problems of the form~\eqref{eqn:Intro-Pareto} as follows.
    If one is given a sensitivity budget $s_0>0$ slightly larger than the optimal sensitivity of $1/n$, then the minimal achievable variance can be determined by setting $\Sen_{\theta}(T_n)=s_0$ in~\eqref{eqn:VS-tradeoff-laplace} and then finding which $0\le \lambda\le 1$ maximizes the resulting lower bound on $\Var_{\theta}(T_n)$.
    (Conversely, one can set $\Var_{\theta}(T_n)=v_0$ for some $v_0>0$ and then maximize the resulting lower bound on $\Sen_{\theta}(T_n)$ over $0\le \lambda\le 1$.)
    In Figure~\ref{fig:laplace-sensitivity-budget} we illustrate these bounds visually.

    We can also use HKZE to construct estimators which asymptotically achieve any prescribed Pareto optimal pair of variance and sensitivity (Example~\ref{ex:Laplace-HK-Z}).
    As expected, the HK$_{\lambda}$ZE interpolates between the sample mean and the sample median as $\lambda$ ranges from 0 to 1, and each intermediate HK$_{\lambda}$ZE is an $M$-estimator with a strictly convex loss function hence it is easy to compute.
    It is also natural to compare the HK$_{\lambda}$ZE with Huber's $M$-estimators from robust statistics \cite{Huber1964}, which provide a ubiquitous family estimators interpolating between the sample median and the sample mean; it turns out that these families of estimators are distinct, but the HK$_{\lambda}$ZE indeed has a ``Huber-like'' loss function (i.e., with quadratic growth when $|x-\theta|\to 0$ and linear growth when $|x-\theta|\to \infty$).
    As we discuss in Appendix~\ref{app:Huber}, it is also possible to define a notion of $\lambda$ relative efficiency for the multiobjective problem of minimizing the variance and sensitivity, and it happens that Huber's $M$-estimators are extremely close to being optimal; their worst-case $\lambda$ relative efficiency, over $0\le \lambda\le 1$, is approximately $0.954$.

    \subsection{Related Literature}

    As discussed above, our work contributes to a growing theory of ``stability'' of statistical estimation, which includes stability under various perturbations of the model or data.
    Classical examples include stability under model contamination \cite{Huber1964,LaiBayraktar2020,GiordanoMeagerBroderickI,GiordanoMeagerBroderickII}, subsampling \cite{BreimanI,BreimanII,SoloffFoygelBarberWillettI,SoloffFoygelBarberWillettII}, and more, and in this work we specifically focus on the notion of sensitivity from \cite{WIM, WCR_achieve, WCR} which is connected to various statistical applications \cite{CarollStefanksi,LaiBayraktar2020,DuchiJordanWainwright2018,BartlDrapeauOblojWiesel2021}.
    We note also that, in some instances, stability is used only as a tool to study accuracy, e.g., in some applications of algorithmic stability~\cite{BousquetElisseeff2002}; in this work, we take the perspective that stability itself is a goal.
    We thus view our results to also contribute to the theory of the PCS framework of veridical data science \cite{Yu2013,BarterYu}, which advocates that various notions of stability should inform the design of estimators and statistical procedures more broadly.

    Another literature in which our results can be situated is the study of tradeoffs between statistical accuracy and other desiderata, which has become a fundamental aspect of modern research in statistics and machine learning.
    For instance, much is known about computational-statistical tradeoffs \cite{BerthetRigollet,ZhangWainwrightJordan, MaWu}, fairness-accuracy tradeoffs \cite{LiangLuMuOkumura,LiuMolinari,FallahJordanUlichney},     tradeoffs between statistical inference and distributed communication \cite{ZhangDuchiJordanWainwright, GargMaNguyen,BravermanGargMaNguyenWoodruff}, and more.
    In this vein, our results are most similar to the recent work \cite{ChakrabortyLuoBarber} which studies stability-accuracy tradeoffs (although for a different notion of stability); yet, we emphasize that \cite{ChakrabortyLuoBarber} studies tradeoffs between the rates of convergence of stability and accuracy, whereas we study the tradeoffs in the optimal pre-factor for models with parametric rates of convergence.

    From the geometric point of view, our results further develop the philosophy from \cite{WCR} that different geometries on the space of probability measures allow one to control different notions of instability.
    This is an extension of the perspective of information geometry (see \cite{Amari} and the references therein), whereby the Cram\'er--Rao lower bound admits a geometric interpretation in terms of the Hellinger and Fisher--Rao metrics, and was initially applied to the Wasserstein geometry in \cite{WIM,AmariMatsuda,WCR_achieve,WCR}.
    Our work may be interpreted as an instance of this paradigm with respect to the Hellinger--Kantorovich geometry which was introduced in \cite{Liero_2016,LieroMielkeSavare,ChizatPeyreSchmitzerVialard} and studied in many subsequent works (e.g., \cite{CaiChengSchmitzerThorpe2022,LieroMielkeSavare2023}).
    The Hellinger--Kantorovich geometry is also closely related to the concept of unbalanced optimal transport, which has been used in various problems in statistics and machine learning \cite{ChapelAlayaGasso,FatrasSejourneFlamaryCourty,YanWangRigollet}.

\section{Background on Hellinger--Kantorovich Geometry}\label{sec:HK-background}

In this section we introduce the geometric framework used throughout the paper, namely the family of Hellinger--Kantorovich ($\HK$) geometries; these geometries describe the evolution of probability
measures through both spatial displacement of mass (called the transport mechanism from the Wasserstein geometry) and the creation and destruction
of mass (called the source mechanism from the Hellinger geometry), and hence they interpolate between the Wasserstein and Hellinger geometries.

We review the dynamic formulation of the $\HK$ metric, its relation to the pure transport and pure source cases, and the notion of absolute continuity of a trajectory needed in the sequel. 
Some statements below are standard facts from the $\HK$ literature, while others are technical extensions tailored to the statistical applications in this paper. 
We refer the reader to \cite{LieroMielkeSavare,Fleissner,ChizatPeyreSchmitzerVialard,Liero_2016} for further background.

\begin{remark}[Hellinger--Kantorovich versus Wasserstein--Fisher--Rao geometries]\label{rem:WFR}
    Another metric which interpolates between the Hellinger and Wasserstein metrics is the \textit{Wasserstein--Fisher--Rao (WFR) metric} of \cite{ChizatPeyreSchmitzerVialard}, which is closely related to, but slightly different from the Hellinger--Kantorovich metric.
    Indeed, while the use of ``Kantorovich'' versus ``Wasserstein'' is only cosmetic \cite[Chapter~6, Bibliographic Notes]{Villani}, the terms ``Hellinger'' and ``Fisher--Rao'' refer to different geometries, but which have the same infinitesimal structure for probability measures and hence lead to the same lower bounds; see \cite[Subsection~2.2]{WCR} for a discussion.
    As such, the results in this paper could equivalently be written in terms of WFR, rather than HK, geometry. 
\end{remark}

We follow standard notation throughout the paper.
For example, $\Pcal(\R^m)$ denotes the space of Borel probability measures on $\R^m$.
Also, $\Pcal_2(\R^m)\subseteq \Pcal(\R^m)$ denotes the subset of probability measures with finite variance and $\Pcal_{2,\textnormal{ac}}(\R^m)\subseteq \Pcal_2(\R^m)$ denotes the subset with finite variance and which are absolutely continuous with respect to the Lebesgue measure of $\R^m$.
We also write $\Mcal_{+}(\R^m)$ for the space of non-negative finite Borel measures on $\R^m$.
For function spaces, we write $C_c^{\infty}=C_c^{\infty}(\R^m)$ for the space of infinitely differentiable, compactly supported functions from $\R^m\to\R$, and we write $C^1$ for the space of continuously differentiable functions from $\R^m\to\R$ and occasionally from $\R^m\to\R^k$ by slight abuse of notation.

\subsection{Transport-Source Equation}

The starting point of the $\HK$ geometry is a dynamic description of how a measure changes over time, which we formulate through a certain partial differential equation (PDE).

To set this up,
fix $0\le \lambda\le 1$ and $m\in\N$.
Let us say that a path $\{\mu_t\}_{0\le t\le 1}$ of finite nonnegative measures on $\R^m$, a path $\{\xi_t\}_{0\le t\le 1}$ of functions $\R^m\to\R$, and a path $\{v_t\}_{0\le t\le 1}$ of functions $\R^m\to\R^m$ \emph{satisfy the transport-source equation} if we have
\begin{equation*}\label{eqn:HK-differential}
\partial_t \mu_t+(1-\lambda)\Div\!\big(v_{t}\,\mu_t\big)
=\lambda\,\xi_{t}\,\mu_t\tag{$\HK_{\lambda}$}
\end{equation*}
in the weak sense; that is, for every $f\in C_{c}^{\infty}(\R^m)$ (i.e., infinitely differentiable and compactly supported functions), the map $t\mapsto\int_{\R^m}f\diff\mu_t$ is absolutely continuous on $[0,1]$, and we have
\begin{equation}\label{eqn:HK-integral}
\frac{\diff}{\diff t}\int_{\R^m}f\diff \mu_{t} = (1-\lambda)\int_{\R^m}(\nabla f)^{\top}v_{t}\diff \mu_t + \lambda\int_{\R^m}f\,\xi_{t}\diff \mu_t
\end{equation}
for Lebesgue a.e. $t \in [0,1]$.

The interpretation of this equation is that the deformation of the measure $\mu_t$ is the result of two mechanisms: existing mass moves spatially according to the vector field $v_t$, and mass is created or destroyed according to the scalar field $\xi_t$. The parameter $0\le \lambda\le 1$ controls the relative weight given to the transport and source mechanisms. As such, other authors may refer to this PDE using terms like the \textit{transport--reaction equation}, \textit{continuity equation with reaction}, or similar. 
Formally, using smooth, compactly supported approximations of the constant function $f\equiv1$ as test functions in \eqref{eqn:HK-integral} yields
\begin{equation}\label{eqn:mass-change}
    \frac{\diff}{\diff t}\mu_t(\R^m)
=
\lambda\int_{\R^m}\xi_t\diff\mu_t,
\end{equation}
so $\{\xi_t\}_{0\le t\le 1}$ controls the evolution of the total mass of $\{\mu_t\}_{0\le t\le 1}$. 
In particular, note that, in principle, $\mu_t$ for $0<t<1$ need not be a probability measure, even though we will always assume $\mu_0$ and $\mu_1$ are probability measures.

We also give names to the extremal cases (i.e., $\lambda\in\{0,1\}$) of \eqref{eqn:HK-differential}.
That is, we say that a pair $\{\mu_t\}_{0\le t\le 1}$ and $\{v_t\}_{0\le t\le 1}$ \emph{satisfies the continuity equation} if we have
\begin{equation*}\label{eqn:CE-differential}
\partial_t \mu_t+\Div\!\big(v_{t}\,\mu_t\big)
=0 \tag{$\HK_{0}$}
\end{equation*}
and we say that a pair $\{\mu_t\}_{0\le t\le 1}$ and $\{\xi_t\}_{0\le t\le 1}$  \emph{satisfies the source equation} if we have
\begin{equation*}\label{eqn:RE-differential}
\partial_t \mu_t=\xi_{t}\,\mu_t\tag{$\HK_{1}$}
\end{equation*}
both in the weak sense; see \cite[Section~8.1]{AmbrosioGigliSavare} and \cite[Section~2.2]{GallouetMonsaingeon2017}, respectively.
These extremal cases isolate the two components of the $\HK$ geometry:
at \(\lambda=0\), the only mechanism is transport, hence corresponding to the Wasserstein geometry; at \(\lambda=1\), the only mechanism is the creation and destruction of mass, hence corresponding to the Hellinger geometry.

\subsection{Metric Space Structure}

The transport-source equation (\ref{eqn:HK-differential}) describes admissible paths of measures in the $\HK$ geometry. It turns out that one can define a corresponding $\HK$ metric by assigning an energy to such paths and then minimizing this energy over all paths connecting two given endpoint measures. 

To state this, fix $0\le \lambda\le 1$ and introduce the following quantity for a locally Lipschitz function $\phi:\R^m\to\R$ and an absolutely continuous finite nonnegative measure $\mu$ on $\R^m$, or for $\phi\in C_c^\infty(\R^m)$ and an arbitrary finite nonnegative measure $\mu$:
\begin{equation}\label{eqn:def-Sobolev}
    \|(\phi,\grad\phi)\|^2_{\mathcal{H}_{\lambda}^1(\mu)}:=\int_{\R^m}(\lambda|\phi|^2 + (1-\lambda)\|\nabla\phi\|^2)\diff \mu;
\end{equation}
in later discussion, we may abuse the notation and write \(\|\phi\|_{\mathcal{H}_{\lambda}^1(\mu)} = \|(\phi,\grad\phi)\|_{\mathcal{H}_{\lambda}^1(\mu)}\).
We note that this is a norm for $0<\lambda\le 1$, but only a seminorm for $\lambda=0$.
Indeed, it fails to be positive definite for $\lambda=0$ since $\phi$ appears only through $\nabla \phi$.
Then, for $0\le \lambda\le 1$, we can define the \textit{$\HK_{\lambda}$ tangent space} at $\mu\in\mathcal M_+(\R^m)$ by
\begin{equation}\label{eqn:def-HK-tangent-space}
    \Tan_{\mu}^{\HK_{\lambda}}(\mathcal M_+(\R^m)) := \overline{\{(\phi,\nabla\phi): \phi\in C_c^{\infty}(\R^m)\}}^{\|\cdot\|_{\mathcal{H}_{\lambda}^1(\mu)}}.
\end{equation}
Lastly, note that the $\HK_{\lambda}$ tangent space coincides with the Wasserstein and Hellinger tangent spaces when $\lambda=0$ and $\lambda=1$, respectively; see Lemma~\ref{lem:HK-tangent-identification} in the Appendix for further detail.
Then:

\begin{definition}\label{def:dynamic-cstr}
    For $0\le \lambda \le 1$ and $\nu_0,\nu_1 \in \mathcal{P}_2(\mathbb{R}^m)$,
 we define $\dHKlambda$ via
    \begin{equation}\label{eqn:HK-dynamic}
        \dHKlambda^2(\nu_0,\nu_1):= \inf\left\{\int_{0}^{1}\int_{\mathbb{R}^m}\left(\lambda|\xi_t|^2+(1-\lambda)\|v_t\|^2\right)\diff \mu_t\diff t: \begin{matrix}
            (\xi_t,v_t)\in\Tan_{\mu_t}^{\HK_{\lambda}}(\mathcal M_+(\R^m)) \textnormal{ for a.e. } t,\\
            \{(\mu_t,\xi_t,v_t)\}_{0\le t\le 1}\textnormal{ solves \eqref{eqn:HK-differential},} \\ 
            \textnormal{and }\mu_0=\nu_0,\mu_1=\nu_1
        \end{matrix}\right\},
    \end{equation}
    called the \emph{Hellinger--Kantorovich metric} on  $\Pcal_2(\R^m)$.
\end{definition}

The formulation above explicitly imposes a constraint on the scalar field and vector field solving the transport-source equation (\ref{eqn:HK-differential}); roughly speaking, the condition $(\xi_t,v_t)\in\Tan_{\mu_t}^{\HK_{\lambda}}(\mathcal M_+(\R^m))$ states that there exists a potential
$\phi_t:\R^m\to\R$ such that $\xi_t =\phi_t$ and $v_t=\nabla \phi_t$ hold $\mu_t$ a.e., i.e., the vector field governing the transport of mass equals the gradient of the scalar field governing the change in mass.
(Interestingly, the value of the metric is unchanged if we remove this constraint and allow for general $v_t$ and $\xi_t$ subject to some regularity; see Lemma~\ref{lem:dynamic-free} for details.)
However, rigorously establishing this constraint can be difficult since $\Tan_{\mu}^{\HK_{\lambda}}(\mathcal M_+(\R^m))$ is implicitly defined via a closure operation; it turns out that often it is easier to work with the following function space:

\begin{definition}\label{def:Sobolev}
    For $\mu\in\Pcal_{2, \textnormal{ac}}(\R^m)$, define the function space
    \begin{equation}\label{eqn:def-Sobolev-space}
        \mathcal{H}^1_{\lambda}(\mu):=\left\{\phi:\text{supp}(\mu)\to\R:
        \begin{aligned}
            &\text{there exists }\widetilde \phi\in\operatorname{Lip}_{\mathrm{loc}}(\mathbb R^m)
            \text{ such that }\tilde \phi=\phi\ \mu\text{-a.e.},\\
            &\textnormal{and }\int_{\R^m}(\lambda|\tilde{\phi}|^2 + (1-\lambda)\|\nabla\tilde{\phi}\|^2)\diff \mu<\infty
        \end{aligned}\right\},
    \end{equation}
    for all $0\le \lambda\le 1$.
    (As we explain in Remark~\ref{rem:Sobolev-gradient}, the integral in~\eqref{eqn:def-Sobolev-space} is indeed well-defined.)
\end{definition}

Indeed, the next result shows that $\mathcal{H}_\lambda^1(\mu)$ provides a simple sufficient condition for the pair $(\phi,\nabla\phi)$ to lie in $\Tan_\mu^{\HK_\lambda}\bigl(\mathcal M_+(\mathbb R^m)\bigr)$; we will use it throughout the remainder of the paper.

\begin{proposition}\label{prop:Sobolev}
    For $\mu\in\Pcal_{2,\textnormal{ac}}(\R^m)$, we have \(\left\{(\phi,\nabla \phi):\phi\in\mathcal H_{\lambda}^1(\mu)\right\}\subseteq\Tan_\mu^{\HK_\lambda}\bigl(\mathcal M_+(\mathbb R^m)\bigr)\).
\end{proposition}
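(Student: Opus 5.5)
Fix $\phi\in\mathcal H^1_\lambda(\mu)$ and its locally Lipschitz representative $\tilde\phi$. Since $\mu$ is absolutely continuous, $\nabla\tilde\phi$ exists $\mu$-a.e. by Rademacher's theorem. The plan is to construct $\phi_k\in C_c^\infty(\R^m)$ with $\|(\phi_k-\tilde\phi,\nabla\phi_k-\nabla\tilde\phi)\|_{\mathcal H^1_\lambda(\mu)}\to0$. I will do this in two stages: truncate and localize, then mollify.

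\textbf{Truncation and localization.} For $R>0$ let $\chi_R(x)=\chi(x/R)$, where $\chi\in C_c^\infty$, $0\le\chi\le1$, $\chi\equiv1$ on the unit ball, and $\|\nabla\chi\|\le 2$. I also want bounded functions, so let $T_M(s)=\max(-M,\min(M,s))$. Set $\psi_{M,R}=\chi_R\,T_M(\tilde\phi)$, which is Lipschitz and compactly supported. Its gradient, $\mu$-a.e., is
\[
\nabla\psi_{M,R}=\chi_R\,\mathbf 1_{\{|\tilde\phi|<M\}}\nabla\tilde\phi+T_M(\tilde\phi)\nabla\chi_R .
\]
This uses the chain rule for Lipschitz truncations; on the level set $\{|\tilde\phi|=M\}$ we have $\nabla\tilde\phi=0$ a.e., which is standard.

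First take $R\to\infty$ with $M$ fixed. The term $\chi_R\mathbf 1\nabla\tilde\phi\to\mathbf 1_{\{|\tilde\phi|<M\}}\nabla\tilde\phi$ in $L^2(\mu)$ by dominated convergence. The error term satisfies $\int|T_M\tilde\phi|^2\|\nabla\chi_R\|^2\diff\mu\le 4M^2R^{-2}\to0$. Note this bound needs only boundedness, not any moment of $\mu$; this is exactly why I truncate when $\lambda=0$, where $\tilde\phi$ itself need not lie in $L^2(\mu)$.

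Then take $M\to\infty$. We have $\mathbf 1_{\{|\tilde\phi|<M\}}\nabla\tilde\phi\to\nabla\tilde\phi$ in $L^2(\mu)$ by dominated convergence. When $\lambda>0$, also $T_M\tilde\phi\to\tilde\phi$ in $L^2(\mu)$. When $\lambda=0$ the zeroth-order term carries weight $0$, so only gradients matter.

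\textbf{Mollification.} Let $\psi$ be Lipschitz with compact support and set $\psi_\varepsilon=\psi*\rho_\varepsilon\in C_c^\infty$. Then $\psi_\varepsilon\to\psi$ uniformly on a fixed compact set, so the zeroth-order term converges in $L^2(\mu)$ for the finite measure $\mu$. The gradients satisfy $\nabla\psi_\varepsilon=(\nabla\psi)*\rho_\varepsilon$, these are uniformly bounded with support in a fixed compact $K$, and they converge Lebesgue-a.e. to $\nabla\psi$ (Lebesgue differentiation, since $\nabla\psi\in L^\infty\subset L^1_{\mathrm{loc}}$). Because $\mu\ll$ Lebesgue, the convergence also holds $\mu$-a.e. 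Dominated convergence with the constant bound $\mathrm{Lip}(\psi)^2\mathbf 1_K$ then gives $\nabla\psi_\varepsilon\to\nabla\psi$ in $L^2(\mu)$.

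A diagonal argument combines the two stages. This places $(\tilde\phi,\nabla\tilde\phi)$, and hence $(\phi,\nabla\phi)$ since they agree $\mu$-a.e., in the closure defining $\Tan^{\HK_\lambda}_\mu$.

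\textbf{Main obstacle.} The delicate step is the $\mu$-a.e. convergence of the mollified gradients. This is precisely where absolute continuity of $\mu$ is essential: for singular $\mu$, Lebesgue-a.e. statements are useless. The truncation handling of the $\lambda=0$ case is the secondary subtle point.
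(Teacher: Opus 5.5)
Your argument is correct and follows essentially the paper's route: cut off, mollify, get $\mu$-a.e.\ convergence of the mollified gradients at Lebesgue points using $\mu\ll\mathcal L^m$, and conclude by dominated convergence plus a diagonal argument. The one real difference is that you clip values with $T_M$ for every $\lambda$. This bounds the cutoff error by $4M^2/R^2$ without using $\tilde\phi\in L^2(\mu)$, and so removes the paper's case split. The price is the level-set fact $\nabla\tilde\phi=0$ a.e.\ on $\{|\tilde\phi|=M\}$. The paper avoids that fact by truncating only when $\lambda=0$, and then with the smooth map $M\arctan(\tilde\phi/M)$. There is also a small omission that mirrors your $\lambda=0$ remark. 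For $\lambda=1$, the gradient convergences you assert need not hold, since $\nabla\tilde\phi$ need not lie in $L^2(\mu)$. They carry weight $0$, however, so they are not needed.
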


Lastly, it is useful to compare the metric $\dHKlambda$ with other metrics, specifically the Hellinger and Wasserstein metrics, denoted $\dH$ and $\dW$ respectively.
The next result shows a bound between all three metrics; we believe this result is well-known to experts, but we give a precise statement and a proof (see Appendix~\ref{app:ProofsSection2}) for completeness.

\begin{lemma}\label{lem:metric-bounds}
    We have $d_{\HK_{0}} =\dW$ and $d_{\HK_1} = \dH$, as well as
    \begin{equation*}
        d_{\HK_{\lambda}}(\nu_0,\nu_1) \le \min\left\{\frac{1}{\sqrt{\lambda}} \dH(\nu_0,\nu_1),\frac{1}{\sqrt{1-\lambda}}\dW(\nu_0,\nu_1)\right\}
    \end{equation*}
    for all $\nu_0,\nu_1$ and $0< \lambda< 1$.
\end{lemma}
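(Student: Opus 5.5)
The equalities $d_{\HK_0}=\dW$ and $d_{\HK_1}=\dH$ should follow by specializing Definition~\ref{def:dynamic-cstr} and invoking the known dynamic formulations. The inequality for $0<\lambda<1$ should follow by embedding near-optimal pure-transport and pure-source paths into the $\HK_\lambda$ framework after rescaling the velocity or source field. The rescaling is what produces the factors $1/\sqrt{1-\lambda}$ and $1/\sqrt{\lambda}$.

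\emph{Extremal cases.} For $\lambda=0$, equation \eqref{eqn:HK-differential} is the continuity equation \eqref{eqn:CE-differential}, and the energy in \eqref{eqn:HK-dynamic} is $\int_0^1\!\int\|v_t\|^2\diff\mu_t\diff t$. Since $\phi$ only enters through $\nabla\phi$, Lemma~\ref{lem:HK-tangent-identification} identifies the constraint $(\xi_t,v_t)\in\Tan^{\HK_0}_{\mu_t}$ with the requirement that $v_t$ lies in the Wasserstein tangent space $\overline{\{\nabla\phi:\phi\in C_c^\infty\}}^{L^2(\mu_t)}$. In this case $\xi_t$ plays no role.
\begin{itemize}
\item Equation~\eqref{eqn:mass-change} shows that mass is conserved, so every $\mu_t$ is a probability measure.
\item The Benamou--Brenier formula, in the form of \cite[Theorem~8.3.1 and Proposition~8.4.5]{AmbrosioGigliSavare}, then gives exactly $\dW^2$. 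That result states that the infimum may be restricted to tangent velocity fields without loss.
\end{itemize}
For $\lambda=1$, the transport term vanishes and \eqref{eqn:HK-differential} becomes the source equation \eqref{eqn:RE-differential}. By Lemma~\ref{lem:HK-tangent-identification}, the tangent space is $L^2(\mu_t)$, so the constraint is vacuous, and the energy is $\int_0^1\!\int|\xi_t|^2\diff\mu_t\diff t$. The dynamic characterization of the Hellinger distance on $\Mcal_+$ (cf.\ \cite{GallouetMonsaingeon2017,LieroMielkeSavare}) identifies this infimum with $\dH^2$, in the normalization used here. If needed, I would verify this directly via the substitution $\sqrt{\mu_t}$, which converts the problem into a straight-line $L^2$ geodesic.

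\emph{Interpolating bound.} Fix $0<\lambda<1$ and $\varepsilon>0$.
\begin{itemize}
\item \emph{Wasserstein bound.} Take a path $(\mu_t,v_t)$ solving \eqref{eqn:CE-differential} with energy at most $\dW^2(\nu_0,\nu_1)+\varepsilon$. Set $\tilde v_t:=v_t/(1-\lambda)$ and $\tilde\xi_t:=0$. Then $(\mu_t,\tilde\xi_t,\tilde v_t)$ solves \eqref{eqn:HK-differential}, and its $\HK_\lambda$ energy is
\[
(1-\lambda)\int_0^1\!\!\int\frac{\|v_t\|^2}{(1-\lambda)^2}\diff\mu_t\diff t\le\frac{\dW^2+\varepsilon}{1-\lambda}.
\]
\item \emph{Hellinger bound.} Take a near-optimal source path $(\mu_t,\xi_t)$ and use $(\xi_t/\lambda,0)$. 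This gives $\HK_\lambda$ energy at most $(\dH^2+\varepsilon)/\lambda$.
\end{itemize}
Letting $\varepsilon\to0$, taking the minimum of the two bounds and then square roots yields the claim.

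\emph{Main obstacle.} These rescaled pairs $(\tilde\xi_t,\tilde v_t)$ are \emph{not} of gradient form. They have $\xi=0$ with $v\neq0$, or $v=0$ with $\xi\neq0$, so they violate the constraint $(\xi_t,v_t)\in\Tan^{\HK_\lambda}_{\mu_t}$. I would handle this with Lemma~\ref{lem:dynamic-free}, which says the value of $\dHKlambda$ is unchanged when the constraint is dropped, subject to some regularity. The real work is checking that regularity, presumably finite energy and local integrability of $v_t,\xi_t$ against $\mu_t$. For near-optimal Wasserstein paths I would use the constant-speed geodesic, which has finite, bounded kinetic energy. For Hellinger paths I would use the explicit geodesic $\mu_t=((1-t)\sqrt{\nu_0}+t\sqrt{\nu_1})^2$, whose $\xi_t$ is square integrable. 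If Lemma~\ref{lem:dynamic-free} only provides an inequality of the form ``free infimum $\ge$ constrained infimum'', I would instead project $(\tilde\xi_t,\tilde v_t)$ orthogonally onto $\Tan^{\HK_\lambda}_{\mu_t}$ in $\mathcal H^1_\lambda(\mu_t)$. The orthogonal complement consists of pairs $(\xi,v)$ with $\lambda\xi\mu-(1-\lambda)\Div(v\mu)=0$ weakly, so the projection preserves \eqref{eqn:HK-integral} while not increasing the energy.
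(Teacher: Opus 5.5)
Your proposal is correct and is essentially the paper's proof. The endpoint identities are Benamou--Brenier and the dynamic Hellinger formula, and the interior bound comes from restricting the unconstrained formulation of Lemma~\ref{lem:dynamic-free} to $v_t\equiv0$ or $\xi_t\equiv0$ and rescaling the surviving field by $1/\lambda$ or $1/(1-\lambda)$, exactly as you do. One small slip: ``free infimum $\ge$ constrained infimum'' is precisely the nontrivial direction you need, since the reverse inequality is automatic because the constrained class is smaller. Your projection fallback is therefore attached to the wrong contingency; the argument itself is valid, but it is unnecessary because Lemma~\ref{lem:dynamic-free} asserts equality.
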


\subsection{Absolutely Continuous Curves}

The dynamic formulation of the $\HK$ metric in Definition~\ref{def:dynamic-cstr} naturally leads to a notion of absolutely continuous curves.
Such curves are important for this work because the statistical models considered later will be viewed as finite-dimensional submanifolds in the space of measures endowed with the $\HK$ metric, whose requisite regularity will involve absolute continuity of suitable one-dimensional submanifolds (i.e., trajectories).

Recall that a curve $\gamma:[0,1]\to \mathfrak{M}$ in a metric space $(\mathfrak{M},d)$ is called \textit{$p$-absolutely continuous} if there exists a function $g\in L^p[0,1]$ such that we have $d(\gamma(s),\gamma(t))\le \int_{s}^{t}g(u)\diff u$ for all $0 \le s\le t\le 1$.
We write $\AC_{\lambda}^p:=\AC^p(\Pcal_2(\R^m),\dHKlambda)$ for the set of all $p$-absolutely continuous curves with respect to the $\HK$ metric.
Throughout the paper, we will mainly work with the space $\AC_{\lambda}^2$.

One important aspect of \(\AC_\lambda^2\) curves is that they can be characterized precisely as the curves which solve the transport-source equation with transport and source terms that are restricted to lie in the $\HK_\lambda$ tangent space (\cite[Theorems~8.16 and~8.17]{LieroMielkeSavare}, \cite[Proposition~3.1]{Fleissner}).
For the cases $\lambda=1$ and $\lambda=0$, see \cite[Section~2.2]{GallouetMonsaingeon2017} and \cite[Sections~8.3 and~8.4]{AmbrosioGigliSavare}, respectively.

\begin{proposition}\label{prop:ans-cont-unique-solution}
    For $0\le \lambda\le 1$, a curve $\{\mu_t\}_{0\le t\le 1}\subseteq\Pcal_{2,\mathrm{ac}}(\R^m)$ lies in $\AC_{\lambda}^2$ if and only if there exists a jointly measurable pair $(t,x)\mapsto(\xi_t(x),v_t(x))$ with
    \begin{equation}\label{eqn:AC2-finite-energy}
        \int_{0}^{1}\|(\xi_t,v_t)\|^2_{\mathcal{H}_{\lambda}^1(\mu_t)}\diff t=\int_{0}^{1}\int_{\R^m}\left(\lambda|\xi_t|^2+(1-\lambda)\|v_t\|^2\right)\diff\mu_t\diff t<\infty,
    \end{equation}
    such that $(\xi_t,v_t)\in \Tan_{\mu_t}^{\HK_\lambda}(\mathcal M_+(\R^m))$ for Lebesgue a.e. $0\le t\le 1$ and $\{(\mu_t,\xi_t,v_t)\}_{0\le t\le 1}$ solves~\eqref{eqn:HK-differential}.
    Moreover, $\{\xi_t\}_{0\le t\le 1}$ is unique up to $\diff\mu_t\diff t$ null sets if $0< \lambda\le 1$ and $\{v_t\}_{0\le t\le 1}$ is unique up to $\diff\mu_t\diff t$ null sets if $0\le \lambda< 1$.
\end{proposition}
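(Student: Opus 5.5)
The plan is to reduce the statement to the known characterization of absolutely continuous curves for the HK metric on finite nonnegative measures, due to Liero--Mielke--Savaré (Theorems~8.16 and~8.17) and Fleissner (Proposition~3.1). The only genuinely new point is that our curves live in $\Pcal_{2,\mathrm{ac}}(\R^m)$ with the metric $\dHKlambda$ of Definition~\ref{def:dynamic-cstr}.

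\textbf{Normalizing the metric.} For $0<\lambda<1$, rescaling time and space shows that $\dHKlambda$ is a constant multiple of the standard $\HK$ metric on $\Mcal_+(\R^m)$ with transport weight $1$ and source weight $1/4$, after a change of length scale. Concretely, set
\[
\tilde v_t=(1-\lambda)v_t, \qquad \tilde\xi_t=\lambda\xi_t .
\]
Then \eqref{eqn:HK-differential} becomes $\partial_t\mu_t+\Div(\tilde v_t\mu_t)=\tilde\xi_t\mu_t$, and the energy becomes $\int\bigl(\tfrac1\lambda|\tilde\xi_t|^2+\tfrac{1}{1-\lambda}\|\tilde v_t\|^2\bigr)\diff\mu_t$. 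Under this substitution the tangent-space constraint $(\xi_t,v_t)\in\Tan$ becomes a closure of $\{(\lambda\phi,(1-\lambda)\nabla\phi)\}$. Via the space dilation $x\mapsto cx$, this is exactly the gradient-structure constraint in the cited works. I would verify this bookkeeping once and then import the result.

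\textbf{Forward direction.} Take $\{\mu_t\}\in\AC^2_\lambda$. The cited theorem gives $\mu_t$-measurable fields in the closure tangent space, with energy equal to the squared metric derivative $|\dot\mu|^2(t)$, which is integrable. Joint measurability comes from the construction there, since the fields are densities of a space-time measure. This gives \eqref{eqn:AC2-finite-energy}.

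\textbf{Converse.} Given such a pair, reparametrize and restrict to $[s,t]$. Definition~\ref{def:dynamic-cstr} then yields
\[
\dHKlambda(\mu_s,\mu_t)\le\int_s^t\|(\xi_u,v_u)\|_{\mathcal H^1_\lambda(\mu_u)}\diff u,
\]
by the standard affine time change and Cauchy--Schwarz. The integrand lies in $L^2$, so the curve is in $\AC^2_\lambda$.

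\textbf{Uniqueness.} Suppose two admissible pairs solve \eqref{eqn:HK-differential}. Their difference $(\delta\xi,\delta v)$ lies in $\Tan_{\mu_t}$ and annihilates every test function:
\[
(1-\lambda)\int\nabla f^\top\delta v\diff\mu_t+\lambda\int f\,\delta\xi\diff\mu_t=0 \qquad\text{for all } f\in C_c^\infty .
\]
This is the statement that $(\delta\xi,\delta v)$ is orthogonal in $\mathcal H^1_\lambda(\mu_t)$ to the dense set defining $\Tan$. Hence $\|(\delta\xi,\delta v)\|_{\mathcal H^1_\lambda(\mu_t)}=0$, i.e. $\lambda|\delta\xi|^2+(1-\lambda)\|\delta v\|^2=0$ $\mu_t$-a.e. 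This gives $\delta\xi=0$ when $\lambda>0$ and $\delta v=0$ when $\lambda<1$. One subtlety is that the identity holds for a.e.\ $t$ with a null set depending on $f$; a countable dense family of test functions handles this.

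\textbf{Endpoints and main obstacle.} The endpoints $\lambda\in\{0,1\}$ are handled separately via Ambrosio--Gigli--Savaré and Gallouët--Monsaingeon. Lemma~\ref{lem:HK-tangent-identification} identifies the tangent spaces there. The main obstacle is a compatibility issue: our $\dHKlambda$ is defined on $\Pcal_2$ with intermediate masses allowed, and with tangent constraints via $C_c^\infty$ closure, and it must match the cited framework on $\Mcal_+$. I would check the equality of infima by arguing that restricting endpoints to probability measures does not change the geodesic energy, since the dynamic problem already ranges over $\Mcal_+$.
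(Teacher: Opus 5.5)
Your proposal takes the same route as the paper, which gives no separate argument and simply imports the characterization from \cite[Theorems~8.16 and~8.17]{LieroMielkeSavare} and \cite[Proposition~3.1]{Fleissner}, with \cite{AmbrosioGigliSavare} and \cite{GallouetMonsaingeon2017} at $\lambda\in\{0,1\}$; your rescaling bookkeeping is consistent with the one in Appendix~\ref{app:HK-static-cone}, and your orthogonality argument for uniqueness is the standard one. One small correction in the converse: an affine time change plus Cauchy--Schwarz only gives $\dHKlambda(\mu_s,\mu_t)\le\sqrt{t-s}\,\bigl(\int_s^t\|(\xi_u,v_u)\|^2_{\mathcal H^1_\lambda(\mu_u)}\diff u\bigr)^{1/2}$, because Cauchy--Schwarz runs the other way, so to get your bound by $\int_s^t\|(\xi_u,v_u)\|_{\mathcal H^1_\lambda(\mu_u)}\diff u$ you need an arc-length reparametrization; alternatively, note that the weaker bound already implies absolute continuity with metric derivative at most $\|(\xi_t,v_t)\|_{\mathcal H^1_\lambda(\mu_t)}$ for a.e.~$t$, which suffices.
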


A second important aspect of $\AC_{\lambda}^2$ curves is that any curve which is absolutely continuous for either the pure transport (i.e., Wasserstein) geometry or the pure source (i.e., Hellinger) geometry is automatically absolutely continuous for every intermediate $\HK$ geometry;
this follows easily from the metric comparison in Lemma~\ref{lem:metric-bounds}.

\begin{proposition}\label{prop:abs-cont-HK}
    If $\{\mu_t\}_{0\le t\le1}$ is $2$-absolutely continuous
with respect to at least one of $\dH$ or $\dW$, then it is
$2$-absolutely continuous with respect to $\dHKlambda$
for every $0<\lambda<1$.
In other words, $\AC_0^2\cup \AC_1^2\subseteq\AC_\lambda^2$, for $0<\lambda<1.$
\end{proposition}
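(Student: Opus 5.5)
The argument goes directly through the definition of $2$-absolute continuity combined with the metric comparison of Lemma~\ref{lem:metric-bounds}. Fix $0<\lambda<1$ and suppose first that $\{\mu_t\}_{0\le t\le 1}$ lies in $\AC^2(\Pcal_2(\R^m),\dW)$. By definition there is $g\in L^2[0,1]$ with
\[
\dW(\mu_s,\mu_t)\le\int_s^t g(u)\diff u \qquad \text{for all } 0\le s\le t\le 1.
\]
Lemma~\ref{lem:metric-bounds} gives $\dHKlambda(\mu_s,\mu_t)\le (1-\lambda)^{-1/2}\dW(\mu_s,\mu_t)$. Combining the two,
\[
\dHKlambda(\mu_s,\mu_t)\le\int_s^t \tilde g(u)\diff u, \qquad \tilde g:=(1-\lambda)^{-1/2}g .
\]
Since $\tilde g\in L^2[0,1]$, the curve lies in $\AC_\lambda^2$.

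The Hellinger case is identical. Starting from the bound $\dH(\mu_s,\mu_t)\le\int_s^t g$ for some $g\in L^2[0,1]$, the comparison $\dHKlambda\le\lambda^{-1/2}\dH$ from Lemma~\ref{lem:metric-bounds} yields $\tilde g:=\lambda^{-1/2}g\in L^2[0,1]$. Since $\dW=d_{\HK_0}$ and $\dH=d_{\HK_1}$, also by Lemma~\ref{lem:metric-bounds}, this is exactly the inclusion $\AC_0^2\cup\AC_1^2\subseteq\AC_\lambda^2$.

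The only step needing care is that the curve must take values in the space on which $\dHKlambda$ is defined, namely $\Pcal_2(\R^m)$, so that the comparison lemma applies pointwise along the curve. For Wasserstein-absolutely continuous curves this holds automatically. For Hellinger-absolutely continuous curves it has to be read as part of the hypothesis, i.e.\ the curve is assumed to live in $\Pcal_2(\R^m)$, consistent with the definition of $\AC^p_\lambda$. With that convention there is no real obstacle: the constants $\lambda^{-1/2}$ and $(1-\lambda)^{-1/2}$ are finite precisely because $0<\lambda<1$, which explains why the endpoints are excluded.
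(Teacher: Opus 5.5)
Your proposal is correct and is essentially the paper's proof: both take the $L^2$ modulus $g$ from Hellinger or Wasserstein absolute continuity and rescale it by $\lambda^{-1/2}$ or $(1-\lambda)^{-1/2}$ via Lemma~\ref{lem:metric-bounds}. Your extra remark that the curve must take values in $\Pcal_2(\R^m)$ is a harmless clarification, consistent with the definition of $\AC_\lambda^2$.
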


In practice, we typically use the results of this subsection to make the following conclusions about a path $\{\mu_t\}_{0\le t\le 1} \subseteq\Pcal_{2,\mathrm{ac}}(\R^m)$.
First, we show that $\{\mu_t\}_{0\le t\le 1}$ is 2-absolutely continuous with respect to at least one of $\dH$ or $\dW$, hence  2-absolutely continuous with respect to $\dHKlambda$ for $0<\lambda<1$ by Proposition~\ref{prop:abs-cont-HK}.
Second, we explicitly find a potential $\phi_t\in \mathcal{H}_{\lambda}^1(\mu_t)$ for each $0\le t\le 1$ such that $\{(\mu_t,\phi_t,\nabla\phi_t)\}_{0\le t\le 1}$ solves the transport-source equation~\eqref{eqn:HK-differential}; by Proposition~\ref{prop:Sobolev} we have $(\phi_t,\nabla\phi_t)\in \Tan_{\mu_t}^{\HK_\lambda}(\mathcal M_+(\R^m))$ for all $0\le t\le 1$, hence Proposition~\ref{prop:ans-cont-unique-solution} guarantees uniqueness.

\subsection{Beyond $C_c^{\infty}$ Test Functions}

The weak formulation of the transport-source equation~\eqref{eqn:HK-integral} is stated using test functions in \(C_c^\infty(\mathbb R^m)\), i.e., infinitely-differentiable and compactly-supported functions. 
However, for the statistical applications to follow, we need to apply the transport-source equation to test functions which are neither once-differentiable nor compactly supported.
Thus, the purpose of this subsection is to record a technical extension of the weak formulation to such test functions, under suitable space-time integrability conditions.

\begin{theorem}\label{thm:loc-Lip-test-funcs}
    Fix $0\le\lambda\le1$, and suppose $\{\mu_t\}_{0\le t\le 1}$ is in $\AC_{\lambda}^2$ where $\{(\xi_t,v_t)\}_{0\le t\le 1}$ is the solution to the transport-source equation~\eqref{eqn:HK-integral} guaranteed by Proposition~\ref{prop:ans-cont-unique-solution}.
    If \(\mu_t\) is absolutely continuous with respect to Lebesgue measure for all $0\le t\le 1$, and
    if \(f:\mathbb R^m\to\mathbb R\) is locally Lipschitz and satisfies
    \begin{itemize}
        \item[(i)] $\int_{\mathbb R^m}|f(x)|\diff \mu_t(x)<\infty$ for all \(t\in[0,1]\),
        \item[(ii)] $\int_0^1\int_{\mathbb R^m}|f(x)|^2\diff \mu_t(x)\diff t<\infty$, and
        \item[(iii)] $\int_0^1\int_{\mathbb R^m}\|\nabla f(x)\|^2\diff \mu_t(x)\diff t<\infty$,

    \end{itemize}
    then the map $t\mapsto \int_{\mathbb R^m} f(x)\diff \mu_t(x)$ is absolutely continuous, and we have
    \[
    \frac{\diff}{\diff t}\int_{\mathbb R^m} f(x)\diff \mu_t(x)
    =
    (1-\lambda)\int_{\mathbb R^m}\nabla f(x)\cdot v_t(x)\diff \mu_t(x)
    +
    \lambda\int_{\mathbb R^m} f(x)\xi_t(x)\diff \mu_t(x)
    \]
    for Lebesgue a.e.~$t \in (0,1)$.
\end{theorem}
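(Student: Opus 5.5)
The plan is a two-stage approximation: first pass from $C_c^\infty$ test functions to compactly supported locally Lipschitz functions by mollification, then remove the compact support by a cutoff argument. Throughout, I will use the integrated form of the claim. It suffices to show that for all $0\le s\le t\le 1$,
\begin{equation*}
\int f\diff\mu_t-\int f\diff\mu_s=\int_s^t\Big((1-\lambda)\int\nabla f\cdot v_u\diff\mu_u+\lambda\int f\,\xi_u\diff\mu_u\Big)\diff u .
\end{equation*}
By Cauchy--Schwarz, (ii), (iii) and the finite energy bound \eqref{eqn:AC2-finite-energy}, the integrand on the right lies in $L^1[0,1]$. This identity then yields both absolute continuity and the a.e.\ derivative formula.

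\emph{Step 1 (compact support, Lipschitz).} Suppose $g$ is Lipschitz with compact support. Set $g_\varepsilon=g*\rho_\varepsilon\in C_c^\infty$, with supports in a fixed compact set $K$. Then $g_\varepsilon\to g$ uniformly, $\|\nabla g_\varepsilon\|_\infty\le \mathrm{Lip}(g)$, and $\nabla g_\varepsilon\to\nabla g$ Lebesgue-a.e. by Rademacher and Lebesgue differentiation. Since each $\mu_u$ is absolutely continuous, this convergence also holds $\mu_u$-a.e., and hence $\diff\mu_u\diff u$-a.e. The integrated identity holds for each $g_\varepsilon$ by \eqref{eqn:HK-integral}. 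The left side converges by uniform convergence together with the finite masses of $\mu_t$ and $\mu_s$. For the right side I use dominated convergence on $[s,t]\times\R^m$ with dominating functions $C(\|v_u\|+|\xi_u|)\mathbf 1_K$. These are integrable in $\diff\mu_u\diff u$ because
\begin{equation*}
\int_0^1\mu_u(\R^m)\diff u<\infty,
\end{equation*}
which follows from \eqref{eqn:mass-change} and finite energy (Gronwall on the total mass). This step is where the hypothesis that $\mu_t$ is absolutely continuous is essential: at a kink of $g$, $\nabla g_\varepsilon$ does not converge, and such a set could otherwise carry mass.

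\emph{Step 2 (remove compact support).} Take cutoffs $\chi_R(x)=\chi(x/R)$ with $0\le\chi\le1$, $\chi=1$ on $B_1$, $\chi$ supported in $B_2$, and $\|\nabla\chi_R\|_\infty\le C/R$. Then $f\chi_R$ is Lipschitz with compact support, so Step 1 applies to it. The gradient splits as
\begin{equation*}
\nabla(f\chi_R)=\chi_R\nabla f+f\nabla\chi_R .
\end{equation*}
I send $R\to\infty$ term by term. The left side converges by (i) and dominated convergence. The term $\lambda\int f\chi_R\xi_u$ converges using $|f\xi_u|$, which is integrable by (ii) and finite energy. The term $(1-\lambda)\int\chi_R\nabla f\cdot v_u$ converges using $\|\nabla f\|\|v_u\|$, integrable by (iii). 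The remaining term is bounded by
\begin{equation*}
\frac{C}{R}\int_0^1\!\!\int|f|\|v_u\|\diff\mu_u\diff u\le\frac CR\Big(\iint|f|^2\Big)^{1/2}\Big(\iint\|v_u\|^2\Big)^{1/2}\to0 .
\end{equation*}
This is exactly where (ii) is used for the transport part.

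The main obstacle is Step 1. The delicate point there is justifying the pointwise a.e.\ convergence of $\nabla g_\varepsilon$ under $\diff\mu_u\diff u$, and the integrability of the total-mass dominating factor. For the latter I would first derive $\sup_u\mu_u(\R^m)<\infty$ from \eqref{eqn:mass-change} and Cauchy--Schwarz. This needs care, since \eqref{eqn:mass-change} was only formal; I would justify it by applying the already-established Step 1 to the cutoffs $\chi_R$ themselves, combined with a Gronwall-type bound.
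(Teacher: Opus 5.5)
Your proposal is correct and follows essentially the paper's route. You mollify compactly supported Lipschitz functions, using $\mu_t\ll\mathcal L^m$ to transfer the Lebesgue-a.e.\ convergence of $\nabla g_\varepsilon$, then remove the compact support with cutoffs, controlling every term by Cauchy--Schwarz against (ii), (iii) and the finite energy; the paper phrases this as $L^2(\diff\mu_t\diff t)$ convergence of $g_\varepsilon,\nabla g_\varepsilon$ and of $f\chi_R,\nabla(f\chi_R)$, which is equivalent to your dominated-convergence bookkeeping.

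The only divergence is the mass bound. The paper obtains it from continuity of $t\mapsto\mu_t$ in $\dHKlambda$ together with $\dHKlambda^2(\mu_t,0)=\frac4\lambda\mu_t(\R^m)$. Your Gronwall detour is unnecessary, and its appeal to Step~1 is slightly circular, since Step~1 itself uses the bound. Step~1 only needs $\sup_t\mu_t(K)<\infty$ for the fixed compact $K$, and this follows from continuity of $t\mapsto\int\chi\diff\mu_t$ for any $\chi\in C_c^\infty$ with $\chi\ge\mathbf 1_K$. In any case, curves in $\AC_\lambda^2$ take values in $\Pcal_2(\R^m)$, so the mass is identically one.
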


This result generalizes and simplifies \cite[Proposition~2.2]{WCR}, which focused only on the pure transport case and whose proof relied on a deep result on probabilistic representations of solutions to the continuity equation \cite[Theorem~8.2.1]{AmbrosioGigliSavare}.
Instead, in the proof of Theorem~\ref{thm:loc-Lip-test-funcs} we consider a simpler approach using a mollification argument.

\section{Pareto Optimal Tradeoffs between Variance and Sensitivity}\label{sec:LB}

In this section we describe the fundamental lower bounds on the pair of variance and sensitivity that can be achieved by any unbiased estimator in a sufficiently regular statistical model.
Throughout, we consider a statistical model $\Pcal=\{P_{\theta}:\theta\in\Theta\}\subseteq\Pcal_{2,\textnormal{ac}}(\R^d)$ where $\Theta\subseteq\R^p$ is an open set and $p\geq 1$.
We let $X_1,\ldots, X_n$ be i.i.d.~samples from $P_{\theta}$ where the goal is to estimate the unknown $\theta\in\Theta$.
More precisely, we define a particular notion of regularity that we will require from our statistical models (Subsection~\ref{subsec:model-regularity}), we state a simultaneous lower bound on the variance and sensitivity of any unbiased estimator (Subsection~\ref{subsec:LB}), and we study various aspects of the finer structure of this lower bound (Subsection~\ref{subsec:VSD}).

The results of this section rely on the preliminaries from Section~\ref{sec:HK-background} on the HK geometry, but the i.i.d.~structure leads us to consider product measures on $(\R^d)^n$, i.e., we will consider probability measures on $\R^m$ for $m=dn$.

\subsection{Regularity Assumptions on the Model}\label{subsec:model-regularity}

We begin by giving a particular notion of a model being regular with respect to the HK geometry; verifying this condition takes work in each example of interest, and at the end of this subsection we discuss this point further.

\begin{definition}\label{def:HK_regular}
    Consider a parametric family $\Pcal=\{P_{\theta}:\theta\in\Theta\}\subseteq\Pcal_{2,\textnormal{ac}}(\R^d)$ where $\Theta\subseteq\R^p$ is an open set.
    For $0\le \lambda\le 1$, we say $\Pcal$ is \emph{$\HK_{\lambda}$ regular} if there exist functions $\phi_{\theta,\lambda,1},\ldots,\phi_{\theta,\lambda,p}:\R^d\to\R$ in $\mathcal{H}_{\lambda}^1(P_{\theta})$, satisfying the following properties for every $\theta\in\Theta$ and $h\in\R^p$ such that $\theta+th\in\Theta$ for all $0\le t\le 1$:
    \begin{itemize}
        \item[(i)] For each $\theta\in\Theta$, $h\in \R^p$, and $1\le j\le p$, the map $t\mapsto \|\phi_{\theta+th,\lambda,j}\|^2_{\mathcal{H}_{\lambda}^1(P_{\theta+th})}$ is locally integrable,
        \item[(ii)] For each $h = (h_1,\ldots, h_p)\in\R^p$, there is a unique weak-sense solution to the $\HK_{\lambda}$ equation 
        \begin{equation}\label{HK-cont-eq}
            \partial_t P_{\theta+th}+(1-\lambda)\Div\!\big(\nabla\phi_{t}\,P_{\theta+th}\big)=\lambda\,\phi_{t}\,P_{\theta+th}
        \end{equation}
       (where uniqueness means the difference between any two solutions has zero $\mathcal{H}_{\lambda}^1(P_{\theta+th})$ norm for almost every $0\le t\le 1$),
        \item[(iii)] For each $h = (h_1,\ldots, h_p)\in\R^p$, the unique solution $\{\phi_t\}_{0\le t\le 1}$ to \eqref{HK-cont-eq} has the form $\phi_t = h_1\phi_{\theta+th,\lambda,1}+\cdots+h_p\phi_{\theta+th,\lambda,p}$. 
    \end{itemize}
    The function $\phi_{\theta,\lambda} = (\phi_{\theta,\lambda,1},\ldots, \phi_{\theta,\lambda,p}):\R^d\to\R^p$ is called the \emph{$\HK_{\lambda}$ score function} of $\Pcal$.
    We furthermore say that $\Pcal$ is \emph{$\HK$ regular} if it is $\HK_{\lambda}$ regular for all $0< \lambda< 1$.
\end{definition}

We now make some remarks about the notion of $\HK$ regularity.
First, we use the term \textit{score function} for $\phi_{\theta,\lambda}$ in the case of general $\lambda$, which agrees with the classical case of $\lambda=1$ where $\phi_{\theta,1}$ coincides with the score function $\dot{\ell}_{\theta}$ but differs from the term \textit{potential} used in \cite{WCR} for the case $\lambda=0$.
Second, regarding notation, we note that, if $p=1$ then we simply write $\phi_{\theta,\lambda}$ in place of $\phi_{\theta,\lambda,1}$, and we will sometimes use parameter names, rather than integers, as the third index (e.g., $\phi_{\theta,\lambda,\mu}$ and $\phi_{\theta,\lambda,\sigma^2}$ in the Gaussian $\Ncal(\mu,\sigma^2)$ Example~\ref{ex:Gaussian-loc-scale} below) when it is useful in context.
Third, we emphasize that the notion of $\HK$ regularity involves the interpolating parameter $\lambda$ ranging over the open interval $(0,1)$, not the closed interval $[0,1]$; this choice is closely related to Proposition~\ref{prop:abs-cont-HK}, which we will use below. Finally, we remark that condition (ii) can be interpreted as a form of G\^ateaux differentiability, while (iii) resembles a Fréchet differentiability assumption. Although this interpretation provides some basic intuition, our notion of regularity is closer to an absolute continuity assumption. A more appropriate definition of differentiability in the Hellinger--Kantorovich sense (DHKS) requires a different approach, which we discuss in Appendix~\ref{app:HK-static-cone}. We prefer to work with Definition~\ref{def:HK_regular} in the sequel, as finding the optimal transport-growth pairs needed in the definition of DHKS is in general difficult, even for the representative statistical models explored here; see the discussion right after Definition~\ref{def:DHKS}. 

\begin{remark}[Centering conventions for $\HK$ score functions]\label{remark:center-conv}
    While the $\HK_\lambda$ score function $\phi_{\theta,\lambda}$ for $0<\lambda\le 1$ satisfies $\Ebb_\theta[\phi_{\theta,\lambda}(X)]=0$ for almost every \(\theta\), the $\HK_0$ potential is determined only up to addition of a function whose gradient vanishes $P_\theta$-almost everywhere (since $\phi_{\theta,0}$ only appears through its gradient $\nabla\phi_{\theta,0}$ in equation~\eqref{HK-cont-eq}).
    Thus, for $\lambda=0$, we always assume that $\phi_{\theta,\lambda}$ has been chosen to satisfy $\Ebb_\theta[\phi_{\theta,0}(X)]=0$, which allows for the interpretation that $\phi_{\theta,\lambda}\to \phi_{\theta,0}$ as $\lambda\to 0$.
\end{remark}

Verifying $\HK$ regularity involves finding the unique solution to the family of PDEs in~\eqref{HK-cont-eq}, which involves both an existence and uniqueness statement.
For $p=1$, we give the following result guaranteeing uniqueness under very mild assumptions; indeed, recall that the notions of differentiability in quadratic mean (DQM) \cite{vanDerVaart} and differentiability in the Wasserstein sense (DWS) \cite{WCR} are easy to verify.

\begin{proposition}\label{prop:HK-reg-suff}
    Suppose a model $\Pcal=\{P_{\theta}:\theta\in\Theta\}\subseteq\Pcal_{2,\textnormal{ac}}(\R^d)$ with $\Theta\subseteq\R$ satisfies at least one of the following conditions:
    \begin{itemize}
        \item[(i)] $\Pcal$ is DQM with score function $\dot{\ell}_{\theta}$, and the map $\theta\mapsto \|\dot{\ell}_{\theta}\|^2_{L^2(P_{\theta})}=\|\dot{\ell}_{\theta}\|^2_{\mathcal{H}_{1}^1(P_{\theta})}$ is locally integrable.
        \item[(ii)] $\Pcal$ is DWS, with potential $\phi_{\theta}$, and the map $\theta\mapsto \|\nabla\phi_{\theta}\|^2_{L^2(P_{\theta})}=\|\phi_{\theta}\|^2_{\mathcal{H}_{0}^1(P_{\theta})}$ is locally integrable.
    \end{itemize}
    Then, for each $0<\lambda<1$ and $h\in\R$, there is at most one weak-sense solution to~\eqref{HK-cont-eq}.
\end{proposition}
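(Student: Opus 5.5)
The plan is to reduce the claim to the uniqueness statement in Proposition~\ref{prop:ans-cont-unique-solution}, after first showing that the curve $t\mapsto P_{\theta+th}$ lies in $\AC_\lambda^2$ for every $0<\lambda<1$. Here a weak-sense solution to~\eqref{HK-cont-eq} is understood as a path $\{\phi_t\}$ with $(\phi_t,\nabla\phi_t)\in\Tan^{\HK_\lambda}_{P_{\theta+th}}$ for a.e.\ $t$ and finite energy, as in Definition~\ref{def:HK_regular}.

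\textbf{Step 1: the model curve is absolutely continuous for $\dH$ or $\dW$.}
Fix $h$ and the segment $\mu_t:=P_{\theta+th}$, $0\le t\le 1$.
\begin{itemize}
\item Under (i), DQM gives that $t\mapsto\sqrt{p_{\theta+th}}$ is differentiable in $L^2$, with derivative $\tfrac12 h\dot\ell_{\theta+th}\sqrt{p_{\theta+th}}$. Hence
\[
\dH(\mu_s,\mu_t)\le C\int_s^t |h|\,\|\dot\ell_{\theta+uh}\|_{L^2(P_{\theta+uh})}\diff u .
\]
For this one needs absolute continuity of the $L^2$-valued map $u\mapsto\sqrt{p_{\theta+uh}}$. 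It follows from differentiability at every point together with local integrability of the derivative norm, via a standard real-analysis lemma: an everywhere-differentiable Banach-valued map with integrable derivative norm is absolutely continuous. The local integrability assumption is then exactly what makes $g(u)=C|h|\|\dot\ell_{\theta+uh}\|$ lie in $L^2[0,1]$. Strictly this is local integrability of $g^2$; after shrinking to a compact segment it is the stated hypothesis.
\item Under (ii), the same argument applies with DWS. The first-order expansion $T=\mathrm{id}+h\nabla\phi_\theta+o(h)$ in $L^2(P_\theta)$ yields
\[
\dW(\mu_s,\mu_t)\le\int_s^t|h|\,\|\nabla\phi_{\theta+uh}\|_{L^2}\diff u ,
\]
so the curve lies in $\AC_0^2$.
\end{itemize}

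\textbf{Step 2: pass to $\AC_\lambda^2$ and apply uniqueness.}
By Proposition~\ref{prop:abs-cont-HK}, in either case $\{\mu_t\}\in\AC_\lambda^2$ for every $0<\lambda<1$, and $\mu_t\in\Pcal_{2,\mathrm{ac}}$ by assumption.

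Now suppose $\{\phi_t\}$ and $\{\psi_t\}$ are two weak-sense solutions of~\eqref{HK-cont-eq}, each with $(\phi_t,\nabla\phi_t)$ in the $\HK_\lambda$ tangent space and finite energy~\eqref{eqn:AC2-finite-energy}. Proposition~\ref{prop:ans-cont-unique-solution} says the tangent pair solving~\eqref{eqn:HK-differential} is unique up to $\diff\mu_t\diff t$-null sets, since $0<\lambda<1$ gives uniqueness of both the scalar and vector components. Therefore $\phi_t=\psi_t$ and $\nabla\phi_t=\nabla\psi_t$ $\mu_t$-a.e.\ for a.e.\ $t$. Equivalently, $\|\phi_t-\psi_t\|_{\mathcal H^1_\lambda(\mu_t)}=0$ for a.e.\ $t$, which is the uniqueness notion in Definition~\ref{def:HK_regular}(ii).

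One subtlety is that a general solution of~\eqref{HK-cont-eq} need not a priori satisfy the energy bound. I would either build this into the notion of solution, matching Definition~\ref{def:HK_regular}(i), or note that Proposition~\ref{prop:ans-cont-unique-solution}'s uniqueness concerns exactly such finite-energy tangent solutions.

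\textbf{Main obstacle.}
I expect the hardest part to be Step 1: turning the pointwise differentiability notions DQM and DWS into genuine $2$-absolute continuity with an $L^2[0,1]$ modulus.

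For DWS in particular, pointwise differentiability of $u\mapsto\mu_u$ in $\dW$ gives the metric derivative $|\dot\mu|(u)\le|h|\|\nabla\phi_{\theta+uh}\|_{L^2}$ at every $u$. One still needs that a curve in a metric space with everywhere finite metric derivative, dominated by an $L^2$ function, is absolutely continuous. Continuity plus everywhere-defined upper derivative bounded by an integrable function suffices, by a Denjoy-type / mean-value argument applied to $u\mapsto d(\mu_s,\mu_u)$. I would try that argument first.
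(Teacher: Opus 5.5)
Your proof is correct. Its second half is the same as the paper's: absolute continuity for $\dH$ or $\dW$ is upgraded to $\AC_\lambda^2$ by Proposition~\ref{prop:abs-cont-HK}, and uniqueness is then read off from Proposition~\ref{prop:ans-cont-unique-solution}. The difference is Step~1.

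\textbf{How the paper handles Step~1.} The paper estimates no distances. It treats $h\dot\ell_{\theta+th}$ (resp.\ $h\phi_{\theta+th}$) as a finite-energy tangent solution of the pure source (resp.\ continuity) equation along the segment. It then applies the ``if'' direction of Proposition~\ref{prop:ans-cont-unique-solution} at $\lambda=1$ (resp.\ $\lambda=0$). So one proposition is used twice, once in each direction.

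\textbf{How you handle it.} You derive the metric bound directly from the first-order expansions in DQM/DWS, via an FTC-type inequality.
\begin{itemize}
\item For DQM you use a Banach-valued version applied to $u\mapsto\sqrt{p_{\theta+uh}}$.
\item For DWS the linear argument does not literally transfer to Wasserstein space, despite your ``the same argument applies.'' The Dini-derivative (Vitali--Carath\'eodory) lemma from your last paragraph does the job. It is valid because the upper right Dini derivative of $u\mapsto\dW(\mu_s,\mu_u)$ is bounded at every point by $|h|\,\|\nabla\phi_{\theta+uh}\|_{L^2(P_{\theta+uh})}$, an everywhere-finite function in $L^2[0,1]$.
\end{itemize}

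\textbf{What each route buys.} The paper's route is shorter, but it tacitly needs two checks.
\begin{itemize}
\item The score/potential must really solve the endpoint equation weakly. This includes absolute continuity of $t\mapsto\int f\diff P_{\theta+th}$, which itself needs the same ``everywhere differentiable with integrable derivative'' fact you use.
\item The score/potential must lie in the relevant tangent space. This is automatic for $\lambda=1$ but not for $\lambda=0$, where $\nabla\phi_\theta$ must lie in the $L^2(P_\theta)$-closure of smooth gradients.
\end{itemize}
Your route sidesteps both checks and uses only the definitions of DQM and DWS. The cost is the real-analysis lemma, plus identifying $d_{\HK_1}$ with a constant multiple of $\|\sqrt{p}-\sqrt{q}\|_{L^2}$, which is implicit in Lemma~\ref{lem:metric-bounds}.

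In both versions, ``weak-sense solution'' must be understood to include the tangent-space constraint and finite energy, as you point out.
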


\begin{remark}
    In the case of a multivariate parameter ($p\ge 2$), one can still conclude that $\{P_{\theta+th}\}_{0\le t\le 1}$ is absolutely continuous with respect to $d_{\HK_{\lambda}}$ for all $0<\lambda<1$, but the potential $\{\phi_t\}_{0\le t\le 1}$ need not be expressible as a linear combination of suitable coordinatewise potentials.
    In other words, $\Pcal$ can fail to be $\HK$ regular since condition (iii) in Definition~\ref{def:HK_regular} can fail.
\end{remark}

For existence of solutions to the PDE~\eqref{HK-cont-eq}, we typically require further work in each example of interest.
Following the standard approach in PDE literature, we first come up with some Ansatz for a solution and then check rigorously that it has the desired properties; Proposition~\ref{prop:HK-reg-suff} guarantees that this approach is valid.
However, coming up with such an Ansatz is non-trivial and requires various ODE and PDE techniques, together with reductions related to symmetries of the model \cite{OlverRosenau1986}.
We illustrate this strategy in the following simple examples, which we revisit throughout the paper.

\begin{example}[Laplace location]\label{ex:Laplace-reg}
    Suppose $P_{\theta}$ is the probability measure on $\R$ whose density with respect to Lebesgue measure is $p_\theta(x)=\frac12 e^{-|x-\theta|}$, for all $x \in \R$, where $\theta\in\R$.
    It is classical that this model is DQM and that its score function is $\phi_{\theta,1}(x)=\dot{\ell}_{\theta}(x) = \textnormal{sgn}(x-\theta)$, where $\textnormal{sgn}(\cdot)$ is the sign function.
   By Proposition~\ref{prop:HK-reg-suff} we know that, for each $0<\lambda<1$, the $\HK_{\lambda}$ equation~\eqref{HK-cont-eq} for $\{P_{\theta+th}\}_{0\le t\le 1}$ has at most one solution for every $\theta,h\in\R$.
    Solving this explicitly leads to
    \begin{equation}\label{eq:phi_laplace}
    \phi_{\theta,\lambda}(x)
    =
    \frac{1}{\lambda}\,\operatorname{sgn}(x-\theta)
    \Bigl(1-e^{-\alpha_\lambda |x-\theta|}\Bigr)\qquad\textnormal{ where }
    \quad\alpha_\lambda
    :=
    \frac{1}{2}\left(\sqrt{\frac{1+3\lambda}{1-\lambda}}-1\right).
    \end{equation}
    The functions in~\eqref{eq:phi_laplace} satisfy the conditions of Definition~\ref{def:HK_regular}, so the model is $\HK$ regular.
    The model is also $\HK_0$ regular, with centered potential $\phi_{\theta,0}(x)=x-\theta$; see Appendix \ref{app:laplace-verification}.
\end{example}

\begin{remark}[Laplace location at $\lambda=1$]
    The classical score function $\phi_{\theta,1}(x)=\operatorname{sgn}(x-\theta)$ belongs to $L^2(P_\theta)$ but not to $\mathcal H_1^1(P_\theta)$, since it has no locally Lipschitz extension. Thus the Laplace location model is not $\HK_1$ regular in the sense of Definition~\ref{def:HK_regular}.
    The score nevertheless satisfies the pure source equation, as verified in Appendix~\ref{app:laplace-verification}, and the MLE for \(\theta\) is given by the sample median.
\end{remark}

\begin{example}[Uniform scale]\label{ex:unif-reg}
    Suppose $P_{\theta}$ is the uniform distribution on the interval $[0,\theta]$ for all $\theta>0$.
    As a scale family (see \cite[Example~5.8]{WCR}), this model is DWS and its $\HK_0$ potential is quadratic, $\phi_{\theta,0}(x)=(x^2-\theta^2/3)/(2\theta)$.
    Consequently, this model is $\HK_0$ regular and $\HK$ regular, since for each $0\le\lambda<1$ the $\HK_{\lambda}$ equation for $\{P_{\theta+th}\}_{0\le t\le 1}$ has a unique solution for every $\theta>0$ and $h\in\R$ satisfying $\theta+h>0$;
    for $0<\lambda<1$, we can directly compute that the solution must be
    \begin{equation}\label{eq:phi_uniform_interior}
    \phi_{\theta,\lambda}(x)
    =
    \frac{\cosh(\beta_\lambda x)}
    {\sqrt{\lambda(1-\lambda)}\,\sinh(\beta_\lambda\theta)}
    -
    \frac{1}{\lambda\theta}
    \qquad\textnormal{ where }
    \quad \beta_\lambda:=\sqrt{\frac{\lambda}{1-\lambda}}.
    \end{equation}
    The model is however not $\HK_1$ regular; see Appendix~\ref{app:uniform-verification}.
\end{example}

\begin{example}[Gaussian correlation family]\label{ex:Gaussian-corr-reg}
    Suppose $P_{\theta} = \Ncal(0,\Sigma_{\theta})\in\Pcal_2(\R^2)$ where
    \begin{equation}\label{eqn:Gaussian-corr-matrix-def}
        \Sigma_\theta := \begin{pmatrix}
            1 & \theta \\ \theta & 1
        \end{pmatrix}
    \end{equation}
    and $\theta\in \Theta:=(-1,1)$.
    In other words, $P_{\theta}$ is a centered bivariate Gaussian distribution, where both coordinates have marginal variance equal to 1 and the correlation between the coordinates is $\theta$.
    As shown in \cite[Example~5.13]{WCR}, this model is DWS with quadratic potential. It is also $\HK_\lambda$ regular for all $0\le\lambda\le1$; see Appendix~\ref{app:gaussian-correlation-verification}.
    Solving the $\HK_{\lambda}$ equation leads to
    \begin{equation}\label{eq:phi_gauss_corr_uv}
    \phi_{\theta,\lambda}(x,y)
    =
    \frac{u^2-(1+\theta)}{2(1+\theta)\bigl(2-\lambda(1-\theta)\bigr)}
    -
    \frac{v^2-(1-\theta)}{2(1-\theta)\bigl(2-\lambda(1+\theta)\bigr)},
    \end{equation}
    where $u=(x+y)/\sqrt 2$ and $v=(x-y)/\sqrt 2$.
    We note that this change of coordinates $(x,y)\mapsto (u,v)$ has a statistical interpretation: If $(X,Y)\sim P_{\theta}$, then $(U,V)$ are independent Gaussian random variables with variances equal to $1+\theta$ and $1-\theta$ respectively; thus, the problem of estimating the correlation between $X$ and $Y$ can be equivalently restated as the problem of estimating a parameter of the marginal variances of $U$ and $V$.
\end{example}

\begin{example}[Pareto index]\label{ex:Pareto-reg}
    Suppose $P_{\theta}$ is the probability measure on $\R$ whose density with respect to Lebesgue measure is $p_\theta(x)=\theta x^{-(\theta+1)}\mathbf 1_{\{x\ge 1\}}$ for $x\in\R$ and $\theta\in\Theta:=(2,\infty)$.
    It is well-known that this model is DQM, and it is shown to be DWS in \cite[Example~5.9]{WCR} whenever $\theta>2$. As verified in Appendix~\ref{app:pareto-verification}, this model is $\HK_\lambda$ regular for all $0\le\lambda\le1$. While existence and uniqueness of weak-sense solutions to~\eqref{HK-cont-eq} for $0<\lambda<1$ follow from Proposition~\ref{prop:HK-reg-suff}, it turns out that there is no elementary closed-form expression for these solutions. More precisely, the transport-source equation reduces to an inhomogeneous second-order ODE whose associated homogeneous equation can be transformed into a modified Bessel equation. At the extremal cases $\lambda\in\{0,1\}$, we have
    \begin{equation}\label{eq:phi_pareto_potentials}
    \phi_{\theta,0}(x)
    =
    -\frac{x^2\log x}{2\theta}
    +
    \frac{x^2}{4\theta}
    +
    \frac{4-\theta}{4(\theta-2)^2}\qquad \textnormal{and}\qquad\phi_{\theta,1}(x)=\frac1\theta-\log x
    \end{equation}
    for all $x\ge 1$.
\end{example}

In the case of a multivariate parameter ($p\ge 2$), we cannot directly use Proposition~\ref{prop:HK-reg-suff} to show $\HK$ regularity.
However, we can sometimes directly verify the conditions of Definition~\ref{def:HK_regular} with slightly more work, as in the following.

\begin{example}[Gaussian mean-variance]\label{ex:Gaussian-loc-scale}
    Suppose $\Theta := \R\times (0,\infty)$ with $\theta = (\mu, \sigma^2)$ and that $P_{\theta}$ is the univariate Gaussian distribution with mean $\mu$ and variance $\sigma^2$.
    We claim that this model is $\HK_\lambda$ regular for all $0\le\lambda\le1$, and that its \(\HK_\lambda\) score function is given by
    \begin{equation}\label{eq:phi_gauss_ls_vector}
    \phi_{\theta,\lambda}(x)
    =
    \begin{pmatrix}
    \phi_{\theta,\lambda,\mu}(x)\\[1mm]
    \phi_{\theta,\lambda,\sigma^2}(x)
    \end{pmatrix}
    =
    \begin{pmatrix}
    \dfrac{x-\mu}{(1-\lambda)+\lambda\sigma^2}\\[3mm]
    \dfrac{(x-\mu)^2-\sigma^2}
    {2\sigma^2\bigl(2(1-\lambda)+\lambda\sigma^2\bigr)}
    \end{pmatrix}
    \end{equation}
    for $0\le \lambda\le 1$.
    For each fixed $0\le\lambda\le1$, the quadratic form and smooth parameter dependence of $\phi_{\theta,\lambda}$, together with the finiteness of Gaussian moments, give the local integrability required in condition (i) of Definition~\ref{def:HK_regular}.
    For each $\theta\in\Theta$ and sufficiently small $h=(h_1,h_2)\in\R^2$, the scalar function $\phi_t=h_1\phi_{\theta+th,\lambda,\mu}+h_2\phi_{\theta+th,\lambda,\sigma^2}$ is the unique weak-sense solution to the $\HK_{\lambda}$ equation for $\{P_{\theta+th}\}_{0\le t\le 1}$, verifying conditions (ii) and (iii); the calculation is given in Appendix~\ref{app:gaussian-location-scale-verification}.
\end{example}

For more complex models, establishing $\HK$ regularity and computing the $\HK$ score functions can be substantially more difficult.
In such cases, one might instead seek numerical approximations using computational solvers for the transport-source equation, still relying on Proposition~\ref{prop:HK-reg-suff} (when $p=1$) to guarantee that a unique solution is unique if exists.

\subsection{The Hellinger--Kantorovich Cram\'er--Rao Lower Bound}\label{subsec:LB}

Next, we establish fundamental lower bounds on the pair of variance and sensitivity of unbiased estimators.
The main result in this subsection is an adaptation of the Cram\'er--Rao lower bound to the Hellinger--Kantorovich geometry (Theorem~\ref{thm:HK-CR-bound}).
It reduces to the classical Cram\'er--Rao lower bound and the Wasserstein--Cram\'er--Rao lower bounds at the extremal cases $\lambda=1$ and $\lambda=0$, respectively, and, for intermediate values $0<\lambda<1$, it allows us to describe optimal tradeoffs between variance and sensitivity.

We begin by defining a concept of information in the Hellinger--Kantorovich geometry, which can be separated into a convex combination of a source term and a transport term according to the interpolation parameter appearing in the $\HK_\lambda$ geometry.

\begin{definition}\label{def:HK-info}
    Fix $0\leq \lambda\leq 1$. Suppose
    $\Pcal=\{P_{\theta}:\theta\in\Theta\}\subseteq\Pcal_{2,\mathrm{ac}}(\R^d)$
    is $\HK_\lambda$ regular. We use the convention that
    \[
        D\phi_{\theta,\lambda}(x)\in\R^{p\times d},
        \qquad
        \big(D\phi_{\theta,\lambda}(x)\big)_{\ell,\cdot}
        =
        \nabla_x\phi_{\theta,\lambda,\ell}(x)^\top .
    \]
    Then, we define its \emph{$\HK_{\lambda}$ information matrix} via
    \begin{equation*}
        I_{\lambda}(\theta)
        :=
        \lambda\Ebb_{\theta}
        \left[
            \phi_{\theta,\lambda}(X)
            (\phi_{\theta,\lambda}(X))^{\top}
        \right]
        +
        (1-\lambda)\Ebb_{\theta}
        \left[
            D\phi_{\theta,\lambda}(X)
            (D\phi_{\theta,\lambda}(X))^{\top}
        \right].
    \end{equation*}
    Whenever $I_\lambda(\theta)$ is invertible, define $V_{\lambda}(\theta)$ and $S_{\lambda}(\theta)$ by the following formulas, each provided that its corresponding expectation is finite:
    \begin{align*}
        V_{\lambda}(\theta)
        &:=
        (I_{\lambda}(\theta))^{-1}
        \Ebb_{\theta}
        \left[
            \phi_{\theta,\lambda}(X)
            (\phi_{\theta,\lambda}(X))^{\top}
        \right]
        (I_{\lambda}(\theta))^{-1},\\
        S_{\lambda}(\theta)
        &:=
        (I_{\lambda}(\theta))^{-1}
        \Ebb_{\theta}
        \left[
            D\phi_{\theta,\lambda}(X)
            (D\phi_{\theta,\lambda}(X))^{\top}
        \right]
        (I_{\lambda}(\theta))^{-1}.
    \end{align*}
    By construction, \(\lambda V_\lambda(\theta)+(1-\lambda)S_\lambda(\theta) = (I_\lambda(\theta))^{-1}\).
    For a locally Lipschitz estimator $T_n:(\R^d)^n\to\R^p$ of $\theta$, we define its variance and sensitivity matrices by
    \begin{equation*}
        \Var_\theta(T_n):=\Ebb_\theta\left[(T_n-\theta)(T_n-\theta)^\top\right]\qquad\textnormal{and}\qquad
        \Sen_\theta(T_n):=\Ebb_\theta\left[\sum_{i=1}^n D_{x_i}T_n(D_{x_i}T_n)^\top\right],
    \end{equation*}
    where $D_{x_i}T_n\in\R^{p\times d}$ is the Jacobian of $T_n$ with respect to $x_i$.
\end{definition}

We also introduce the following class of estimators.

\begin{definition}\label{def:estims-class}

    Fix a model $\Pcal=\{P_{\theta}:\theta\in\Theta\}\subseteq\Pcal_{2,\textnormal{ac}}(\R^d)$, and consider the class of estimators
    \begin{equation*}
        \estims_n := \left\{T_n:(\R^d)^n\to\R^p:\begin{matrix}
             T_n\textnormal{ is locally Lipschitz and unbiased for $\theta$, and}\\
             \theta\mapsto \Var_{\theta}(T_n) \textnormal{ and }\theta\mapsto \Sen_{\theta}(T_n) \textnormal{ are continuous}
        \end{matrix}\right\}.
    \end{equation*}
    We emphasize that unbiasedness is required to hold for all $\theta\in\Theta$.
\end{definition}

Our main result involves the $\HK_{\lambda}$ information and is given as follows.

\begin{theorem}[Hellinger--Kantorovich Cram\'er--Rao lower bound]
\label{thm:HK-CR-bound}
    Suppose $0\le\lambda\le1$,
    $\Pcal=\{P_\theta:\theta\in\Theta\}
    \subseteq\Pcal_{2,\textnormal{ac}}(\R^d)$
    is $\HK_\lambda$ regular with $\Theta\subseteq\R^p$ open, and that the maps $\vartheta\mapsto \Ebb_\vartheta[\phi_{\vartheta,\lambda}(X)\phi_{\vartheta,\lambda}(X)^\top]$ and $\vartheta\mapsto\Ebb_\vartheta[D\phi_{\vartheta,\lambda}(X)D\phi_{\vartheta,\lambda}(X)^\top
        ]$ are continuous on $\Theta$.
        For any $T_n\in\estims_n$ and $\theta\in\Theta$ such that $I_\lambda(\theta)$ is invertible, we have
    \begin{equation}\label{HK_CRLB}
        \lambda\Var_\theta(T_n)
        +(1-\lambda)\Sen_\theta(T_n)
        \succeq
        \frac{1}{n}(I_\lambda(\theta))^{-1}.
    \end{equation}
    In particular, for $0 < \lambda < 1$, the following holds:
    \begin{equation*}
        \textnormal{if}\quad \Var_\theta(T_n)\preceq \frac{1}{n}V_\lambda(\theta),
        \quad\textnormal{then}\quad
        \Sen_\theta(T_n)\succeq \frac{1}{n}S_\lambda(\theta);
    \end{equation*}
    \begin{equation*}
        \textnormal{if}\quad \Sen_\theta(T_n)\preceq \frac{1}{n}S_\lambda(\theta),
        \quad\textnormal{then}\quad \Var_\theta(T_n)\succeq \frac{1}{n}V_\lambda(\theta).
    \end{equation*}
\end{theorem}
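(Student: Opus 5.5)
The plan is to reduce the matrix inequality to a scalar Cauchy--Schwarz argument in the Hilbert space $\mathcal{H}^1_\lambda(P_\theta^{\otimes n})$, exactly as in the classical Cram\'er--Rao proof. Fix $a,b\in\R^p$ and set $f:=a^\top T_n$, which is locally Lipschitz on $(\R^d)^n$. The key identity is obtained by differentiating unbiasedness, $\Ebb_{\theta+th}[a^\top T_n]=a^\top(\theta+th)$, in $t$.

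\textbf{Step 1 (tensorization).} Check that the product path $\{P_{\theta+th}^{\otimes n}\}$ solves the transport-source equation on $\R^{dn}$ with potential $\Phi_t(x_1,\ldots,x_n):=\sum_{i=1}^n h^\top\phi_{\theta+th,\lambda}(x_i)$. This follows by testing against products of one-variable test functions, using the Leibniz rule, and then applying density. The source term $\lambda\Phi_t$ and the transport term $\nabla\Phi_t$ split coordinatewise. Moreover $\Phi_t$ is locally Lipschitz with finite $\mathcal H^1_\lambda$ energy. Its energy equals $n\,h^\top I_\lambda(\theta+th)h$, because the cross terms $\Ebb[\phi(X_i)\phi(X_j)^\top]$ vanish when $i\ne j$; here I use the centering $\Ebb_\vartheta[\phi_{\vartheta,\lambda}]=0$ from Remark~\ref{remark:center-conv}. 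Local integrability then follows from condition (i) of Definition~\ref{def:HK_regular}. Proposition~\ref{prop:Sobolev} puts $(\Phi_t,\nabla\Phi_t)$ in the tangent space, and Proposition~\ref{prop:ans-cont-unique-solution} gives that the path lies in $\AC^2_\lambda$.

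\textbf{Step 2 (differentiate).} Apply Theorem~\ref{thm:loc-Lip-test-funcs} to $f=a^\top T_n$. Hypotheses (i)--(iii) hold because $\Var_\vartheta(T_n)$ and $\Sen_\vartheta(T_n)$ are continuous in $\vartheta$, hence bounded along the segment, and unbiasedness handles the mean in (i). This gives, for a.e.\ $t$,
\[
a^\top h=(1-\lambda)\Ebb\Big[\sum_i (D_{x_i}T_n^\top a)\cdot D\phi(X_i)^\top h\Big]+\lambda\,\Ebb\Big[(a^\top T_n-a^\top(\theta+th))\sum_i h^\top\phi(X_i)\Big].
\]
I may replace $f$ by $f-\text{const}$ since $\Phi_t$ is centered. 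Both sides are continuous in $t$: the left side trivially, and the right side by Cauchy--Schwarz combined with the assumed continuity of the second-moment maps and of $\Var$ and $\Sen$. To pass to $t=0$, I would instead compute the difference quotient as an integral of the right side over $[0,t]$ and let $t\to0$. I expect this continuity at $t=0$ to be the main technical obstacle. In particular, the inner products involve $T_n$ and $\phi$ jointly, so continuity needs some care; an $L^2$-weak-continuity argument should suffice, using the continuity of the norms plus the a.e.\ identity.

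\textbf{Step 3 (Cauchy--Schwarz).} At $t=0$, apply Cauchy--Schwarz in the weighted inner product $\lambda\langle\cdot,\cdot\rangle_{L^2}+(1-\lambda)\langle\nabla\cdot,\nabla\cdot\rangle_{L^2}$. This gives
\[
(a^\top h)^2\le a^\top\big(\lambda\Var_\theta+(1-\lambda)\Sen_\theta\big)a\cdot n\,h^\top I_\lambda(\theta)h.
\]
Choosing $h=I_\lambda(\theta)^{-1}a$ yields $a^\top(\lambda\Var+(1-\lambda)\Sen)a\ge \tfrac1n a^\top I_\lambda^{-1}a$, which is~\eqref{HK_CRLB}. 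For the two implications, use $I_\lambda^{-1}=\lambda V_\lambda+(1-\lambda)S_\lambda$. If $\Var\preceq V_\lambda/n$, then
\[
(1-\lambda)\Sen\succeq \tfrac1n\big(\lambda V_\lambda+(1-\lambda)S_\lambda\big)-\lambda\Var\succeq \tfrac{1-\lambda}{n}S_\lambda,
\]
and dividing by $1-\lambda>0$ gives the claim. The other implication is symmetric.
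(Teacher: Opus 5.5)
Your overall architecture (the tensorized potential $\Phi_t$, Theorem~\ref{thm:loc-Lip-test-funcs} applied to $a^\top T_n$, weighted Cauchy--Schwarz, the choice $h=I_\lambda(\theta)^{-1}a$, and the final rearrangement) is the same as the paper's. However, the passage to $t=0$ in Step~2 is a genuine gap, and the step you want actually fails under the stated hypotheses.

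You need the identity itself at $t=0$. That is, you need continuity in $t$ of the mixed moments $\Ebb_{\theta_t}[(a^\top T_n-a^\top\theta_t)\Phi_t]$ and $\Ebb_{\theta_t}[\sum_i (D_{x_i}T_n^\top a)\cdot D\phi_{\theta_t,\lambda}(X_i)^\top h]$, where $\theta_t=\theta+th$. Cauchy--Schwarz only bounds these by products of continuous norms. It does not make the inner products continuous, because both the integrand and the measure move with $t$, and nothing in the hypotheses supplies the weak convergence your ``$L^2$-weak-continuity'' argument would need.

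In fact, Definition~\ref{def:HK_regular} determines $\phi_{\vartheta,\lambda}$ only for a.e.\ parameter along each line. So replacing $\phi_{\theta,\lambda}$ by $-\phi_{\theta,\lambda}$ at the single point $\theta$ preserves $\HK_\lambda$ regularity, centering, and both continuity hypotheses (the second-moment matrices do not change). Yet it flips the sign of your right-hand side at $t=0$, so the identity at $t=0$ can simply be false. The difference-quotient idea does not rescue this: the left side is identically $a^\top h$, so averaging the right side over $[0,t]$ returns $a^\top h$ and says nothing about its value at $t=0$.

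The fix is to reverse the order of your Steps~2 and~3, which is exactly what the paper does. Apply Cauchy--Schwarz at every $t$ in the full-measure set $G\subseteq(0,\varepsilon)$ on which the identity holds and $\Phi_t$ is centered. This gives
\[
    (a^\top h)^2\le a^\top\bigl[\lambda\Var_{\theta_t}(T_n)+(1-\lambda)\Sen_{\theta_t}(T_n)\bigr]a\cdot n\,h^\top I_\lambda(\theta_t)h,\qquad t\in G.
\]
Only $\Var_{\theta_t}(T_n)$, $\Sen_{\theta_t}(T_n)$ and $I_\lambda(\theta_t)$ now appear. These are continuous in $t$ by the definition of $\estims_n$ and the two continuity hypotheses, so choosing $t_k\in G$ with $t_k\downarrow0$ and letting $k\to\infty$ gives the inequality at $\theta$. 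Equivalently, you can average the identity over $[0,t]$, bound the integrand pointwise by Cauchy--Schwarz, and then let $t\to0$.

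Working on $G$ also means you use the centering of $\Phi_t$ only for a.e.\ $t$ (supplied, when $\lambda>0$, by mass conservation along the path). You no longer need it at the specific point $t=0$, where Remark~\ref{remark:center-conv} does not guarantee it. With this reordering, the rest of your argument, including the two implications, goes through as written.
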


One can easily compute $V_{\lambda}(\theta)$ and $S_{\lambda}(\theta)$ in our running examples, and hence obtain precise tradeoffs for the variance and sensitivity of sufficiently regular estimators.
While we will visualize the resulting lower bounds in the following Subsection~\ref{subsec:VSD}, we defer the explicit formulas until Subsection~\ref{subsec:HK-score} when we construct estimators achieving these tradeoffs.

The next result provides lower bounds for an appropriate multidimensional ($p\ge 1$) version of problem~\eqref{eqn:Intro-Pareto} from the Introduction.
Note for an unbiased estimator $T_n$ that $\mathrm{tr}(\Var_\theta(T_n))$ equals its MSE, so this result thus provides a fundamental lower bound for the minimization of the MSE within a class of regular and unbiased estimators satisfying a sensitivity upper bound constraint.
It follows directly from Theorem~\ref{thm:HK-CR-bound} via the monotonicity of the trace with respect to the Loewner partial order.

\begin{corollary}
\label{cor:Trace}
Fix $0<\lambda<1$. Under the same assumptions as in Theorem~\ref{thm:HK-CR-bound} we have
 \[  {v_\lambda(\theta)} \leq   \inf_{\substack{T_n  \in \estims_n \\
                    \mathrm{tr}(n \Sen_\theta(T_n))\le s_\lambda(\theta)}}
        \mathrm{tr}(n \Var_\theta(T_n)), \quad \quad  s_\lambda(\theta) \leq   \inf_{\substack{T_n  \in \estims_n \\
                    \mathrm{tr}(n \Var_\theta(T_n))\le v_\lambda(\theta)}}
        \mathrm{tr}(n\Sen_\theta(T_n)), \]
        where $v_\lambda(\theta):= \mathrm{tr}(V_\lambda(\theta))$ and $s_\lambda(\theta):= \mathrm{tr}(S_\lambda(\theta))$.
\end{corollary}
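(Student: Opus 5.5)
The plan is to take traces in the matrix inequality \eqref{HK_CRLB} of Theorem~\ref{thm:HK-CR-bound} and then use the identity $\lambda V_\lambda(\theta)+(1-\lambda)S_\lambda(\theta)=(I_\lambda(\theta))^{-1}$ from Definition~\ref{def:HK-info}. Fix $T_n\in\estims_n$. Since the trace is linear and monotone for the Loewner order ($A\succeq B$ implies $\mathrm{tr}(A)\ge\mathrm{tr}(B)$), multiplying \eqref{HK_CRLB} by $n$ and taking traces gives
\begin{equation*}
\lambda\,\mathrm{tr}(n\Var_\theta(T_n))+(1-\lambda)\,\mathrm{tr}(n\Sen_\theta(T_n))\ \ge\ \mathrm{tr}\bigl((I_\lambda(\theta))^{-1}\bigr)=\lambda v_\lambda(\theta)+(1-\lambda)s_\lambda(\theta).
\end{equation*}
This scalar inequality is the only input needed.

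For the first claim, take any $T_n\in\estims_n$ with $\mathrm{tr}(n\Sen_\theta(T_n))\le s_\lambda(\theta)$. Because $1-\lambda>0$, substituting this bound into the display yields $\lambda\,\mathrm{tr}(n\Var_\theta(T_n))\ge \lambda v_\lambda(\theta)+(1-\lambda)\bigl(s_\lambda(\theta)-\mathrm{tr}(n\Sen_\theta(T_n))\bigr)\ge\lambda v_\lambda(\theta)$. Dividing by $\lambda>0$ gives $\mathrm{tr}(n\Var_\theta(T_n))\ge v_\lambda(\theta)$. Taking the infimum over all such $T_n$ proves the first inequality; if the feasible set is empty, the infimum is $+\infty$ and the claim is trivial. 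The second claim follows by the symmetric argument, exchanging the roles of variance and sensitivity and dividing by $1-\lambda$.

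There is no substantial obstacle. The only points to check are bookkeeping: $I_\lambda(\theta)$ must be invertible and the expectations defining $V_\lambda$ and $S_\lambda$ must be finite, so that $v_\lambda$ and $s_\lambda$ are defined, and these are the standing hypotheses inherited from Theorem~\ref{thm:HK-CR-bound}. The strict inequality $0<\lambda<1$ is used precisely to divide by $\lambda$ and by $1-\lambda$.
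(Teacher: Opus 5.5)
Your proposal is correct and follows the paper's route: take traces in \eqref{HK_CRLB}, use Loewner monotonicity of the trace together with $\lambda V_\lambda(\theta)+(1-\lambda)S_\lambda(\theta)=(I_\lambda(\theta))^{-1}$, and rearrange using $0<\lambda<1$. You also rightly work from the scalarized trace inequality rather than the matrix implications of Theorem~\ref{thm:HK-CR-bound}, since a trace constraint such as $\mathrm{tr}(n\Sen_\theta(T_n))\le s_\lambda(\theta)$ does not imply the Loewner constraint $n\Sen_\theta(T_n)\preceq S_\lambda(\theta)$.
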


We can also obtain lower bounds for unbiased estimation of a function of the parameter.

\begin{corollary}[Estimation of a function of the parameter]
\label{cor:HK-CR-function}
    Fix $0\le\lambda\le1$. Let $\chi:\Theta\to\R^q$ be $C^1$, and let $T_n:(\R^d)^n\to\R^q$ be an unbiased estimator of $\chi(\theta)$.
    Suppose that $T_n$, $\Pcal$, and its $\HK_\lambda$ score function satisfy the regularity assumptions of Theorem~\ref{thm:HK-CR-bound}.
    Then, for every $\theta\in\Theta$ such that $I_\lambda(\theta)$ is invertible,
    \begin{equation}\label{eq:HK-CR-function}
        \lambda\Var_\theta(T_n)+(1-\lambda)\Sen_\theta(T_n)
        \succeq
        \frac1n D\chi(\theta)(I_\lambda(\theta))^{-1}(D\chi(\theta))^\top,
    \end{equation}
    where $D\chi(\theta)\in\R^{q\times p}$ is the Jacobian of $\chi$.
\end{corollary}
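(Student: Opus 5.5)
The plan is to repeat the proof of Theorem~\ref{thm:HK-CR-bound}, with the identity $\partial_\theta \Ebb_\theta[T_n]=I_p$ replaced by $\partial_\theta \Ebb_\theta[T_n]=D\chi(\theta)$. The underlying device is a Cauchy--Schwarz inequality in the product Hilbert structure $\mathcal{H}^1_\lambda(P_\theta^{\otimes n})$.

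\textbf{Step 1 (derivative of the mean).} Fix $\theta$, a direction $h\in\R^p$, and a vector $a\in\R^q$. Consider the scalar estimator $f:=a^\top T_n$ on $(\R^d)^n$ and the product path $\{P_{\theta+th}^{\otimes n}\}_t$. The $\HK_\lambda$ regularity gives the one-sample potential $\phi_t=h^\top\phi_{\theta+th,\lambda}$. The product path then solves the transport-source equation on $\R^{dn}$ with tensorized potential $\Phi_t(x_1,\dots,x_n)=\sum_{i=1}^n h^\top\phi_{\theta+th,\lambda}(x_i)$: the source term is $\lambda\Phi_t$, and the transport field is $\nabla\Phi_t$, whose $i$-th block is $D\phi_{\theta+th,\lambda}(x_i)^\top h$. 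I would check this directly on product test functions and then extend by density. Next I apply Theorem~\ref{thm:loc-Lip-test-funcs} to $f$. Its hypotheses (i)--(iii) follow from unbiasedness, which gives integrability, and from the continuity of $\vartheta\mapsto\Var_\vartheta(T_n)$, $\vartheta\mapsto\Sen_\vartheta(T_n)$ on the compact segment, which bounds $\int_0^1\Ebb[f^2]$ and $\int_0^1\Ebb\|\nabla f\|^2$. This yields, for a.e.\ $t$,
\begin{equation*}
a^\top D\chi(\theta+th)h=\Ebb_{\theta+th}\Big[\lambda(f-a^\top\chi)\Phi_t+(1-\lambda)\textstyle\sum_i \nabla_{x_i}f\cdot D\phi_{\theta+th,\lambda}(X_i)^\top h\Big],
\end{equation*}
where I may subtract the constant $a^\top\chi$ because $\Ebb\Phi_t=0$ (Remark~\ref{remark:center-conv}). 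The continuity hypotheses on $\chi$, on $\Var$ and $\Sen$, and on the score second moments make both sides continuous in $t$. The identity therefore holds at $t=0$, since the a.e.\ identity integrates to an identity for $t\mapsto a^\top\chi(\theta+th)$ whose derivative at $0$ is $a^\top D\chi(\theta)h$.

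\textbf{Step 2 (Cauchy--Schwarz).} The right-hand side at $t=0$ is an inner product $\langle (f-a^\top\chi,\nabla f),(\Phi_0,\nabla\Phi_0)\rangle_{\mathcal{H}^1_\lambda}$. Cauchy--Schwarz gives
\begin{equation*}
(a^\top D\chi(\theta) h)^2\le a^\top\big(\lambda\Var_\theta(T_n)+(1-\lambda)\Sen_\theta(T_n)\big)a\cdot n\,h^\top I_\lambda(\theta)h.
\end{equation*}
In computing the second factor, the independence of $X_1,\dots,X_n$ and the centering of the score make the cross terms in $\Ebb\Phi_0^2$ vanish. Hence $\|\Phi_0\|^2_{\mathcal{H}^1_\lambda}=n\,h^\top I_\lambda(\theta)h$.

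\textbf{Step 3 (optimize over $h$).} I choose $h=I_\lambda(\theta)^{-1}D\chi(\theta)^\top a$, which is valid because $I_\lambda(\theta)$ is invertible. Then both $a^\top D\chi h$ and $h^\top I_\lambda h$ equal $a^\top D\chi I_\lambda^{-1}D\chi^\top a$. Dividing the inequality by this quantity (the claim is trivial when it is zero) gives $a^\top(\lambda\Var+(1-\lambda)\Sen)a\ge\frac1n a^\top D\chi I_\lambda^{-1}D\chi^\top a$ for every $a$, which is~\eqref{eq:HK-CR-function}. Alternatively, once Step 1 is in hand, the whole argument is verbatim the proof of Theorem~\ref{thm:HK-CR-bound}.

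\textbf{Main obstacle.} The delicate part is Step 1. One must justify applying Theorem~\ref{thm:loc-Lip-test-funcs} on the product space, since the product path has to be in $\AC^2_\lambda$ with tensorized potential, and one must pass from the a.e.-$t$ derivative identity to the pointwise identity at $t=0$. For the latter, my first approach is to show continuity in $t$ of the right-hand side using the assumed continuity of the second moments, combined with dominated convergence along the segment.
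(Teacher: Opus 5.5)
Your overall route is the paper's. Its proof of this corollary reruns the proof of Theorem~\ref{thm:HK-CR-bound} along $\theta_t:=\theta+th$, using $\frac{\diff}{\diff t}\Ebb_{\theta_t}[a^\top T_n]=a^\top D\chi(\theta_t)h$. It applies the same weighted Cauchy--Schwarz inequality and takes $h=I_\lambda(\theta)^{-1}D\chi(\theta)^\top a$, with the same handling of the degenerate case.

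\textbf{The gap.} What does not go through as written is your passage to $t=0$ in Step~1, on which Step~2 relies. Theorem~\ref{thm:loc-Lip-test-funcs} yields the derivative identity only for a.e.\ $t$. Its right-hand side is a cross-moment, $\Ebb_{\theta_t}[\lambda(f-a^\top\chi(\theta_t))\Phi_t+(1-\lambda)\nabla f\cdot\nabla\Phi_t]$. The hypotheses make only second moments continuous in the parameter: $\Var$, $\Sen$, $\Ebb_\vartheta[\phi_{\vartheta,\lambda}\phi_{\vartheta,\lambda}^\top]$ and $\Ebb_\vartheta[D\phi_{\vartheta,\lambda}D\phi_{\vartheta,\lambda}^\top]$. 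Continuity of norms does not give continuity of inner products. There is also no pointwise convergence for dominated convergence to use: no continuity of $\vartheta\mapsto\phi_{\vartheta,\lambda}$ is assumed, and the sampling measure moves as well.

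The identity at $t=0$ can in fact fail. Replace $\phi_{\theta,\lambda}$ by $-\phi_{\theta,\lambda}$ at the single parameter value $\theta$. This alters the score only on a $t$-null set along every segment, so $\HK_\lambda$ regularity and all the continuity hypotheses are preserved. Yet it flips the sign of your right-hand side at $t=0$, so the identity cannot hold for both choices unless $a^\top D\chi(\theta)h=0$. Likewise, you use the centering $\Ebb_\theta[\phi_{\theta,\lambda}(X)]=0$ at $t=0$ to subtract $a^\top\chi(\theta)$ and to kill the cross terms in $\Ebb\,\Phi_0^2$. That centering is guaranteed only for almost every parameter (Remark~\ref{remark:center-conv}).

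\textbf{The fix.} Square before taking the limit, which is what the paper does. At every $t$ in the full-measure set where the derivative identity and the centering hold, Cauchy--Schwarz gives
\[
  \bigl(a^\top D\chi(\theta_t)h\bigr)^2\le n\,a^\top\bigl[\lambda\Var_{\theta_t}(T_n)+(1-\lambda)\Sen_{\theta_t}(T_n)\bigr]a\cdot h^\top I_\lambda(\theta_t)h .
\]
Only $D\chi$ and second moments appear here. All are continuous in $t$, since $\chi\in C^1$ and by the continuity hypotheses. Letting $t_k\downarrow0$ along such times gives the inequality at $\theta$, and your Step~3 then finishes verbatim. The squared inequality is insensitive to the sign flip above, which is why it survives the limit while the unsquared identity need not.
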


\begin{corollary}[Asymptotic lower bound]
\label{cor:HK-CR-asymptotic}
    Fix $0\le\lambda\le1$, and let $\chi:\Theta\to\R^q$ be $C^1$.
    For each $n$, let $T_n:(\R^d)^n\to\R^q$ and set $\chi_n(\theta):=\Ebb_\theta[T_n]$.
    Suppose that the assumptions of Corollary~\ref{cor:HK-CR-function} hold with $\chi$ replaced by $\chi_n$, and that $D\chi_n(\theta)\to D\chi(\theta)$ as $n\to\infty$ for every $\theta\in\Theta$.
    Then, for every $\theta\in\Theta$ such that $I_\lambda(\theta)$ is invertible,
    \begin{equation}\label{eq:HK-CR-asymptotic}
        \lim_{n\to\infty}
        n\left[\lambda\Var_\theta(T_n)+(1-\lambda)\Sen_\theta(T_n)\right]
        \succeq
        D\chi(\theta)I_\lambda(\theta)^{-1}D\chi(\theta)^\top,
    \end{equation}
    whenever the limit exists.
\end{corollary}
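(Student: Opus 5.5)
The plan is to apply Corollary~\ref{cor:HK-CR-function} for each fixed $n$, with $\chi$ replaced by $\chi_n(\theta)=\Ebb_\theta[T_n]$, and then pass to the limit. By definition $T_n$ is unbiased for $\chi_n(\theta)$ for every $\theta\in\Theta$. The hypotheses assumed in the statement are exactly those of Corollary~\ref{cor:HK-CR-function} with $\chi_n$ in place of $\chi$. These include that $\chi_n$ is $C^1$, which one would in any case expect from Theorem~\ref{thm:loc-Lip-test-funcs} applied along lines $\theta+th$ to the product model. Hence for each $n$ and each $\theta$ with $I_\lambda(\theta)$ invertible we get
\begin{equation*}
    n\left[\lambda\Var_\theta(T_n)+(1-\lambda)\Sen_\theta(T_n)\right]
    \succeq
    D\chi_n(\theta)(I_\lambda(\theta))^{-1}(D\chi_n(\theta))^\top .
\end{equation*}
One point needs checking: the variance in Corollary~\ref{cor:HK-CR-function} is the covariance of $T_n$ around its own mean $\chi_n(\theta)$. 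I would keep that convention, so that $\Var_\theta(T_n)$ in~\eqref{eq:HK-CR-asymptotic} is read as the centered covariance. If instead the matrix $\Ebb_\theta[(T_n-\chi(\theta))(T_n-\chi(\theta))^\top]$ is intended, it dominates the centered covariance in the Loewner order, since it differs by the positive semidefinite bias term $(\chi_n-\chi)(\chi_n-\chi)^\top$. So the inequality only gets easier.

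The next step is passing to the limit $n\to\infty$. The right-hand side is a continuous function of the matrix $D\chi_n(\theta)$, because $(I_\lambda(\theta))^{-1}$ is fixed and does not depend on $n$. Thus $D\chi_n(\theta)\to D\chi(\theta)$ gives convergence of the right-hand side to $D\chi(\theta)I_\lambda(\theta)^{-1}D\chi(\theta)^\top$. Write $A_n$ for the left-hand side, $B_n$ for the right-hand side, and $A$, $B$ for their limits. For every vector $u\in\R^q$ we have the scalar inequality $u^\top A_n u\ge u^\top B_n u$. Letting $n\to\infty$ and using that the limit of $A_n$ exists (by assumption) gives $u^\top A u\ge u^\top B u$. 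Since $u$ is arbitrary, this says $A-B\succeq0$, because the cone of positive semidefinite matrices is closed.

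I expect no serious obstacle. The only delicate point is confirming that Corollary~\ref{cor:HK-CR-function} applies uniformly in $n$ with $\chi_n$ in place of $\chi$, namely that $\chi_n$ is $C^1$ and that $T_n$ has the regularity of the class $\estims_n$, except that it is unbiased for $\chi_n$ rather than for $\theta$. The statement grants this as a hypothesis, so the argument reduces to the Loewner-order limit described above.
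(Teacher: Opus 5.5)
Your proposal is correct and matches the paper's proof. You apply Corollary~\ref{cor:HK-CR-function} for each fixed $n$ to $T_n$ as an unbiased estimator of $\chi_n(\theta)$, then pass to the limit using $D\chi_n(\theta)\to D\chi(\theta)$ and the closedness of the positive semidefinite cone; your remark on centering $\Var_\theta(T_n)$ at its own mean is a harmless clarification the paper leaves implicit.
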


Corollary~\ref{cor:HK-CR-function} can in particular be interpreted as a lower bound for a biased estimator of $\theta$, if we view $\chi(\theta)$ as the expectation of $T_n$. Moreover, under the stated assumptions, Corollary~\ref{cor:HK-CR-asymptotic} shows that biased estimators of $\theta$ may satisfy the same lower bounds of unbiased estimators of $\theta$ asymptotically. For the latter, the Jacobian of the expectation of $T_n$ must converge to the identity matrix. 

\subsection{Structure of Variance--Sensitivity Diagram}\label{subsec:VSD}

In this subsection, we further investigate the lower bounds of Theorem~\ref{thm:HK-CR-bound} by considering the space of pairs $(v,s)\in\R^2$ of asymptotic variance and sensitivity that are achievable by sequences of unbiased estimators in a given model; we revisit Figure~\ref{fig:VSD-cartoon} throughout the subsection to illustrate the concepts herein.
For the remainder of this subsection we focus on the case of a univariate parameter ($p=1$) although we make some remarks about the multivariate case toward the end.

We begin with some notions from multiobjective optimization.
Define the partial order $\preceq$ on $\R^2$ so that we have $(v,s)\preceq (v',s')$ if we have both $v\le v'$ and $s\le s'$. We say that $(v,s)$ \textit{dominates} $(v',s')$ if $(v,s)\preceq (v',s')$ and at least one of the two inequalities is strict.
For sets $\bdry,\diagram\subseteq\R_{\ge 0}^{2}$, we say that $\bdry$ \textit{is a Pareto front for} $\diagram$ if no element of $\bdry\cup \diagram$ dominates any element of $\bdry$.
Notice that we write ``a Pareto front'' rather than ``the Pareto boundary''; this is because our notion does not require $\bdry\subseteq\diagram$ and hence is not uniquely determined from $\diagram$, so we need to distinguish it from existing notions (e.g., in \cite{Ehrgott}).

We will study Pareto fronts for the following sets; recall the class $\estims_n$ defined in Definition~\ref{def:estims-class}.

\begin{definition}\label{def:VS-diagram}

    For a model $\Pcal=\{P_{\theta}:\theta\in\Theta\}\subseteq\Pcal_{2,\textnormal{ac}}(\R^d)$ and each $\theta\in\Theta$, we define
    \begin{equation*}
        \diagram_n(\theta) := \left\{\Big(\Var_{\theta}(T_n),\Sen_{\theta}(T_n)\Big)\in\R^2:T_n\in\estims_n\right\} \qquad \textnormal{and}\qquad \diagram(\theta) := \bigcap_{m\in\N}\overline{\bigcup_{n\ge m}n\diagram_n(\theta)},
    \end{equation*}
    called the \textit{variance-sensitivity diagram} and the \textit{asymptotic variance-sensitivity diagram}, respectively.
\end{definition}

Plainly, the variance-sensitivity diagram represents the set of all variance-sensitivity coordinates that can be achieved by a suitable unbiased estimator in the model $\Pcal$, while the asymptotic variance-sensitivity diagram is a certain set-valued limit of the rescaled variance-sensitivity diagrams; this notion of set-valued limit is the 
\textit{Kuratowski outer limit}, which is a common notion of convergence of unbounded sets in mathematical optimization (e.g., \cite[Chapter~4A]{RockafellarWets}).
See Figure~\ref{fig:VSD-cartoon} for a depiction of the asymptotic variance-sensitivity diagram $\diagram(\theta)$ as the dark green region.
We note that the definition only requires $\Pcal\subseteq \Pcal_{2,\textnormal{ac}}(\R^d)$ in order for the sensitivity to be well-defined, but that more will be deduced under further hypotheses; for example, if $\Pcal$ is DQM then the $v$ coordinate is bounded below by $(I(\theta))^{-1}$ and if $\Pcal$ is DWS then the $s$ coordinate is bounded below by $(J(\theta))^{-1}$, which we display with vertical and horizontal dashed lines, respectively.

In general it is hard to precisely understand the geometry of the set $\diagram(\theta)$ of achievable variance-sensitivity pairs; so we will often instead visualize the set $\tilde{\diagram}(\theta):=\diagram(\theta)+\R_{\ge 0}^2$ which is the result of extending $\diagram(\theta)$ in the up-right direction.
To gain some more understanding of the difference between $\diagram(\theta)$ and $\tilde{\diagram}(\theta)$, we consider models $\Pcal$ in which $P_{\theta}$ satisfies a dimension-free Poincar\'e inequality for all $\theta\in\Theta$, i.e., there exists a constant $C_{\theta}>0$ such that we have $\Var_{\theta}(T_n)\le C_{\theta}\Sen_{\theta}(T_n)$ for all $n\in\N$ and all locally Lipschitz unbiased estimators $T_n$;
this condition holds for all of our running examples  (because they are all log-concave; see \cite[Chapter~4.8]{BakryGentilLedoux}) except the Pareto index family (since it does not have finite exponential moments).
In this setting, it follows easily (Proposition~\ref{prop:Poincare-diagram}) that $\diagram(\theta)$ lies entirely above the line $s=v/C_{\theta}$ in the $(v,s)$ plane, while $\tilde{\diagram}(\theta)$ cannot satisfy any such bound since it always contains infinite horizontal rays.

\begin{figure}[h!]
    \centering
    \begin{tikzpicture}[scale=9.]
			\draw[thick, -stealth] (0,0) --  (1.2,0);
            \draw[] (1.2,0) node[below right] {$v$};
			\draw[thick, -stealth] (0,0) -- (0,1.2);
            \draw[] (0,1.2) node[above left] {$s$};
            \draw[thick] (-0.05,0) node[left]{$0$} -- (0.05,0);
			\draw[thick] (0,-0.05) node[below]{$0$} -- (0,0.05);
			\node[below] at (0.75, -0.05) {asymptotic variance};
			\node[rotate=90] at (-0.15, 0.65) {asymptotic sensitivity};
            
            \draw[thick, dashed] (0,0.2) -- (1,0.2);
			\draw[thick, dashed] (0.2,0) -- (0.2,1);
			\draw[thick] (-0.05,0.2) node[left]{$(J(\theta))^{-1}$} -- (0.05,0.2);
			\draw[thick] (0.2,-0.05) node[below]{$(I(\theta))^{-1}$} -- (0.2,0.05);

			\path [fill=mygreen, thick,  opacity=0.3, domain=0:1, samples=100] plot ({0.2+0.5*\x^2}, {0.2+0.5*(\x-1)^2}) -- (1,0.2) -- (1,1) -- (0.2, 1) -- cycle;		
			\draw [color=mygreen, ultra thick,  domain=0:1, samples=100] plot ({0.2+0.5*\x^2}, {0.2+0.5*(\x-1)^2});		
            \draw[color=mygreen] (0.75, 0.6) node[above right] {$\tilde{\diagram}(\theta)$};
            
            \path [fill=mygreen, thick,  opacity=0.2, domain=0:1, samples=100] plot ({0.2+0.5*\x^2}, {0.2+0.5*(\x-1)^2}) to[out=0,in=-60] (0.8,0.5) to[out=120,in=0] (0.4, 0.8) to[out=180,in=90] cycle;
            \filldraw[color=blue] (0.4, 0.6) circle (0.35 pt) node[above right] {$T_n$};
            \draw[color=mygreen] (0.65, 0.45) node[above right] {$\diagram(\theta)$};

            \draw[color=mygreen] (0.2, 0.65) node[left] {$\mathcal{B}(\theta)$};

            \draw[thick, -stealth, color=black, dashed] (0,0) -- (1.1,0.275);
            \draw[color=black] (1.1,0.275) node[above right] {$s=\frac{1}{C_{\theta}}v$};

            \draw[ultra thick, -stealth, color=mygreen] (0.45,0.2425) --  (0.22,0.35);
            \filldraw[color=mygreen] (0.45,0.2425) circle (0.35 pt) node[above right] {$(V_{\lambda}(\theta),S_{\lambda}(\theta))$};
            \draw[thick, color=mygreen,decorate,decoration={brace,amplitude=5pt}] (0.45,0.19) --  (0.22,0.19) node[midway,yshift=-1.4em]{$\partial_{\lambda}V_{\lambda}(\theta)$};
            \draw[thick, color=mygreen,decorate,decoration={brace,amplitude=5pt}] (0.19,0.2425) --  (0.19,0.35) node[midway,xshift=-2.4em]{$\partial_{\lambda}S_{\lambda}(\theta)$};

		\end{tikzpicture}
    \caption{A cartoon of the asymptotic variance-sensitivity diagram and related objects in a $\HK$ regular model.
    We show the asymptotic variance-sensitivity diagram $\diagram(\theta)$ for regular unbiased estimators (dark green region), the extended diagram $\tilde{\diagram}(\theta)$ (light green region), and the curve $\bdry(\theta)$ from the $\HK$ lower bound (dark green curve).
    All regions are bounded by $(I(\theta))^{-1}$ in the $v$ coordinate via the Cram\'er--Rao lower bound (vertical dashed line) and bounded by $(J(\theta))^{-1}$ in the $s$ coordinate via the Wasserstein--Cram\'er--Rao lower bound (horizontal dashed line); the region $\diagram(\theta)$ also lies above the line $s=v/C_{\theta}$ by the Poincar\'e inequality (sloped dashed line), but $\tilde{\diagram}(\theta)$ does not.
    }
    \label{fig:VSD-cartoon}
\end{figure}

Next we connect the structure of the asymptotic variance-sensitivity diagrams $\diagram(\theta)$ and $\tilde{\diagram}(\theta)$ with the lower bounds from Theorem~\ref{thm:HK-CR-bound}.
To do this, for a model $\Pcal=\{P_{\theta}:\theta\in\Theta\}\subseteq\Pcal_{2,\textnormal{ac}}(\R^d)$ that is $\HK$ regular, we define
    \begin{equation*}
        \bdry(\theta):=\Big\{(V_{\lambda}(\theta),S_{\lambda}(\theta)): 0<\lambda<1\Big\}
    \end{equation*}
    for all $\theta\in\Theta$ such that $I_\lambda(\theta)>0$ for all $0<\lambda<1$, where $V_{\lambda}(\theta)$ and $S_{\lambda}(\theta)$ were introduced in Definition~\ref{def:HK-info}; see Figure~\ref{fig:VSD-cartoon} for a depiction of the curve $\bdry(\theta)$ in dark green.
    Then we have the following, which is immediate from Theorem~\ref{thm:HK-CR-bound}.
    
\begin{corollary}\label{cor:Pareto-optim-VSD}
    If a model $\Pcal=\{P_{\theta}:\theta\in\Theta\}\subseteq\Pcal_{2,\textnormal{ac}}(\R^d)$ is $\HK$ regular and for each $0<\lambda<1$ the maps $\theta\mapsto \Ebb_{\theta}[|\phi_{\theta,\lambda}(X)|^2]$ and $\theta\mapsto \Ebb_{\theta}[\|\nabla\phi_{\theta,\lambda}(X)\|^2]$ are continuous on $\Theta$, then $\bdry(\theta)$ is a Pareto front for $\diagram(\theta)$ and for $\tilde{\diagram}(\theta)$, for all $\theta\in\Theta$ such that $I_\lambda(\theta)>0$ for all $0<\lambda<1$.
\end{corollary}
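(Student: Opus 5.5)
The plan is to verify the Pareto front property directly from the definition: no point of $\bdry(\theta)\cup\diagram(\theta)$ dominates any point $(V_\lambda(\theta),S_\lambda(\theta))$ of $\bdry(\theta)$. The first step is a closed half-plane bound on $\diagram(\theta)$. For each fixed $0<\lambda<1$ and every $(v,s)\in\diagram(\theta)$ we aim to show
\begin{equation*}
    \lambda v+(1-\lambda)s\ \ge\ (I_\lambda(\theta))^{-1}=\lambda V_\lambda(\theta)+(1-\lambda)S_\lambda(\theta).
\end{equation*}
For $T_n\in\estims_n$, Theorem~\ref{thm:HK-CR-bound} with $p=1$ gives $\lambda\, n\Var_\theta(T_n)+(1-\lambda)\, n\Sen_\theta(T_n)\ge (I_\lambda(\theta))^{-1}$. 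The continuity hypotheses of Theorem~\ref{thm:HK-CR-bound} are exactly those assumed in the corollary, and $I_\lambda(\theta)>0$ gives invertibility. So every point of $n\diagram_n(\theta)$ lies in the closed half-plane $H_\lambda:=\{(v,s):\lambda v+(1-\lambda)s\ge (I_\lambda(\theta))^{-1}\}$. Since $H_\lambda$ is closed, it contains $\overline{\bigcup_{n\ge m}n\diagram_n(\theta)}$ for each $m$, and hence the intersection $\diagram(\theta)$. Adding $\R^2_{\ge0}$ preserves $H_\lambda$ because $\lambda,1-\lambda>0$, so $\tilde\diagram(\theta)\subseteq H_\lambda$ as well. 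The identity $\lambda V_\lambda+(1-\lambda)S_\lambda=I_\lambda^{-1}$ is recorded in Definition~\ref{def:HK-info}.

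The second step handles domination by points of the diagrams. Suppose $(v,s)\in\tilde\diagram(\theta)$ dominates $(V_\lambda,S_\lambda)$. Then $v\le V_\lambda$ and $s\le S_\lambda$ with at least one inequality strict. Because both weights $\lambda$ and $1-\lambda$ are strictly positive, this gives $\lambda v+(1-\lambda)s<I_\lambda^{-1}$, contradicting the first step. The strictness of $0<\lambda<1$ is used here, and it is why $\bdry(\theta)$ is defined over the open interval.

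The third step, which I expect to be the only subtle point, handles domination within $\bdry(\theta)$ itself. The points $(V_{\lambda'},S_{\lambda'})$ need not lie in $\diagram(\theta)$, so I cannot simply invoke the second step. Instead I will show directly that $(V_{\lambda'},S_{\lambda'})\in H_\lambda$ for all $\lambda,\lambda'\in(0,1)$, which reduces to the second step again. This amounts to the variational fact that $I_\lambda^{-1}$ is the minimum of $\lambda V+(1-\lambda)S$ over the curve. I would argue it via the lower bound itself: $\phi_{\theta,\lambda'}$ yields a valid competitor in the dual characterization. Concretely, by Cauchy--Schwarz in $\mathcal H^1_\lambda(P_\theta)$, $(\Ebb[\phi_{\lambda'}\,\partial_\theta\text{-pairing}])^2\le \|\phi_{\lambda'}\|^2\,\|\phi_\lambda\|^2$, where the pairing is the derivative identity from the transport-source equation. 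The identity $\lambda\Ebb[\phi_\lambda\phi_{\lambda'}]+(1-\lambda)\Ebb[\nabla\phi_\lambda\nabla\phi_{\lambda'}]=\frac{d}{d\theta}\Ebb_\theta[\phi_{\lambda'}]\cdot(\text{normalization})$ would come from Theorem~\ref{thm:loc-Lip-test-funcs} applied with test function $\phi_{\theta,\lambda'}$.

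If that route is technically messy, the fallback is to obtain it from the $\lambda'$-equation by the analogous pairing: both inner products equal the same derivative $1$-normalized quantity. That gives $\lambda V_{\lambda'}+(1-\lambda)S_{\lambda'}=I_{\lambda'}^{-2}\|\phi_{\lambda'}\|^2_{\mathcal H^1_\lambda}\ge I_\lambda^{-1}$, which is exactly Cauchy--Schwarz after normalization. With this, no point of $\bdry(\theta)$ dominates another, and the corollary follows.
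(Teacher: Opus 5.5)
Your proposal is correct and is essentially the paper's argument. Steps 1--2 are the ``immediate'' consequence of Theorem~\ref{thm:HK-CR-bound} that the paper cites. Step 3 is exactly the supporting-line inequality \eqref{eqn:bdry-supp} from the proof of Lemma~\ref{lem:bdry-slope}, obtained by applying \eqref{eqn:op-norm-calc} at level $\lambda$ to $\phi_{\theta,\lambda'}/I_{\lambda'}(\theta)$; the paper's one-line justification leaves this step implicit, so you are right to supply it.

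For Step 3, use your fallback: the functional $\ell_\theta$ on $C_c^\infty$, extended to $\phi_{\theta,\lambda'}$ via Proposition~\ref{prop:Sobolev} and the equivalence of the $\mathcal H^1_\lambda$ and $\mathcal H^1_{\lambda'}$ norms. Theorem~\ref{thm:loc-Lip-test-funcs} with the frozen test function $\phi_{\theta,\lambda'}$ would need its integrability under nearby $P_\vartheta$, and continuity of cross-expectations to evaluate at $\vartheta=\theta$, neither of which the corollary assumes. Also, the weak equation only fixes the pairing for a.e.\ $\vartheta$. So conclude the inequality for a.e.\ $\vartheta$ and pass to every $\theta$ using the continuity of $I_\lambda$, $V_{\lambda'}$, $S_{\lambda'}$ implied by the hypotheses, exactly as in the proof of Theorem~\ref{thm:HK-CR-bound}.
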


Our analysis does not imply that $\bdry(\theta)$ is the Pareto boundary of $\diagram(\theta)$ in the standard sense, but we highlight that the only obstruction for proving this is to check that $(V_{\lambda}(\theta),S_{\lambda}(\theta))\in\diagram(\theta)$; we emphasize that this problem is also seen in the pure source case \cite{vanDerVaart} and in the pure transport case \cite{WCR}, where one typically ``achieves'' the lower bound by constructing a consistent sequence of estimators rather than a sequence of unbiased estimators.
In Section~\ref{sec:achieve} we will adopt this perspective and construct a consistent (but possibly biased) sequence of estimators which asymptotically achieves the variance-sensitivity pairs on the Pareto front.
We also note that Figure~\ref{fig:VSD-cartoon} utilizes this abuse of notation, where we plot the asymptotic variance-sensitivity coordinates, in blue, for an estimator $T_n$, considered as $n\to\infty$.

Our final goal is to understand some finer structure of the set $\bdry(\theta)$.
The main ingredient for this is the following result which states that, although $V_{\lambda}(\theta),S_{\lambda}(\theta)$, and $I_{\lambda}(\theta)$ were initially defined explicitly in terms of the $\HK_{\lambda}$ score functions $\phi_{\theta,\lambda}$, one can write $V_{\lambda}(\theta)$ and $S_{\lambda}(\theta)$ explicitly in terms of $I_{\lambda}(\theta)$.

\begin{lemma}\label{lem:Info-cvx}
    For $p=1$, suppose $\Pcal=\{P_{\theta}:\theta\in\Theta\}\subseteq\Pcal_{2,\textnormal{ac}}(\R^d)$ is $\HK$ regular and that, for each $0<\lambda<1$, the maps $\theta\mapsto \Ebb_{\theta}[|\phi_{\theta,\lambda}(X)|^2]$ and $\theta\mapsto \Ebb_{\theta}[\|\nabla\phi_{\theta,\lambda}(X)\|^2]$ are continuous on $\Theta$.
    Then, for each $\theta\in\Theta$ such that $I_\lambda(\theta)>0$ for all $0<\lambda<1$, the following hold.
    First, the map $(0,1)\ni\lambda\mapsto I_\lambda(\theta)$ is convex and the map $(0,1)\ni\lambda\mapsto(I_\lambda(\theta))^{-1}$ is concave; hence both are differentiable at all but at most countably many $0<\lambda<1$.
    Second, we have
    \begin{equation}\label{eqn:VS-from-I}
        V_{\lambda}(\theta) = (I_{\lambda}(\theta))^{-1}-(1-\lambda)\frac{\partial_{\lambda}I_{\lambda}(\theta)}{(I_{\lambda}(\theta))^2}
        \qquad\textnormal{and}\qquad
        S_{\lambda}(\theta) = (I_{\lambda}(\theta))^{-1}+\lambda\frac{\partial_{\lambda}I_{\lambda}(\theta)}{(I_{\lambda}(\theta))^2}.
    \end{equation}
    for every $0<\lambda<1$ at which $\lambda\mapsto I_\lambda(\theta)$ is differentiable.
\end{lemma}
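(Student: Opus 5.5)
The plan is to write $I_\lambda(\theta)$ as a pointwise infimum of functions affine in $\lambda$, and then read off both convexity/concavity and the derivative formula (an envelope/Danskin argument).

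\textbf{Step 1: variational characterization.} Fix $\theta$ and, for $0<\lambda<1$, set $a_\lambda:=\Ebb_\theta[|\phi_{\theta,\lambda}|^2]$ and $b_\lambda:=\Ebb_\theta[\|\nabla\phi_{\theta,\lambda}\|^2]$, so that $I_\lambda=\lambda a_\lambda+(1-\lambda)b_\lambda$. I would establish the duality
\begin{equation*}
(I_\lambda(\theta))^{-1}=\inf\Big\{\lambda\Ebb_\theta[|\psi|^2]+(1-\lambda)\Ebb_\theta[\|\nabla\psi\|^2]\;:\;\psi\in\mathcal H^1_\lambda(P_\theta),\ \partial_\vartheta\Ebb_\vartheta[\psi]\big|_{\vartheta=\theta}=1\Big\}.
\end{equation*}
The constraint is computed via Theorem~\ref{thm:loc-Lip-test-funcs} and equals the $\mathcal H^1_\lambda(P_\theta)$ inner product $\langle\psi,\phi_{\theta,\lambda}\rangle_\lambda$. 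Then Cauchy--Schwarz in $\mathcal H^1_\lambda(P_\theta)$ gives ``$\ge$'', and $\psi=\phi_{\theta,\lambda}/I_\lambda$ attains it. The mixed pairing $\langle\psi,\phi_{\theta,\lambda}\rangle_\lambda$ equals $\partial_\vartheta\Ebb_\vartheta[\psi]$ independently of $\lambda$, since the derivative of $\Ebb_\vartheta\psi$ does not depend on which geometry is used to describe the curve. With this in hand, the same functional over $\psi$ with $\psi$ fixed, i.e.\ $\lambda\mapsto\lambda\Ebb|\psi|^2+(1-\lambda)\Ebb\|\nabla\psi\|^2$, is affine in $\lambda$. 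Hence $\lambda\mapsto I_\lambda^{-1}$ is an infimum of affine functions and is concave.

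For convexity of $I_\lambda$, I would use a second, dual representation,
\begin{equation*}
I_\lambda=\sup_\psi\frac{\langle\psi,\phi_{\theta,\lambda}\rangle_\lambda^2}{\|\psi\|^2_\lambda}.
\end{equation*}
More directly, I would view $I_\lambda$ as the minimal energy $\inf\{\lambda\Ebb|\xi|^2+(1-\lambda)\Ebb\|v\|^2\}$ over all $(\xi,v)$ solving the linearized equation $\partial_\theta P_\theta+(1-\lambda)\Div(vP_\theta)=\lambda\xi P_\theta$. Reparametrizing by $\tilde v=(1-\lambda)v$ and $\tilde\xi=\lambda\xi$ makes the constraint $\lambda$-free, and the energy becomes $|\tilde\xi|^2/\lambda+\|\tilde v\|^2/(1-\lambda)$. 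This is convex in $\lambda$ for fixed $(\tilde\xi,\tilde v)$, but an infimum of convex functions is not convex in general. So instead I would argue via the concavity of $I_\lambda^{-1}$ together with a supremum representation: $I_\lambda=\sup_{f}\,(\partial_\theta\Ebb_\theta f)^2/\big(\lambda\Ebb f^2+(1-\lambda)\Ebb\|\nabla f\|^2\big)$. Each term is convex in $\lambda$, being of the form $c^2/(\text{affine}>0)$, and a supremum of convex functions is convex. The Riesz argument in $\mathcal H^1_\lambda$ gives equality with maximizer $f=\phi_{\theta,\lambda}$, using minimality/uniqueness of the score (Definition~\ref{def:HK_regular}(ii) and Proposition~\ref{prop:ans-cont-unique-solution}) to identify the score as the Riesz representer. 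Differentiability off a countable set then follows from standard facts about convex functions.

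\textbf{Step 2: derivative.} At a point $\lambda_0$ of differentiability, the function $g(\lambda):=(\partial_\theta\Ebb f)^2/(\lambda\Ebb f^2+(1-\lambda)\Ebb\|\nabla f\|^2)$ with $f=\phi_{\theta,\lambda_0}$ satisfies $g\le I_\lambda$ for all $\lambda$, with equality at $\lambda_0$. Touching from below forces $\partial_\lambda I_{\lambda_0}=g'(\lambda_0)$. Using $\partial_\theta\Ebb f=I_{\lambda_0}$, this gives
\begin{equation*}
\partial_\lambda I_{\lambda_0}=-I_{\lambda_0}^2(a-b)/I_{\lambda_0}^2=b-a,
\end{equation*}
with $a=a_{\lambda_0}$ and $b=b_{\lambda_0}$. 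Combining this with $\lambda a+(1-\lambda)b=I$ yields $a=I-(1-\lambda)\partial_\lambda I$ and $b=I+\lambda\partial_\lambda I$. Dividing by $I^2$ gives \eqref{eqn:VS-from-I}.

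\textbf{Main obstacle.} The hard step is the rigorous identity $\partial_\vartheta\Ebb_\vartheta[f]|_\theta=\langle f,\phi_{\theta,\lambda}\rangle_{\lambda}$, valid for all $f\in\mathcal H^1_\lambda(P_\theta)$ and for every $\lambda$ simultaneously. Theorem~\ref{thm:loc-Lip-test-funcs} only gives this for a.e.\ $t$ along the curve, so I would need the continuity hypotheses on $\theta\mapsto a,b$ (and a density argument from $C_c^\infty$) to upgrade to every $\theta$. I would also need $\lambda$-independence of the left side, which is automatic since it is the derivative of a fixed real function of $\vartheta$.
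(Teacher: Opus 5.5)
Your proposal is correct and is essentially the paper's argument: the paper also works with the $\lambda$-independent functional $\ell_\theta(\phi)=\frac{\diff}{\diff\theta}\Ebb_\theta[\phi(X)]$ on $C_c^\infty(\R^d)$, extended by continuity to $\mathcal H^1_\lambda(P_\theta)$ (this disposes of your concern about defining the constraint for non-smooth $\psi$; the paper simply invokes the transport-source equation at the fixed $\theta$ and does not upgrade from a.e.\ $\theta$). It then proves concavity of $(I_\lambda(\theta))^{-1}$ by exactly your infimum-of-affine-functions formula, and the only difference is cosmetic: for convexity and the slope the paper uses the completed-square representation $I_\lambda(\theta)=\sup_\phi\{2\ell_\theta(\phi)-\|\phi\|^2_{\mathcal H^1_\lambda(P_\theta)}\}$, whose terms are affine in $\lambda$ and give an affine supporting minorant at $\phi=\phi_{\theta,\lambda_0}$, whereas your Rayleigh-quotient minorant is convex but still touches at $\lambda_0$ and gives the same slope $\Ebb_\theta\|\nabla\phi_{\theta,\lambda_0}\|^2-\Ebb_\theta|\phi_{\theta,\lambda_0}|^2$ (your separate convexity step is in fact redundant, since $x\mapsto 1/x$ is convex and nonincreasing, so concavity of $(I_\lambda(\theta))^{-1}$ already forces convexity of $I_\lambda(\theta)$).
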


The preceding result gives a geometric interpretation of the quantities $V_{\lambda}(\theta)$ and $S_{\lambda}(\theta)$ appearing in our lower bounds, which we illustrate in Figure~\ref{fig:VS-from-I}.
Indeed, it shows that they are the values at $1$ and at $0$, respectively, of the line tangent to the concave function $\lambda\mapsto(I_{\lambda}(\theta))^{-1}$ at $\lambda$.
We emphasize that this perspective only arises when studying the whole family of $\HK_{\lambda}$ geometries at once, whereas most existing works in the literature of optimal transport fix the relative weights of transport and source (e.g., \cite{LieroMielkeSavare,ChizatPeyreSchmitzerVialard}), and indeed changing $\lambda$ only amounts to a dilation of the underlying space (see Appendix~\ref{app:HK-static-cone}).

\begin{figure}[t]
    \centering
\begin{tikzpicture}[font=\footnotesize]
  \def\Ma{0.25}\def\Mb{1.1}\def\Mc{0.9}
  \def\lamA{0.3}    
  \def\lamB{0.75}   
  \def\lamMin{0.12} 
  \def\PanelS{4}    
  \def\PanelIIx{8.2}
  \pgfmathsetmacro{\MA}{\Ma+\Mb*\lamA-\Mc*\lamA^2}
  \pgfmathsetmacro{\MB}{\Ma+\Mb*\lamB-\Mc*\lamB^2}
  \pgfmathsetmacro{\VA}{\Ma+\Mb-2*\Mc*\lamA+\Mc*\lamA^2}
  \pgfmathsetmacro{\SA}{\Ma+\Mc*\lamA^2}
  \pgfmathsetmacro{\VB}{\Ma+\Mb-2*\Mc*\lamB+\Mc*\lamB^2}
  \pgfmathsetmacro{\SB}{\Ma+\Mc*\lamB^2}

  \begin{scope}[scale=\PanelS]
    \draw[thick, -stealth] (0,0) -- (1.2,0);
    \draw[thick, -stealth] (0,0) -- (0,1.2);
    \draw[thick, dashed] (1,0) -- (1,1);
    \draw[thick] (-0.05,0) node[left]{$0$}  -- (0.05,0);
    \draw[thick] (0,-0.05) node[below]{$0$} -- (0,0.05);
    \draw[thick] (1,-0.05) node[below]{$1$} -- (1,0.05);
    \draw[thick] (\lamA,-0.05) node[below]{$\lambda$}   -- (\lamA,0.05);
    \draw[thick] (\lamB,-0.05) node[below]{$\lambda'$}  -- (\lamB,0.05);

    \draw[mygreen, thick] (0,\SA) -- (1,\VA);
    \draw[mygreen, thick] (0,\SB) -- (1,\VB);

    \draw[mypurple, ultra thick, domain=0:1, samples=100]
      plot ({\x},{\Ma+\Mb*\x-\Mc*\x*\x});
    \node[mypurple] at (0.62,0.45) {$(I_{\lambda}(\theta))^{-1}$};

    \filldraw[mygreen] (\lamA,\MA) circle (0.5pt);
    \filldraw[mygreen] (\lamB,\MB) circle (0.5pt);
    \filldraw[mygreen] (0,\SA) circle (0.5pt) node[left]  {$S_{\lambda}(\theta)$};
    \filldraw[mygreen] (0,\SB) circle (0.5pt) node[left]  {$S_{\lambda'}(\theta)$};
    \filldraw[mygreen] (1,\VA) circle (0.5pt) node[right] {$V_{\lambda}(\theta)$};
    \filldraw[mygreen] (1,\VB) circle (0.5pt) node[right] {$V_{\lambda'}(\theta)$};
  \end{scope}

  \begin{scope}[shift={(\PanelIIx,0)}, scale=\PanelS]
    \draw[thick, -stealth] (0,0) -- (1.2,0);
    \draw[thick, -stealth] (0,0) -- (0,1.2);
    \draw[thick] (-0.05,0) node[left]{$0$}  -- (0.05,0);
    \draw[thick] (0,-0.05) node[below]{$0$} -- (0,0.05);
    \node at (0.6,-0.22) {asymptotic variance};
    \node[rotate=90] at (-0.36,0.6) {asymptotic sensitivity};

    \draw[mygreen, ultra thick, domain=\lamMin:1, samples=100]
      plot ({\Ma+\Mb-2*\Mc*\x+\Mc*\x*\x},{\Ma+\Mc*\x*\x});
    \node[mygreen] at (0.8,0.58) {$\bdry(\theta)$};

    \draw[mygreen, dashed] (\VA,0) -- (\VA,\SA) -- (0,\SA);
    \draw[mygreen, dashed] (\VB,0) -- (\VB,\SB) -- (0,\SB);
    \filldraw[mygreen] (\VA,\SA) circle (0.5pt);
    \filldraw[mygreen] (\VB,\SB) circle (0.5pt);

    \draw[mygreen, thick] (\VA,-0.05) node[below]{$V_{\lambda}(\theta)$}  -- (\VA,0.05);
    \draw[mygreen, thick] (\VB,-0.05) node[below]{$V_{\lambda'}(\theta)$} -- (\VB,0.05);
    \draw[mygreen, thick] (-0.05,\SA) node[left]{$S_{\lambda}(\theta)$}   -- (0.05,\SA);
    \draw[mygreen, thick] (-0.05,\SB) node[left]{$S_{\lambda'}(\theta)$}  -- (0.05,\SB);
  \end{scope}
\end{tikzpicture}
    \caption{Geometric representation of the Pareto optimal asymptotic variance and sensitivity, in terms of the inverse $\HK$ information.
    We show the inverse $\HK$ information $\lambda\mapsto (I_{\lambda}(\theta))^{-1}$ and its tangent lines at two values of the interpolation parameter $0<\lambda<1$ (left), and we show the Pareto front $\mathcal{B}(\theta)$ whose $(v,s)$  coordinates are the values of the tangent lines at $1$ and $0$ respectively (right).}
    \label{fig:VS-from-I}
\end{figure}

An easy consequence of this result is the following calculation of the slope of the tangent lines to $\bdry(\theta)$, which quantitatively describe the instantaneous tradeoffs between variance and sensitivity; for instance, this slope represents how much the variance must be increased in order to decrease the sensitivity by a certain amount.
See Figure~\ref{fig:VSD-cartoon} for an illustration of this tangent line and its slope.
While these slopes can be directly calculated for the running examples, we also have the following general result:

\begin{lemma}\label{lem:bdry-slope}
    For $p=1$, suppose $\Pcal=\{P_{\theta}:\theta\in\Theta\}\subseteq\Pcal_{2,\textnormal{ac}}(\R^d)$ is $\HK$ regular and that, for each $0<\lambda<1$, the maps $\theta\mapsto \Ebb_{\theta}[|\phi_{\theta,\lambda}(X)|^2]$ and $\theta\mapsto \Ebb_{\theta}[\|\nabla\phi_{\theta,\lambda}(X)\|^2]$ are continuous on $\Theta$.
    If $\theta\in\Theta$ satisfies $I_\lambda(\theta)>0$ for all $0<\lambda<1$, then there is a non-increasing convex function
    $h_\theta$ defined on the interval $\{V_\lambda(\theta):0<\lambda<1\}\subseteq\R$ such that
    we have $S_\lambda(\theta)=h_\theta(V_\lambda(\theta))$ for all $0<\lambda<1$.
    Moreover, we have $h_\theta'(V_\lambda(\theta))=-\lambda/(1-\lambda)$ whenever $V_\lambda(\theta)$ is an interior point of the domain of $h_\theta$ at which $h_\theta$ is differentiable.
\end{lemma}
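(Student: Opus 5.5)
The plan is to work entirely with the concave function $g(\lambda):=(I_\lambda(\theta))^{-1}$ on $(0,1)$ and its tangent lines, using Lemma~\ref{lem:Info-cvx}. Write $A_\lambda:=\Ebb_\theta[|\phi_{\theta,\lambda}|^2]$ and $B_\lambda:=\Ebb_\theta[\|\nabla\phi_{\theta,\lambda}\|^2]$. The key observation is a variational characterization: $I_\lambda(\theta)=\lambda A_\lambda+(1-\lambda)B_\lambda$ should be the minimum over admissible pairs of the $\mathcal H^1_\lambda$ energy, with $(\phi_{\theta,\lambda},\nabla\phi_{\theta,\lambda})$ the minimizer. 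Equivalently, $(I_\lambda)^{-1}=\inf_{\lambda'}\{\lambda V_{\lambda'}+(1-\lambda)S_{\lambda'}\}$, attained at $\lambda'=\lambda$. This is the structure already behind the convexity in Lemma~\ref{lem:Info-cvx}; alternatively it follows from Theorem~\ref{thm:HK-CR-bound} applied to the curve $\bdry(\theta)$. Thus the line $\ell_\lambda(\mu)=\mu V_\lambda+(1-\mu)S_\lambda$ lies above $g$ and touches it at $\mu=\lambda$. In particular, for all $\lambda,\lambda'\in(0,1)$,
\[
\lambda V_{\lambda'}+(1-\lambda)S_{\lambda'}\ \ge\ \lambda V_\lambda+(1-\lambda)S_\lambda .
\]

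First I will show that $\lambda\mapsto V_\lambda$ is non-increasing and $\lambda\mapsto S_\lambda$ is non-decreasing. Adding the displayed inequality to its copy with $\lambda,\lambda'$ swapped gives $(\lambda-\lambda')\big[(V_{\lambda'}-V_\lambda)-(S_{\lambda'}-S_\lambda)\big]\ge0$. Next, at differentiability points the formulas \eqref{eqn:VS-from-I} give $V_\lambda=g(\lambda)+(1-\lambda)g'(\lambda)$ and $S_\lambda=g(\lambda)-\lambda g'(\lambda)$. Since $g$ is concave, $g'$ is non-increasing, so $S$ is non-decreasing and $V$ is non-increasing along differentiability points. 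At the countably many kinks I expect $(V_\lambda,S_\lambda)$ to sit between the one-sided values, by the tangent-line characterization.

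Second, I will define $h_\theta$ on $J:=\{V_\lambda:0<\lambda<1\}$ as a Legendre-type envelope,
\[
h_\theta(v):=\sup_{0<\lambda<1}\frac{g(\lambda)-\lambda v}{1-\lambda}.
\]
This is a supremum of affine functions of $v$ with slopes $-\lambda/(1-\lambda)<0$, so it is automatically convex and non-increasing. At $v=V_\lambda$, the tangent-line inequality reads $(1-\mu)S_\lambda\ge g(\mu)-\mu V_\lambda$ for every $\mu$, with equality at $\mu=\lambda$. Hence $h_\theta(V_\lambda)=S_\lambda$, and the affine function indexed by $\mu=\lambda$ supports $h_\theta$ at $V_\lambda$. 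If $h_\theta$ is differentiable at an interior point $V_\lambda$, its derivative must equal the slope of this supporting line, namely $-\lambda/(1-\lambda)$.

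The main obstacle is justifying the tangent-line inequality $\ell_\lambda\ge g$ rigorously, including at non-differentiability points of $I_\lambda$. I would try first to derive it from the minimality of $\phi_{\theta,\lambda}$ in the $\mathcal H^1_{\lambda'}$ energy among solutions of the transport-source equation. Proposition~\ref{prop:ans-cont-unique-solution} and Lemma~\ref{lem:dynamic-free} give that a rescaled pair $(c\phi_{\theta,\lambda},c\nabla\phi_{\theta,\lambda})$ competes in the $\lambda'$-problem after accounting for the different weights. Then a Cauchy--Schwarz/Cramér--Rao-type duality yields $(I_{\lambda'})^{-1}\le\lambda'V_\lambda+(1-\lambda')S_\lambda$. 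A second concern is that $J$ is an interval, which needs continuity of $V_\lambda$ in $\lambda$. This fails only at kinks, and convexity of $h_\theta$ can instead be established on the convex hull of $J$.
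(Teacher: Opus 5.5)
Your envelope definition $h_\theta(v):=\sup_{0<\mu<1}(g(\mu)-\mu v)/(1-\mu)$ is valid and somewhat tidier than the paper's route. Once the tangent-line inequality is granted, it gives at once convexity, monotonicity, single-valuedness, $h_\theta(V_\lambda)=S_\lambda$ and the slope claim. The paper instead first proves that each coordinate is monotone, in order to see that $\bdry(\theta)$ is a graph. It does this by keeping the two inequalities separate and eliminating $\Delta S$ or $\Delta V$; adding them, as you do, only shows that $V_\lambda-S_\lambda$ is monotone. Only then does it identify $h_\theta$ with the same supremum of supporting lines. So your first step is dispensable.

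The genuine gap is that the statement asserts the domain $\{V_\lambda(\theta):0<\lambda<1\}$ is an interval. You do not prove this, you conjecture that continuity of $V_\lambda$ fails at kinks, and passing to the convex hull proves a different statement. The missing idea is that $\lambda\mapsto(V_\lambda,S_\lambda)$ is continuous on all of $(0,1)$. Suppress $\theta$ and use $\ell_\theta(\psi)=\langle\psi,\phi_{\theta,\lambda_0}\rangle_{\mathcal H^1_{\lambda_0}(P_\theta)}$ together with $\ell_\theta(\phi_{\theta,\lambda}/I_\lambda)=1$. Expanding the square then gives
\[
\Bigl\|\frac{\phi_{\theta,\lambda}}{I_\lambda}-\frac{\phi_{\theta,\lambda_0}}{I_{\lambda_0}}\Bigr\|^2_{\mathcal H^1_{\lambda_0}(P_\theta)}=\lambda_0V_\lambda+(1-\lambda_0)S_\lambda-g(\lambda_0)=g(\lambda)-g(\lambda_0)+(\lambda_0-\lambda)(V_\lambda-S_\lambda).
\]
The right side tends to $0$ as $\lambda\to\lambda_0$, because $g$ is continuous and $V_\lambda\le g(\lambda)/\lambda$, $S_\lambda\le g(\lambda)/(1-\lambda)$ are locally bounded. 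Since $0<\lambda_0<1$, this is $L^2(P_\theta)$ convergence of both the normalized scores and their gradients. Hence $V_\lambda$ and $S_\lambda$ are continuous and $\{V_\lambda\}$ is an interval. This also removes your worry about kinks. $V_\lambda-S_\lambda$ always lies in $[g'_+(\lambda),g'_-(\lambda)]$, and a continuous selection of supergradients of a concave function forces $g'_+=g'_-$ everywhere. So $g$ is in fact differentiable on all of $(0,1)$.

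You are right that the tangent-line inequality $g(\lambda')\le\lambda'V_\lambda+(1-\lambda')S_\lambda$ is the crux, but two of your three justifications for it fail. First, Theorem~\ref{thm:HK-CR-bound} concerns estimators in $\estims_n$, and the points of $\bdry(\theta)$ are not known to arise from such estimators, so it cannot be applied ``to the curve''. Second, the competitor argument points the wrong way. A pair built from $\phi_{\theta,\lambda}$ solves the $\lambda'$ transport-source equation only after rescaling its two components differently, e.g.\ $(\tfrac{\lambda}{\lambda'}\phi_{\theta,\lambda},\tfrac{1-\lambda}{1-\lambda'}\nabla\phi_{\theta,\lambda})$. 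Minimality of the tangent solution then yields an \emph{upper} bound on $I_{\lambda'}$, whereas you need the lower bound $I_{\lambda'}\ge I_\lambda^2/\|\phi_{\theta,\lambda}\|^2_{\mathcal H^1_{\lambda'}(P_\theta)}$.

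What works is the dual characterization \eqref{eqn:Minv-inf}. Its constraint set $\{\ell_\theta=1\}$ does not depend on the weight, so $\phi_{\theta,\lambda}/I_\lambda$ is feasible at $\lambda'$, using that $\mathcal H^1_\lambda$ and $\mathcal H^1_{\lambda'}$ coincide with equivalent norms. Equivalently, by Cauchy--Schwarz, $I_\lambda=\ell_\theta(\phi_{\theta,\lambda})=\langle\phi_{\theta,\lambda},\phi_{\theta,\lambda'}\rangle_{\mathcal H^1_{\lambda'}(P_\theta)}\le\|\phi_{\theta,\lambda}\|_{\mathcal H^1_{\lambda'}(P_\theta)}I_{\lambda'}^{1/2}$. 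This is exactly the paper's one-line use of \eqref{eqn:op-norm-calc}. It is the ``Cauchy--Schwarz duality'' you mention at the end, and it should be the argument itself rather than an add-on to the competitor step.
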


Finally, we can revisit our running examples and display their asymptotic variance-sensitivity diagrams; that is, we give Figure~\ref{fig:all-VSD}, which shows a simplified version of Figure~\ref{fig:VSD-cartoon} for the Gaussian location, Laplace location, uniform scale, and Gaussian correlation families. We also show the asymptotic variance-sensitivity coordinates achieved by various estimators of interest (blue points), some of which we will construct in the following section.

\begin{figure}[t]
    \centering

    	\begin{center}
        \begin{tikzpicture}[scale=4.]
            \node at (0.65, 1.15) {Gaussian location family};

			\draw[thick, -stealth] (0,0) -- (1.2,0);
			\draw[thick, -stealth] (0,0) -- (0,1.2);
            \draw[thick, dashed] (0,0.2) -- (1,0.2);
			\draw[thick, dashed] (0.2,0) -- (0.2,1);
			
			\draw[thick] (-0.05,0) node[left]{$0$} -- (0.05,0);
			\draw[thick] (-0.05,0.2) node[left]{$1$} -- (0.05,0.2);
			
			\draw[thick] (0,-0.05) node[below]{$0$} -- (0,0.05);
            \draw[thick] (0.2,-0.05) node[below right]{$1$} -- (0.2,0.05);

			\path [fill=mygreen, thick,  opacity=0.3] (0.2,1) -- (0.2,0.2) -- (1,0.2) -- (1,1) -- cycle;		

			\filldraw[color=blue] (0.2, 0.2) circle (0.5 pt) node[above right] {sample mean};
			
		\end{tikzpicture} \begin{tikzpicture}[scale=4.]
            \node at (0.65, 1.15) {Laplace location family};
            
			\draw[thick, -stealth] (0,0) -- (1.2,0);
			\draw[thick, -stealth] (0,0) -- (0,1.2);
            \draw[thick, dashed] (0,0.2) -- (1,0.2);
			\draw[thick, dashed] (0.2,0) -- (0.2,1);
			
			\draw[thick] (-0.05,0) node[left]{$0$} -- (0.05,0);
			\draw[thick] (-0.05,0.2) node[left]{$1$} -- (0.05,0.2);
            \draw[thick] (-0.05,1) node[left]{$\infty$} -- (0.05,1);
			
			\draw[thick] (0,-0.05) node[below]{$0$} -- (0,0.05);
            \draw[thick] (0.2,-0.05) node[below right]{$1$} -- (0.2,0.05);
			\draw[thick] (0.7,-0.05) node[below]{$2$} -- (0.7,0.05);

			\path [fill=mygreen, thick,  opacity=0.3, domain=0:1, samples=100] plot ({0.2+0.5*\x^2}, {0.2+0.8*(\x-1)^2}) -- (1,0.2) -- (1,1) -- cycle;		
			\draw [color=mygreen, ultra thick,  domain=0:1, samples=100] plot ({0.2+0.5*\x^2}, {0.2+0.8*(\x-1)^2});

						
			\filldraw[color=blue] (0.2, 1) circle (0.5 pt) node[below right] {sample median};
			\filldraw[color=blue] (0.7, 0.2) circle (0.5 pt) node[above right,align=left] {sample\\mean};

            \filldraw[color=white] (-0.65, 0.2) circle (0.5 pt);

		\end{tikzpicture}\\
        \begin{tikzpicture}[scale=4.]
            \node at (0.65, 1.15) {Uniform scale family};
            
			\draw[thick, -stealth] (0,0) -- (1.2,0);
			\draw[thick, -stealth] (0,0) -- (0,1.2);
            \draw[thick, dashed] (0,0.2) -- (1,0.2);
			
			\draw[thick] (-0.05,0) node[left]{$0$} -- (0.05,0);
			\draw[thick] (-0.05,0.2) node[left]{$3$} -- (0.05,0.2);
            \draw[thick] (-0.05,1) node[left]{$\infty$} -- (0.05,1);
			
			\draw[thick] (0,-0.05) node[below]{$0$} -- (0,0.05);
            \draw[thick] (0.7,-0.05) node[below]{$\frac{1}{5}\theta^2$} -- (0.7,0.05);

			\path [fill=mygreen, thick,  opacity=0.3, domain=0:1, samples=100] plot ({0.7*\x^2}, {0.2+0.8*(\x-1)^2}) -- (1,0.2) -- (1,1) -- cycle;		
			\draw [color=mygreen, ultra thick,  domain=0:1, samples=100] plot ({0.7*\x^2}, {0.2+0.8*(\x-1)^2});

			\filldraw[color=blue] (0, 1) circle (0.5 pt) node[below right] {$X_{(n)}$};
			\filldraw[color=blue] (0.7, 0.2) circle (0.5 pt) node[above right] {$\sqrt{3\overline{X^2}}$};			
		\end{tikzpicture}\begin{tikzpicture}[scale=4.]
            \node at (0.65, 1.15) {Gaussian correlation family};
            
			\draw[thick, -stealth] (0,0) -- (1.2,0);
			\draw[thick, -stealth] (0,0) -- (0,1.2);
            \draw[thick, dashed] (0,0.2) -- (1,0.2);
			\draw[thick, dashed] (0.2,0) -- (0.2,1);
			
			\draw[thick] (-0.05,0) node[left]{$0$} -- (0.05,0);
			\draw[thick] (-0.05,0.2) node[left]{$2(1-\theta^2)$} -- (0.05,0.2);
            \draw[thick] (-0.05,0.7) node[left]{$\frac{2\left(1-\theta^{2}\right)\left(1+3\theta^{2}\right)}{\left(1+\theta^{2}\right)^{2}}$} -- (0.05,0.7);
            \draw[thick] (-0.05,0.85) node[left]{$2$} -- (0.05,0.85);
			
			\draw[thick] (0,-0.05) node[below]{$0$} -- (0,0.05);
            \draw[thick] (0.2,-0.05) node[below]{$\frac{(1-\theta^2)^2}{1+\theta^2}$} -- (0.2,0.05);
			\draw[thick] (0.7,-0.05) -- (0.7,0.05);

            \draw[] (0.6,-0.05) node[below] {$(1-\theta^{2})^{2}$};

            \draw[thick] (0.85,-0.05) node[below right]{$1+\theta^2$} -- (0.85,0.05);

			\path [fill=mygreen, thick,  opacity=0.3, domain=0:1, samples=100] plot ({0.2+0.5*\x^2}, {0.2+0.5*(\x-1)^2}) -- (1,0.2) -- (1,1) -- (0.2, 1) -- cycle;		
			\draw [color=mygreen, ultra thick,  domain=0:1, samples=100] plot ({0.2+0.5*\x^2}, {0.2+0.5*(\x-1)^2});						
            \filldraw[color=blue] (0.2, 0.7) circle (0.5 pt) node[above right] {MLE};
			\filldraw[color=blue] (0.7, 0.2) circle (0.5 pt) node[above,align=left] {Pearson\\correlation};

            \filldraw[color=blue] (0.85, 0.85) circle (0.5 pt) node[above, align=left] {$\frac1n\sum_{i=1}^n X_iY_i$};

            \filldraw[color=white] (-0.62, 0.2) circle (0.5 pt);

		\end{tikzpicture}
	\end{center}
    \caption{The asymptotic variance-sensitivity diagrams for the Gaussian location family (top left), the Laplace location family (top right), the uniform scale family (bottom left), and the Gaussian correlation family (bottom right).
    In each plot, we show the asymptotic variance-sensitivity diagram $\tilde{\diagram}(\theta)$ (light green region), the curve $\bdry(\theta)$ (dark green curve), the Cram\'er--Rao and Wasserstein--Cram\'er--Rao lower bounds if they hold (dashed lines), and the asymptotic variance-sensitivity coordinates achieved by various estimators of interest (blue).}
    \label{fig:all-VSD}
\end{figure}

To conclude this subsection, we note that this theory may be extended to the case of a multivariate parameter ($p\ge 1$), although visualizing the resulting sets becomes more complicated.
That is, write $\CovSpace(p)$ for the cone of $p\times p$ positive semi-definite matrices, and observe by Theorem~\ref{thm:HK-CR-bound} that the multivariate variance-sensitivity diagrams are naturally viewed as subsets of $\CovSpace(p)\times\CovSpace(p)$.
When $p=1$ the cone $\CovSpace(p)$ has dimension 1 (and is isomorphic to $\R_{\ge 0}$) so we may visualize these diagrams in the plane $\R\times\R=\R^2$, but already for $p=2$ the cone $\CovSpace(2)$ has dimension 3 so visualizing the resulting diagrams involves $\R^{3}\times\R^{3}=\R^{6}$.

\section{Constructing Asymptotically Efficient Estimators}\label{sec:achieve}

For each sufficiently regular model $\Pcal=\{P_{\theta}:\theta\in\Theta\}$, the lower bounds in Section~\ref{subsec:LB} describe a tradeoff between the asymptotic variance and asymptotic sensitivity of sufficiently regular unbiased estimators of $\theta$. The purpose of this section is to construct estimators whose asymptotic variance and sensitivity achieve the Pareto optimal pairs $(V_\lambda(\theta),S_\lambda(\theta))$ indexed by $\lambda$.
We do this by studying some general properties of the asymptotic sensitivity of $Z$-estimators (Subsection~\ref{subsec:Z-estim}), and then by specializing to the case of a $Z$-estimator based on the $\HK$ score functions (Subsection~\ref{subsec:HK-score}); we also study asymptotic efficiency at the extremal cases $\lambda\in\{0,1\}$, which slightly differs from the interior cases $0<\lambda<1$.

We begin with a precise notion of (Pareto) efficiency.

\begin{definition}\label{def:Pareto-eff}
    Fix $0\le\lambda\le1$. Let $\Pcal=\{P_\theta:\theta\in\Theta\}
    \subseteq\Pcal_{2,\textnormal{ac}}(\R^d)$
    be an $\HK_\lambda$ regular model with $\Theta\subseteq\R^p$.
    Assume that $V_\lambda(\theta)$ and $S_\lambda(\theta)$, as in Definition \ref{def:HK-info}, are well defined and finite for all $\theta\in\Theta$.
    A consistent, locally Lipschitz estimator $T_n:(\R^d)^n\to\R^p$ is called \textit{$\lambda$ (Pareto) efficient} if we have
    \begin{equation*}
        \begin{aligned}
            \sqrt n(T_n-\theta_0)
            &\overset{\mathrm d}{\to}
            \mathcal N\bigl(0,V_\lambda(\theta_0)\bigr),\\
            n\sum_{i=1}^nD_{x_i}T_n(D_{x_i}T_n)^\top
            &\overset{P_{\theta_0}}{\to}
            S_\lambda(\theta_0).
        \end{aligned}
    \end{equation*}
    for all $\theta_0\in\Theta$, where
    \(D_{x_i}T_n\in\R^{p\times d}\) has rows corresponding
    to the components of \(T_n\).
    We say that $T_n$ is \textit{(Pareto) efficient} if it is $\lambda$ (Pareto) efficient for some $0\le \lambda\le 1$.
\end{definition}

\begin{remark}\label{rem:achieve-eff}
    The notion of Pareto efficiency in Definition~\ref{def:Pareto-eff} slightly differs from the lower bounds in Theorem~\ref{thm:HK-CR-bound}.
    That is, the lower bounds involve the limits of the real sequences $n\Var_{\theta}(T_n)$ and $n\Sen_{\theta}(T_n)$, while the notion of efficiency involves the variance of the limiting distribution of $T_n$ and its sample sensitivity.
    This gap between the lower bounds and how to achieve them is standard in the literature; see, e.g., \cite[Chapter~8.1]{vanDerVaart}.
\end{remark}

\subsection{Asymptotic Sensitivity of $Z$-Estimators}\label{subsec:Z-estim}

We begin with a general result which calculates the asymptotic sensitivity for a wide class of $Z$-estimators.
This complements existing results which calculate the asymptotic variance for $Z$-estimators (e.g., \cite[Theorem~5.21]{vanDerVaart}). Our later construction of asymptotically Pareto-efficient estimators will indeed use both of these calculations.

For this subsection, we let \(X_1,\ldots,X_n\) be i.i.d. samples from a distribution \(P\), which we do not assume comes from any particular parametric model, and we write $\Ebb_P$ for the corresponding expectation.
Then, for an open set $\Theta\subseteq\R^p$ and a fixed function \(s:\Theta\times\R^d\to\R^p\), we define the $Z$-estimator $T_n^s:(\R^d)^n\to\Theta$ to be any solution for $\theta\in\Theta$ to the equation
\begin{equation}\label{eqn:Z}
    \frac{1}{n}\sum_{i=1}^{n}s(\theta,X_i)=0.
\end{equation}
There are a few technical subtleties to this definition (e.g., that $T_n^s$ may fail to be well-defined on all of $(\R^d)^n$, and that it need not even be measurable when it is well-defined) but we direct the reader to standard sources like \cite[Chapter~5]{vanDerVaart} for further detail; see also our statement of Theorem~\ref{thm:z-est} below and the discussion right after it.

We write \(D_t s(t,x)\in\R^{p\times p}\) and
\(D_xs(t,x)\in\R^{p\times d}\), with rows corresponding
to the components of \(s\), and use the same convention for
\(D_{x_i}T_n^s\).
For \(\HK\) score functions,
\(D_x\phi_{\theta,\lambda}=D\phi_{\theta,\lambda}\),
as in Definition~\ref{def:HK-info}.
We write \(\|\cdot\|_{\mathrm F}\) for the Frobenius norm and
\(M^{-\top}:=(M^{-1})^\top\) for an invertible matrix \(M\).
Matrix ULLNs and convergence of matrix-valued random variables are understood entrywise.

Our main result in this subsection is the following, which states that, under mild regularity conditions, the asymptotic sensitivity of $T_n^s$ can be written in terms of expectations with respect to $P$ of certain derivatives of $s$. Note that this result requires certain function classes to satisfy a uniform law of large numbers (ULLN), sufficient conditions for which can be found in \cite[Theorem~19.4]{vanDerVaart}.

\begin{theorem}\label{thm:z-est}
    Fix \(\theta_0\in\Theta\), and let \(X_1,X_2,\ldots\) be
    i.i.d. from \(P \in \mathcal{P}_{2, \mathrm{ac} }(\R^d)\).
    Let \(s:\Theta\times\R^d\to\R^p\) be a (jointly) \(C^1\) function, and suppose the following conditions are satisfied:
    \begin{enumerate}
        \item[(i)] (Existence of a solution)
        With probability one, for all sufficiently large \(n\)
        the equation
        \begin{equation}
          \frac1n\sum_{i=1}^n s(\theta,x_i)=0
            \label{eq:EstiamtingEqn2}
        \end{equation}
        admits a solution \( \theta = T_n^s(x_1, \dots, x_n)\) for all $(x_1, \dots, x_n)$ within an open neighborhood of \((X_1,\ldots,X_n)\). Moreover, the solution is assumed to vary continuously in its input within this neighborhood.

        \item[(ii)] (Consistency)
        The selected estimator $T_n^s= T_n^s(X_1, \dots, X_n)$ satisfies
        \(T_n^s\to\theta_0\) in probability as \(n\to\infty\).

        \item[(iii)] (Invertibility of Jacobians)
        For some \(\varepsilon>0\) with
        \(B(\theta_0,\varepsilon)\subset\Theta\),
        \(\Ebb_{P}[D_\theta s(\theta,X)]\) is invertible
        for all $\theta$ satisfying \(\|\theta-\theta_0\|<\varepsilon\).

        \item[(iv)] (ULLNs)
        The classes
        \(\{D_\theta s(\theta,\cdot):
            \|\theta-\theta_0\|<\varepsilon\}\) and
        \(\{D_xs(\theta,\cdot)D_xs(\theta,\cdot)^\top:
            \|\theta-\theta_0\|<\varepsilon\}\)
        satisfy an entrywise ULLN under \(P\).
        Moreover, \(\Ebb_{P}[D_\theta s(\theta,X)]\)
        and \(\Ebb_{P}[D_xs(\theta,X)D_xs(\theta,X)^\top]\)
        have finite entries for all $\theta$ satisfying \(\|\theta-\theta_0\|<\varepsilon\).
    \end{enumerate}
    Then, as \(n\to\infty\),
    \[
        n\sum_{i=1}^nD_{x_i}T_n^s(D_{x_i}T_n^s)^\top
        \longrightarrow
        \bigl(\Ebb_{P}[D_\theta s(\theta_0,X)]\bigr)^{-1}
        \Ebb_{P}[D_xs(\theta_0,X)D_xs(\theta_0,X)^\top]
        \bigl(\Ebb_{P}[D_\theta s(\theta_0,X)]\bigr)^{-\top}
    \]
    in probability.
\end{theorem}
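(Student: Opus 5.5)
The plan is to use the implicit function theorem to obtain an explicit formula for $D_{x_i}T_n^s$, and then pass to the limit with the ULLNs of (iv) combined with consistency (ii).

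Fix an outcome in the event from (i), with $n$ large. Set $F_n(\theta,x_1,\ldots,x_n):=\frac1n\sum_{j=1}^n s(\theta,x_j)$, which is jointly $C^1$. On the neighborhood given by (i), $T_n^s$ is a continuous solution of $F_n(T_n^s(x),x)=0$. Let $A_n(\theta):=\frac1n\sum_j D_\theta s(\theta,X_j)$. If $A_n(T_n^s)$ is invertible, the implicit function theorem gives a unique $C^1$ local solution branch near $(T_n^s,X_1,\ldots,X_n)$. Because $T_n^s$ is continuous, it must agree with this branch near the data. Differentiating the identity then yields
\[
D_{x_i}T_n^s=-\frac1n\,A_n(T_n^s)^{-1}D_xs(T_n^s,X_i).
\]
Consequently
\[
n\sum_{i=1}^n D_{x_i}T_n^s(D_{x_i}T_n^s)^\top
= A_n(T_n^s)^{-1}\Bigl(\frac1n\sum_{i=1}^n D_xs(T_n^s,X_i)D_xs(T_n^s,X_i)^\top\Bigr)A_n(T_n^s)^{-\top}.
\]
The factor $1/n$ from the derivative appears squared and cancels against $n$, and the sum over $i$ becomes an empirical average.

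For the limit, write $\bar A(\theta):=\Ebb_P[D_\theta s(\theta,X)]$ and $\bar B(\theta):=\Ebb_P[D_xs\,D_xs^\top(\theta,X)]$. By (ii), with probability tending to one we have $\|T_n^s-\theta_0\|<\varepsilon$. On that event, (iv) gives $\|A_n(T_n^s)-\bar A(T_n^s)\|\le\sup_{\|\theta-\theta_0\|<\varepsilon}\|A_n(\theta)-\bar A(\theta)\|\to0$ in probability, and the same holds for the middle factor with $\bar B$.

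The remaining step is $\bar A(T_n^s)\to\bar A(\theta_0)$ and $\bar B(T_n^s)\to\bar B(\theta_0)$ in probability. This requires continuity of $\bar A$ and $\bar B$ at $\theta_0$, which I regard as the main obstacle since it is not stated explicitly. I would first try to get it from the $C^1$ assumption: the integrands are continuous in $\theta$, so one needs dominated convergence. If ULLN is understood in the Glivenko--Cantelli sense with an integrable envelope, as in the sufficient conditions of \cite[Theorem~19.4]{vanDerVaart}, that envelope supplies domination. Otherwise I would read continuity of the means as part of hypothesis (iv).

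Granting this, $A_n(T_n^s)\to\bar A(\theta_0)$ in probability. Since $\bar A(\theta_0)$ is invertible by (iii), matrix inversion is continuous near it, so $A_n(T_n^s)$ is invertible with probability tending to one. This also retroactively justifies the use of the implicit function theorem above on an event of probability tending to one, which suffices for convergence in probability. The continuous mapping theorem applied to $(A,B)\mapsto A^{-1}BA^{-\top}$ then gives the claimed limit. Finiteness of the entries in (iv) ensures the limit is a genuine finite matrix.
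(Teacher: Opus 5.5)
Your outline matches the paper's argument: the implicit function theorem on the event that the empirical Jacobian at $T_n^s$ is invertible, the explicit formula for $D_{x_i}T_n^s$, then ULLN plus consistency plus continuous mapping. Every step except one goes through. The gap is the step you flag yourself, namely continuity at $\theta_0$ of $\bar A(\theta)=\Ebb_P[D_\theta s(\theta,X)]$ and $\bar B(\theta)=\Ebb_P[D_xs(\theta,X)D_xs(\theta,X)^\top]$. Neither of your remedies proves the theorem as stated. Hypothesis (iv) asserts only a ULLN and finiteness of the means, with no envelope. So dominated convergence from the $C^1$ assumption is not available, since pointwise continuity of the integrand does not give continuity of its expectation. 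Reading continuity into (iv) changes the hypotheses.

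The missing idea is that continuity follows from the ULLN itself. Because $s$ is jointly $C^1$, for every realization of the sample the maps $\theta\mapsto A_n(\theta)$ and $\theta\mapsto B_n(\theta):=\frac1n\sum_i D_xs(\theta,X_i)D_xs(\theta,X_i)^\top$ are continuous on the ball $\{\|\theta-\theta_0\|<\varepsilon\}$. Since $\sup_\theta\|A_n(\theta)-\bar A(\theta)\|\to0$ in probability, and likewise for $B_n$, some subsequence makes both suprema tend to zero almost surely. Fix any single outcome in this probability-one event. The deterministic functions $\bar A$ and $\bar B$ are then uniform limits on the ball of continuous functions, hence continuous there.

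This is how the paper closes the argument, working on a slightly smaller ball $B(\theta_0,\delta)$ with $\delta<\varepsilon$. Your remaining steps then finish the proof with no extra assumptions: $A_n(T_n^s)\to\bar A(\theta_0)$ and $B_n(T_n^s)\to\bar B(\theta_0)$ in probability, invertibility with probability tending to one, and the continuous mapping theorem.
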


The proof of Theorem~\ref{thm:z-est} is based on the implicit function theorem, which requires the technical conditions stated above.
For instance, in order to guarantee that \(T_n^s\) is a differentiable function of $X_1,\ldots, X_n$, we require the empirical Jacobian
\(n^{-1}\sum_{i=1}^nD_\theta s(T_n^s,X_i)\) to be invertible for all large enough $n$, and this requires the invertibility of the population Jacobian
\(\Ebb_{P}[D_\theta s(\theta_0,X)]\) along with the stated ULLNs.
See Appendix~\ref{app:proofs-achieve} for details.

It is instructive to compare this result with the classical result
on the asymptotic covariance of \(Z\)-estimators
(e.g., \cite[Theorem~5.21]{vanDerVaart}).
Under suitable regularity conditions, that result states
\[
    \sqrt n(T_n^s-\theta)
    \overset{\mathrm d}{\to}
    \mathcal N\left(
        0,\,
        \bigl(\Ebb_P[D_\theta s(\theta,X)]\bigr)^{-1}
        \Ebb_P[s(\theta,X)s(\theta,X)^\top]
        \bigl(\Ebb_P[D_\theta s(\theta,X)]\bigr)^{-\top}
    \right).
\]
In particular, the asymptotic covariance and the asymptotic
sensitivity matrix have the same left and right normalization factors, and the middle matrices differ only in the appearance of a Jacobian; this is most visible in the special case $p=1$, where the asymptotic variance and sensitivity are
\begin{equation}\label{eqn:Z-est-VS}
        \frac{\Ebb_P\left[|s(\theta,X)|^2\right]}{\bigl(\Ebb_P[\partial_\theta s(\theta,X)]\bigr)^{2}}\qquad \textnormal{and}\qquad\frac{\Ebb_P[|\nabla_x s(\theta,X)|^2]}{\bigl(\Ebb_P[\partial_\theta s(\theta,X)]\bigr)^{2}},
\end{equation}
whose denominators are the same and whose numerators differ only in the appearance of a gradient.
This structure plays an important role in the
next subsection when we take \(s\) to be an \(\HK\) score function.

Additional examples of interest, beyond those that will arise from $\HK$ score functions, include Huber's $M$-estimators from robust statistics \cite{HuberRonchetti}.
The regularity conditions in Theorem~\ref{thm:z-est} do not hold in this setting, but we nonetheless obtain the same formulas for the asymptotic variance and asymptotic sensitivity via a direct computation.

\begin{example}[Huber's $M$ estimators]\label{ex:huber}
       
        For $\delta> 0$, define the Huber loss function via
		\begin{equation*}
			L_{\delta}(r) = \begin{cases}
				\frac{1}{2}r^2 &\textnormal{ if } |r|\le \delta \\
				\delta\left(|r|-\frac{1}{2}\delta\right) &\textnormal{ if } |r|\ge \delta, \\
			\end{cases}
		\end{equation*}
        and then consider the estimator
        \begin{equation}\label{eqn:Huber-def}
            T_n^{\delta}(X_1,\ldots,X_n):=\underset{\theta\in\R}{\arg\min}\,\frac{1}{n}\sum_{i=1}^{n}L_{\delta}(X_i-\theta).
        \end{equation}
        If the minimizer is not unique, we take the midpoint of the minimizing interval (note that Huber's loss function is convex and, as such, its set of minimizers must be an interval). This estimator interpolates between the sample median ($\delta\to 0$) and the sample mean ($\delta\to\infty$).
        By computing the first-order optimality condition for the convex optimization problem~\eqref{eqn:Huber-def} (note that Huber's loss is continuously differentiable), one can see that $T_n^{\delta}$ is precisely the $Z$-estimator for the function $s_{\delta}:\R^2\to\R$ defined via
        \begin{equation*}
            s_{\delta}(\theta,x)=\begin{cases}
				-\delta &\textnormal{ if } x\le \theta-\delta \\
                x -\theta &\textnormal{ if } \theta-\delta \le x\le \theta+\delta \\
				\delta &\textnormal{ if } x\ge \theta+\delta.
			\end{cases}
        \end{equation*}
        While Theorem \ref{thm:z-est} cannot be applied directly because $s_\delta$ is not $C^1$, it is still possible to obtain formulas for the asymptotic variance and sensitivity of $T_n^\delta$ when $P$ is absolutely continuous.
        Let \(\theta\in\R\) satisfy \(\Ebb_P[s_\delta(\theta,X)]=0\), and assume \(\Pbb_P(|X-\theta|<\delta)>0\).
    The condition \(\Ebb_P[s_\delta(\theta,X)]=0\) specifies the population target.
    It holds, in particular, when \(P\) is symmetric about \(\theta\), because \(r\mapsto s_\delta(\theta,\theta+r)\) is bounded and odd.
        Then, as \(n\to\infty\),
    \begin{equation*}
        \sqrt n(T_n^\delta-\theta)
        \overset{\mathrm d}{\to}
        \mathcal N\left(
            0,\frac{\Ebb_P[s_\delta(\theta,X)^2]}
            {\Pbb_P(|\theta-X|\le\delta)^2}
        \right), \qquad n\sum_{i=1}^{n}\|\nabla_{x_i}T_n^\delta\|^2
            \overset{P}{\to}\frac{1}{\Pbb_{P}(|\theta-X|\le\delta)};
    \end{equation*}
    a detailed proof of these facts is given in Appendix~\ref{app:proofs-achieve}. Note that, as $\delta\to \infty$, the estimator $T_n^{\delta}$ approaches the sample mean and the asymptotic sensitivity converges to 1, while as $\delta\to 0$ the estimator $T_n^{\delta}$ approaches the sample median and the asymptotic sensitivity blows up.
	\end{example}

In the next subsection we apply Theorem~\ref{thm:z-est} to construct asymptotically Pareto-efficient estimators in $\HK$ regular parametric models.
As suggested by \eqref{eqn:Z-est-VS}, specializing to the case of $s(\theta,x) = \phi_{\theta,\lambda}(x)$ will allow us to recover the optimal asymptotic variance and sensitivity pairs from Section~\ref{subsec:LB}.

\subsection{Hellinger--Kantorovich $Z$-Estimators}\label{subsec:HK-score}

We now specialize the previous subsection to the case of the
\(\HK_\lambda\) score functions $\phi_{\theta,\lambda}$ of an \(\HK_\lambda\) regular model $\Pcal$.
We will show, under suitable regularity conditions, that the resulting $Z$-estimator is asymptotically efficient.

\begin{definition}\label{def:HK-Z-est}
    If \(\Pcal\) is \(\HK_{\lambda}\) regular and
    \(\phi_{\theta,\lambda}\) denotes its \(\HK_{\lambda}\) score function, then
    \begin{equation*}
        \frac{1}{n}\sum_{i=1}^{n}\phi_{\theta,\lambda}(X_i) = 0
    \end{equation*}
    is called the
    \textit{$\HK_\lambda$ score equation}. Any solution to this equation will be referred to as a \textit{\(\HK_\lambda\) \(Z\)-estimator} (HKZE), and will be denoted by
    \(T_{n,\lambda}\).
\end{definition}

\begin{remark}[Maximum likelihood estimation]\label{rem:HK1-MLE}
    The \(\HK_1\) \(Z\)-equation is exactly the likelihood equation \(\sum_{i=1}^n\nabla_\theta\log p_\theta(X_i)=\sum_{i=1}^n\phi_{\theta,1}(X_i) =0\).
    Thus an interior MLE is an HK$_1$ZE.
\end{remark}

The $\HK_0$ equation could reasonably be called the ``potential equation'', and, under the assumptions of Theorem~\ref{thm:HK-Z-attainment}, the resulting $Z$-estimator is asymptotically sensitivity-efficient.
In fact, if the $\HK_0$ potential is known in closed form, then the HK$_0$ZE is a much simpler estimator than the WPE proposed in \cite{WCR}; it is easier to compute in practice, and its asymptotic sensitivity-efficiency requires many fewer assumptions.

The Pareto efficiency of HKZE will follow by combining Theorem~\ref{thm:z-est} with the following intermediate result that identifies the normalization in the asymptotic covariance and
sensitivity formulas~\eqref{eqn:Z-est-VS} as precisely the \(\HK_\lambda\) information.
By considering the special case $\lambda=1$ below, one can see that this is a generalization of the representation of the Fisher information as the negative expected Hessian of the log-density with respect to the parameter.

\begin{lemma}[\(\HK_\lambda\) information identity]
\label{lem:HK-information-identity}
    Fix \(0\leq\lambda\leq1\). Suppose
    \(\Pcal=\{P_\theta:\theta\in\Theta\}\subseteq
    \Pcal_{2,\mathrm{ac}}(\R^d)\) is \(\HK_\lambda\) regular,
    where \(\Theta\subseteq\R^p\) is open.
    Let \(\phi_{\theta,\lambda}\) denote its score function.
    Fix \(\theta_0\in\Theta\), and choose \(\varepsilon>0\) such that
    \(U:=B(\theta_0,\varepsilon)\) satisfies
    \(\overline U\subset\Theta\).
    Set \(s(t,x)=\phi_{t,\lambda}(x)\) and assume \(s\) is jointly
    \(C^1\) and satisfies condition (iv) of Theorem~\ref{thm:z-est}.
    In addition, assume that:
    \begin{enumerate}
        \item[(i)]
        \(\sup_{r,t\in U}\Ebb_r\left[\|\phi_{t,\lambda}(X)\|^2+\|D\phi_{t,\lambda}(X)\|_{\mathrm F}^2\right]<\infty\).

        \item[(ii)]
        If \(\lambda>0\), the map
        \((r,t)\mapsto\Ebb_r[\phi_{t,\lambda}(X)\phi_{r,\lambda}(X)^\top]\)
        is continuous at \((\theta_0,\theta_0)\).
        If \(\lambda<1\), the map
        \((r,t)\mapsto\Ebb_r[D\phi_{t,\lambda}(X)D\phi_{r,\lambda}(X)^\top]\)
        is continuous at \((\theta_0,\theta_0)\).
    \end{enumerate}
    Then
    \begin{equation}\label{eq:HK-information-identity}
        \Ebb_{\theta_0}\left[
            D_\theta\phi_{\theta_0,\lambda}(X)
        \right]
        =
        -I_\lambda(\theta_0).
    \end{equation}
\end{lemma}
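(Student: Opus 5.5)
The plan is to differentiate, in the parameter, the identity obtained by testing the transport-source equation against the score function itself at a frozen parameter value. Fix $t\in U$ and a direction $h\in\R^p$. Consider the path $r=\theta_0+uh$ for small $u$. By $\HK_\lambda$ regularity, this path solves \eqref{HK-cont-eq} with potential $h^\top\phi_{\theta_0+uh,\lambda}$. Apply Theorem~\ref{thm:loc-Lip-test-funcs} with the locally Lipschitz test function $f=\phi_{t,\lambda,j}$, with $t$ held fixed. Its hypotheses (i)--(iii) follow from assumption (i), which gives a uniform bound on $\Ebb_r[\|\phi_{t,\lambda}\|^2+\|D\phi_{t,\lambda}\|_{\mathrm F}^2]$ over $r,t\in U$. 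We obtain, for a.e.\ $u$,
\begin{equation*}
\frac{\diff}{\diff u}\Ebb_{\theta_0+uh}[\phi_{t,\lambda}(X)]
=\lambda\,\Ebb_{\theta_0+uh}\bigl[\phi_{t,\lambda}(X)\phi_{\theta_0+uh,\lambda}(X)^\top\bigr]h+(1-\lambda)\,\Ebb_{\theta_0+uh}\bigl[D\phi_{t,\lambda}(X)D\phi_{\theta_0+uh,\lambda}(X)^\top\bigr]h .
\end{equation*}
Write $G(r,t)$ for the matrix $\lambda\Ebb_r[\phi_t\phi_r^\top]+(1-\lambda)\Ebb_r[D\phi_tD\phi_r^\top]$. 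Integrating in $u$ gives the exact identity
\begin{equation*}
m(\theta_0+\delta h,t)-m(\theta_0,t)=\int_0^\delta G(\theta_0+uh,t)h\,\diff u,\qquad m(r,t):=\Ebb_r[\phi_{t,\lambda}(X)].
\end{equation*}
Note that $G(\theta_0,\theta_0)=I_\lambda(\theta_0)$.

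Next, use that the score is centered: $m(r,r)=0$ for all $r$ (Remark~\ref{remark:center-conv}). Continuity of $r\mapsto m(r,r)$, which follows from the integrated identity, upgrades the a.e.\ statement to every $r$, and for $\lambda=0$ this is the normalization convention. Set $t=\theta_0+\delta h$ and write
\begin{equation*}
0=m(t,t)-m(\theta_0,\theta_0)=\bigl[m(t,t)-m(\theta_0,t)\bigr]+\bigl[m(\theta_0,t)-m(\theta_0,\theta_0)\bigr].
\end{equation*}
The first bracket equals $\int_0^\delta G(\theta_0+uh,\theta_0+\delta h)h\,\diff u$. By the joint continuity of $G$ at $(\theta_0,\theta_0)$ (assumption (ii)), this is $\delta I_\lambda(\theta_0)h+o(\delta)$.

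The second bracket involves $t\mapsto\Ebb_{\theta_0}[\phi_{t,\lambda}(X)]$, with the measure held fixed. Since $s$ is jointly $C^1$, the mean value theorem gives
\begin{equation*}
\phi_{t,\lambda}(x)-\phi_{\theta_0,\lambda}(x)=\int_0^1 D_\theta\phi_{\theta_0+v\delta h,\lambda}(x)\,\delta h\,\diff v .
\end{equation*}
Condition (iv) of Theorem~\ref{thm:z-est} says the class $\{D_\theta s(\theta,\cdot)\}$ satisfies a ULLN with finite expectations. Any such class has an integrable envelope, since a ULLN under $P$ implies Glivenko--Cantelli and hence $\Ebb^*\sup\|D_\theta s\|<\infty$ after centering. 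This lets us apply dominated convergence and Fubini, and together with continuity of $\theta\mapsto\Ebb_{\theta_0}[D_\theta\phi_{\theta,\lambda}]$ it shows the second bracket is $\delta\,\Ebb_{\theta_0}[D_\theta\phi_{\theta_0,\lambda}(X)]h+o(\delta)$.

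Dividing by $\delta$ and letting $\delta\to0$ gives $\bigl(I_\lambda(\theta_0)+\Ebb_{\theta_0}[D_\theta\phi_{\theta_0,\lambda}]\bigr)h=0$ for every $h$, which is \eqref{eq:HK-information-identity}.

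The main obstacle is the second bracket: justifying that we may differentiate under $\Ebb_{\theta_0}$ in the parameter of the score. The ULLN-to-envelope step is where I would be most careful. If it is delicate, I would instead use the ULLN directly. Apply it to the empirical version of the mean-value expansion, noting that the empirical average of $\phi_t-\phi_{\theta_0}$ converges to $m(\theta_0,t)-m(\theta_0,\theta_0)$ by the strong law (assumption (i) gives integrability). The empirical integral of $D_\theta\phi$ then converges uniformly, and continuity of its expectation near $\theta_0$ follows from the ULLN together with uniform continuity of the limit on a compact subball. A secondary point is making sure the centering $m(r,r)=0$ holds at every $r$ near $\theta_0$, not just almost every $r$. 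As noted above, continuity of $r\mapsto m(r,r)$ obtained from the integrated identity and assumption (ii) should handle this.
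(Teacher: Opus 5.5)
Your proposal is correct and follows the paper's proof. There too one computes $\Ebb_{\theta_0}[\phi_{\theta_0+h,\lambda}(X)-\phi_{\theta_0,\lambda}(X)]$ twice: once through Theorem~\ref{thm:loc-Lip-test-funcs} with the test function frozen, together with centering and continuity of $G$ at $(\theta_0,\theta_0)$, and once through the empirical fundamental theorem of calculus, the ULLN and the law of large numbers under $P_{\theta_0}$.

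For the fixed-measure step, use your fallback, which is exactly the paper's argument. The paper's ULLNs hold only in probability, and the implication from a ULLN to an integrable envelope is a theorem only for almost-sure Glivenko--Cantelli classes.

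Your upgrade of the almost-everywhere centering is sound, but the justification should be different. Continuity of $t\mapsto\Ebb_r[\phi_{t,\lambda}(X)]$ comes from joint continuity of $s$ plus the uniform integrability given by assumption (i), not from the integrated identity or (ii).
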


Now we give our main result on constructing Pareto efficient estimators.

\begin{theorem}[Asymptotic efficiency of HKZE]
\label{thm:HK-Z-attainment}
    Fix \(0\leq\lambda\leq1\). Suppose
    \(\Pcal=\{P_\theta:\theta\in\Theta\}\subseteq
    \Pcal_{2,\mathrm{ac}}(\R^d)\) is \(\HK_\lambda\) regular with
    \(\Theta\subseteq\R^p\) open. Fix \(\theta_0\in\Theta\), and let
    \(X_1,\ldots,X_n\stackrel{\mathrm{iid}}{\sim}P_{\theta_0}\).
    Suppose that the conditions of Lemma~\ref{lem:HK-information-identity}
    and the assumptions of Theorem~\ref{thm:z-est} are satisfied by the score function \(s(t,x)=\phi_{t,\lambda}(x)\), and let \(T_{n,\lambda}\) be a consistent HKZE. If \(I_\lambda(\theta_0)\) is invertible, then as $n\to\infty$, 
    \[
        \sqrt n(T_{n,\lambda}-\theta_0)
        \overset{\mathrm d}{\to}
        \mathcal N\bigl(0,V_\lambda(\theta_0)\bigr)\qquad\textnormal{and}\qquad n\sum_{i=1}^n
        D_{x_i}T_{n,\lambda}(D_{x_i}T_{n,\lambda})^\top
        \overset{P_{\theta_0}}{\to}
        S_\lambda(\theta_0).
    \] 
\end{theorem}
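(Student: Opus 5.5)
The plan is to treat the two conclusions separately: the sensitivity statement comes from Theorem~\ref{thm:z-est}, and the distributional statement from the classical $Z$-estimator asymptotics (\cite[Theorem~5.21]{vanDerVaart}). Both use the same normalization matrix, which Lemma~\ref{lem:HK-information-identity} identifies as $-I_\lambda(\theta_0)$.

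\textbf{Sensitivity.} Apply Theorem~\ref{thm:z-est} with $P=P_{\theta_0}$ and $s(t,x)=\phi_{t,\lambda}(x)$; its hypotheses are assumed. This gives
\[
n\sum_{i=1}^n D_{x_i}T_{n,\lambda}(D_{x_i}T_{n,\lambda})^\top \to M^{-1}\,\Ebb_{\theta_0}\!\left[D\phi_{\theta_0,\lambda}D\phi_{\theta_0,\lambda}^\top\right] M^{-\top}
\]
in probability, where $M:=\Ebb_{\theta_0}[D_\theta\phi_{\theta_0,\lambda}(X)]$. By Lemma~\ref{lem:HK-information-identity}, $M=-I_\lambda(\theta_0)$. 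Since $I_\lambda(\theta_0)$ is symmetric and invertible, the signs cancel and the limit is exactly $S_\lambda(\theta_0)$ from Definition~\ref{def:HK-info}.

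\textbf{Asymptotic normality.} I would linearize the score equation around $\theta_0$. Set $\Psi_n(\theta)=n^{-1}\sum_i\phi_{\theta,\lambda}(X_i)$; consistency puts $T_{n,\lambda}$ in $B(\theta_0,\varepsilon)$ with probability tending to one, where $\Psi_n$ is $C^1$. A row-wise mean value expansion gives
\[
0=\Psi_n(T_{n,\lambda})=\Psi_n(\theta_0)+\widetilde{M}_n\,(T_{n,\lambda}-\theta_0),
\]
where each row of $\widetilde M_n$ is the corresponding row of $n^{-1}\sum_i D_\theta\phi_{\tilde\theta_j,\lambda}(X_i)$, with $\tilde\theta_j$ on the segment between $\theta_0$ and $T_{n,\lambda}$. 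Convergence $\widetilde M_n\to M$ in probability follows from three facts: the ULLN in condition~(iv) of Theorem~\ref{thm:z-est}, continuity of $\theta\mapsto\Ebb_{\theta_0}[D_\theta\phi_{\theta,\lambda}]$ near $\theta_0$, and consistency. That continuity is the step I expect to be most delicate. I would try to extract it from the ULLN combined with joint $C^1$-ness and dominated convergence; if this fails, I would revisit the proof of Lemma~\ref{lem:HK-information-identity}, which already studies the dependence of $\Ebb_r[\phi_{t,\lambda}]$ on $t$.

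For the leading term, Remark~\ref{remark:center-conv} gives $\Ebb_{\theta_0}[\phi_{\theta_0,\lambda}]=0$, and assumption~(i) of Lemma~\ref{lem:HK-information-identity} gives finite second moments. The multivariate CLT then yields
\[
\sqrt n\,\Psi_n(\theta_0)\overset{\mathrm d}{\to}\Ncal\!\left(0,\Ebb_{\theta_0}[\phi_{\theta_0,\lambda}\phi_{\theta_0,\lambda}^\top]\right).
\]
The centering holds only for a.e.\ $\theta$ in the remark. I would instead derive it directly at $\theta_0$ by testing the transport-source equation against $f\equiv1$ via Theorem~\ref{thm:loc-Lip-test-funcs}: mass is conserved, so $\lambda\int\phi_t\,\diff P_{\theta_0+th}=0$ for a.e.\ $t$. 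Continuity from assumption~(ii) then passes this to $t=0$; for $\lambda=0$ the centering is a convention.

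Since $M$ is invertible, $\widetilde M_n$ is invertible with probability tending to one, so $\sqrt n(T_{n,\lambda}-\theta_0)=-\widetilde M_n^{-1}\sqrt n\Psi_n(\theta_0)$. Slutsky's lemma and $M=-I_\lambda(\theta_0)$ then give the limit $\Ncal(0,V_\lambda(\theta_0))$.
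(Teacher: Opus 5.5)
Your proposal follows the paper's proof essentially step for step. The sensitivity limit comes from Theorem~\ref{thm:z-est} plus the information identity. Normality comes from linearizing the score equation, then the ULLN, consistency and continuity of $t\mapsto\Ebb_{\theta_0}[D_t\phi_{t,\lambda}(X)]$, and finally the CLT with Slutsky. The only cosmetic difference is that the paper uses the integral form $\int_0^1 n^{-1}\sum_i D_\theta\phi_{\theta_0+u(T_{n,\lambda}-\theta_0),\lambda}(X_i)\,\diff u$ rather than a row-wise mean value expansion.

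The continuity you flag as delicate is handled in the paper exactly by your first idea, and no dominated convergence is needed. The empirical maps are continuous by joint $C^1$-ness. By the ULLN they converge uniformly on a ball, along an almost surely convergent subsequence. Their limit is therefore continuous.

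The one step that does not work as written is your upgrade of the centering from almost every $t$ to $\theta_0$ itself. Assumption~(ii) of Lemma~\ref{lem:HK-information-identity} concerns only the cross second moments $\Ebb_r[\phi_{t,\lambda}(X)\phi_{r,\lambda}(X)^\top]$ and $\Ebb_r[D\phi_{t,\lambda}(X)D\phi_{r,\lambda}(X)^\top]$. It gives no continuity of the mean $r\mapsto\Ebb_r[\phi_{r,\lambda}(X)]$, so it cannot pass your almost-everywhere identity to $t=0$. The paper itself simply invokes Remark~\ref{remark:center-conv} at $\theta_0$.

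An easy repair within your own argument is that consistency forces the centering:
\begin{itemize}
\item Your expansion gives $\Psi_n(\theta_0)=-\widetilde M_n(T_{n,\lambda}-\theta_0)=o_p(1)$. This only needs $\widetilde M_n=O_p(1)$, and the convergence $\widetilde M_n\to M$ does not use centering.
\item The law of large numbers, with assumption~(i), gives $\Psi_n(\theta_0)\to\Ebb_{\theta_0}[\phi_{\theta_0,\lambda}(X)]$ in probability.
\end{itemize}
Hence $\Ebb_{\theta_0}[\phi_{\theta_0,\lambda}(X)]=0$, which is what your CLT step needs.
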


Lastly, we may revisit our running examples and construct specific Pareto efficient estimators.

\begin{example}[Laplace location]\label{ex:Laplace-HK-Z}\label{laplace_lb}
    In the Laplace location family of Example~\ref{ex:Laplace-reg}, we can use the known form of the $\HK$ score functions to explicitly write $V_\lambda(\theta)$ and $S_\lambda(\theta)$ as
    \begin{equation*}
        V_{\lambda}(\theta)=
        1+
        \sqrt{\frac{1-\lambda}{1+3\lambda}}\qquad \textnormal{ and }\qquad S_{\lambda}(\theta) =
        \frac{
            \left(
                \sqrt{\frac{1+3\lambda}{1-\lambda}}+1
            \right)^2
        }{
            4\sqrt{\frac{1+3\lambda}{1-\lambda}}
        },
    \end{equation*}
    for all $0\le \lambda<1$, while in the pure transport case we have $(V_0(\theta),S_0(\theta))=(2,1)$ and in the pure source case we have  $\lim_{\lambda\uparrow1}(V_\lambda(\theta),S_\lambda(\theta))=(1,\infty)$.
    Moreover, the \(\HK_\lambda\) score function satisfies the conditions of Theorem~\ref{thm:HK-Z-attainment} for \(0\le\lambda<1\), so the estimator given by
    \[
        T_{n,\lambda}
        =
        \begin{cases}
            \frac1n\sum_{i=1}^nX_i,
            & \lambda=0,\\
            \textnormal{unique solution of }
            \sum_{i=1}^n
            \operatorname{sgn}(X_i-\theta)
            \left(
                1-e^{-\alpha_\lambda|X_i-\theta|}
            \right)=0,
            & 0<\lambda<1,\\
            \textnormal{median of } X_1,\ldots,X_n,
            & \lambda=1,
        \end{cases}
    \]
    coincides with the HKZE and is hence a Pareto-efficient estimator for each \(0\le\lambda<1\).
    One can also check directly (recall \cite[Example~1.3]{WCR}) that asymptotic variance and sensitivity of the sample median are $(1,\infty)$, hence $T_{n,1}$ is also Pareto efficient for $\lambda=1$.
\end{example}

Since Example~\ref{ex:Laplace-HK-Z} identifies a family of estimators which interpolate between the sample mean and the sample median, it is natural to investigate the relationship between Huber's $M$-estimators (previously discussed in Example~\ref{ex:huber}) and Pareto-efficient estimation in the Laplace location family.
It turns out that Huber's $M$-estimators are not Pareto-efficient estimators of $\theta$ in this setting, but they are quite close to being so; in Proposition~\ref{prop:Laplace-Huber-rel-eff} we introduce a notion of worst-case relative Pareto efficiency, and we show that Huber's $M$-estimators achieve a value of 0.954 (where $1$ would correspond to efficiency). One may additionally wonder whether a single parametric model can have Huber estimating functions as its $\HK_\lambda$ score functions for all $0<\lambda<1$. Proposition~\ref{prop:huber-nonexistence} shows that this is impossible under its stated assumptions, even when the clipping threshold depends on $\lambda$.
Nonetheless, the HKZE of Example~\ref{ex:Laplace-HK-Z} shares many qualitative features of Huber's $M$-estimator, since it can be written as
\begin{equation*}
    T_{n,\lambda}:=\underset{\theta\in\Theta}{\arg\min\,}\frac{1}{n}\sum_{i=1}^{n}L_{\lambda}(X_i-\theta)\qquad\textnormal{where}\qquad L_{\lambda}(r) = \frac{1}{\alpha_{\lambda}}\left(|r|-\frac{1-e^{-\alpha_{\lambda}|r|}}{\alpha_{\lambda}}\right).
\end{equation*}
We note that the loss function $L_{\lambda}$ is even, locally quadratic near $r= 0$, and asymptotically linear as $r\to\infty$;
as such, $T_{n, \lambda}$ possesses many desirable properties from robust statistics.
The primary difference is that $L_{\lambda}$ smoothly transitions between its regimes whereas Huber's loss function has a sharp transition, so we may regard it as an additional example of a ``Huber-like'' loss function such as those constructed from the functions $\log(\cosh(r))$ and $\sqrt{r^2+1}-1$.
For these reasons we believe that the loss function $L_{\lambda}$ may be useful for ``Huberizing'' the residuals in many location-type models (e.g., linear regression) if it is also desired to achieve small sensitivity.

\begin{figure}
	\centering
	\begin{tikzpicture}[font=\footnotesize]

      \def\PaneloneS{3.5}   
      \def\baseS{3}         
      \def\panelIIIx{12.5}  
      \def\yscaleL{6}\def\yscaleD{3}\def\del{0.2}\def\xm{0.6}\def\ax{0.7}
      \pgfmathsetmacro{\halfW}{\ax*\baseS}                       
      \pgfmathsetmacro{\Rone}{\PaneloneS*1.2}                    
      \pgfmathsetmacro{\panelIIx}{(\panelIIIx+\Rone-\halfW)/2}   
      \pgfmathsetmacro{\yscII}{\PaneloneS*1.6}
      \pgfmathsetmacro{\yscIII}{\PaneloneS*0.8}
      \pgfmathsetmacro{\panelIIIyc}{\PaneloneS*0.6}
      \pgfmathsetmacro{\titleY}{\PaneloneS*1.2+0.4}
      \pgfmathsetmacro{\titleyI}{\titleY/\PaneloneS}
      \pgfmathsetmacro{\titleyII}{\titleY/\yscII}
      \pgfmathsetmacro{\titleyIII}{(\titleY-\panelIIIyc)/\yscIII}
    
      \begin{scope}[scale=\PaneloneS]
        \node at (0.55,\titleyI) {variance-sensitivity diagram};
        \draw[thick, -stealth] (0,0) -- (1.2,0);
        \draw[thick, -stealth] (0,0) -- (0,1.2);
        \draw[thick, dashed] (0,0.2) -- (1,0.2);
        \draw[thick, dashed] (0.2,0) -- (0.2,1);
        \node at (0.5, -0.25) {asymptotic variance};
        \node[rotate=90] at (-0.25, 0.75) {asymptotic sensitivity};
        \draw[thick] (-0.05,0)   node[left]{$0$}      -- (0.05,0);
        \draw[thick] (-0.05,0.2) node[left]{$1$}      -- (0.05,0.2);
        \draw[thick] (-0.05,1)   node[left]{$\infty$} -- (0.05,1);
        \draw[thick] (0,-0.05)   node[below]{$0$}       -- (0,0.05);
        \draw[thick] (0.2,-0.05) node[below right]{$1$} -- (0.2,0.05);
        \draw[thick] (0.7,-0.05) node[below]{$2$}       -- (0.7,0.05);
        \path [fill=mygreen, thick, opacity=0.3, domain=0:1, samples=100]
          plot ({0.2+0.5*\x^2}, {0.2+0.8*(\x-1)^2}) -- (1,0.2) -- (1,1) -- cycle;
        \draw [color=mygreen, ultra thick, domain=0:1, samples=100]
          plot ({0.2+0.5*\x^2}, {0.2+0.8*(\x-1)^2}) -- (1,0.2);
        \node[color=mygreen,rotate=-90] at (0.1, 0.65) {Pareto front};
        \draw [color=red, ultra thick, domain=0:1, samples=100]
          plot ({0.2+0.5*\x}, {0.2+0.8*(\x-1)^2});
        \node[color=red,rotate=-65,font=\scriptsize] at (0.4, 0.65) {Huber $M$-estimator};
        \filldraw[color=blue] (0.2, 1) circle (0.5pt) node[above right] {sample median};
        \filldraw[color=blue] (0.7, 0.2) circle (0.5pt) node[above right] {sample mean};
      \end{scope}
    
      \begin{scope}[shift={(\panelIIx,0)}]
       \begin{scope}[xscale=\baseS, yscale=\yscII]
        \node at (0,\titleyII) {loss function};
        \draw[thick, stealth-stealth] (-\ax,0) -- (\ax,0);
        \draw[thick, -stealth] (0,0) -- (0,0.75);
        \draw[thick] (-\del,-0.04) node[below]{$-\alpha$} -- (-\del,0.04);
        \draw[thick] ( \del,-0.04) node[below]{$\alpha$}  -- ( \del,0.04);
        \draw[red, ultra thick, stealth-, domain=-\xm:-\del, samples=100] plot ({\x},{\yscaleL*\del*(-\x-\del*0.5)});
        \draw[red, ultra thick, domain=-\del:\del, samples=100] plot ({\x},{\yscaleL*0.5*\x*\x});
        \draw[red, ultra thick, -stealth, domain=\del:\xm,   samples=100] plot ({\x},{\yscaleL*\del*(\x-\del*0.5)});
        \draw[mygreen, ultra thick, stealth-stealth, domain=-\xm:\xm, samples=200]
          plot ({\x},{\yscaleL*(\del*(abs(\x) - \del*(1-exp(-abs(\x)/\del))))});
        \node[below] at (\ax,0) {residual};
       \end{scope}
      \end{scope}
    
      \begin{scope}[shift={(\panelIIIx,\panelIIIyc)}]
       \begin{scope}[xscale=\baseS, yscale=\yscIII]
        \node at (0,\titleyIII) {influence function};
        \draw[thick, stealth-stealth] (-\ax,0) -- (\ax,0);
        \draw[thick, stealth-stealth] (0,-0.75) -- (0,0.75);
        \draw[thick] (-\del,-0.04) node[below]{$-\alpha$} -- (-\del,0.04);
        \draw[thick] ( \del,-0.04) node[below]{$\alpha$}  -- ( \del,0.04);
        \draw[dashed] (-\xm,{ \yscaleD*\del}) -- (\xm,{ \yscaleD*\del});
        \draw[dashed] (-\xm,{-\yscaleD*\del}) -- (\xm,{-\yscaleD*\del});
        \draw[red, ultra thick, stealth-, domain=-\xm:-\del, samples=100] plot ({\x},{-\yscaleD*\del});
        \draw[red, ultra thick, domain=-\del:\del, samples=100] plot ({\x},{ \yscaleD*\x});
        \draw[red, ultra thick, -stealth, domain=\del:\xm,   samples=100] plot ({\x},{ \yscaleD*\del});
        \draw[mygreen, ultra thick, stealth-, domain=-\xm:0, samples=100] plot ({\x},{\yscaleD*\del*(exp(\x/\del)-1)});
        \draw[mygreen, ultra thick, -stealth, domain=0:\xm,  samples=100] plot ({\x},{\yscaleD*\del*(1-exp(-\x/\del))});
        \node[below] at (\ax,0) {residual};
       \end{scope}
      \end{scope}
    \end{tikzpicture}
	\caption{The $M$-estimation interpretation of the HKZE in the Laplace location family.  
    We show the variance-sensitivity diagram, and the curve depicting the variance-sensitivity coordinates achieved by Huber's $M$-estimators (left).
    Then, for both the HKZE and Huber estimators, we show their loss functions (center) and influence functions (right), which share many qualitative properties.}
	\label{fig:laplace_diagram}
\end{figure}

\begin{example}[Uniform scale]\label{ex:Uniform-HK-Z}\label{uniform_lb}
    Consider the uniform scale family from Example~\ref{ex:unif-reg}, for which we can calculate
    \begin{equation*}
        V_{\lambda}(\theta)=
        \frac{
            \theta^4 \beta_\lambda^2
            \left[
                \frac12+\frac{\sinh(2\theta\beta_\lambda)}{4\theta\beta_\lambda}
                -
                \left(\frac{\sinh \theta\beta_\lambda}{\theta\beta_\lambda}\right)^2
            \right]
        }{
            \left(\theta\beta_\lambda\cosh (\theta\beta_\lambda)-\sinh (\theta\beta_\lambda)\right)^2
        }\qquad\textnormal{ and }\qquad S_{\lambda}(\theta)=
        \frac{
            \theta^4\beta_\lambda^4
            \left[
                \frac{\sinh(2\theta\beta_\lambda)}{4\theta\beta_\lambda}
                -
                \frac12
            \right]
        }{
            \left(\theta\beta_\lambda\cosh (\theta\beta_\lambda)-\sinh (\theta\beta_\lambda)\right)^2
        },
    \end{equation*}
    for all $0<\lambda<1$, where \(\beta_\lambda:=\sqrt{\lambda/(1-\lambda)}\).
    Also note that at the pure transport and pure source endpoints we have $(V_0(\theta),S_0(\theta))=(\theta^2/5,3)$ and $\lim_{\lambda\uparrow1}(V_\lambda(\theta),S_\lambda(\theta))=(0,\infty)$.
    Since the $\HK$ score functions satisfy the conditions of Theorem~\ref{thm:HK-Z-attainment} for \(0\le\lambda<1\), the estimator defined as
    \[
        T_{n,\lambda}
        =
        \begin{cases}
            \sqrt{\frac3n\sum_{i=1}^nX_i^2},
            & \lambda=0,\\
            \textnormal{the unique positive solution of }
            \frac{
                \sinh(\beta_\lambda\theta)
            }{
                \beta_\lambda\theta
            }
            =
            \frac1n\sum_{i=1}^n
            \cosh(\beta_\lambda X_i),
            & 0<\lambda<1,\\
            X_{(n)},
            & \lambda=1,
        \end{cases}
    \]
    corresponds to the HKZE for \(0\le\lambda<1\) and is therefore Pareto efficient.
    Although this model is not $\HK_1$ regular, it is easy to show $n(T_{n,1}-\theta_0)
        \to
        -\textnormal{Exp}(\theta_0)$ in distribution (where \(\textnormal{Exp}(\theta_0)\) has mean \(\theta_0\)) and $n\sum_{i=1}^n\|\nabla_{x_i}T_{n,1}\|^2=n\to\infty$, hence $T_{n,1}$ is also Pareto efficient for $\lambda=1$.
\end{example}

As in Example~\ref{ex:Laplace-HK-Z}, the HKZE from Example~\ref{ex:Uniform-HK-Z} can also be written as a suitable $M$-estimator, namely as
\begin{equation*}
    \begin{gathered}
        T_{n,\lambda}:=\underset{\theta\in\Theta}{\arg\min\,}\frac{1}{n}\sum_{i=1}^{n}L_{\lambda}(\theta,X_i)\\
        \textnormal{where}\qquad L_{\lambda}(\theta,x) = -\frac{1}{\lambda}\left(\cosh(\beta_{\lambda}x)\log\left|\tanh\left(\frac{\beta_{\lambda}\theta}{2}\right)\right|-\log \theta\right),
    \end{gathered}
\end{equation*}
which we plot in Figure~\ref{fig:uniform-scale-m-est}.
To understand the qualitative properties of this loss function, note that we have $L_{\lambda}(\theta,x)\sim \lambda^{-1}\log \theta$ as $\theta\to \infty$, and $L_{\lambda}(\theta,x)\sim \lambda^{-1}(\cosh(\beta_{\lambda}x)-1)\log (1/\theta)$ as $\theta\to 0$, with a smooth interpolation in between.
In fact, these qualitative properties are shared across the $M$-estimators representing HKZE in many scale families (e.g., Gaussian, exponential);
we have $L_{\lambda}(\theta,x)\sim \lambda^{-1}\log \theta$ as $\theta\to \infty$, and $L_{\lambda}(\theta,x)\sim \lambda^{-1}(f_{\lambda}(x)-1)\log (1/\theta)$ as $\theta\to 0$, for a suitable function $f_{\lambda}:\R^d\to\R$.
We are not aware if this $M$-estimator has been studied before, but we note that the empirical loss for the uniform scale family is quasiconvex in $\theta$ and hence can be easily numerically minimized.
Indeed, for $0<\lambda<1$ and $\theta>0$,
\[
    \frac{\diff}{\diff\theta}\left[\frac1n\sum_{i=1}^nL_\lambda(\theta,X_i)\right]
    =
    \frac{\beta_\lambda}{\lambda\sinh(\beta_\lambda\theta)}
    \left[
        \frac{\sinh(\beta_\lambda\theta)}{\beta_\lambda\theta}
        -\frac1n\sum_{i=1}^n\cosh(\beta_\lambda X_i)
    \right].
\]
Since $\theta\mapsto\sinh(\beta_\lambda\theta)/(\beta_\lambda\theta)$ increases strictly from $1$ to $\infty$ and $n^{-1}\sum_{i=1}^n\cosh(\beta_\lambda X_i)>1$ almost surely, the empirical loss decreases for $0<\theta<T_{n,\lambda}$ and increases for $\theta>T_{n,\lambda}$.

Lastly, we note that it is not true in general that an arbitrary $Z$-estimator can be written as an $M$-estimator.
When $p=1$, a continuous estimating function can be integrated with respect to $\theta$ to obtain a loss whose negative gradient is the estimating function; for $p\ge 2$, such a representation requires the estimating function to be a \textit{conservative} vector field in $\theta$ for each $x$.
A solution of the estimating equation is then a stationary point of the empirical loss; its minimizing property requires separate verification.
An interesting avenue for future work would be to understand when $\HK$ score functions correspond to conservative vector fields, and to study the resulting class of $M$-estimators.

\begin{figure}
  \centering
  \begin{tikzpicture}[font=\footnotesize]
    \def\xs{0.30}      
    \def\ys{3.2}       
    \def\bet{1}        
    \def\Acoef{2}      
    \def\Bcoef{1}      
    \def\Lsc{0.1714}   
    \def\Dsc{0.2617}   
    \def\tmax{12}      
    \def\panelIIx{4.6} 
    \pgfmathsetmacro{\cy}{0.66*\ys}              
    \pgfmathsetmacro{\titleY}{0.25*\ys+\cy+0.25} 
    \pgfmathsetmacro{\titleone}{\titleY/\ys}
    \pgfmathsetmacro{\titletwo}{(\titleY-\cy)/\ys}

    \begin{scope}[xscale=\xs, yscale=\ys]
      \node at (6,\titleone) {loss function};
      \draw[thick, -stealth] (0,0) -- (\tmax*1.06,0);      
      \draw[thick, -stealth] (0,0) -- (0,0.85);            
      \draw[mygreen, ultra thick, stealth-stealth,
            domain=0.08:\tmax, samples=220, smooth]
        plot ({\x},{\Lsc*(-\Acoef*ln((1-exp(-\bet*\x))/(1+exp(-\bet*\x)))
                          + \Bcoef*ln(\x))});
      \node[below] at (\tmax*0.98,0) {scale};
    \end{scope}

    \begin{scope}[shift={(\panelIIx,\cy)}]
     \begin{scope}[xscale=\xs, yscale=\ys]
      \node at (6,\titletwo) {influence function};
      \draw[thick, -stealth] (0,0) -- (\tmax*1.06,0);      
      \draw[thick, stealth-stealth] (0,-0.72) -- (0,0.25); 
      \draw[mygreen, ultra thick, stealth-,
            domain=0.4:\tmax, samples=220, smooth]
        plot ({\x},{\Dsc*(-2*\Acoef*\bet*exp(-\bet*\x)/(1-exp(-2*\bet*\x))
                          + \Bcoef/\x)});
      \node[below] at (\tmax*0.98,0) {scale};
     \end{scope}
    \end{scope}
  \end{tikzpicture}
    \caption{The $M$-estimation interpretation of the HKZE in the uniform scale family.
    We show the loss function $L_{\lambda}(\theta,x)$ as a function of the scale $\theta>0$ (left), and its influence function $\phi_{\theta,\lambda}(x)$ (right).}
    \label{fig:uniform-scale-m-est}
\end{figure}

\begin{example}[Gaussian mean-variance]\label{rem:Gaussian-LS-HK-Z}\label{Gaussian_loc_lb}
    Consider the Gaussian mean-variance family of Example~\ref{ex:Gaussian-loc-scale}, i.e., $P_{\theta}=\Ncal(\mu,\sigma^2)$, where
    $\theta=(\mu,\sigma^2)\in\Theta:=\R\times(0,\infty)$.
    Now $V_\lambda(\theta)$ and $S_\lambda(\theta)$ are matrices, and we can directly compute them as
    \begin{equation*}
        V_\lambda(\theta)
        =
        \begin{pmatrix}
            \sigma^2 & 0\\
            0 & 2\sigma^4
        \end{pmatrix}\qquad \textnormal{ and }\qquad S_\lambda(\theta)
        =
        \begin{pmatrix}
            1 & 0\\
            0 & 4\sigma^2
        \end{pmatrix},
    \end{equation*}
    respectively, for all $0\le \lambda\le 1$.
    Note that $V_{\lambda}(\theta)$ and $S_{\lambda}(\theta)$ do not depend on $0\le \lambda\le 1$; this follows directly from the fact that the $\HK_{\lambda}$ score functions involve $\lambda$ only in the normalizing constant.
    The conditions of Theorem~\ref{thm:HK-Z-attainment} are satisfied for all \(0\le\lambda\le1\), and the \(\HK_\lambda\) score functions for \(\mu\) and \(\sigma^2\) are nonvanishing scalar multiples of \(x-\mu\) and \((x-\mu)^2-\sigma^2\), respectively; thus, the \(\HK_\lambda\) score equation has the same root for every \(0\leq\lambda\leq1\),
    \[
        \hat\mu=\overline X,
        \qquad
        \hat\sigma^2
        =
        \frac1n\sum_{i=1}^n(X_i-\overline X)^2.
    \]
    In other words, $T_{n,\lambda}:=(\bar{X},\hat{\sigma}^2)^{\top}$ is $\lambda$-Pareto efficient for all $0\le \lambda\le 1$.
\end{example}

\begin{example}[Gaussian correlation]\label{ex:Gaussian-corr-HK-Z}\label{gaussian_cor_lb}
    Consider the Gaussian correlation family from Example~\ref{ex:Gaussian-corr-reg}, i.e., $P_{\theta} = \Ncal(0,\Sigma_{\theta})$, where $\Sigma_{\theta}$ is given by~\eqref{eqn:Gaussian-corr-matrix-def}, for all $-1<\theta<1$.
    Then we can compute
    \begin{equation*}
        V_{\lambda}(\theta)=
        \frac{
            (1-\theta^2)^2
            \left((2-\lambda)^2+\lambda^2\theta^2\right)
        }{
            \left(2-\lambda(1-\theta^2)\right)^2
        }\qquad \textnormal{ and }\qquad S_{\lambda}(\theta)=
        \frac{
            2(1-\theta^2)
            \left((2-\lambda)^2+\lambda(4-\lambda)\theta^2\right)
        }{
            \left(2-\lambda(1-\theta^2)\right)^2
        },
    \end{equation*}
    for all $0\le \lambda\le 1$. 
    In order to describe the HKZE, let us write
    \begin{equation}
        \hat{\sigma}_X^2=\frac1n\sum_{i=1}^nX_i^2,\qquad \hat{\sigma}_Y^2=\frac1n\sum_{i=1}^nY_i^2, \qquad\textnormal{and}\qquad \hat\sigma_{XY}:=\frac1n\sum_{i=1}^nX_iY_i 
    \end{equation}
    At \(\lambda=0\), the HKZE is \(T_{n,0}=2\hat\sigma_{XY}/(\hat\sigma_X^2+\hat\sigma_Y^2)\) almost surely.
    For \(0<\lambda\le1\), we may choose \(T_{n,\lambda}\) to be the unique root \(t\in(-1,1)\) of \(\lambda t^3-\lambda\hat\sigma_{XY}t^2+(\hat\sigma_X^2+\hat\sigma_Y^2-\lambda)t+(\lambda-2)\hat\sigma_{XY}=0\) with the same sign as \(\hat\sigma_{XY}\). The selected \(T_{n,\lambda}\) satisfies the regularity conditions of Theorem~\ref{thm:HK-Z-attainment} and therefore is Pareto efficient; see Appendix~\ref{app:gaussian-correlation-verification}.
    Also, note that this estimator coincides with the MLE at \(\lambda=1\).
\end{example}

\begin{remark}[Asymptotic sensitivity-efficiency of Pearson correlation in Gaussian correlation family]
    By massaging the HK$_0$ZE in the previous example, we can find other asymptotically sensitivity-efficient estimators.
    Specifically, since the maps $(\sigma_{XY},\sigma_X^2,\sigma_Y^2)\mapsto 2\sigma_{XY}/(\sigma^2_X+\sigma_Y^2)$ and $(\sigma_{XY},\sigma_X^2,\sigma_Y^2)\mapsto\sigma_{XY}/(\sigma_X^2\sigma_Y^2)^{1/2}$ have the same value and gradient
    at $(\sigma_{XY},\sigma_X^2,\sigma_Y^2)=(\theta_0,1,1)$, it follows that
    \begin{equation*}
        \tilde{T}_{n,0}:= \frac{\hat{\sigma}_{XY}}{\sqrt{\hat{\sigma}_X^2\hat{\sigma}_Y^2}}
    \end{equation*}
    is consistent and has the same asymptotic variance and sensitivity as $T_{n,0}$, which is asymptotically sensitivity-efficient.
    Interestingly, note that $ \tilde{T}_{n,0}$ is the Pearson correlation in the model where the marginal variances are not known; although our model fixes the marginal variances, the asymptotic sensitivity somehow improves by weakening the identifiability assumptions.
\end{remark}

\begin{example}[Pareto index]\label{pareto_lb}
    Consider the Pareto index family with density $p_\theta(x)=\theta x^{-(\theta+1)}\mathbf 1_{\{x\ge 1\}}$ for $\theta\in\Theta:=(4,\infty)$.
    The stronger restriction $\theta>4$ ensures that $V_0(\theta)$ is finite, since $\Ebb_\theta[\phi_{\theta,0}(X)^2]<\infty$ if and only if $\theta>4$.
    As we saw before, the weak-sense solution to the $\HK_\lambda$ equation for $0<\lambda<1$ does not admit an elementary closed form. Nonetheless, we can compute
    \begin{equation*}
        V_{0}(\theta)=
        \frac{
            \theta(\theta-2)^2(5\theta^2-28\theta+40)
        }{
            4(\theta-4)^3
        }\qquad \textnormal{ and }\qquad S_0(\theta)=\frac{\theta(\theta-2)^3}{2},
    \end{equation*}
    and $V_1(\theta)=
        \theta^2$ and $S_1(\theta)=
        \theta^5/(\theta+2)$ for all $\theta>4$.

    For the broader parameter range \(\theta>2\), the lower-bound coordinates \((V_\lambda(\theta),S_\lambda(\theta))\) remain well defined and finite for \(0<\lambda<1\), as follows from the construction in Appendix~\ref{app:pareto-verification}.
\end{example}

\subsection{Efficiency at the Extremal Cases}\label{subsec:selection}

Our theory of lower bounds (and how to achieve them) applies at each $0\le\lambda\le1$ for which the required hypotheses hold.
These tools allow us to also say something precise about the extremal cases $\lambda\in\{0,1\}$, i.e., about the problem of minimizing purely the asymptotic variance or purely the asymptotic sensitivity. We explore a few such results in this subsection, which is limited to the case of $p=1$ for simplicity.

To set things up, we recall the class $\estims_n$ of locally Lipschitz unbiased estimators of $\theta$ with continuous variance and sensitivity from Definition~\ref{def:estims-class}, and we consider the optimization problems
\begin{equation}\label{eqn:selection-opt-source}
    \begin{cases}
        \textnormal{minimize} & \displaystyle\liminf_{n\to\infty} n\Sen_\theta(T_n) \\[0.3em]
        \textnormal{over} & \{T_n\}_{n\in\N}\textnormal{ with each }T_n\in\estims_n \\[0.3em]
        \textnormal{with} & \displaystyle\lim_{n\to\infty} n\Var_\theta(T_n)=\inf_{0<\lambda<1}V_{\lambda}(\theta)
    \end{cases}
\end{equation}
and
\begin{equation}\label{eqn:selection-opt-transport}
    \begin{cases}
        \textnormal{minimize} & \displaystyle\liminf_{n\to\infty} n\Var_\theta(T_n) \\[0.3em]
        \textnormal{over} & \{T_n\}_{n\in\N}\textnormal{ with each }T_n\in\estims_n \\[0.3em]
        \textnormal{with} & \displaystyle\lim_{n\to\infty} n\Sen_\theta(T_n)=\inf_{0<\lambda<1}S_{\lambda}(\theta).
    \end{cases}
\end{equation}
In words, problem~\eqref{eqn:selection-opt-source} aims to minimize the asymptotic sensitivity among all estimators with the optimal asymptotic variance, and problem~\eqref{eqn:selection-opt-transport} aims to minimize the asymptotic variance among all estimators with the optimal asymptotic sensitivity.
The main result in this subsection is a version of our lower bounds (Theorem~\ref{thm:HK-CR-bound}) for these two problems.

\begin{proposition}\label{prop:Pareto-extremal}

    Suppose $\Pcal=\{P_\theta:\theta\in\Theta\}
    \subseteq\Pcal_{2,\textnormal{ac}}(\R^d)$
    is $\HK$ regular with $\Theta\subseteq\R$, that 
    $\vartheta\mapsto
        \Ebb_\vartheta[\|\phi_{\vartheta,\lambda}(X)\|^2]$ and $\vartheta\mapsto
        \Ebb_\vartheta[\|\nabla\phi_{\vartheta,\lambda}(X)\|^2]$ are continuous on $\Theta$ for each $0<\lambda<1$, and that $T_n\in\estims_n$ for each $n\in\N$.
        Then, for every $\theta\in\Theta$ such that $I_\lambda(\theta)>0$ for all $0<\lambda<1$, the following holds:
        \begin{equation}\label{eqn:Pareto-extremal}
        \begin{split}
            \textnormal{if } \lim_{n\to\infty}n\Var_{\theta}(T_n)= \inf_{0<\lambda<1}V_\lambda(\theta), &\textnormal{ then }\liminf_{n\to\infty}n\Sen_{\theta}(T_n)\ge \sup_{0<\lambda<1}S_{\lambda}(\theta); \\
            \textnormal{if } \lim_{n\to\infty}n\Sen_{\theta}(T_n)= \inf_{0<\lambda<1}S_\lambda(\theta), &\textnormal{ then }\liminf_{n\to\infty}n\Var_{\theta}(T_n)\ge \sup_{0<\lambda<1}V_{\lambda}(\theta).
        \end{split}
    \end{equation}
    Thus, the value of~\eqref{eqn:selection-opt-source} is at least $\sup_{0<\lambda<1}S_{\lambda}(\theta)$ and the value of~\eqref{eqn:selection-opt-transport} is at least $\sup_{0<\lambda<1}V_{\lambda}(\theta)$.
\end{proposition}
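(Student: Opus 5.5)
We prove the first implication in~\eqref{eqn:Pareto-extremal}; the second follows by the symmetric argument with the roles of $(V_\lambda,\lambda)$ and $(S_\lambda,1-\lambda)$ exchanged. The plan is to apply Theorem~\ref{thm:HK-CR-bound} at each fixed $0<\lambda<1$. Write $v_n:=n\Var_\theta(T_n)$ and $s_n:=n\Sen_\theta(T_n)$. The hypotheses of the proposition are exactly those of Theorem~\ref{thm:HK-CR-bound} for $p=1$, so for every $0<\lambda<1$ and every $n$ we get
\[
\lambda v_n+(1-\lambda)s_n\ \ge\ (I_\lambda(\theta))^{-1}=\lambda V_\lambda(\theta)+(1-\lambda)S_\lambda(\theta).
\]
Set $V_*:=\inf_{0<\lambda<1}V_\lambda(\theta)$ and suppose $v_n\to V_*$. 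Taking $\liminf_{n}$ in the display gives
\[
\lambda V_*+(1-\lambda)\liminf_n s_n\ \ge\ \lambda V_\lambda(\theta)+(1-\lambda)S_\lambda(\theta).
\]

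Rearranging, we obtain
\[
\liminf_n s_n\ \ge\ S_\lambda(\theta)+\frac{\lambda}{1-\lambda}\bigl(V_\lambda(\theta)-V_*\bigr)\ \ge\ S_\lambda(\theta),
\]
because $V_\lambda(\theta)\ge V_*$ by the definition of the infimum. This holds for every $0<\lambda<1$, so taking the supremum over $\lambda$ yields $\liminf_n s_n\ge\sup_{0<\lambda<1}S_\lambda(\theta)$. The second line of~\eqref{eqn:Pareto-extremal} is identical: if $s_n\to S_*:=\inf_\lambda S_\lambda(\theta)$, then
\[
\liminf_n v_n\ \ge\ V_\lambda(\theta)+\frac{1-\lambda}{\lambda}\bigl(S_\lambda(\theta)-S_*\bigr)\ \ge\ V_\lambda(\theta)
\]
for each $\lambda$, and we take the supremum. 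The statements about the values of~\eqref{eqn:selection-opt-source} and~\eqref{eqn:selection-opt-transport} follow immediately, since every feasible sequence satisfies the corresponding hypothesis.

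There are three places where care is needed, though none should be a serious obstacle.
\begin{itemize}
\item If $\liminf_n s_n=\infty$ there is nothing to prove. Otherwise all quantities are finite: $V_*<\infty$ because each $V_\lambda(\theta)$ is finite, and in particular $\liminf_n s_n$ is attained along a subsequence on which we may pass to the limit in the linear inequality.
\item The identity $\lambda V_\lambda+(1-\lambda)S_\lambda=I_\lambda^{-1}$ is recorded right after Definition~\ref{def:HK-info}. It requires $I_\lambda(\theta)>0$, which is assumed for all $0<\lambda<1$.
\item The only check with any content is that the continuity hypotheses of Theorem~\ref{thm:HK-CR-bound} hold for every $\lambda$ in the open interval. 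They do, since they are assumed for each $0<\lambda<1$; the endpoints $\lambda\in\{0,1\}$ are never used, which is precisely why the bounds are stated with $\inf$ and $\sup$ over the open interval.
\end{itemize}
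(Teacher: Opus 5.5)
Your proposal is correct and is essentially the paper's own proof: apply Theorem~\ref{thm:HK-CR-bound} at each fixed $0<\lambda<1$ to get $\lambda n\Var_\theta(T_n)+(1-\lambda)n\Sen_\theta(T_n)\ge\lambda V_\lambda(\theta)+(1-\lambda)S_\lambda(\theta)$, take the lower limit using convergence of the constrained coordinate, rearrange using $V_\lambda(\theta)\ge\inf_{0<\lambda'<1}V_{\lambda'}(\theta)$ (or the analogous bound for $S$), and take the supremum over $\lambda$. Your extra remarks on finiteness of $V_\lambda(\theta)$ and $S_\lambda(\theta)$ and on the trivial case $\liminf_{n} n\Sen_\theta(T_n)=\infty$ are harmless refinements that the paper leaves implicit.
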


One important consequence of this result is that, by combining it with the developments of Subsection~\ref{subsec:HK-score}, we can construct estimators which solve problems~\eqref{eqn:selection-opt-source} and~\eqref{eqn:selection-opt-transport} in a suitable sense.
As before, we follow the classical perspective whereby, although the problem is formulated in terms of a sequence of unbiased estimators and their rescaled variance and sensitivity, we construct a solution as a consistent estimator with the appropriate asymptotic variance and sensitivity.

\begin{example}[Uniform scale]
    The work \cite{WCR} constructs two different consistent estimators which achieve the optimal asymptotic sensitivity in the uniform scale family of Example~\ref{ex:unif-reg}. Namely,
    \begin{equation*}
        T_n^{\WPE} = \frac{3}{n^2}\sum_{i=1}^{n}iX_{(i)},\qquad \textnormal{and}\qquad T_{n,0} = \sqrt{\frac{3}{n}\sum_{i=1}^{n}X_i^2}.
    \end{equation*}
    The first is the Wasserstein projection estimator (WPE) and the second estimator coincides with our HK$_0$ZE from Example~\ref{ex:Uniform-HK-Z}, although \cite{WCR} found the latter by different means.
    One can directly compute that both estimators have asymptotic variance equal to $\theta^2/5$ under $P_{\theta}=\textnormal{U}[0,\theta]$, but this raises the question of whether this asymptotic variance is optimal (among all estimators with optimal asymptotic sensitivity).
    We can then compute $\sup_{0<\lambda<1}V_{\lambda}(\theta) = \theta^2/5$, so Proposition~\ref{prop:Pareto-extremal} shows indeed that both are solutions to problem~\eqref{eqn:selection-opt-transport}.
\end{example}

In fact, this example is an instance of the following more general phenomenon.
The upshot of this result is that, by combining with Proposition~\ref{prop:Pareto-extremal}, it shows under very mild conditions that the MLE solves~\eqref{eqn:selection-opt-source} (and analogously for $T_{n,0}$ solving problem~\eqref{eqn:selection-opt-transport}).

\begin{corollary}\label{cor:extremal-efficiency}
    Let $\Pcal=\{P_\theta:\theta\in\Theta\}\subseteq\Pcal_{2,\textnormal{ac}}(\R^d)$
    be $\HK$ regular with $\Theta\subseteq\R$ open, and fix $\theta\in\Theta$ with
    $I_\lambda(\theta)>0$ for all $0<\lambda<1$.
    \begin{itemize}
        \item[(i)] Suppose that $\Pcal$ is $\HK_1$ regular, that
        $\inf_{0<\lambda<1}V_\lambda(\theta)=V_1(\theta)$, and that
        $\sup_{0<\lambda<1}S_\lambda(\theta)=S_1(\theta)$.
        If $T_n^{\MLE}$ satisfies the assumptions of
        Theorem~\ref{thm:HK-Z-attainment} with $\lambda=1$ and $\theta_0=\theta$, then
        \[
            \sqrt n\,(T_n^{\MLE}-\theta)\overset{\mathrm d}{\to}
            \Ncal\Bigl(0,\inf_{0<\lambda<1}V_\lambda(\theta)\Bigr)
            \qquad\textnormal{and}\qquad
            n\sum_{i=1}^n\|\nabla_{x_i}T_n^{\MLE}\|^2\overset{P_\theta}{\to}
            \sup_{0<\lambda<1}S_\lambda(\theta).
        \]
        \item[(ii)] Suppose that $\Pcal$ is $\HK_0$ regular, that
        $\sup_{0<\lambda<1}V_\lambda(\theta)=V_0(\theta)$, and that
        $\inf_{0<\lambda<1}S_\lambda(\theta)=S_0(\theta)$.
        If $T_{n,0}$ satisfies the assumptions of
        Theorem~\ref{thm:HK-Z-attainment} with $\lambda=0$ and $\theta_0=\theta$, then
        \[
            \sqrt n\,(T_{n,0}-\theta)\overset{\mathrm d}{\to}
            \Ncal\Bigl(0,\sup_{0<\lambda<1}V_\lambda(\theta)\Bigr)
            \qquad\textnormal{and}\qquad
            n\sum_{i=1}^n\|\nabla_{x_i}T_{n,0}\|^2\overset{P_\theta}{\to}
            \inf_{0<\lambda<1}S_\lambda(\theta).
        \]
    \end{itemize}
\end{corollary}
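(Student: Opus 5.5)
The corollary follows by direct substitution into Theorem~\ref{thm:HK-Z-attainment}; the stated hypotheses are there exactly to identify the endpoint values with the infimum and supremum. We treat part (i) in detail; part (ii) is the mirror image with $\lambda=0$.

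For (i), $\Pcal$ is assumed $\HK_1$ regular. By Remark~\ref{rem:HK1-MLE}, the HK$_1$ score equation is the likelihood equation, so an interior MLE is an HK$_1$ZE $T_{n,1}$. By hypothesis $T_n^{\MLE}$ is consistent and satisfies the conditions of Lemma~\ref{lem:HK-information-identity} and Theorem~\ref{thm:z-est}. The one remaining requirement of Theorem~\ref{thm:HK-Z-attainment} is invertibility of $I_1(\theta)$ (here $p=1$, so positivity).

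Positivity is the only step needing care. The hypothesis $\inf_{0<\lambda<1}V_\lambda(\theta)=V_1(\theta)$ presupposes that $V_1(\theta)$ is well defined, which by Definition~\ref{def:HK-info} already requires $I_1(\theta)$ to be invertible. To argue it directly: $I_1(\theta)=\Ebb_\theta[|\phi_{\theta,1}|^2]\ge0$. If it vanished, then $\phi_{\theta,1}=0$ $P_\theta$-a.e., so $V_1(\theta)$ could not be defined. So we may take $I_1(\theta)>0$. Alternatively, the convexity of $\lambda\mapsto I_\lambda(\theta)$ from Lemma~\ref{lem:Info-cvx}, together with $I_\lambda(\theta)>0$ on $(0,1)$, gives the same conclusion once continuity at the endpoint is known.

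Theorem~\ref{thm:HK-Z-attainment} then gives
\[
\sqrt n(T_n^{\MLE}-\theta)\overset{\mathrm d}{\to}\Ncal(0,V_1(\theta)),\qquad n\sum_{i=1}^n\|\nabla_{x_i}T_n^{\MLE}\|^2\overset{P_\theta}{\to}S_1(\theta).
\]
Substituting the assumed identities $V_1(\theta)=\inf_{0<\lambda<1}V_\lambda(\theta)$ and $S_1(\theta)=\sup_{0<\lambda<1}S_\lambda(\theta)$ yields (i).

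For (ii), the argument is identical. Use $\HK_0$ regularity, the centering convention of Remark~\ref{remark:center-conv}, and $T_{n,0}$ as the HK$_0$ZE. Apply Theorem~\ref{thm:HK-Z-attainment} with $\lambda=0$, using $I_0(\theta)=\Ebb_\theta[\|\nabla\phi_{\theta,0}\|^2]>0$ as implied by $S_0(\theta)$ being well defined. Then substitute $V_0(\theta)=\sup_{0<\lambda<1}V_\lambda(\theta)$ and $S_0(\theta)=\inf_{0<\lambda<1}S_\lambda(\theta)$.

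Combining either part with Proposition~\ref{prop:Pareto-extremal} gives the claimed optimality interpretation for problems~\eqref{eqn:selection-opt-source} and~\eqref{eqn:selection-opt-transport}.
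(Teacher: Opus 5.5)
Your proposal is correct and is essentially the paper's proof: the paper applies Theorem~\ref{thm:HK-Z-attainment} at $\lambda=1$ and at $\lambda=0$, then substitutes the assumed identities for the endpoint values. (Invertibility of $I_1(\theta)$, resp.\ $I_0(\theta)$, is already a hypothesis of that theorem and is implicit in $V_1(\theta)$, resp.\ $S_0(\theta)$, being defined.)

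Your ``alternatively'' remark needs a correction, though it is not needed for the proof. Convexity of $\lambda\mapsto I_\lambda(\theta)$, positivity on $(0,1)$ and continuity at the endpoint do not force $I_1(\theta)>0$; the function $1-\lambda$ is a counterexample. What does work is the concavity of $\lambda\mapsto(I_\lambda(\theta))^{-1}$ from Lemma~\ref{lem:Info-cvx}: it bounds $(I_\lambda(\theta))^{-1}$ above on $(0,1)$ and hence keeps $I_\lambda(\theta)$ away from zero.
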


Another consequence of Proposition~\ref{prop:Pareto-extremal} exploits the fact that it does not require $\HK$ regularity at the endpoints $\lambda\in\{0,1\}$ but only on $0<\lambda<1$.
As such, we can determine some surprising results on lower bounds even when the model is only assumed $\HK_{\lambda}$ regular for $0<\lambda<1$.
This is best illustrated in some examples.

\begin{example}[Laplace location]\label{ex:Laplace-extreme}
    It is straightforward (see \cite[Example~1.3]{WCR}) to show that, in the Laplace location family of Example~\ref{ex:Laplace-reg}, the sample median has optimal asymptotic variance but non-vanishing sensitivity.
    With Proposition~\ref{prop:Pareto-extremal} we can show much more, simply by computing $\inf_{0<\lambda<1}V_{\lambda}(\theta) = (I(\theta))^{-1} =1$ and $\sup_{0<\lambda<1}S_{\lambda}(\theta)=\infty$.
    Indeed, we conclude the stronger statement that any unbiased estimator with optimal asymptotic variance must have sensitivity decaying strictly slower than $n^{-1}$.
\end{example}

\begin{example}[Uniform scale]\label{ex:Uniform-extreme}
    It is easy to compute (see \cite[Example~1.2]{WCR}) that, in the uniform scale family of Example~\ref{ex:unif-reg}, the sample maximum has variance decaying like $n^{-2}$ and has constant (hence non-vanishing) sensitivity.
    As above, Proposition~\ref{prop:Pareto-extremal} gives a stronger conclusion by computing $\inf_{0<\lambda<1}V_{\lambda}(\theta) = 0$ and $\sup_{0<\lambda<1}S_{\lambda}(\theta)=\infty$:
    Any unbiased estimator with variance decaying strictly faster than $n^{-1}$ must have sensitivity decaying strictly slower than $n^{-1}$.
\end{example}

As the preceding examples show, the importance of Proposition~\ref{prop:Pareto-extremal} is that we do not require any conditions of the form $\inf_{0<\lambda<1}V_{\lambda}(\theta)=(I(\theta))^{-1}$ or $\inf_{0<\lambda<1}S_{\lambda}(\theta)=(J(\theta))^{-1}$, nor do we require $\sup_{0<\lambda<1}V_{\lambda}(\theta)<\infty$ or $\sup_{0<\lambda<1}S_{\lambda}(\theta)<\infty$.
Example~\ref{ex:Uniform-extreme} on the uniform scale family is particularly interesting, since we obtain a very precise conclusion about fundamental lower bounds on the asymptotic variance of unbiased estimators, even though this model is not DQM.

\bibliographystyle{alpha}
\bibliography{biblio}

\newcommand{\etalchar}[1]{$^{#1}$}
\begin{thebibliography}{BDOW21}

\bibitem[AD21]{Aimsworth}
Mark Ainsworth and Justin Dong.
\newblock {G}alerkin neural networks: a framework for approximating variational
  equations with error control.
\newblock {\em SIAM J. Sci. Comput.}, 43(4):A2474--A2501, 2021.

\bibitem[AGS08]{AmbrosioGigliSavare}
Luigi Ambrosio, Nicola Gigli, and Giuseppe Savar{\'e}.
\newblock {\em Gradient Flows in Metric Spaces and in the Space of Probability
  Measures}.
\newblock Lectures in Mathematics ETH Z{\"u}rich. Birkh{\"a}user, Basel, second
  edition, 2008.

\bibitem[AM24]{AmariMatsuda}
Shun-ichi Amari and Takeru Matsuda.
\newblock Information geometry of {W}asserstein statistics on shapes and affine
  deformations.
\newblock {\em Inf. Geom.}, 7(2):285--309, 2024.

\bibitem[Ama85]{Amari}
Shun-ichi Amari.
\newblock {\em Differential-Geometrical Methods in Statistics}, volume~28 of
  {\em Lecture Notes in Statistics}.
\newblock Springer, New York, 1985.

\bibitem[BDOW21]{BartlDrapeauOblojWiesel2021}
Daniel Bartl, Samuel Drapeau, Jan Ob{\l}{\'o}j, and Johannes Wiesel.
\newblock Sensitivity analysis of {W}asserstein distributionally robust
  optimization problems.
\newblock {\em Proc. R. Soc. A}, 477(2256):20210176, 2021.

\bibitem[BE02]{BousquetElisseeff2002}
Olivier Bousquet and Andr{\'e} Elisseeff.
\newblock Stability and generalization.
\newblock {\em J. Mach. Learn. Res.}, 2:499--526, 2002.

\bibitem[BGL14]{BakryGentilLedoux}
Dominique Bakry, Ivan Gentil, and Michel Ledoux.
\newblock {\em Analysis and Geometry of {M}arkov Diffusion Operators}, volume
  348 of {\em Grundlehren der mathematischen Wissenschaften}.
\newblock Springer, Cham, 2014.

\bibitem[BGM{\etalchar{+}}16]{BravermanGargMaNguyenWoodruff}
Mark Braverman, Ankit Garg, Tengyu Ma, Huy~L. Nguyen, and David~P. Woodruff.
\newblock Communication lower bounds for statistical estimation problems via a
  distributed data processing inequality.
\newblock In {\em STOC}, 2016.

\bibitem[BR13]{BerthetRigollet}
Quentin Berthet and Philippe Rigollet.
\newblock Complexity theoretic lower bounds for sparse principal component
  detection.
\newblock In {\em COLT}, 2013.

\bibitem[Bre96a]{BreimanI}
Leo Breiman.
\newblock Bagging predictors.
\newblock {\em Mach. Learn.}, 24(2):123--140, 1996.

\bibitem[Bre96b]{BreimanII}
Leo Breiman.
\newblock Heuristics of instability and stabilization in model selection.
\newblock {\em Ann. Statist.}, 24(6):2350--2383, 1996.

\bibitem[BS08]{Brenner2008}
Susanne~C. Brenner and L.~Ridgway Scott.
\newblock {\em The Mathematical Theory of Finite Element Methods}.
\newblock Texts in Applied Mathematics. Springer, New York, 2008.

\bibitem[CAG20]{ChapelAlayaGasso}
Laetitia Chapel, Mokhtar~Z. Alaya, and Gilles Gasso.
\newblock Partial optimal transport with applications on positive-unlabeled
  learning.
\newblock In {\em NeurIPS}, 2020.

\bibitem[CCST22]{CaiChengSchmitzerThorpe2022}
Tianji Cai, Junyi Cheng, Bernhard Schmitzer, and Matthew Thorpe.
\newblock The linearized {H}ellinger--{K}antorovich distance.
\newblock {\em SIAM J. Imaging Sci.}, 15(1):45--83, 2022.

\bibitem[CLFB26]{ChakrabortyLuoBarber}
Abhinav Chakraborty, Yuetian Luo, and Rina Foygel~Barber.
\newblock Stability and accuracy trade-offs in statistical estimation.
\newblock arXiv preprint arXiv:2601.11701, 2026.

\bibitem[CPSV18]{ChizatPeyreSchmitzerVialard}
L{\'e}na{\"\i}c Chizat, Gabriel Peyr{\'e}, Bernhard Schmitzer, and Fran{\c
  c}ois-Xavier Vialard.
\newblock An interpolating distance between optimal transport and
  {F}isher--{R}ao metrics.
\newblock {\em Found. Comput. Math.}, 18(1):1--44, 2018.

\bibitem[CS90]{CarollStefanksi}
Raymond~J. Carroll and Leonard~A. Stefanski.
\newblock Approximate quasi-likelihood estimation in models with surrogate
  predictors.
\newblock {\em J. Amer. Statist. Assoc.}, 85(411):652--663, 1990.

\bibitem[DJW18]{DuchiJordanWainwright2018}
John~C. Duchi, Michael~I. Jordan, and Martin~J. Wainwright.
\newblock Minimax optimal procedures for locally private estimation.
\newblock {\em J. Amer. Statist. Assoc.}, 113(521):182--201, 2018.

\bibitem[Ehr05]{Ehrgott}
Matthias Ehrgott.
\newblock {\em Multicriteria Optimization}.
\newblock Springer, Berlin, second edition, 2005.

\bibitem[FJU25]{FallahJordanUlichney}
Alireza Fallah, Michael~I. Jordan, and Annie Ulichney.
\newblock The statistical fairness-accuracy frontier.
\newblock arXiv preprint arXiv:2508.17622, 2025.

\bibitem[Fle23]{Fleissner}
Florentine~Catharina Flei{\ss}ner.
\newblock A note on the differentiability of the {H}ellinger--{K}antorovich
  distances.
\newblock {\em Appl. Math. Optim.}, 87(3):37, 2023.

\bibitem[FSFC21]{FatrasSejourneFlamaryCourty}
Kilian Fatras, Thibault S{\'e}journ{\'e}, R{\'e}mi Flamary, and Nicolas Courty.
\newblock Unbalanced minibatch optimal transport; applications to domain
  adaptation.
\newblock In {\em ICML}, 2021.

\bibitem[GM17]{GallouetMonsaingeon2017}
Thomas~O. Gallou{\"e}t and L{\'e}onard Monsaingeon.
\newblock A {JKO} splitting scheme for {K}antorovich--{F}isher--{R}ao gradient
  flows.
\newblock {\em SIAM J. Math. Anal.}, 49(2):1100--1130, 2017.

\bibitem[GMB26a]{GiordanoMeagerBroderickI}
Ryan Giordano, Rachael Meager, and Tamara Broderick.
\newblock An automatic finite-sample robustness metric: when can dropping a
  little data change conclusions? {P}art {I}: definitions and experiments.
\newblock {\em Philos. Trans. R. Soc. A}, 384(2321):20250001, 2026.

\bibitem[GMB26b]{GiordanoMeagerBroderickII}
Ryan Giordano, Rachael Meager, and Tamara Broderick.
\newblock An automatic finite-sample robustness metric: when can dropping a
  little data change conclusions? {P}art {II}: theory and intuition.
\newblock {\em Philos. Trans. R. Soc. A}, 384(2321):20240614, 2026.

\bibitem[GMN14]{GargMaNguyen}
Ankit Garg, Tengyu Ma, and Huy~L. Nguyen.
\newblock On communication cost of distributed statistical estimation and
  dimensionality.
\newblock In {\em NeurIPS}, 2014.

\bibitem[GTJS25]{WCR}
Nicol{\'a}s Garc{\'i}a~Trillos, Adam~Quinn Jaffe, and Bodhisattva Sen.
\newblock {W}asserstein--{C}ram{\'e}r--{R}ao theory of unbiased estimation.
\newblock arXiv preprint arXiv:2511.07414, 2025.

\bibitem[HR09]{HuberRonchetti}
Peter~J. Huber and Elvezio~M. Ronchetti.
\newblock {\em Robust Statistics}.
\newblock Wiley Series in Probability and Statistics. John Wiley \& Sons,
  Hoboken, NJ, second edition, 2009.

\bibitem[Hub64]{Huber1964}
Peter~J. Huber.
\newblock Robust estimation of a location parameter.
\newblock {\em Ann. Math. Statist.}, 35(1):73--101, 1964.

\bibitem[LB20]{LaiBayraktar2020}
Lifeng Lai and Erhan Bayraktar.
\newblock On the adversarial robustness of robust estimators.
\newblock {\em IEEE Trans. Inform. Theory}, 66(8):5097--5109, 2020.

\bibitem[LC98]{LehmannCasella1998}
E.~L. Lehmann and George Casella.
\newblock {\em Theory of Point Estimation}.
\newblock Springer Texts in Statistics. Springer, New York, second edition,
  1998.

\bibitem[Leo17]{Leoni2017}
Giovanni Leoni.
\newblock {\em A First Course in {S}obolev Spaces}, volume 181 of {\em Graduate
  Studies in Mathematics}.
\newblock American Mathematical Society, Providence, RI, second edition, 2017.

\bibitem[LeV07]{LeVeque2007}
Randall~J. LeVeque.
\newblock {\em Finite Difference Methods for Ordinary and Partial Differential
  Equations: Steady-State and Time-Dependent Problems}.
\newblock Society for Industrial and Applied Mathematics, Philadelphia, PA,
  2007.

\bibitem[LKH25]{Lee2025}
Jae~Yong Lee, Seungchan Ko, and Youngjoon Hong.
\newblock Finite element operator network for solving elliptic-type parametric
  {PDE}s.
\newblock {\em SIAM J. Sci. Comput.}, 47(2):C501--C528, 2025.

\bibitem[LLMO26]{LiangLuMuOkumura}
Annie Liang, Jay Lu, Xiaosheng Mu, and Kyohei Okumura.
\newblock Algorithm design: a fairness-accuracy frontier.
\newblock {\em J. Polit. Econ.}, 134(5):1401--1467, 2026.

\bibitem[LM24]{LiuMolinari}
Yiqi Liu and Francesca Molinari.
\newblock Inference for an algorithmic fairness-accuracy frontier.
\newblock In {\em EC}, 2024.

\bibitem[LMS16]{Liero_2016}
Matthias Liero, Alexander Mielke, and Giuseppe Savar{\'e}.
\newblock Optimal transport in competition with reaction: the
  {H}ellinger--{K}antorovich distance and geodesic curves.
\newblock {\em SIAM J. Math. Anal.}, 48(4):2869--2911, 2016.

\bibitem[LMS18]{LieroMielkeSavare}
Matthias Liero, Alexander Mielke, and Giuseppe Savar{\'e}.
\newblock Optimal entropy-transport problems and a new
  {H}ellinger--{K}antorovich distance between positive measures.
\newblock {\em Invent. Math.}, 211(3):969--1117, 2018.

\bibitem[LMS23]{LieroMielkeSavare2023}
Matthias Liero, Alexander Mielke, and Giuseppe Savar{\'e}.
\newblock Fine properties of geodesics and geodesic {$\lambda$}-convexity for
  the {H}ellinger--{K}antorovich distance.
\newblock {\em Arch. Ration. Mech. Anal.}, 247(6):112, 2023.

\bibitem[LPS03]{Lapeyre2003}
B.~Lapeyre, {\'E}.~Pardoux, and R.~Sentis.
\newblock {\em Introduction to {M}onte-{C}arlo Methods for Transport and
  Diffusion Equations}.
\newblock Oxford University Press, Oxford, 2003.

\bibitem[LZ23]{WIM}
Wuchen Li and Jiaxi Zhao.
\newblock {W}asserstein information matrix.
\newblock {\em Inf. Geom.}, 6(1):203--255, 2023.

\bibitem[MW15]{MaWu}
Zongming Ma and Yihong Wu.
\newblock Computational barriers in minimax submatrix detection.
\newblock {\em Ann. Statist.}, 43(3):1089--1116, 2015.

\bibitem[MZ25]{MielkeZhu2025HKGradientFlows}
Alexander Mielke and Jia-Jie Zhu.
\newblock {H}ellinger--{K}antorovich gradient flows: global exponential decay
  of entropy functionals.
\newblock arXiv preprint arXiv:2501.17049, 2025.

\bibitem[NM25]{WCR_achieve}
Hayato Nishimori and Takeru Matsuda.
\newblock On the attainment of the {W}asserstein--{C}ram{\'e}r--{R}ao lower
  bound.
\newblock {\em Inf. Geom.}, 2025.
\newblock Published online.

\bibitem[OR86]{OlverRosenau1986}
Peter~J. Olver and Philip Rosenau.
\newblock The construction of special solutions to partial differential
  equations.
\newblock {\em Phys. Lett. A}, 114(3):107--112, 1986.

\bibitem[RPK19]{RAISSI2019686}
M.~Raissi, P.~Perdikaris, and G.~E. Karniadakis.
\newblock Physics-informed neural networks: a deep learning framework for
  solving forward and inverse problems involving nonlinear partial differential
  equations.
\newblock {\em J. Comput. Phys.}, 378:686--707, 2019.

\bibitem[RW98]{RockafellarWets}
R.~Tyrrell Rockafellar and Roger J.-B. Wets.
\newblock {\em Variational Analysis}, volume 317 of {\em Grundlehren der
  mathematischen Wissenschaften}.
\newblock Springer, Berlin, 1998.

\bibitem[SFBW24]{SoloffFoygelBarberWillettI}
Jake~A. Soloff, Rina Foygel~Barber, and Rebecca Willett.
\newblock Bagging provides assumption-free stability.
\newblock {\em J. Mach. Learn. Res.}, 25(131):1--35, 2024.

\bibitem[SFBW26]{SoloffFoygelBarberWillettII}
Jake~A. Soloff, Rina Foygel~Barber, and Rebecca Willett.
\newblock Stability via resampling: statistical problems beyond the real line.
\newblock {\em Electron. J. Stat.}, 20(2):4179--4207, 2026.

\bibitem[Sha03]{Shao2003}
Jun Shao.
\newblock {\em Mathematical Statistics}.
\newblock Springer Texts in Statistics. Springer, New York, second edition,
  2003.

\bibitem[Soo20]{Sood2020}
Sunny Sood.
\newblock Implicit function theorems for {L}ipschitz functions.
\newblock MA4K9 dissertation, University of Warwick, 2020.

\bibitem[vdV98]{vanDerVaart}
A.~W. van~der Vaart.
\newblock {\em Asymptotic Statistics}, volume~3 of {\em Cambridge Series in
  Statistical and Probabilistic Mathematics}.
\newblock Cambridge University Press, Cambridge, 1998.

\bibitem[Vil09]{Villani}
C{\'e}dric Villani.
\newblock {\em Optimal Transport: Old and New}, volume 338 of {\em Grundlehren
  der mathematischen Wissenschaften}.
\newblock Springer, Berlin, 2009.

\bibitem[YB20]{BarterYu}
Bin Yu and Rebecca Barter.
\newblock The data science process: one culture.
\newblock {\em J. Amer. Statist. Assoc.}, 115(530):672--674, 2020.

\bibitem[YK20]{YuKumbier}
Bin Yu and Karl Kumbier.
\newblock Veridical data science.
\newblock {\em Proc. Natl. Acad. Sci. USA}, 117(8):3920--3929, 2020.

\bibitem[Yu13]{Yu2013}
Bin Yu.
\newblock Stability.
\newblock {\em Bernoulli}, 19(4):1484--1500, 2013.

\bibitem[YWR24]{YanWangRigollet}
Yuling Yan, Kaizheng Wang, and Philippe Rigollet.
\newblock Learning {G}aussian mixtures using the {W}asserstein--{F}isher--{R}ao
  gradient flow.
\newblock {\em Ann. Statist.}, 52(4):1774--1795, 2024.

\bibitem[ZDJW13]{ZhangDuchiJordanWainwright}
Yuchen Zhang, John~C. Duchi, Michael~I. Jordan, and Martin~J. Wainwright.
\newblock Information-theoretic lower bounds for distributed statistical
  estimation with communication constraints.
\newblock In {\em NeurIPS}, 2013.

\bibitem[ZWJ14]{ZhangWainwrightJordan}
Yuchen Zhang, Martin~J. Wainwright, and Michael~I. Jordan.
\newblock Lower bounds on the performance of polynomial-time algorithms for
  sparse linear regression.
\newblock In {\em COLT}, 2014.

\end{thebibliography}

\appendix

\section{Differentiability in the Hellinger--Kantorovich sense}\label{app:HK-static-cone}

In this appendix, we introduce a notion of differentiability with respect to the Hellinger--Kantorovich geometry. This notion is analogous to the notions of differentiability in quadratic mean (DQM) (see \cite{vanDerVaart}) and differentiability in the Wasserstein sense (DWS) (see \cite{WCR}) of statistical models. To introduce this notion of differentiability, we first discuss a static formulation of the Hellinger--Kantorovich distance that is reminiscent of the static version in terms of transport maps of the Wasserstein distance. We remark that the form of this static formulation differs from its Wasserstein counterpart. For the Wasserstein geometry, Brenier's theorem identifies conditions under which the optimal Kantorovich plan is induced by a deterministic transport map and in particular provides sufficient conditions for the equivalence between Kantorovich's and Monge's formulations of the Wasserstein distance (see \cite{Villani}). In contrast, in the Hellinger--Kantorovich geometry mass may also be created or destroyed, and therefore the corresponding Monge target must encode mechanisms for both spatial displacement and mass creation/destruction. We first recall the static and cone formulations of the Hellinger--Kantorovich distance, then introduce optimal transport--growth pairs through a Brenier-like theorem, and finally introduce the notion of differentiability of a statistical model in the Hellinger--Kantorovich sense, which we write DHKS for short. 

Throughout this appendix, assume \(0<\lambda<1\) and work under the normalization in equation \eqref{eqn:HK-differential}. 

\begin{definition}[Static formulation of \(\HK_\lambda\)]\label{def:HK-static}
    For \(\mu_0,\mu_1\in\mathcal M_+(\mathbb R^m)\), the static formulation of Hellinger--Kantorovich distance is
    \[
        d_{\HK_\lambda}^2(\mu_0,\mu_1)
        =
        \frac{4}{\lambda}
        \inf_{\eta\in\mathcal M(\mathbb R^m\times\mathbb R^m)}
        \left\{
            \mathrm{KL}((\pi_0)_\#\eta\|\mu_0)
            +
            \mathrm{KL}((\pi_1)_\#\eta\|\mu_1)
            +
            \int_{\mathbb R^m\times\mathbb R^m}
                c_\lambda(x,y)\diff \eta(x,y)
        \right\},
    \]
    where
    \[
        c_\lambda(x,y)
        :=
        \begin{cases}
            -\log\!\left[
                \cos^2\!\left(
                    \sqrt{\frac{\lambda}{4(1-\lambda)}}\,|x-y|
                \right)
            \right],
            & |x-y|<R_\lambda,\\[0.5em]
            +\infty,
            & |x-y|\ge R_\lambda.
        \end{cases}
        ,\qquad 
        R_\lambda
        :=
        \pi\sqrt{\frac{1-\lambda}{\lambda}}.
    \]
\end{definition}

This static formulation indeed agrees with the dynamic formulation \eqref{eqn:HK-dynamic} by \cite[Theorem~2.10]{LieroMielkeSavare2023}, after rescaling to the normalization in \eqref{eqn:HK-differential}. It is a Kantorovich-type variational problem but with soft marginal constraints. Indeed, the measure \(\eta\) records how mass at one location is matched with mass at another location, but its marginals are not required to agree exactly with \(\mu_0\) and \(\mu_1\). Instead, the two KL terms penalize the discrepancy between the marginals of \(\eta\) and the endpoint measures $\mu_0$ and $\mu_1$. The cost \(c_\lambda\) encodes the finite transport range of the \(\HK_\lambda\) geometry: only \(x, y\) pairs satisfying \(|x-y|<R_\lambda\) may be matched. Beyond this range, the optimal mechanism to change the mass at one point is by pure creation/destruction rather than by transport.

To pass from the static, Kantorovich-type formulation of the Hellinger--Kantorovich distance in Definition~\ref{def:HK-static} to a Monge-type formulation, it is useful to first lift the variational problem to an appropriate cone. Let
\(C:=(\mathbb R^m\times[0,\infty))/\sim\) be the cone over \(\mathbb R^m\), where all points of radius zero are identified with the cone vertex, and write \([x,r]\in C\). The cone distance associated with the normalization in \eqref{eqn:HK-differential} is
\[
    d_{\lambda,C}^2([x,r],[y,s])
    :=
    \frac{4}{\lambda}
    \left[
        r^2+s^2
        -
        2rs
        \cos_{\pi/2}
        \left(
            \sqrt{\frac{\lambda}{4(1-\lambda)}}\,|x-y|
        \right)
    \right],
\]
where \(\cos_{\pi/2}(u):=\cos(\min\{u,\pi/2\})\). For \(\beta\in\mathcal M_+(C)\) with \(\int_C r^2\diff\beta([x,r])<\infty\), define its homogeneous projection \(h\beta\in\mathcal M_+(\mathbb R^m)\) by
\(\int_{\mathbb R^m} f(x)\diff (h\beta)(x) = \int_C r^2 f(x)\diff \beta([x,r])\). Then the cone formulation of \(\HK_\lambda\) is
\[
    d_{\HK_\lambda}^2(\mu_0,\mu_1)
    =
    \inf_{\beta}
    \int_{C\times C}
        d_{\lambda,C}^2(z_0,z_1)\diff \beta(z_0,z_1),
\]
where the infimum is taken over all
\(\beta\in\mathcal M_+(C\times C)\) satisfying \(h((\pi_i)_\#\beta)=\mu_i \text{ for } i=0,1\); the equivalence between the cone formulation and the dynamic formulation \eqref{eqn:HK-dynamic} follows from \cite[Theorem~2.5, (2.11b)]{LieroMielkeSavare2023}.

Next, for a finite nonnegative measure $\mu$ on $\R^m$, consider a \textit{transport--growth pair} $(T,q)$ \cite[Definition~2.7]{LieroMielkeSavare2023} of measurable maps $T:\R^m\to\R^m$ and $q:\R^m\to [0,\infty)$ with $q\in L^2(\mu)$. One such pair acts on $\mu$ via
\[
    (T,q)_{\ast}\mu:=T_\#(q^2\mu),
\]
meaning that for all measurable sets $A\subseteq \R^m$ we have $\int_{\R^m}\mathbf{1}_A\diff ((T,q)_{\ast}\mu):=\int_{\R^m}\mathbf{1}_{T^{-1}(A)}q^2\diff \mu$.
In particular, $(T,q)_*\mu_0=\mu_1$ implies $\int_{\R^m}q^2\diff\mu_0=\mu_1(\R^m)$, so $\int_{\R^m}q^2\diff\mu_0=1$ when $\mu_1$ is a probability measure.
Note that $T$ specifies the spatial movement of the mass (i.e., analogous to the vector fields from the Wasserstein geometry) and that $q^2$ specifies the creation/destruction of mass (i.e., analogous to the scalar fields from the Hellinger geometry).

The cone formulation is the geometric mechanism behind transport--growth pairs. A cone point \([x,r]\) represents mass \(r^2\) located at \(x\): the base point records position, while the scalar records the amount of mass. Thus, the $\HK$ analogue of a Monge map is the cone-valued map
\[
    x\longmapsto \big([x,1],[T(x),q(x)]\big),
\]
where \((T,q)\) is a transport--growth pair. This graph induces the cone plan
\[
    \beta_{T,q}
    :=
    \left(
        x\mapsto \big([x,1],[T(x),q(x)]\big)
    \right)_\#\mu_0
    \in \mathcal M_+(C\times C).
\]
Its first homogeneous marginal is \(\mu_0\), while its second homogeneous marginal is \(h((\pi_1)_\#\beta_{T,q})=(T,q)_*\mu_0=T_\#(q^2\mu_0)\).
Thus, \(\beta_{T,q}\) is admissible in the cone formulation precisely when \((T,q)_*\mu_0=\mu_1\).
Plugging the graph plan \(\beta_{T,q}\) into the cone cost gives the transport--growth energy \cite[Problem~2.9]{LieroMielkeSavare2023}
\[
    \mathcal C_\lambda(T,q;\mu_0)
    :=
    \frac{4}{\lambda}
    \int_{\mathbb R^m}
    \left[
        1+q^2(x)
        -
        2q(x)
        \cos_{\pi/2}
        \left(
            \sqrt{\frac{\lambda}{4(1-\lambda)}}\,|T(x)-x|
        \right)
    \right]\diff \mu_0(x).
\]
Consequently, every transport--growth pair satisfying \((T,q)_*\mu_0=\mu_1\)
gives the upper bound
\[
    d_{\HK_\lambda}^2(\mu_0,\mu_1)
    \le
    \mathcal C_\lambda(T,q;\mu_0).
\]
The Brenier-type problem for \(\HK_\lambda\) is whether this upper bound is sharp, or, equivalently, whether an optimal cone plan can be represented by a transport--growth pair between the measures $\mu_0$ and $\mu_1$. Theorem~\ref{thm:HK-Brenier} below gives a sufficient condition for this Monge representation. It is directly derived from \cite[Corollary~3.5]{LieroMielkeSavare2023} by the rescaling argument given below.

\begin{theorem}[Brenier-like theorem for $\HK$ {\cite[Corollary~3.5]{LieroMielkeSavare2023}}]\label{thm:HK-Brenier}
Fix $0<\lambda<1$. Let \(\mu_0,\mu_1\in\mathcal M_+(\mathbb R^m)\), and set \(S_i:=\operatorname{supp}(\mu_i)\) for $i=0,1$. Let \(R_\lambda:=\pi\sqrt{(1-\lambda)/\lambda}\). Further denote 
\(S^{R_\lambda}:=\left\{x\in\mathbb R^m:\operatorname{dist}(x,S)<R_\lambda\right\}\). Assume that 
\begin{itemize}
    \item \(\mu_1\bigl(\mathbb R^m\setminus S_0^{R_\lambda}\bigr)=0\), i.e., the target has no mass outside the \(R_\lambda\)-neighborhood of the source support. 
    \item \(\mu_0\llcorner S_1^{R_\lambda}\ll \mathcal L^m\), where \(\mu_0\llcorner S_1^{R_\lambda}\) denotes the restriction of \(\mu_0\) to \(S_1^{R_\lambda}\); i.e., the part of the source that lies within the transport range of the target is absolutely continuous.
\end{itemize}
Then there exists an optimal \(\HK_\lambda\) transport--growth pair
\[
    (T,q):\mathbb R^m\to\mathbb R^m\times[0,\infty)
\]
satisfying \((T,q)_*\mu_0=\mu_1\). With the convention \(T(x)=x\) on
\(\{q=0\}\), this optimal pair is unique up to \(\mu_0\)-negligible sets.
\end{theorem}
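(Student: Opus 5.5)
The plan is a reduction to the standard normalization of \cite{LieroMielkeSavare2023} by a spatial dilation, followed by a direct appeal to \cite[Corollary~3.5]{LieroMielkeSavare2023}. In \cite{LieroMielkeSavare2023} the Hellinger--Kantorovich distance is the one associated with the equation $\partial_t\mu+\Div(\nabla\psi\,\mu)=4\psi\,\mu$ (energy $\int(|\nabla\psi|^2+4|\psi|^2)\diff\mu$). Its cone cost is $r^2+s^2-2rs\cos_{\pi/2}(|x-y|)$ and its transport range is $\pi$; we denote this distance by $\HK$.

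First, I would relate $d_{\HK_\lambda}$ to $\HK$ through the dilation $D_a(x):=ax$ with $a:=\sqrt{\lambda/(4(1-\lambda))}$. Take a solution $(\mu_t,\phi_t)$ of \eqref{eqn:HK-differential} with $\xi_t=\phi_t$, $v_t=\nabla\phi_t$. Set $\tilde\mu_t:=(D_a)_\#\mu_t$ and $\tilde\psi_t(y):=c\,\phi_t(y/a)$, and choose the constant $c$ so that the transport and source coefficients match the reference normalization. This requires $(1-\lambda)a^2/c=\lambda/(4c)$, which holds for the stated $a$, and gives $c=\lambda/4$. A change of variables in the weak formulation \eqref{eqn:HK-integral} and in the energy \eqref{eqn:HK-dynamic} then shows that $(\tilde\mu_t,\tilde\psi_t)$ is admissible for the reference problem. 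It also shows $d_{\HK_\lambda}^2(\mu_0,\mu_1)=\frac{4}{\lambda}\HK^2((D_a)_\#\mu_0,(D_a)_\#\mu_1)$. The factor $4/\lambda$ and the argument $a|x-y|$ match the cone distance $d_{\lambda,C}$ written above. As a consistency check, the correspondence respects the tangent-space constraint, since dilations map $C_c^\infty$ potentials to $C_c^\infty$ potentials.

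Second, I would transport the hypotheses. Since $D_a$ is a linear bijection, $\operatorname{supp}((D_a)_\#\mu_i)=aS_i$, and $\operatorname{dist}$ scales by $a$. Hence $R_\lambda$-neighborhoods of $S_i$ correspond to $aR_\lambda$-neighborhoods of $aS_i$, and $aR_\lambda=\pi/2$. This is where I expect the main obstacle to lie. The range $\pi/2$ that appears in \cite[Corollary~3.5]{LieroMielkeSavare2023} must be checked against the stated $R_\lambda$, and the factor-of-two conventions of the cost $\cos(|x-y|)$ versus $\cos(|x-y|/2)$ must be verified carefully. If the constant $a$ in the cone distance already absorbs the factor $1/2$, then $R_\lambda=\pi\sqrt{(1-\lambda)/\lambda}$ corresponds exactly to the reference range. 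Absolute continuity of $\mu_0\llcorner S_1^{R_\lambda}$ is preserved, because $D_a$ maps Lebesgue-null sets to Lebesgue-null sets.

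Third, I would apply \cite[Corollary~3.5]{LieroMielkeSavare2023} to the dilated measures. This yields an optimal pair $(\tilde T,\tilde q)$ with $\tilde T_\#(\tilde q^2(D_a)_\#\mu_0)=(D_a)_\#\mu_1$, unique $(D_a)_\#\mu_0$-a.e. under the convention $\tilde T=\mathrm{id}$ on $\{\tilde q=0\}$. I would then define $T:=D_a^{-1}\circ\tilde T\circ D_a$ and $q:=\tilde q\circ D_a$. A direct computation gives $(T,q)_*\mu_0=\mu_1$ and $\mathcal C_\lambda(T,q;\mu_0)=\frac4\lambda\,\mathcal C(\tilde T,\tilde q;(D_a)_\#\mu_0)$, so optimality transfers by the first step. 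Uniqueness transfers as well, because conjugation by $D_a$ is a bijection between admissible pairs that preserves null sets and the fixed-point convention.
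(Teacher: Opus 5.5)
Your proposal is correct and follows the paper's proof. Both use the dilation by $\sqrt{\lambda/(4(1-\lambda))}$ with potential rescaling $\lambda/4$, the identity $d_{\HK_\lambda}^2=\frac{4}{\lambda}\,d_{\HK_{1,4}}^2$ on the dilated measures, the observation that $R_\lambda$ maps to the cutoff $\pi/2$, and an appeal to \cite[Corollary~3.5]{LieroMielkeSavare2023}; your conjugation $T=D_a^{-1}\circ\tilde T\circ D_a$, $q=\tilde q\circ D_a$ makes explicit the pull-back that the paper leaves implicit. One slip: your first paragraph says the reference transport range is $\pi$, but with cost $\cos_{\pi/2}(|x-y|)$ the range is $\pi/2$, which is exactly your computed $aR_\lambda=\pi/2$, so the factor-of-two issue you flag does not arise.
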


Before discussing the rescaling argument used to prove this result, it is worth pausing to make a few remarks. First, note that under the assumptions of Theorem~\ref{thm:HK-Brenier}, the optimal transport--growth pair \((T,q)\) satisfies
\[
    d_{\HK_\lambda}^2(\mu_0,\mu_1)
    =
    \mathcal C_\lambda(T,q;\mu_0).
\]
Second, observe that the first assumption in the theorem means that \(\mu_1\)-almost every point lies within distance
\(R_\lambda\) of \(\operatorname{supp}(\mu_0)\). Hence, no target mass is forced to
be created independently of the source. The second assumption is the source-side
absolute continuity condition, analogous to the absolute continuity assumption in
Brenier's theorem. Finally, note that if both \(\mu_0\) and \(\mu_1\) have full support in \(\mathbb R^m\), then the
transport-range assumptions are automatically satisfied. In this case, the theorem applies as soon
as $\mu_0\ll\mathcal L^m.$ As \(\lambda\to0\), one has \(R_\lambda\to\infty\), 
which is consistent with the pure transport limit, where transport has no finite
cut-off. As \(\lambda\to1\), one has \(R_\lambda\to 0\), 
which is consistent with the pure source limit, where spatial transport
disappears and mass changes only by growth or decay.

\begin{proof}[Proof of Theorem~\ref{thm:HK-Brenier}]
Write \(d_{\HK_{1,4}}\) for the distance with \((\alpha,\beta)=(1,4)\) in \cite[Definition~2.1]{LieroMielkeSavare2023}, and let \(\kappa=\sqrt{\lambda/(4(1-\lambda))}\).
For a finite-energy solution \(\{(\mu_t,\xi_t,v_t)\}_{0\le t\le1}\) of \eqref{eqn:HK-differential}, set \(\widetilde\mu_t=(x\mapsto\kappa x)_\#\mu_t\), \(\widetilde v_t(\kappa x)=\kappa(1-\lambda)v_t(x)\), and \(\widetilde\xi_t(\kappa x)=\lambda\xi_t(x)/4\).
The transformed fields satisfy \(\partial_t\widetilde\mu_t+\Div(\widetilde v_t\widetilde\mu_t)=4\widetilde\xi_t\widetilde\mu_t\), and
\begin{equation*}
    \|\widetilde v_t(\kappa x)\|^2+4|\widetilde\xi_t(\kappa x)|^2
    =\frac{\lambda}{4}\left((1-\lambda)\|v_t(x)\|^2+\lambda|\xi_t(x)|^2\right).
\end{equation*}
The change of variables is invertible, so
\begin{equation*}
    \dHKlambda^2(\mu_0,\mu_1)
    =\frac{4}{\lambda}\,d_{\HK_{1,4}}^2(\widetilde\mu_0,\widetilde\mu_1).
\end{equation*}
For \(S_i=\operatorname{supp}(\mu_i)\), \(i=0,1\), we have \(\operatorname{dist}(\kappa x,\kappa S_i)=\kappa\operatorname{dist}(x,S_i)\).
Thus, the cutoff \(\pi/2\) for the transformed measures corresponds to a cutoff
\[
    \frac{\pi}{2\kappa}
    =\pi\sqrt{\frac{1-\lambda}{\lambda}}
    =R_\lambda
\]
in the original coordinates.
This change of variables preserves absolute continuity and transforms the hypotheses of Theorem~\ref{thm:HK-Brenier} into the corresponding conditions with cutoff \(\pi/2\).
Theorem~\ref{thm:HK-Brenier} now follows from \cite[Corollary~3.5]{LieroMielkeSavare2023}.
\end{proof}

The regularity assumptions we have placed on $\Pcal$ through the paper are qualitatively different from the notions of DQM or DWS. Indeed, our regularity assumptions (see Definition~\ref{def:HK_regular}) impose a sort of absolute continuity on the model $\Pcal$ rather than differentiability. We now introduce the notion of differentiability for the Hellinger--Kantorovich geometry. This notion can be formulated through a suitable first-order approximation assumption on optimal transport--growth pairs between elements in the statistical model.

\begin{definition}\label{def:DHKS}
    Fix $0<\lambda<1$.
    We say that $\Pcal=\{P_{\theta}:\theta\in\Theta\}\subseteq\Pcal_{2,\mathrm{ac}}(\R^d)$ with $\Theta\subseteq\R^p$ open is \emph{differentiable in the $\HK_{\lambda}$ sense (DHKS)} if, for each $\theta\in\Theta$, there exists a function $\phi_{\theta,\lambda}:\R^d\to\R^p$ such that
    \[
        (\phi_{\theta,\lambda,j},\grad\phi_{\theta,\lambda,j})
        \in\Tan_{P_\theta}^{\HK_\lambda}
        (\mathcal M_+(\R^d)),
        \qquad j=1,\ldots,p,
    \]
    and if for all sufficiently small $h\in\R^p$, there is a unique optimal $\HK_\lambda$ transport--growth pair $(T_t,q_t)$ from $P_\theta$ to $P_{\theta+th}$ such that
    \begin{equation}\label{eqn:def-DHKS}
        \begin{aligned}
            \int_{\R^d}\Big(
            &\lambda\big|q_t(x)-1-\tfrac{\lambda t}{2}(\phi_{\theta,\lambda}(x))^\top h\big|^2\\
            &+(1-\lambda)\big\|T_t(x)-x-t(1-\lambda)(D\phi_{\theta,\lambda}(x))^\top h\big\|^2
            \Big)\diff P_\theta(x)=o(t^2)
        \end{aligned}
    \end{equation}
    as $t\to0$.
    Uniqueness is understood up to $P_\theta$-negligible sets, with the convention $T_t(x)=x$ on $\{q_t=0\}$.
\end{definition}

The notion of DHKS is analogous to DQM and DWS in the sense that the optimal transport--growth pair determines the corresponding logarithmic map in the Hellinger--Kantorovich geometry (see \cite{CaiChengSchmitzerThorpe2022} and \cite[Corollary~3.5 and (3.34)]{LieroMielkeSavare2023}); hence, equation \eqref{eqn:def-DHKS} states that $\phi_{\theta,\lambda}$ represents the first-order directional variation of the map $P:\Theta\to\Pcal_{2,\textnormal{ac}}(\R^d)$ when $\Pcal_{2,\textnormal{ac}}(\R^d)$ is endowed with the $\dHKlambda$ geometry.
However, while DHKS is geometrically appealing, it is not useful in practice since the optimal transport--growth pairs are typically hard to calculate; this contrasts the pure source and transport cases, where DQM and DWS are useful precisely because likelihood ratios and transport maps are easier to calculate in a variety of examples.
We also emphasize that Proposition~\ref{prop:abs-cont-HK} makes Definition~\ref{def:HK_regular} preferable to Definition~\ref{def:DHKS}: absolute continuity in $\HK$ follows from absolute continuity in either of Hellinger or Wasserstein, whereas it is unclear whether DHKS follows from DQM or DWS.

\section{Further Remarks on Huber's $M$-Estimators}\label{app:Huber}

\begin{figure}
    \resizebox{0.85\width}{!}{
    \begin{tikzpicture}[scale=4.,font=\footnotesize,baseline={(0,0)}]
      \draw[thick, -stealth] (0,0) -- (1.2,0);
      \draw[thick, -stealth] (0,0) -- (0,1.2);
      \draw[thick, dashed] (0,0.2) -- (1,0.2);
      \draw[thick, dashed] (0.2,0) -- (0.2,1);
      \draw[thick] (-0.05,0)   node[left]{$0$}      -- (0.05,0);
      \draw[thick] (-0.05,0.2) node[left]{$1$}      -- (0.05,0.2);
      \draw[thick] (-0.05,1)   node[left]{$\infty$} -- (0.05,1);
      \draw[thick] (0,-0.05)   node[below]{$0$}      -- (0,0.05);
      \draw[thick] (0.2,-0.05) node[below right]{$1$} -- (0.2,0.05);
      \draw[thick] (0.7,-0.05) node[below]{$2$}      -- (0.7,0.05);
      \node[rotate=90] at (-0.24,0.6) {asymptotic sensitivity};
      \node at (0.6,-0.22) {asymptotic variance};
      \path[fill=mygreen, thick, opacity=0.3, domain=0:1, samples=100]
          plot ({0.2+0.5*\x^2}, {0.2+0.8*(\x-1)^2}) -- (1,0.2) -- (1,1) -- (0.2,1) -- cycle;
      \draw[color=mygreen, ultra thick, domain=0:1, samples=100]
          plot ({0.2+0.5*\x^2}, {0.2+0.8*(\x-1)^2});
      \node[color=mygreen, rotate=-90] at (0.14,0.7) {Pareto front};
      \node[color=red, rotate=-72, font=\scriptsize] at (0.4,0.635) {Huber $M$-estimators};
      \draw[red, ultra thick] plot[smooth] coordinates {
        (0.2017,0.9868) (0.2022,0.9826) (0.2029,0.9769) (0.2039,0.9695) (0.2052,0.9599)
        (0.2069,0.9473) (0.2092,0.9310) (0.2122,0.9101) (0.2162,0.8836) (0.2215,0.8505)
        (0.2286,0.8097) (0.2380,0.7606) (0.2504,0.7031) (0.2668,0.6379) (0.2884,0.5669)
        (0.3166,0.4932) (0.3530,0.4208) (0.3990,0.3542) (0.4547,0.2978) (0.5182,0.2546)
        (0.5831,0.2257) (0.6392,0.2096) (0.6766,0.2026) (0.6941,0.2005) (0.6998,0.2000)
      };
      \filldraw[color=blue] (0.2,1)   circle (0.5pt) node[above right] {median};
      \filldraw[color=blue] (0.7,0.2) circle (0.5pt) node[above right] {mean};
    \end{tikzpicture}%
    \begin{tikzpicture}[scale=4.,font=\footnotesize,baseline={(0,0)}]
      \draw[thick, -stealth] (0,0) -- (1.2,0);
      \draw[thick, -stealth] (0,0) -- (0,1.2);
      \draw[thick, dashed] (0,0.2) -- (1,0.2);
      \draw[thick, dashed] (0.2,0) -- (0.2,1);
      \draw[thick] (-0.05,0)   node[left]{$0$} -- (0.05,0);
      \draw[thick] (-0.05,0.2) node[left]{$1$} -- (0.05,0.2);
      \draw[thick] (-0.05,1)   node[left]{$\infty$} -- (0.05,1);
      \draw[thick] (0,-0.05)   node[below]{$0$}      -- (0,0.05);
      \draw[thick] (0.2,-0.05) node[below right]{$1$} -- (0.2,0.05);
      \draw[thick] (0.7,-0.05) node[below]{$2$}      -- (0.7,0.05);
      \node[rotate=90] at (-0.24,0.6) {asymptotic sensitivity};
      \node at (0.6,-0.22) {asymptotic variance};
    
      \draw[mygreen, thin] (0.0,0.7612) -- (0.6677,0.0);
      \draw[red,     thin] (0.0,0.8111) -- (0.7115,0.0);
    
      \draw[color=mygreen, ultra thick, domain=0:1, samples=100]
          plot ({0.2+0.5*\x^2}, {0.2+0.8*(\x-1)^2});
      \draw[red, ultra thick] plot[smooth] coordinates {
        (0.2017,0.9868) (0.2022,0.9826) (0.2029,0.9769) (0.2039,0.9695) (0.2052,0.9599)
        (0.2069,0.9473) (0.2092,0.9310) (0.2122,0.9101) (0.2162,0.8836) (0.2215,0.8505)
        (0.2286,0.8097) (0.2380,0.7606) (0.2504,0.7031) (0.2668,0.6379) (0.2884,0.5669)
        (0.3166,0.4932) (0.3530,0.4208) (0.3990,0.3542) (0.4547,0.2978) (0.5182,0.2546)
        (0.5831,0.2257) (0.6392,0.2096) (0.6766,0.2026) (0.6941,0.2005) (0.6998,0.2000)
      };
      \filldraw[color=blue] (0.3798,0.3282) circle (0.5pt) node[below left=-0.25em] {$T_{n,\lambda}$};
      \filldraw[color=blue] (0.4149,0.3381) circle (0.5pt) node[above right] {$T_n^{\delta}$};
      \filldraw[color=blue] (0.2,1)   circle (0.5pt);
      \filldraw[color=blue] (0.7,0.2) circle (0.5pt);
      \node[color=mygreen, scale=0.62, anchor=east] at (1.25,0.15) {$\lambda v+(1-\lambda)s=(I_{\lambda}(\theta))^{-1}$};
      \node[color=red,     scale=0.62, anchor=east] at (1.25,0.05) {$\lambda v+(1-\lambda)s=r_{\lambda,\delta}(\theta)$};
    \end{tikzpicture}%
    \begin{tikzpicture}[scale=4.,font=\footnotesize,baseline={(-0.0500,0)}]
      \draw[thick, -stealth] (0.2000,0) -- (1.45,0);
      \draw[thick, -stealth] (0.2000,0) -- (0.2000,1.2);
      \draw[thick] (0.2000,-0.05) node[below]{$0$} -- (0.2000,0.05);
      \draw[thick] (0.8274,-0.05) node[below]{$\lambda^{\ast}\!\approx\!0.627$} -- (0.8274,0.05);
      \draw[thick] (1.2000,-0.05) node[below]{$1$} -- (1.2000,0.05);
      \draw[thick] (0.1375,0) node[left]{$0$} -- (0.2625,0);
      \draw[thick] (0.1375,1.0) node[left]{$1$} -- (0.2625,1.0);
      \node[rotate=90] at (-0.0750,0.6) {rel. scalarized efficiency};
      \node at (0.7000,-0.22) {weight $\lambda$};
      \draw[thick, dashed] (0.2000,0.9258) -- (1.2000,0.9258);
      \node[anchor=north west, inner sep=1.5pt] at (0.2437,0.9235) {$\approx0.954$};
      \draw[mypurple, ultra thick] plot[smooth] coordinates {
        (0.2002,1.0000) (0.2243,0.9991) (0.2482,0.9966) (0.2724,0.9927) (0.2964,0.9879)
        (0.3204,0.9825) (0.3444,0.9770) (0.3685,0.9716) (0.3925,0.9664) (0.4165,0.9615)
        (0.4406,0.9570) (0.4646,0.9528) (0.4886,0.9490) (0.5128,0.9456) (0.5368,0.9425)
        (0.5607,0.9397) (0.5849,0.9371) (0.6089,0.9349) (0.6329,0.9330) (0.6569,0.9313)
        (0.6810,0.9298) (0.7050,0.9286) (0.7290,0.9276) (0.7531,0.9268) (0.7771,0.9263)
        (0.8011,0.9259) (0.8253,0.9258) (0.8493,0.9259) (0.8732,0.9262) (0.8974,0.9267)
        (0.9214,0.9275) (0.9454,0.9286) (0.9694,0.9299) (0.9935,0.9316) (1.0175,0.9336)
        (1.0415,0.9360) (1.0656,0.9390) (1.0896,0.9426) (1.1136,0.9471) (1.1378,0.9529)
        (1.1618,0.9609) (1.1857,0.9742) (1.2000,1.0000)
      };
      \filldraw[color=mypurple] (0.2000,1.0) circle (0.6pt) node[above right] {mean};
      \filldraw[color=mypurple] (1.2000,1.0) circle (0.6pt) node[above left] {median};
      \filldraw[color=mypurple] (0.8274,0.9258) circle (0.6pt);
      \node[color=mypurple] at (0.7000,0.62)
        {$R_{\lambda}(\theta)=\dfrac{1}{I_{\lambda}(\theta)\,r_{\lambda,\delta}(\theta)}$};
    \end{tikzpicture}
    }
    \caption{Relative scalarized efficiency of Huber's $M$-estimators for the Laplace location family.
    We show the variance-sensitivity diagram and the variance-sensitivity pairs achieved by Huber's $M$-estimators (left), a visualization of the relative scalarized efficiency (center), and a plot of the worst-case relative scalarized efficiency of Huber's $M$-estimators (right).}
    \label{fig:Huber-rel-eff}
\end{figure}

In this appendix, we give some further results about variance-sensitivity tradeoffs for Huber's $M$-estimators, which are a central tool in robust statistics.

First, we precisely state and prove the following result, previously discussed in Subsection~\ref{subsec:HK-score}; under its hypotheses, no single parametric model admits Huber estimating functions as its $\HK_\lambda$ score functions for all $0<\lambda<1$.

\begin{proposition}[Nonexistence of a fixed Huber-$\HK$ model]
\label{prop:huber-nonexistence}
    Let
    \(\Pcal=\{P_\theta:\theta\in\Theta\}\) be a statistical model on
    \(\R\), where each \(P_\theta\) has a strictly positive, continuous
    density \(p_\theta\). Suppose that \(\theta\mapsto\Ebb_\theta[f(X)]\)
    is differentiable on \(\Theta\) for every \(f\in C_c^\infty(\R)\).
    Write \(\psi_\delta(r):=s_\delta(0,r)\), where \(s_\delta\) is defined in Example~\ref{ex:huber}.

    If, for every \(\lambda\) in some set
    \(\Lambda\subseteq[0,1]\), there exist
    \(c_{\theta,\lambda}>0\) and \(\delta_\lambda>0\) such that
    \[
        \phi_{\theta,\lambda}(x)
        =
        c_{\theta,\lambda}
        \psi_{\delta_\lambda}(x-\theta)
    \]
    satisfies the weak transport-source equation
    \[
        \frac{\diff}{\diff\theta}\Ebb_\theta[f(X)]
        =(1-\lambda)\Ebb_\theta[f'(X)\partial_x\phi_{\theta,\lambda}(X)]
        +\lambda\Ebb_\theta[f(X)\phi_{\theta,\lambda}(X)]
    \]
    for every \(\theta\in\Theta\) and \(f\in C_c^\infty(\R)\),
    then \(\Lambda\) contains at most one
    element. In particular, no fixed model admits the Huber estimating
    function as its \(\HK_\lambda\) score for all
    \(0\leq\lambda\leq1\).
\end{proposition}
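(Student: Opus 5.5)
The plan is to exploit the one feature of the weak transport-source equation that does not depend on $\lambda$: its left-hand side $\frac{\diff}{\diff\theta}\Ebb_\theta[f(X)]$. Fix $\theta\in\Theta$ and suppose $\lambda,\lambda'\in\Lambda$. Subtracting the two equations shows that the two right-hand sides agree for every $f\in C_c^\infty(\R)$. I will rewrite each right-hand side as $-\langle G_\lambda, f'\rangle$ for an explicit function $G_\lambda$, and then read off $\lambda$ from the structure of $G_\lambda$.

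\textbf{Step 1 (the primitive $G_\lambda$).} Write $I_\lambda:=(\theta-\delta_\lambda,\theta+\delta_\lambda)$. Since $\partial_x\phi_{\theta,\lambda}=c_{\theta,\lambda}\mathbf 1_{I_\lambda}$ a.e., the transport term is $(1-\lambda)c_{\theta,\lambda}\int f'\,p_\theta\mathbf 1_{I_\lambda}\diff x$. For the source term, $\psi_{\delta_\lambda}$ is bounded, so $F_\lambda(x):=\int_{-\infty}^x p_\theta(y)\psi_{\delta_\lambda}(y-\theta)\diff y$ is well defined and continuous. Using $\Ebb_\theta[\phi_{\theta,\lambda}]=0$ (or simply $f$ compactly supported) and integrating by parts, the source term equals $-\lambda c_{\theta,\lambda}\int f' F_\lambda\diff x$. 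Hence the right-hand side is $\int f' G_\lambda\diff x$ with
\begin{equation*}
    G_\lambda:=(1-\lambda)c_{\theta,\lambda}\,p_\theta\mathbf 1_{I_\lambda}-\lambda c_{\theta,\lambda}F_\lambda .
\end{equation*}
Equality of the two right-hand sides for all $f$ means that $G_\lambda-G_{\lambda'}$ has zero distributional derivative. It is therefore a.e. constant, and since both functions vanish as $x\to-\infty$, we get $G_\lambda=G_{\lambda'}$ a.e.

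\textbf{Step 2 (comparing jumps).} The term $F_\lambda$ is continuous and $p_\theta$ is continuous and strictly positive. So the only discontinuities of (the natural representative of) $G_\lambda$ are jumps at $\theta\pm\delta_\lambda$, of size $(1-\lambda)c_{\theta,\lambda}p_\theta(\theta\pm\delta_\lambda)$, and these are nonzero exactly when $\lambda<1$.
\begin{itemize}
    \item If $\lambda=1$, then $G_1$ has no jumps, so $G_{\lambda'}$ has none either, which forces $(1-\lambda')c_{\theta,\lambda'}=0$ and hence $\lambda'=1$.
    \item If $\lambda,\lambda'<1$, matching the jump locations gives $\delta_\lambda=\delta_{\lambda'}$. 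Matching the jump sizes then gives $(1-\lambda)c_{\theta,\lambda}=(1-\lambda')c_{\theta,\lambda'}$.
\end{itemize}

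\textbf{Step 3 (the source part).} Evaluate $G_\lambda=G_{\lambda'}$ at points $x<\theta-\max\{\delta_\lambda,\delta_{\lambda'}\}$. There the indicator terms vanish and $F_\lambda(x)=-\delta_\lambda\Pbb_\theta(X\le x)$. Hence
\begin{equation*}
    \lambda c_{\theta,\lambda}\delta_\lambda\Pbb_\theta(X\le x)=\lambda' c_{\theta,\lambda'}\delta_{\lambda'}\Pbb_\theta(X\le x),
\end{equation*}
with $\Pbb_\theta(X\le x)>0$ by strict positivity of $p_\theta$. Combined with $\delta_\lambda=\delta_{\lambda'}$, this gives $\lambda c_{\theta,\lambda}=\lambda'c_{\theta,\lambda'}$. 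This step also covers the case $\lambda=0$, since it then forces $\lambda'c_{\theta,\lambda'}=0$ and so $\lambda'=0$.

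When both $\lambda,\lambda'\in(0,1)$, dividing the two identities (all $c$'s are positive) yields $\lambda/(1-\lambda)=\lambda'/(1-\lambda')$, hence $\lambda=\lambda'$. So $\Lambda$ has at most one element.

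The main obstacle is Step 1: $p_\theta$ is only continuous, so I must avoid differentiating it and work entirely with the primitive $G_\lambda$ and the weak formulation. The delicate points there are justifying the integration by parts for the source term and fixing the additive constant through the limit $x\to-\infty$. Everything after that is bookkeeping of jumps.
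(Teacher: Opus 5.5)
Your proof is correct and follows essentially the same route as the paper's. Both subtract the two equations to eliminate the $\theta$-derivative. Both then match jumps at the clipping points to get $\delta_\lambda=\delta_{\lambda'}$ and $(1-\lambda)c_{\theta,\lambda}=(1-\lambda')c_{\theta,\lambda'}$, and use a tail where the transport part vanishes to get $\lambda c_{\theta,\lambda}=\lambda' c_{\theta,\lambda'}$. The only difference is that you work with the $x$-primitive $G_\lambda$ rather than the distributional identity for the flux. This makes explicit the paper's step that the flux cannot jump because the source side contains no Dirac mass.
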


\begin{proof}
    Fix \(\theta\in\Theta\), and suppose that the transport-source equation holds
    for two distinct values \(\lambda_1,\lambda_2\in\Lambda\). Write
    \[
        c_j:=c_{\theta,\lambda_j},
        \qquad
        \delta_j:=\delta_{\lambda_j},
        \qquad
        j=1,2.
    \]
    Since
    \[
        \psi_{\delta_j}'(x-\theta)
        =
        \mathbf 1_{\{|x-\theta|<\delta_j\}}
    \]
    for Lebesgue-a.e. \(x\), the weak transport-source equations can be written,
    in the sense of distributions, as
    \[
        \partial_\theta p_\theta
        +
        (1-\lambda_j)
        \partial_x
        \left(
            c_j
            \mathbf 1_{\{|x-\theta|<\delta_j\}}
            p_\theta
        \right)
        =
        \lambda_jc_j
        \psi_{\delta_j}(x-\theta)p_\theta,
        \qquad j=1,2.
    \]
    Subtracting the two equations eliminates the unknown derivative
    \(\partial_\theta p_\theta\) and gives
    \begin{align}
        &\partial_x
        \left(
            \left[
                (1-\lambda_1)c_1
                \mathbf 1_{\{|x-\theta|<\delta_1\}}
                -
                (1-\lambda_2)c_2
                \mathbf 1_{\{|x-\theta|<\delta_2\}}
            \right]
            p_\theta(x)
        \right) \notag\\
        &\qquad=
        \left[
            \lambda_1c_1\psi_{\delta_1}(x-\theta)
            -
            \lambda_2c_2\psi_{\delta_2}(x-\theta)
        \right]
        p_\theta(x).
        \label{eq:huber-difference}
    \end{align}
    The right-hand side contains no Dirac mass. Consequently, the function inside the derivative on the left-hand side, which can be interpreted as a flux, cannot have jumps.

    We first show that \(\delta_1=\delta_2\). Suppose, without loss of
    generality, that \(\delta_1<\delta_2\). If \(\lambda_2<1\), the flux in
    \eqref{eq:huber-difference} has a jump at
    \(x=\theta+\delta_2\) of size
    \[
        (1-\lambda_2)c_2
        p_\theta(\theta+\delta_2)>0,
    \]
    producing a nonzero Dirac mass on the left-hand side. This contradicts
    the form of the right-hand side. If \(\lambda_2=1\), then
    \(\lambda_1<1\), and the same argument at
    \(x=\theta+\delta_1\) gives a jump of size
    \[
        -(1-\lambda_1)c_1
        p_\theta(\theta+\delta_1)\neq0.
    \]
    Thus \(\delta_1=\delta_2=:\delta\).

    With a common threshold, the flux in
    \eqref{eq:huber-difference} becomes
    \[
        \left[
            (1-\lambda_1)c_1
            -
            (1-\lambda_2)c_2
        \right]
        \mathbf 1_{\{|x-\theta|<\delta\}}
        p_\theta(x).
    \]
    The absence of a jump at \(x=\theta+\delta\) therefore implies
    \begin{equation}
        (1-\lambda_1)c_1
        =
        (1-\lambda_2)c_2.
        \label{eq:huber-transport-match}
    \end{equation}

    On the open tail \(x>\theta+\delta\), the flux vanishes identically, while
    \(\psi_\delta(x-\theta)=\delta\). Equation
    \eqref{eq:huber-difference} consequently gives
    \[
        \left(
            \lambda_1c_1-\lambda_2c_2
        \right)
        \delta p_\theta(x)
        =
        0.
    \]
    Since \(\delta>0\) and \(p_\theta(x)>0\),
    \begin{equation}
        \lambda_1c_1
        =
        \lambda_2c_2.
        \label{eq:huber-reaction-match}
    \end{equation}
    Combining \eqref{eq:huber-transport-match} and
    \eqref{eq:huber-reaction-match} yields
    \[
        c_1
        =
        (1-\lambda_1)c_1+\lambda_1c_1
        =
        (1-\lambda_2)c_2+\lambda_2c_2
        =
        c_2.
    \]
    Since \(c_1=c_2>0\), equation
    \eqref{eq:huber-reaction-match} implies
    \(\lambda_1=\lambda_2\), contradicting the choice of two distinct
    parameters.
\end{proof}

The proposition already allows the clipping threshold for the Huber loss to depend arbitrarily on
the parameter $\lambda$. Thus choosing
\(\delta=\delta(\lambda)\) cannot remove the main obstruction: the jump condition
first forces any two admissible thresholds to be equal, and the transport and
source matching conditions then force the corresponding values of \(\lambda\) to be
equal. Reparametrizing Huber estimators by \(\lambda\) therefore does not
make them the \(\HK_\lambda\) \(Z\)-estimators of a single fixed model.

\begin{remark}
    The model in Proposition~\ref{prop:huber-nonexistence} is not assumed to be
    a location family; only the estimating function has the Huber location form
    \(\psi_{\delta_\lambda}(x-\theta)\). In Proposition \ref{prop:huber-nonexistence} we have ruled out models consisting of distributions with full support and continuous densities. In order to rule out models with parameter-dependent support, we would require a separate analysis.
\end{remark}

Next we specialize to the case of Huber's $M$-estimators within the Laplace location family model of Example~\ref{ex:Laplace-reg}.
In this case, we can use \cite[Theorem~5.21]{vanDerVaart} and Example~\ref{ex:huber} to compute that its asymptotic variance and sensitivity are
\begin{equation*}
    V^{\delta}(\theta):=\frac{e^{\delta}-\delta-1}{\cosh\delta-1}\qquad \textnormal{ and }\qquad S^{\delta}(\theta):=\frac{1}{1-e^{-\delta}},
\end{equation*}
respectively, when $0\le \delta\le \infty$ is the cutoff, and $\theta\in\R$ is the parameter (although the resulting expression does not depend on $\theta$).
Also, recall that the Pareto optimal variance and sensitivity are given by
\begin{equation*}
        V_{\lambda}(\theta)=
        1+
        \sqrt{\frac{1-\lambda}{1+3\lambda}}\qquad \textnormal{ and }\qquad S_{\lambda}(\theta) =
        \frac{
            \left(
                \sqrt{\frac{1+3\lambda}{1-\lambda}}+1
            \right)^2
        }{
            4\sqrt{\frac{1+3\lambda}{1-\lambda}}
        },
    \end{equation*}
    for all $0\le \lambda<1$ and $\theta\in\R$. 
Qualitatively speaking, note that both of these are convex curves interpolating between $(2,1)$ and $(1,\infty)$; see Figure~\ref{fig:laplace_diagram} for an illustration.

Our last goal is to describe the relative efficiency of Huber's $M$-estimators with respect to the Pareto front.
One difficulty, however, is that there isn't a direct correspondence between the $0\le \lambda\le 1$ and $0\le \delta\le \infty$ parameters. To address this, we consider, for each $0\le \lambda\le 1$, the best possible scalarized objective achieved by any Huber $M$-estimator, i.e.
\begin{equation*}
    \min_{0\le \delta\le \infty}(\lambda V^{\delta}(\theta)+(1-\lambda)S^{\delta}(\theta)).
\end{equation*}
The solution map above provides a principled method for choosing $\delta$ as a function of $\lambda$; while there is no closed-form expression for this solution map, it is the inverse of the following explicit formula, visualized in Figure~\ref{fig:Huber-cutoff-from-wt}:
\begin{equation}\label{eqn:Huber-cutoff-from-wt}
    \lambda(\delta) = \frac{e^{-\delta}-1}{3(1-e^{-\delta})-2\delta(1+e^{-\delta})}.
\end{equation}
From this perspective, a natural notion of relative efficiency, for each $0\le \lambda\le 1$, is the following:
\begin{equation*}
    R_{\lambda}(\theta) := \frac{\lambda V_{\lambda}(\theta)+(1-\lambda)S_{\lambda}(\theta)}{\min_{0\le \delta\le \infty}(\lambda V^{\delta}(\theta)+(1-\lambda)S^{\delta}(\theta))} = \frac{1}{I_{\lambda}(\theta)\min_{0\le \delta\le \infty}(\lambda V^{\delta}(\theta)+(1-\lambda)S^{\delta}(\theta))}.
\end{equation*}
To summarize this across all $0\le \lambda\le 1$, we consider its worst-case behavior; we refer to this as the \textit{worst-case relative scalarized efficiency}.
Then we can directly calculate the following, which states that Huber's $M$-estimators are nearly Pareto optimal: 

\begin{proposition}\label{prop:Laplace-Huber-rel-eff}
    The worst-case relative scalarized efficiency of Huber's $M$-estimators in the Laplace location family is  $\min_{0\le \lambda\le 1}R_{\lambda}(\theta) \approx 0.954$, and the minimum occurs at $\lambda^{\ast} \approx 0.627$.
\end{proposition}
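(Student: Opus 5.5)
The plan is to reduce everything to a one-variable numerical problem in the cutoff $\delta$. Throughout I use only the closed forms already recorded: $V_\lambda(\theta),S_\lambda(\theta)$ from Example~\ref{ex:Laplace-HK-Z}, and $V^\delta(\theta),S^\delta(\theta)$ from Example~\ref{ex:huber} specialized to the Laplace density. None of these depends on $\theta$.

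\textbf{Step 1 (numerator).} Since $\lambda V_\lambda+(1-\lambda)S_\lambda=(I_\lambda(\theta))^{-1}$ (Definition~\ref{def:HK-info}), simplifying the formulas of Example~\ref{ex:Laplace-HK-Z} should give
\begin{equation*}
(I_\lambda(\theta))^{-1}=\frac{1+\lambda+\sqrt{(1-\lambda)(1+3\lambda)}}{2},
\end{equation*}
matching \eqref{eqn:VS-tradeoff-laplace}. This is a short algebraic check after writing $\rho=\sqrt{(1+3\lambda)/(1-\lambda)}$.

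\textbf{Step 2 (denominator, the main obstacle).} Set $g_\lambda(\delta):=\lambda V^\delta+(1-\lambda)S^\delta$ on $[0,\infty]$. The stationarity condition $\partial_\delta g_\lambda=0$ is linear in $\lambda$. Solving it for $\lambda$ should reproduce \eqref{eqn:Huber-cutoff-from-wt}, i.e.
\begin{equation*}
\lambda=\lambda(\delta):=\frac{-\partial_\delta S^\delta}{\partial_\delta V^\delta-\partial_\delta S^\delta}.
\end{equation*}
The delicate part is showing that this stationary point is the \emph{global} minimizer of $g_\lambda$ and that it is unique. I would argue this through the monotonicity of the parametrized curves:
\begin{itemize}
\item $\delta\mapsto S^\delta$ is strictly decreasing and $\delta\mapsto V^\delta$ is strictly increasing. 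Then $(V^\delta,S^\delta)$ traces the graph of a decreasing function $s=k(v)$.
\item If $k$ is convex, minimizing $\lambda v+(1-\lambda)s$ along the curve is a convex problem in $v$. Its minimizer is characterized by the slope condition $k'(v)=-\lambda/(1-\lambda)$, which is exactly the stationarity equation.
\item Convexity of $k$ is equivalent to $\delta\mapsto\lambda(\delta)$ being strictly monotone from $\lambda(0^+)=1$ to $\lambda(\infty)=0$. That is a one-variable inequality, which I would check by differentiating \eqref{eqn:Huber-cutoff-from-wt}, or rigorously on a grid with interval arithmetic if a clean proof fails.
\end{itemize}
Monotonicity of $\lambda(\delta)$ also makes $\delta\mapsto\lambda(\delta)$ a bijection $(0,\infty)\to(0,1)$, so every $\lambda\in(0,1)$ is reached. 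At the endpoints, $\lambda=0$ forces $\delta=\infty$ (the sample mean, $g_0=1$) and $\lambda=1$ forces $\delta=0$ (the sample median, $g_1=1$). In both cases $R_\lambda=1$.

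\textbf{Step 3 (reduce to one variable).} With this bijection, define
\begin{equation*}
\widetilde R(\delta):=R_{\lambda(\delta)}(\theta)=\frac{1}{I_{\lambda(\delta)}(\theta)\,g_{\lambda(\delta)}(\delta)}.
\end{equation*}
This is an explicit elementary function on $(0,\infty)$ that tends to $1$ at both ends and satisfies $\widetilde R\le 1$ by Theorem~\ref{thm:HK-CR-bound}.

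\textbf{Step 4 (locate the minimum).} I would solve $\widetilde R'(\delta)=0$ numerically. This should give a unique critical point $\delta^\ast$ with $\lambda(\delta^\ast)\approx0.627$ and $\widetilde R(\delta^\ast)\approx0.954$. To make the approximations rigorous, I would certify them with validated numerics:
\begin{itemize}
\item bound $|\widetilde R'|$ on a compact $\delta$-interval and evaluate $\widetilde R$ on a fine grid there;
\item use asymptotic expansions as $\delta\to0$ and $\delta\to\infty$ to show $\widetilde R$ stays near $1$ on the two tails.
\end{itemize}
Together these show the grid minimum is the global minimum up to the stated precision. The proposition only asserts approximate values, so this level of certification suffices.
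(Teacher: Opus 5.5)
Your proposal is correct and takes essentially the paper's route: the paper simply evaluates $R_\lambda(\theta)$ numerically, parametrizing the optimal Huber cutoff through the explicit inverse map \eqref{eqn:Huber-cutoff-from-wt}, and your Steps~2--4 supply the global-optimality and certification details it leaves implicit; the monotonicity you need is immediate, since $\lambda(\delta)=\bigl(2\delta\coth(\delta/2)-3\bigr)^{-1}$ and $y\mapsto y\coth y$ is strictly increasing on $(0,\infty)$. One remark is loose: attributing $\widetilde R\le 1$ to Theorem~\ref{thm:HK-CR-bound} would require convergence of $n\Var_\theta(T_n^\delta)$ and $n\Sen_\theta(T_n^\delta)$ to $(V^\delta,S^\delta)$, because that theorem concerns finite-sample quantities and the paper does not establish this convergence (the variational formula \eqref{eqn:Minv-inf}, applied via density to the normalized Huber score, gives the inequality directly), but the inequality plays no role in locating the minimum.
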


Figure~\ref{fig:Huber-rel-eff} visualizes this worst-case relative scalarized efficiency calculation.

\begin{figure}
    \centering
    \begin{tikzpicture}[scale=3.,font=\footnotesize,baseline={(-0.0500,0)}]
      \draw[thick, -stealth] (0.2000,0) -- (1.45,0);
      \draw[thick, -stealth] (0.2000,0) -- (0.2000,1.2);
      \draw[thick] (0.2000,-0.05) node[below]{$0$} -- (0.2000,0.05);
      \draw[thick] (1.2000,-0.05) node[below]{$1$} -- (1.2000,0.05);
      \draw[thick] (0.1375,0) node[left]{$0$}      -- (0.2625,0);
      \draw[thick] (0.1375,1.0) node[left]{$\infty$} -- (0.2625,1.0);
      \node[rotate=90] at (-0.0750,0.6) {Huber cutoff $\delta$};
      \node at (0.7000,-0.22) {weight $\lambda$};
      \draw[orange, ultra thick] plot[smooth] coordinates {
        (0.2000,1.0000) (0.2040,0.9845) (0.2049,0.9812) (0.2059,0.9772) (0.2073,0.9724) (0.2089,0.9666) (0.2109,0.9596) (0.2134,0.9512) (0.2164,0.9412) (0.2203,0.9292) (0.2249,0.9151) (0.2309,0.8985) (0.2384,0.8790) (0.2479,0.8565) (0.2602,0.8305) (0.2764,0.8009) (0.2976,0.7675) (0.3260,0.7305) (0.3636,0.6900) (0.4129,0.6463) (0.4756,0.6000) (0.4857,0.5932) (0.4986,0.5847) (0.5125,0.5758) (0.5272,0.5666) (0.5433,0.5569) (0.5603,0.5469) (0.5785,0.5363) (0.5980,0.5252) (0.6190,0.5136) (0.6414,0.5014) (0.6653,0.4886) (0.6909,0.4751) (0.7181,0.4609) (0.7470,0.4459) (0.7775,0.4300) (0.8096,0.4132) (0.8432,0.3954) (0.8782,0.3764) (0.9141,0.3562) (0.9507,0.3347) (0.9875,0.3117) (1.0239,0.2870) (1.0590,0.2605) (1.0921,0.2320) (1.1224,0.2012) (1.1487,0.1677) (1.1704,0.1314) (1.1866,0.0918) (1.1966,0.0483) (1.2000,0.0005)
      };
      \node[color=orange] at (0.8500,0.60) {$\delta=\delta(\lambda)$};
    \end{tikzpicture}
    \caption{The optimal cutoff parameter $\delta$ in Huber's $M$-estimator, in terms of the weight $\lambda$.
    There is no closed form expression for $\delta$ in terms of $\lambda$, but it can be numerically computed as the inverse of the expression for $\lambda$ in terms of $\delta$ given in~\eqref{eqn:Huber-cutoff-from-wt}.}
    \label{fig:Huber-cutoff-from-wt}
\end{figure}

\section{Verifying $\HK$ regularity}\label{app:verify}

\subsection{Laplace location family}
\label{app:laplace-verification}

Recall the Laplace location family \(p_\theta(x)=\frac12e^{-|x-\theta|}\), \(\theta\in\R\).
For \(0<\lambda<1\), we verify the function given in~\eqref{eq:phi_laplace}:
\[
    \phi_{\theta,\lambda}(x)
    =\frac1\lambda\operatorname{sgn}(x-\theta)
    \left(1-e^{-\alpha_\lambda|x-\theta|}\right),
    \qquad
    \alpha_\lambda
    :=\frac12\left(\sqrt{\frac{1+3\lambda}{1-\lambda}}-1\right).
\]

\paragraph{Verification of the transport-source equation.}
To verify the \(\HK_\lambda\) equation for \(0\le\lambda<1\), it suffices to show that, for every \(f\in C_c^\infty(\R)\),
\[
    \partial_\theta\Ebb_\theta[f(X)]
    =(1-\lambda)\Ebb_\theta\!
    \left[f'(X)\phi'_{\theta,\lambda}(X)\right]
    +\lambda\Ebb_\theta\!
    \left[f(X)\phi_{\theta,\lambda}(X)\right].
\]
Since
\[
    \Ebb_\theta[f(X)]
    =\frac12\int_{-\infty}^{\theta}f(x)e^{x-\theta}\diff x
    +\frac12\int_{\theta}^{\infty}f(x)e^{-(x-\theta)}\diff x,
\]
direct differentiation gives
\begin{align}
    \partial_\theta\Ebb_\theta[f(X)]
    &=-\frac12\int_{-\infty}^{\theta}f(x)e^{x-\theta}\diff x
      +\frac12\int_{\theta}^{\infty}f(x)e^{-(x-\theta)}\diff x \notag\\
    &=\Ebb_\theta\!
    \left[f(X)\operatorname{sgn}(X-\theta)\right].
    \label{eq:laplace-weak-left}
\end{align}
For \(0<\lambda<1\) and \(x\neq\theta\), direct calculation gives
\[
    \phi'_{\theta,\lambda}(x)
    =\frac{\alpha_\lambda}{\lambda}e^{-\alpha_\lambda|x-\theta|},
    \qquad
    \phi''_{\theta,\lambda}(x)
    =-\frac{\alpha_\lambda^2}{\lambda}
    \operatorname{sgn}(x-\theta)e^{-\alpha_\lambda|x-\theta|},
\]
while \(p_\theta'(x)/p_\theta(x)=-\operatorname{sgn}(x-\theta)\) and \(\alpha_\lambda(\alpha_\lambda+1)=\lambda/(1-\lambda)\). Set \(J_{\theta,\lambda}(x):=\phi'_{\theta,\lambda}(x)p_\theta(x)\). It follows that, for \(x\neq\theta\),
\begin{equation}
    -(1-\lambda)J'_{\theta,\lambda}(x)
    +\lambda\phi_{\theta,\lambda}(x)p_\theta(x)
    =\operatorname{sgn}(x-\theta)p_\theta(x).
    \label{eq:laplace-flux-identity}
\end{equation}
It remains to check that no singular boundary term occurs at \(x=\theta\). Splitting at \(x=\theta\) and integrating by parts gives
\[
    \int_{\R}f'(x)J_{\theta,\lambda}(x)\diff x
    =f(\theta)\!
    \left[J_{\theta,\lambda}(\theta^-)-J_{\theta,\lambda}(\theta^+)\right]
    -\int_{\R}f(x)J'_{\theta,\lambda}(x)\diff x.
\]
The second terms at infinity vanish because \(f\) has compact support. Moreover, \(p_\theta(\theta^-)=p_\theta(\theta^+)=\frac12\) and \(\phi'_{\theta,\lambda}(\theta^-)=\phi'_{\theta,\lambda}(\theta^+)=\alpha_\lambda/\lambda\), so \(J_{\theta,\lambda}(\theta^-)=J_{\theta,\lambda}(\theta^+)\). Hence, using \eqref{eq:laplace-flux-identity},
\begin{align*}
    &(1-\lambda)\Ebb_\theta\!
    \left[f'(X)\phi'_{\theta,\lambda}(X)\right]
    +\lambda\Ebb_\theta\!
    \left[f(X)\phi_{\theta,\lambda}(X)\right]\\
    &=-(1-\lambda)\int_{\R}f(x)J'_{\theta,\lambda}(x)\diff x
      +\lambda\int_{\R}f(x)\phi_{\theta,\lambda}(x)p_\theta(x)\diff x\\
    &=\Ebb_\theta\!
    \left[f(X)\operatorname{sgn}(X-\theta)\right].
\end{align*}
Comparison with \eqref{eq:laplace-weak-left} proves the weak \(\HK_\lambda\) equation.

At \(\lambda=0\), the potential \(\phi_{\theta,0}(x)=x-\theta\) has derivative \(\phi'_{\theta,0}(x)=1\). Splitting at \(x=\theta\) and integrating by parts gives
\begin{align*}
    \Ebb_\theta\!\left[f'(X)\phi'_{\theta,0}(X)\right]
    &=\frac12\int_{-\infty}^{\theta}f'(x)e^{x-\theta}\diff x
      +\frac12\int_{\theta}^{\infty}f'(x)e^{-(x-\theta)}\diff x\\
    &=-\frac12\int_{-\infty}^{\theta}f(x)e^{x-\theta}\diff x
      +\frac12\int_{\theta}^{\infty}f(x)e^{-(x-\theta)}\diff x\\
    &=\partial_\theta\Ebb_\theta[f(X)].
\end{align*}
The boundary terms at \(\theta\) cancel, the terms at infinity vanish because \(f\) has compact support, and the last equality follows from~\eqref{eq:laplace-weak-left}.
Thus \(\phi_{\theta,0}\) solves the \(\HK_0\) continuity equation.

At \(\lambda=1\), the function \(\phi_{\theta,1}(x)=\operatorname{sgn}(x-\theta)\) satisfies
\[
    \partial_\theta\Ebb_\theta[f(X)]
    =\Ebb_\theta\!\left[f(X)\phi_{\theta,1}(X)\right]
\]
by~\eqref{eq:laplace-weak-left}, so it solves the \(\HK_1\) source equation.

\paragraph{Verification of regularity.}
Fix \(0\le\lambda<1\).
The functions \(\phi_{\theta,\lambda}\) are centered by symmetry and are continuously differentiable in \(x\).
For \(0<\lambda<1\), the bounds
\[
    |\phi_{\theta,\lambda}(x)|\le\frac1\lambda,
    \qquad
    |\phi'_{\theta,\lambda}(x)|\le\frac{\alpha_\lambda}{\lambda}
\]
give \(\phi_{\theta,\lambda}\in\mathcal H_\lambda^1(P_\theta)\) with energy bounded uniformly in \(\theta\).
At \(\lambda=0\), \(\|\phi_{\theta,0}\|_{\mathcal H_0^1(P_\theta)}^2=1\).
Thus condition (i) of Definition~\ref{def:HK_regular} holds.

For any \(\theta,h\in\R\), set \(\phi_t=h\phi_{\theta+th,\lambda}\), \(t\in[0,1]\).
The map \((t,x)\mapsto(\phi_t(x),\nabla\phi_t(x))\) is continuous and hence jointly measurable.
Applying the chain rule to the verified weak \(\HK_\lambda\) equation shows that \(\phi_t\) solves~\eqref{HK-cont-eq}, and the energy bounds give
\[
    \int_0^1
    \|\phi_t\|_{\mathcal H_\lambda^1(P_{\theta+th})}^2
    \diff t<\infty.
\]
Proposition~\ref{prop:Sobolev} and Proposition~\ref{prop:ans-cont-unique-solution} give uniqueness up to the \(\mathcal H_\lambda^1\) norm, verifying (ii).
The form \(\phi_t=h\phi_{\theta+th,\lambda}\) then verifies (iii).
Hence the model is \(\HK_0\) regular and \(\HK\) regular.

At \(\lambda=1\), the score \(\operatorname{sgn}(x-\theta)\) belongs to \(L^2(P_\theta)\), but its jump at \(x=\theta\) prevents it from having a locally Lipschitz representative.
Thus it does not belong to \(\mathcal H_1^1(P_\theta)\), and the Laplace location family is not \(\HK_1\) regular in the sense of Definition~\ref{def:HK_regular}.
However, the likelihood is proportional to \(\exp(-\sum_{i=1}^n|X_i-\theta|)\), so any sample median is an MLE for \(\theta\).

\paragraph{Remaining assumptions of Theorems~\ref{thm:HK-CR-bound} and~\ref{thm:HK-Z-attainment}.}
Fix \(\theta_0\in\R\) and \(0\le\lambda<1\).
Translation invariance shows that \(\Ebb_\theta[\phi_{\theta,\lambda}(X)^2]\) and \(\Ebb_\theta[|\phi'_{\theta,\lambda}(X)|^2]\) are finite and independent of \(\theta\), hence continuous.

For \(0<\lambda<1\), \(s(t,x)=\phi_{t,\lambda}(x)\) is jointly \(C^1\), and
\[
    \partial_t\phi_{t,\lambda}(x)
    =-\frac{\alpha_\lambda}{\lambda}e^{-\alpha_\lambda|x-t|},
    \qquad
    |\phi'_{t,\lambda}(x)|^2
    =\frac{\alpha_\lambda^2}{\lambda^2}e^{-2\alpha_\lambda|x-t|}.
\]
Let \(J\) be a compact interval containing \(\theta_0\) in its interior.
To verify the ULLNs for \(\{\partial_t\phi_{t,\lambda}:t\in J\}\) and \(\{|\phi'_{t,\lambda}|^2:t\in J\}\), let \(g_t\) denote either function.
The explicit formulas give
\[
    |g_t(x)-g_u(x)|\le C|t-u|,
    \qquad
    C:=\max\left\{
    \frac{\alpha_\lambda^2}{\lambda},
    \frac{2\alpha_\lambda^3}{\lambda^2}
    \right\},
\]
for all \(t,u\in J\) and \(x\in\R\).
For any \(\delta>0\), choose finitely many points \(t_1,\ldots,t_N\in J\) such that every \(t\in J\) is within \(\delta\) of one of them.
Approximating both the empirical average and its expectation at the nearest point gives
\begin{equation}\label{eq:laplace-derivative-ulln}
    \begin{aligned}
        &\sup_{t\in J}
        \left|
        \frac1n\sum_{i=1}^n g_t(X_i)
        -\Ebb_{\theta_0}[g_t(X)]
        \right|\\
        &\quad\le
        \max_{1\le j\le N}
        \left|
        \frac1n\sum_{i=1}^n g_{t_j}(X_i)
        -\Ebb_{\theta_0}[g_{t_j}(X)]
        \right|
        +2C\delta.
    \end{aligned}
\end{equation}
For each fixed grid, the maximum in~\eqref{eq:laplace-derivative-ulln} tends to zero in probability by the law of large numbers.
First letting \(n\to\infty\) and then \(\delta\downarrow0\) proves condition (iv) of Theorem~\ref{thm:z-est}.

Since \(|\phi_{t,\lambda}(x)|\le1/\lambda\) and \(|\phi'_{t,\lambda}(x)|\le\alpha_\lambda/\lambda\) for all \(t,x\in\R\), we have
\[
    \sup_{r,t\in J}\Ebb_r
    [\phi_{t,\lambda}(X)^2+|\phi'_{t,\lambda}(X)|^2]
    \le\frac{1+\alpha_\lambda^2}{\lambda^2}.
\]
Writing \(X=r+Z\), where \(Z\sim P_0\), dominated convergence shows that \((r,t)\mapsto\Ebb_r[\phi_{t,\lambda}(X)\phi_{r,\lambda}(X)]\) and \((r,t)\mapsto\Ebb_r[\phi'_{t,\lambda}(X)\phi'_{r,\lambda}(X)]\) are continuous.
This verifies conditions (i)--(ii) of Lemma~\ref{lem:HK-information-identity}.
Moreover, \(\Ebb_{\theta_0}[\partial_t\phi_{t,\lambda}(X)]<0\), verifying Theorem~\ref{thm:z-est}(iii).

The empirical score is strictly decreasing in \(t\) and has opposite signs at the two infinities, so its unique zero depends \(C^1\) on the observations.
The population score is also strictly decreasing and vanishes at \(\theta_0\).
For every \(\varepsilon>0\), the law of large numbers at \(\theta_0-\varepsilon\) and \(\theta_0+\varepsilon\) therefore places the empirical zero between these points with probability tending to one, proving consistency.

At \(\lambda=0\), \(s(t,x)=x-t\) gives the sample mean.
Its parameter and observation derivatives are \(-1\) and \(1\), so the ULLNs hold directly.
Since \(\Ebb_r[(X-t)^2]=2+(r-t)^2\), the uniform moment bound also holds on \(J\times J\); the gradient cross expectation equals \(1\).
Consistency follows from the law of large numbers.

\subsection{Uniform scale family}
\label{app:uniform-verification}

Recall the uniform scale family \(p_\theta(x)=\theta^{-1}\mathbf 1_{\{0<x<\theta\}}\), \(\theta>0\).

For \(0<\lambda<1\), we verify the function given in~\eqref{eq:phi_uniform_interior}:
\[
    \phi_{\theta,\lambda}(x)
    =\frac{\cosh(\beta_\lambda x)}
    {\sqrt{\lambda(1-\lambda)}\,\sinh(\beta_\lambda\theta)}
    -\frac1{\lambda\theta},
    \qquad
    \beta_\lambda:=\sqrt{\frac{\lambda}{1-\lambda}}.
\]
\paragraph{Verification of the transport-source equation.}
To verify the \(\HK_\lambda\) equation for \(0\le\lambda<1\), it suffices to show that, for every \(f\in C_c^\infty(\R)\),
\[
    \partial_\theta\Ebb_\theta[f(X)]
    =(1-\lambda)\Ebb_\theta\!
    \left[f'(X)\phi'_{\theta,\lambda}(X)\right]
    +\lambda\Ebb_\theta\!
    \left[f(X)\phi_{\theta,\lambda}(X)\right].
\]
Since \(\Ebb_\theta[f(X)]=\theta^{-1}\int_0^\theta f(x)\diff x\), direct differentiation gives
\begin{equation}
    \partial_\theta\Ebb_\theta[f(X)]
    =\frac{f(\theta)}{\theta}-\frac1{\theta^2}\int_0^\theta f(x)\diff x.
    \label{eq:uniform-weak-left}
\end{equation}
For \(0<\lambda<1\), a direct calculation gives
\[
    \phi'_{\theta,\lambda}(x)
    =\frac{\sinh(\beta_\lambda x)}
    {(1-\lambda)\sinh(\beta_\lambda\theta)},
    \qquad
    \phi''_{\theta,\lambda}(x)
    =\frac{\beta_\lambda^2\cosh(\beta_\lambda x)}
    {\sqrt{\lambda(1-\lambda)}\,\sinh(\beta_\lambda\theta)}.
\]
It follows that \(\phi'_{\theta,\lambda}(0)=0\) and \((1-\lambda)\phi'_{\theta,\lambda}(\theta)=1\). Moreover, using \((1-\lambda)\beta_\lambda^2=\lambda\), we obtain
\begin{equation}
    (1-\lambda)\phi''_{\theta,\lambda}(x)
    -\lambda\phi_{\theta,\lambda}(x)=\frac1\theta.
    \label{eq:uniform-interior-identity}
\end{equation}
Therefore, integrating by parts,
\begin{align*}
    &(1-\lambda)\Ebb_\theta\!
    \left[f'(X)\phi'_{\theta,\lambda}(X)\right]
    +\lambda\Ebb_\theta\!
    \left[f(X)\phi_{\theta,\lambda}(X)\right]\\
    &=\frac{1-\lambda}{\theta}\int_0^\theta
        f'(x)\phi'_{\theta,\lambda}(x)\diff x
      +\frac{\lambda}{\theta}\int_0^\theta
        f(x)\phi_{\theta,\lambda}(x)\diff x\\
    &=\frac{1-\lambda}{\theta}
      \left[f(x)\phi'_{\theta,\lambda}(x)\right]_{x=0}^{x=\theta}
      -\frac{1-\lambda}{\theta}\int_0^\theta
        f(x)\phi''_{\theta,\lambda}(x)\diff x
      +\frac{\lambda}{\theta}\int_0^\theta
        f(x)\phi_{\theta,\lambda}(x)\diff x\\
    &=\frac{f(\theta)}{\theta}
      -\frac1\theta\int_0^\theta f(x)
      \left[(1-\lambda)\phi''_{\theta,\lambda}(x)
      -\lambda\phi_{\theta,\lambda}(x)\right]\diff x\\
    &=\frac{f(\theta)}{\theta}-\frac1{\theta^2}\int_0^\theta f(x)\diff x,
\end{align*}
where the fourth line uses the two boundary conditions and the final line uses \eqref{eq:uniform-interior-identity}. Comparison with \eqref{eq:uniform-weak-left} proves the weak \(\HK_\lambda\) equation.

At \(\lambda=0\),
\[
    \phi_{\theta,0}(x)=\frac{x^2-\theta^2/3}{2\theta},
    \qquad
    \phi'_{\theta,0}(0)=0,
    \qquad
    \phi'_{\theta,0}(\theta)=1,
    \qquad
    \phi''_{\theta,0}(x)=\frac1\theta.
\]
The same integration-by-parts argument gives
\begin{align*}
    \Ebb_\theta\!\left[f'(X)\phi'_{\theta,0}(X)\right]
    &=\frac1{\theta^2}\int_0^\theta xf'(x)\diff x\\
    &=\frac{f(\theta)}{\theta}-\frac1{\theta^2}\int_0^\theta f(x)\diff x\\
    &=\partial_\theta\Ebb_\theta[f(X)],
\end{align*}
where the last equality follows from~\eqref{eq:uniform-weak-left}.
Thus \(\phi_{\theta,0}\) solves the \(\HK_0\) continuity equation.

\paragraph{Verification of regularity.}
Fix \(0\le\lambda<1\).
The explicit formulas define smooth functions on \(\R\).
A direct integration shows that \(\Ebb_\theta[\phi_{\theta,\lambda}(X)]=0\), and compactness of the support implies that the score function has finite \(\mathcal H^1_\lambda(P_\theta)\) energy.
Thus \(\phi_{\theta,\lambda}\in\mathcal H_\lambda^1(P_\theta)\).
At \(\lambda=0\), \(\|\phi_{\theta,0}\|_{\mathcal H_0^1(P_\theta)}^2=\Ebb_\theta[(X/\theta)^2]=1/3\).
After the change of variables \(x=\theta u\), the energy is an integral over \([0,1]\) of a function continuous in \((\theta,u)\), so it is continuous and locally bounded for \(\theta>0\).
This verifies condition (i) of Definition~\ref{def:HK_regular}.

For any \(\theta>0\) and \(h\in\R\) with \(\theta+h>0\), set \(\phi_t=h\phi_{\theta+th,\lambda}\), \(t\in[0,1]\).
The map \((t,x)\mapsto(\phi_t(x),\nabla\phi_t(x))\) is continuous and hence jointly measurable.
Applying the chain rule to the verified weak \(\HK_\lambda\) equation shows that \(\phi_t\) solves~\eqref{HK-cont-eq}, and the local energy bounds give
\[
    \int_0^1
    \|\phi_t\|_{\mathcal H_\lambda^1(P_{\theta+th})}^2
    \diff t<\infty.
\]
Proposition~\ref{prop:Sobolev} and Proposition~\ref{prop:ans-cont-unique-solution} give uniqueness up to the \(\mathcal H_\lambda^1\) norm, verifying (ii).
The form \(\phi_t=h\phi_{\theta+th,\lambda}\) then verifies (iii).
Hence the model is \(\HK_0\) regular and \(\HK\) regular.

Under the pure source case, however,
\[
    \partial_\theta p_\theta
    =-\frac1{\theta^2}\mathbf 1_{\{0<x<\theta\}}+\frac1\theta\delta_\theta.
\]
The boundary Dirac mass cannot be represented as \(\phi_{\theta,1}p_\theta\) by an ordinary function \(\phi_{\theta,1}\). Hence the uniform scale family has no classical function-valued \(\HK_1\) score function.
Using the density \(p_\theta(x)=\theta^{-1}\mathbf 1_{\{0\le x\le\theta\}}\), the likelihood is proportional to \(\theta^{-n}\mathbf 1_{\{\theta\ge X_{(n)}\}}\).
Thus the sample maximum \(X_{(n)}\) is the MLE for \(\theta\).

\paragraph{Remaining assumptions of Theorems~\ref{thm:HK-CR-bound} and~\ref{thm:HK-Z-attainment}.}
Fix \(\theta_0>0\), \(0\le\lambda<1\), and a compact interval \(J=[a,b]\subset(0,\infty)\) containing \(\theta_0\) in its interior.
For \(r\in J\), the distribution \(P_r\) is supported on \([0,b]\).
The explicit formulas are smooth in \((t,x)\) on \(J\times[0,b]\).
Consequently, for either \(g_t(x)=\partial_t\phi_{t,\lambda}(x)\) or \(g_t(x)=|\phi'_{t,\lambda}(x)|^2\), the functions \(g_t(x)\) and \(\partial_tg_t(x)\) are uniformly bounded there.
The mean value theorem gives \(|g_t(x)-g_u(x)|\le C|t-u|\) for a finite constant \(C\).
Similar to the Laplace case, we obtain the required ULLNs in Theorem~\ref{thm:z-est}(iv) from~\eqref{eq:laplace-derivative-ulln}.

For \(0<\lambda<1\), the explicit formulas give, for \(t\in[a,b]\) and \(0\le x\le b\),
\begin{equation}\label{eq:uniform-score-bounds}
    \begin{aligned}
        |\phi_{t,\lambda}(x)|
        &\le
        \frac{\cosh(\beta_\lambda b)}
        {\sqrt{\lambda(1-\lambda)}\,\sinh(\beta_\lambda a)}
        +\frac1{\lambda a},\\
        |\phi'_{t,\lambda}(x)|
        &\le
        \frac{\sinh(\beta_\lambda b)}
        {(1-\lambda)\sinh(\beta_\lambda a)}.
    \end{aligned}
\end{equation}
At \(\lambda=0\), the corresponding bounds are \(|\phi_{t,0}(x)|\le b^2/(2a)+b/6\) and \(|\phi'_{t,0}(x)|\le b/a\).
These bounds are independent of \(r,t\in J\), and therefore give \(\sup_{r,t\in J}\Ebb_r[\phi_{t,\lambda}(X)^2+|\phi'_{t,\lambda}(X)|^2]<\infty\).

To verify continuity when the sampling distribution also varies, write \(X=rZ\), where \(Z\) is uniform on \([0,1]\).
For example,
\[
    \Ebb_r[\phi_{t,\lambda}(X)^2]
    =\int_0^1\phi_{t,\lambda}(ru)^2\diff u.
\]
The integrand is continuous in \((r,t,u)\) and bounded uniformly on \(J\times J\times[0,1]\).
The same applies to the products \(|\phi'_{t,\lambda}(ru)|^2\), \(\phi_{t,\lambda}(ru)\phi_{r,\lambda}(ru)\), and \(\phi'_{t,\lambda}(ru)\phi'_{r,\lambda}(ru)\).
Dominated convergence therefore gives the continuity of \(\Ebb_\theta[\phi_{\theta,\lambda}(X)^2]\) and \(\Ebb_\theta[|\phi'_{\theta,\lambda}(X)|^2]\) in \(\theta\), and the cross-expectation continuity required in Lemma~\ref{lem:HK-information-identity}(ii).

Whenever \(I_\lambda(\theta_0)\) is invertible, the information identity gives \(\Ebb_{\theta_0}[\partial_t\phi_{\theta_0,\lambda}(X)]=-I_\lambda(\theta_0)\ne0\).
Continuity of \(t\mapsto\Ebb_{\theta_0}[\partial_t\phi_{t,\lambda}(X)]\) then verifies Theorem~\ref{thm:z-est}(iii) near \(\theta_0\).

For \(0<\lambda<1\), the estimating equation in Example~\ref{ex:Uniform-HK-Z} is
\[
    H_\lambda(T_{n,\lambda})
    =\frac1n\sum_{i=1}^n\cosh(\beta_\lambda X_i),
    \qquad
    H_\lambda(t):=\frac{\sinh(\beta_\lambda t)}{\beta_\lambda t}.
\]
For \(t>0\),
\[
    H_\lambda'(t)
    =
    \frac{\beta_\lambda t\cosh(\beta_\lambda t)-\sinh(\beta_\lambda t)}
    {\beta_\lambda t^2}>0,
\]
since \(z\cosh z-\sinh z\) is zero at \(z=0\) and has derivative \(z\sinh z>0\) for \(z>0\).
Moreover, \(H_\lambda(t)\to1\) as \(t\downarrow0\), and \(H_\lambda(t)\to\infty\) as \(t\to\infty\).
Thus \(H_\lambda\) has a \(C^1\) inverse on \((1,\infty)\).

For every sample other than the zero vector, \(n^{-1}\sum_i\cosh(\beta_\lambda X_i)>1\), so
\begin{equation}\label{eq:uniform-estimator-inverse}
    T_{n,\lambda}
    =
    H_\lambda^{-1}\left(
    \frac1n\sum_{i=1}^n\cosh(\beta_\lambda X_i)
    \right)
\end{equation}
is the unique positive solution and depends locally \(C^1\) on the observations.
Samples from \(P_{\theta_0}\) are nonzero almost surely.
Finally,
\[
    \frac1n\sum_{i=1}^n\cosh(\beta_\lambda X_i)
    \longrightarrow
    \frac1{\theta_0}\int_0^{\theta_0}\cosh(\beta_\lambda x)\diff x
    =H_\lambda(\theta_0)
\]
almost surely.
Applying the continuous function \(H_\lambda^{-1}\) in~\eqref{eq:uniform-estimator-inverse} proves \(T_{n,\lambda}\to\theta_0\) almost surely.

At \(\lambda=0\), the estimator is \(\sqrt{3n^{-1}\sum_iX_i^2}\).
It is locally smooth away from the zero sample, and the law of large numbers gives \(n^{-1}\sum_iX_i^2\to\theta_0^2/3\), proving consistency.

\subsection{Gaussian location-scale family}
\label{app:gaussian-location-scale-verification}

Recall the Gaussian location-scale family
\[
    p_\theta(x)=p_{\mu,\sigma^2}(x)
    =\frac{1}{\sqrt{2\pi\sigma^2}}
    \exp\left(-\frac{(x-\mu)^2}{2\sigma^2}\right),
    \qquad
    \theta=(\mu,\sigma^2)\in\R\times(0,\infty).
\]

For \(0\leq\lambda\leq1\), set \(a_\lambda:=(1-\lambda)+\lambda\sigma^2\) and \(b_\lambda:=2(1-\lambda)+\lambda\sigma^2\), and recall from Example~\ref{ex:Gaussian-loc-scale} that
\[
    \phi_{\theta,\lambda}(x)
    =\begin{pmatrix}
        \phi_{\theta,\lambda,\mu}(x)\\[1mm]
        \phi_{\theta,\lambda,\sigma^2}(x)
    \end{pmatrix}
    =\begin{pmatrix}
        \dfrac{x-\mu}{a_\lambda}\\[3mm]
        \dfrac{(x-\mu)^2-\sigma^2}{2\sigma^2b_\lambda}
    \end{pmatrix}.
\]
\paragraph{Verification of the transport-source equation.}

To verify the \(\HK_\lambda\) equation, it suffices to show that, for every \(f\in C_c^\infty(\R)\) and \(\eta\in\{\mu,\sigma^2\}\),
\begin{equation}\label{eq:gaussian-ls-weak}
    \partial_\eta\Ebb_\theta[f(X)]
    =(1-\lambda)\Ebb_\theta\!
    \left[f'(X)\phi'_{\theta,\lambda,\eta}(X)\right]
    +\lambda\Ebb_\theta\!
    \left[f(X)\phi_{\theta,\lambda,\eta}(X)\right].
\end{equation}
We first record
\[
    \frac{p_\theta'(x)}{p_\theta(x)}=-\frac{x-\mu}{\sigma^2},
    \qquad
    \frac{\partial_\mu p_\theta(x)}{p_\theta(x)}=\frac{x-\mu}{\sigma^2},
    \qquad
    \frac{\partial_{\sigma^2}p_\theta(x)}{p_\theta(x)}
    =\frac{(x-\mu)^2-\sigma^2}{2\sigma^4}.
\]
For the location coordinate, \(\phi'_{\theta,\lambda,\mu}(x)=1/a_\lambda\) and \(\phi''_{\theta,\lambda,\mu}(x)=0\). Consequently,
\begin{align*}
    &-(1-\lambda)
    \frac{\partial_x\!
    \left(\phi'_{\theta,\lambda,\mu}(x)p_\theta(x)\right)}{p_\theta(x)}
    +\lambda\phi_{\theta,\lambda,\mu}(x)\\
    &=\frac{x-\mu}{a_\lambda}
    \left(\frac{1-\lambda}{\sigma^2}+\lambda\right)
    =\frac{x-\mu}{\sigma^2}
    =\frac{\partial_\mu p_\theta(x)}{p_\theta(x)}.
\end{align*}
For the scale coordinate, \(\phi'_{\theta,\lambda,\sigma^2}(x)=(x-\mu)/(\sigma^2b_\lambda)\) and \(\phi''_{\theta,\lambda,\sigma^2}(x)=1/(\sigma^2b_\lambda)\). It follows that
\begin{align*}
    &-(1-\lambda)
    \frac{\partial_x\!
    \left(\phi'_{\theta,\lambda,\sigma^2}(x)p_\theta(x)\right)}{p_\theta(x)}
    +\lambda\phi_{\theta,\lambda,\sigma^2}(x)\\
    &=-(1-\lambda)
    \left[
        \frac1{\sigma^2b_\lambda}
        -\frac{(x-\mu)^2}{\sigma^4b_\lambda}
    \right]
    +\lambda\frac{(x-\mu)^2-\sigma^2}{2\sigma^2b_\lambda}\\
    &=\frac{(x-\mu)^2-\sigma^2}{\sigma^4b_\lambda}
    \left(1-\lambda+\frac{\lambda\sigma^2}{2}\right)\\
    &=\frac{(x-\mu)^2-\sigma^2}{2\sigma^4}
    =\frac{\partial_{\sigma^2}p_\theta(x)}{p_\theta(x)}.
\end{align*}
Thus, for each \(\eta\in\{\mu,\sigma^2\}\),
\[
    \partial_\eta p_\theta
    =-(1-\lambda)\partial_x\!
    \left(\phi'_{\theta,\lambda,\eta}p_\theta\right)
    +\lambda\phi_{\theta,\lambda,\eta}p_\theta.
\]
Multiplying by \(f\), integrating over \(\R\), and integrating the transport term by parts gives
\begin{align*}
    \partial_\eta\Ebb_\theta[f(X)]
    &=\int_{\R}f(x)\partial_\eta p_\theta(x)\diff x\\
    &=-(1-\lambda)\int_{\R}f(x)
      \partial_x\!\left(\phi'_{\theta,\lambda,\eta}(x)p_\theta(x)\right)\diff x\\
    &\quad+\lambda\int_{\R}f(x)
      \phi_{\theta,\lambda,\eta}(x)p_\theta(x)\diff x\\
    &=(1-\lambda)\Ebb_\theta\!
      \left[f'(X)\phi'_{\theta,\lambda,\eta}(X)\right]
      +\lambda\Ebb_\theta\!
      \left[f(X)\phi_{\theta,\lambda,\eta}(X)\right].
\end{align*}
Hence both coordinates satisfy the weak \(\HK_\lambda\) equation.

At the endpoints,
\[
    \phi_{\theta,0}(x)
    =\begin{pmatrix}
        x-\mu\\[1mm]
        \dfrac{(x-\mu)^2-\sigma^2}{4\sigma^2}
    \end{pmatrix},
    \qquad
    \phi_{\theta,1}(x)
    =\begin{pmatrix}
        \dfrac{x-\mu}{\sigma^2}\\[2mm]
        \dfrac{(x-\mu)^2-\sigma^2}{2\sigma^4}
    \end{pmatrix}.
\]
The first vector solves the two coordinatewise \(\HK_0\) continuity equations, while the second is precisely the Gaussian score vector and therefore solves the \(\HK_1\) source equation.

\paragraph{Verification of regularity.}
Fix \(0\le\lambda\le1\).
Since \(\Ebb_\theta[X-\mu]=0\) and \(\Ebb_\theta[(X-\mu)^2]=\sigma^2\), both coordinates are centered under \(P_\theta\).
The coordinate functions are smooth in \(x\).
Using \(\Ebb_\theta[(X-\mu)^4]=3\sigma^4\), we obtain
\begin{align*}
    \|\phi_{\theta,\lambda,\mu}\|_{\mathcal H_\lambda^1(P_\theta)}^2
    &=\frac{\lambda\sigma^2+(1-\lambda)}{a_\lambda^2}
    =\frac1{a_\lambda},\\
    \|\phi_{\theta,\lambda,\sigma^2}\|_{\mathcal H_\lambda^1(P_\theta)}^2
    &=\frac{\lambda}{2b_\lambda^2}
    +\frac{1-\lambda}{\sigma^2b_\lambda^2}
    =\frac1{2\sigma^2b_\lambda}.
\end{align*}
Thus both coordinate functions belong to \(\mathcal H_\lambda^1(P_\theta)\), and their energies are continuous in \(\theta\) and bounded for each \(\theta\).
This verifies condition (i) of Definition~\ref{def:HK_regular}.

For any \(\theta\in\Theta\) and \(h=(h_1,h_2)\in\R^2\) such that \(\theta+th\in\Theta\) for all \(t\in[0,1]\), set
\[
    \phi_t
    =h_1\phi_{\theta+th,\lambda,\mu}
    +h_2\phi_{\theta+th,\lambda,\sigma^2}.
\]
The map \((t,x)\mapsto(\phi_t(x),\nabla\phi_t(x))\) is continuous and hence jointly measurable.
We have verified that \(\phi_t\) solves~\eqref{HK-cont-eq}.
Since \(\{\theta+th:0\le t\le1\}\) is a compact subset of \(\Theta\), the bounded \(\mathcal H_\lambda^1\) energy at each \(\theta\) gives
\[
    \int_0^1
    \|\phi_t\|_{\mathcal H_\lambda^1(P_{\theta+th})}^2
    \diff t<\infty.
\]
Proposition~\ref{prop:Sobolev} and Proposition~\ref{prop:ans-cont-unique-solution} give uniqueness up to the \(\mathcal H_\lambda^1\) norm, verifying (ii).
The form of \(\phi_t\) then verifies (iii).
Hence the Gaussian location-scale family is \(\HK_\lambda\) regular for all \(0\le\lambda\le1\).

\paragraph{Remaining assumptions of Theorems~\ref{thm:HK-CR-bound} and~\ref{thm:HK-Z-attainment}.}
Fix \(\theta_0\in\Theta\), \(0\le\lambda\le1\), and a compact neighborhood \(K\subset\Theta\) of \(\theta_0\).
The score is jointly \(C^1\) and quadratic in the observation.
Every entry of \(D_t\phi_{t,\lambda}(x)\) and \(D\phi_{t,\lambda}(x)D\phi_{t,\lambda}(x)^\top\) is a linear combination of \(1,x,x^2\).
Its coefficients are continuous in \(t\), so each coefficient is uniformly bounded on \(K\).
Consequently, the empirical errors, uniformly over \(t\in K\), are bounded, up to a constant coefficient, by
\[
    \left|
    \frac1n\sum_{i=1}^nX_i-\Ebb_{\theta_0}[X]
    \right|
    +
    \left|
    \frac1n\sum_{i=1}^nX_i^2-\Ebb_{\theta_0}[X^2]
    \right|.
\]
The law of large numbers makes this expression tend to zero, proving the entrywise ULLNs in Theorem~\ref{thm:z-est}(iv).

Since the score is quadratic and its observation derivative is linear, there is a constant \(C_K<\infty\) such that
\[
    \|\phi_{t,\lambda}(x)\|^2
    +\|D\phi_{t,\lambda}(x)\|_{\mathrm F}^2
    \le C_K(1+|x|^4),
    \qquad t\in K.
\]
Under \(P_r\), with \(r=(\mu,\sigma^2)\), \(\Ebb_r[X^4]=\mu^4+6\mu^2\sigma^2+3\sigma^4\).
This expression is bounded for \(r\in K\).
Therefore,
\[
    \sup_{r,t\in K}\Ebb_r[
    \|\phi_{t,\lambda}(X)\|^2
    +\|D\phi_{t,\lambda}(X)\|_{\mathrm F}^2]
    \le C_K\left(1+\sup_{r\in K}\Ebb_r[X^4]\right)<\infty,
\]
verifying Lemma~\ref{lem:HK-information-identity}(i).

The products \(\phi_{t,\lambda}(x)\phi_{r,\lambda}(x)^\top\) and \(D\phi_{t,\lambda}(x)D\phi_{r,\lambda}(x)^\top\) have polynomial entries of degree at most four, with coefficients continuous in \((r,t)\).
Their expectations are finite sums of these coefficients multiplied by Gaussian moments, which are continuous in the distribution parameter \(r\).
This proves Lemma~\ref{lem:HK-information-identity}(ii).
Setting \(r=t=\theta\) gives the continuity of \(\Ebb_\theta[\phi_{\theta,\lambda}(X)\phi_{\theta,\lambda}(X)^\top]\) and \(\Ebb_\theta[D\phi_{\theta,\lambda}(X)D\phi_{\theta,\lambda}(X)^\top]\) required in Theorem~\ref{thm:HK-CR-bound}.

Whenever \(I_\lambda(\theta_0)\) is invertible, the information identity and continuity of \(\Ebb_{\theta_0}[D_t\phi_{t,\lambda}(X)]\) give Theorem~\ref{thm:z-est}(iii) near \(\theta_0\).
For \(n\ge2\), the sample variance is positive almost surely, so the explicit estimator in Example~\ref{rem:Gaussian-LS-HK-Z} lies in \(\Theta\) and is locally smooth in the observations.
Its consistency follows from the law of large numbers.

\subsection{Gaussian correlation family}
\label{app:gaussian-correlation-verification}

Recall the Gaussian correlation family
\[
    P_\theta=\Ncal(0,\Sigma_\theta),
    \qquad
    \Sigma_\theta
    =
    \begin{pmatrix}
        1 & \theta\\
        \theta & 1
    \end{pmatrix},
    \qquad
    \theta\in(-1,1).
\]
Introduce the orthogonal coordinates
\[
    u=\frac{x+y}{\sqrt2},
    \qquad
    v=\frac{x-y}{\sqrt2}.
\]
Under \(P_\theta\), the variables \(U\) and \(V\) are independent, with
\[
    U\sim\Ncal(0,1+\theta),
    \qquad
    V\sim\Ncal(0,1-\theta).
\]

Fix \(0\leq\lambda\leq1\), and recall from
Example~\ref{ex:Gaussian-corr-reg} that, in the \((u,v)\)-coordinates,
\[
    \phi_{\theta,\lambda}(u,v)
    =
    \frac{u^2-(1+\theta)}
    {2(1+\theta)\bigl(2-\lambda(1-\theta)\bigr)}
    -
    \frac{v^2-(1-\theta)}
    {2(1-\theta)\bigl(2-\lambda(1+\theta)\bigr)}.
\]
\paragraph{Verification of the transport-source equation.}

Since the change of coordinates is orthogonal, it suffices to verify the weak
transport-source equation in the \((u,v)\)-coordinates. The density in these
coordinates is
\[
    p_\theta(u,v)
    =
    \frac{1}{2\pi\sqrt{(1+\theta)(1-\theta)}}
    \exp\left(
        -\frac{u^2}{2(1+\theta)}
        -
        \frac{v^2}{2(1-\theta)}
    \right).
\]
Direct differentiation gives
\begin{equation}
    \frac{\partial_\theta p_\theta(u,v)}
    {p_\theta(u,v)}
    =
    \frac{u^2-(1+\theta)}{2(1+\theta)^2}
    -
    \frac{v^2-(1-\theta)}{2(1-\theta)^2}.
    \label{eq:gaussian-corr-score}
\end{equation}
Thus, for every \(f\in C_c^\infty(\R^2)\),
\[
    \partial_\theta\Ebb_\theta[f(U,V)]
    =
    \Ebb_\theta
    \left[
        f(U,V)
        \left(
            \frac{U^2-(1+\theta)}{2(1+\theta)^2}
            -
            \frac{V^2-(1-\theta)}{2(1-\theta)^2}
        \right)
    \right].
\]
The gradient of the score function is
\[
    \nabla\phi_{\theta,\lambda}(u,v)
    =
    \left(
        \frac{u}
        {(1+\theta)\bigl(2-\lambda(1-\theta)\bigr)},
        -\frac{v}
        {(1-\theta)\bigl(2-\lambda(1+\theta)\bigr)}
    \right),
\]
while
\[
    \nabla\log p_\theta(u,v)
    =
    \left(
        -\frac{u}{1+\theta},
        -\frac{v}{1-\theta}
    \right).
\]
Consequently,
\begin{align*}
    \frac{
        \Div\left(
            p_\theta\nabla\phi_{\theta,\lambda}
        \right)
    }{
        p_\theta
    }
    &=
    \Delta\phi_{\theta,\lambda}
    +
    \nabla\phi_{\theta,\lambda}
    \cdot
    \nabla\log p_\theta\\
    &=
    -\frac{u^2-(1+\theta)}
    {(1+\theta)^2\bigl(2-\lambda(1-\theta)\bigr)}
    +
    \frac{v^2-(1-\theta)}
    {(1-\theta)^2\bigl(2-\lambda(1+\theta)\bigr)}.
\end{align*}
Using
\[
    2-\lambda(1-\theta)
    =
    2(1-\lambda)+\lambda(1+\theta),
\]
and
\[
    2-\lambda(1+\theta)
    =
    2(1-\lambda)+\lambda(1-\theta),
\]
we obtain
\begin{align*}
    &-(1-\lambda)
    \frac{
        \Div\left(
            p_\theta\nabla\phi_{\theta,\lambda}
        \right)
    }{
        p_\theta
    }
    +
    \lambda\phi_{\theta,\lambda}\\
    &=
    \frac{u^2-(1+\theta)}
    {2(1+\theta)^2\bigl(2-\lambda(1-\theta)\bigr)}
    \left[
        2(1-\lambda)+\lambda(1+\theta)
    \right]\\
    &\quad-
    \frac{v^2-(1-\theta)}
    {2(1-\theta)^2\bigl(2-\lambda(1+\theta)\bigr)}
    \left[
        2(1-\lambda)+\lambda(1-\theta)
    \right]\\
    &=
    \frac{u^2-(1+\theta)}{2(1+\theta)^2}
    -
    \frac{v^2-(1-\theta)}{2(1-\theta)^2}.
\end{align*}
Comparison with \eqref{eq:gaussian-corr-score} gives the strong identity
\[
    \partial_\theta p_\theta
    =
    -(1-\lambda)
    \Div\left(
        p_\theta\nabla\phi_{\theta,\lambda}
    \right)
    +
    \lambda\phi_{\theta,\lambda}p_\theta.
\]
Multiplying by \(f\), integrating over \(\R^2\), and integrating the transport
term by parts gives
\begin{align*}
    \partial_\theta\Ebb_\theta[f(U,V)]
    &=
    (1-\lambda)
    \Ebb_\theta
    \left[
        \nabla f(U,V)
        \cdot
        \nabla\phi_{\theta,\lambda}(U,V)
    \right]\\
    &\quad+
    \lambda
    \Ebb_\theta
    \left[
        f(U,V)\phi_{\theta,\lambda}(U,V)
    \right].
\end{align*}
The boundary term vanishes because \(f\) has compact support. Hence
\(\phi_{\theta,\lambda}\) solves the weak
\(\HK_\lambda\) equation.

Under the pure transport case, 
\[
    \phi_{\theta,0}(u,v)
    =
    \frac{u^2-(1+\theta)}{4(1+\theta)}
    -
    \frac{v^2-(1-\theta)}{4(1-\theta)},
\]
while under the pure source case,
\[
    \phi_{\theta,1}(u,v)
    =
    \frac{u^2-(1+\theta)}{2(1+\theta)^2}
    -
    \frac{v^2-(1-\theta)}{2(1-\theta)^2},
\]
which is the Gaussian correlation score function. Thus they solve the \(\HK_0\) continuity equation and the \(\HK_1\) source equation, respectively.

\paragraph{Verification of regularity.}
Fix \(0\le\lambda\le1\).
The score function is centered because
\[
    \Ebb_\theta[U^2]=1+\theta,
    \qquad
    \Ebb_\theta[V^2]=1-\theta.
\]
It is smooth in \((u,v)\).
Independence of \(U\) and \(V\), together with
\(\Ebb_\theta[U^4]=3(1+\theta)^2\) and
\(\Ebb_\theta[V^4]=3(1-\theta)^2\), gives
\begin{align*}
    \Ebb_\theta[\phi_{\theta,\lambda}(U,V)^2]
    &=
    \frac{1}{2\bigl(2-\lambda(1-\theta)\bigr)^2}
    +\frac{1}{2\bigl(2-\lambda(1+\theta)\bigr)^2},\\
    \Ebb_\theta[\|\nabla\phi_{\theta,\lambda}(U,V)\|^2]
    &=
    \frac{1}{(1+\theta)\bigl(2-\lambda(1-\theta)\bigr)^2}
    +\frac{1}{(1-\theta)\bigl(2-\lambda(1+\theta)\bigr)^2}.
\end{align*}
Consequently,
\[
    \|\phi_{\theta,\lambda}\|_{\mathcal H_\lambda^1(P_\theta)}^2
    =
    \frac{1}{2(1+\theta)\bigl(2-\lambda(1-\theta)\bigr)}
    +\frac{1}{2(1-\theta)\bigl(2-\lambda(1+\theta)\bigr)}.
\]
Thus \(\phi_{\theta,\lambda}\) belongs to \(\mathcal H_\lambda^1(P_\theta)\), and its energy is continuous in \(\theta\) and bounded for each \(\theta\).
This verifies condition (i) of Definition~\ref{def:HK_regular}.

For any \(\theta\in(-1,1)\) and \(h\in\R\) such that \(\theta+th\in(-1,1)\) for all \(t\in[0,1]\), set \(\phi_t=h\phi_{\theta+th,\lambda}\).
The map \((t,u,v)\mapsto(\phi_t(u,v),\nabla\phi_t(u,v))\) is continuous and hence jointly measurable.
We have verified that \(\phi_t\) solves~\eqref{HK-cont-eq}.
Since \(\{\theta+th:0\le t\le1\}\) is a compact subset of \((-1,1)\), the bounded \(\mathcal H_\lambda^1\) energy at each \(\theta\) gives
\[
    \int_0^1
    \|\phi_t\|_{\mathcal H_\lambda^1(P_{\theta+th})}^2
    \diff t<\infty.
\]
Proposition~\ref{prop:Sobolev} and Proposition~\ref{prop:ans-cont-unique-solution} give uniqueness up to the \(\mathcal H_\lambda^1\) norm, verifying (ii).
The form of \(\phi_t\) then verifies (iii).
Hence the Gaussian correlation family is \(\HK_\lambda\) regular for all \(0\le\lambda\le1\).

\paragraph{Remaining assumptions of Theorems~\ref{thm:HK-CR-bound} and~\ref{thm:HK-Z-attainment}.}
Fix \(\theta_0\in(-1,1)\) and \(0\le\lambda\le1\).
The expressions computed above show that \(\theta\mapsto\Ebb_\theta[\phi_{\theta,\lambda}(U,V)^2]\) and \(\theta\mapsto\Ebb_\theta[\|\nabla\phi_{\theta,\lambda}(U,V)\|^2]\) are continuous.

The score is jointly \(C^1\).
The functions \(\partial_t\phi_{t,\lambda}(u,v)\) and \(\|\nabla\phi_{t,\lambda}(u,v)\|^2\) are linear combinations of \(1,u^2,v^2\).
On any compact parameter interval \(K\subset(-1,1)\), their coefficients are continuous, and each coefficient is uniformly bounded on \(K\).
Since
\[
    \frac1n\sum_{i=1}^nU_i^2\longrightarrow1+\theta_0,
    \qquad
    \frac1n\sum_{i=1}^nV_i^2\longrightarrow1-\theta_0
\]
almost surely, the empirical averages of these functions converge to their expectations uniformly over \(t\in K\).
This verifies Theorem~\ref{thm:z-est}(iv).

There is also a constant \(C_K<\infty\) such that
\[
    |\phi_{t,\lambda}(u,v)|^2
    +\|\nabla\phi_{t,\lambda}(u,v)\|^2
    \le C_K(1+u^4+v^4),
    \qquad t\in K.
\]
Because \(\Ebb_r[U^4]=3(1+r)^2\le12\) and \(\Ebb_r[V^4]=3(1-r)^2\le12\), the expectation of this bound is at most \(25C_K\), uniformly for \(r,t\in K\).
This proves Lemma~\ref{lem:HK-information-identity}(i).
Expanding \(\phi_{t,\lambda}\phi_{r,\lambda}\) and \(\nabla\phi_{t,\lambda}\cdot\nabla\phi_{r,\lambda}\) expresses their expectations in terms of \(1\), \(1\pm r\), \(3(1\pm r)^2\), and \(\Ebb_r[U^2V^2]=1-r^2\), with coefficients continuous in \((r,t)\).
Their expectations are therefore continuous, verifying Lemma~\ref{lem:HK-information-identity}(ii).
Whenever \(I_\lambda(\theta_0)\) is invertible, the information identity and continuity of \(\Ebb_{\theta_0}[\partial_t\phi_{t,\lambda}(U,V)]\) give Theorem~\ref{thm:z-est}(iii) near \(\theta_0\).

Almost surely, \(\hat\sigma_{XY}\ne0\) and \(\hat\sigma_X^2+\hat\sigma_Y^2>2|\hat\sigma_{XY}|\).
For \(0<\lambda\le1\), the estimating equation
\begin{equation}\label{eq:gaussian-corr-estimating-verification}
    \lambda t^3-\lambda\hat\sigma_{XY}t^2
    +(\hat\sigma_X^2+\hat\sigma_Y^2-\lambda)t
    +(\lambda-2)\hat\sigma_{XY}=0
\end{equation}
has a zero between \(0\) and \(\operatorname{sgn}(\hat\sigma_{XY})\).
At any such zero \(t\), its derivative with respect to \(t\) equals
\begin{equation}\label{eq:gaussian-corr-root-derivative}
    \begin{aligned}
        &3\lambda t^2-2\lambda\hat\sigma_{XY}t
        +\hat\sigma_X^2+\hat\sigma_Y^2-\lambda\\
        &\qquad=
        2\lambda t^2+
        \frac{\hat\sigma_{XY}}t
        (2-\lambda-\lambda t^2)>0.
    \end{aligned}
\end{equation}
By~\eqref{eq:gaussian-corr-root-derivative}, every zero with the same sign as \(\hat\sigma_{XY}\) crosses from negative to positive.
There cannot be two such zeros, since a return from positive to negative would require a zero at which the derivative is not positive.
Thus there is exactly one zero with the same sign as \(\hat\sigma_{XY}\).
The nonzero derivative allows the implicit function theorem to identify this selected zero as a locally \(C^1\) function of the observations.

At \(\lambda=0\),
\[
    T_{n,0}
    =\frac{2\hat\sigma_{XY}}{\hat\sigma_X^2+\hat\sigma_Y^2}.
\]

At \(\lambda=1\), the \(\HK_\lambda\) \(Z\)-estimating equation~\eqref{eq:gaussian-corr-estimating-verification} is equivalent to \(\ell_n'(t)=0\), where the log-likelihood, up to an additive constant, is
\[
    \ell_n(t)
    =-\frac n2\log(1-t^2)
    -\frac{n(\hat\sigma_X^2+\hat\sigma_Y^2-2t\hat\sigma_{XY})}
    {2(1-t^2)},\qquad -1<t<1.
\]
Almost surely, \(\hat\sigma_X^2+\hat\sigma_Y^2>2|\hat\sigma_{XY}|\) and \(\hat\sigma_{XY}\ne0\).
Consequently, \(\ell_n(t)\to-\infty\) as \(t\to\pm1\), so its maximum is attained in \((-1,1)\).
Moreover, \(\ell_n(t)-\ell_n(-t)=2nt\hat\sigma_{XY}/(1-t^2)\) and \(\ell_n'(0)=n\hat\sigma_{XY}\ne0\), so every maximizer has the same sign as \(\hat\sigma_{XY}\).
The uniqueness of the root with this sign therefore shows that \(T_{n,1}\) is the MLE.

To prove consistency, substitute the selected estimator into~\eqref{eq:gaussian-corr-estimating-verification} and rearrange:
\begin{equation}\label{eq:gaussian-corr-estimator-consistency}
    (T_{n,\lambda}-\hat\sigma_{XY})
    (2-\lambda+\lambda T_{n,\lambda}^2)
    =
    (2-\hat\sigma_X^2-\hat\sigma_Y^2)T_{n,\lambda}.
\end{equation}
Equation~\eqref{eq:gaussian-corr-estimator-consistency} also holds at \(\lambda=0\).
Since \(|T_{n,\lambda}|<1\) and \(2-\lambda+\lambda T_{n,\lambda}^2\ge1\),
\[
    |T_{n,\lambda}-\theta_0|
    \le
    |\hat\sigma_{XY}-\theta_0|
    +|\hat\sigma_X^2+\hat\sigma_Y^2-2|.
\]
The law of large numbers gives \(\hat\sigma_{XY}\to\theta_0\) and \(\hat\sigma_X^2+\hat\sigma_Y^2\to2\) almost surely.
Hence \(T_{n,\lambda}\to\theta_0\) almost surely for every \(0\le\lambda\le1\).

\subsection{Pareto index family}
\label{app:pareto-verification}

Recall the Pareto index family \(p_\theta(x)=\theta x^{-(\theta+1)}\mathbf1_{\{x\ge1\}}\), \(\theta>2\).
For $\theta>2$, we have $\Ebb_\theta[X^2]=\theta/(\theta-2)$, so the Pareto model belongs to $\Pcal_{2,\mathrm{ac}}(\R)$.

\paragraph{Verification of the transport-source equation.}
The endpoint functions in~\eqref{eq:phi_pareto_potentials} are centered and satisfy
\[
    \phi'_{\theta,0}(x)=-\frac{x\log x}{\theta},
    \qquad
    \phi'_{\theta,1}(x)=-\frac1x,
    \qquad x>1.
\]
Since $\phi'_{\theta,0}(1)=0$, the transport flux has no boundary Dirac mass at $1$.
Direct calculation therefore gives
\begin{equation}\label{eq:pareto-endpoint-equations}
    \partial_\theta p_\theta
    +\partial_x\bigl(p_\theta\phi'_{\theta,0}\bigr)=0,
    \qquad
    \partial_\theta p_\theta
    =p_\theta\phi_{\theta,1}
\end{equation}
in the sense of distributions on $\R$.

For \(0<\lambda<1\), we construct a finite-energy solution without requiring a closed-form expression.
Fix \(\theta>2\), and let
\[
    V_\theta=
    \left\{
    u:[1,\infty)\to\R:
    \begin{aligned}
        &u|_{[1,R]}\in H^1([1,R])\text{ for every }R>1,\\
        &\int_1^\infty(u^2+|u'|^2)p_\theta\diff x<\infty
    \end{aligned}
    \right\},
\]
where derivatives are understood weakly.
This is a Hilbert space with inner product
\[
    \langle u,v\rangle_{\theta,\lambda}
    :=
    \int_1^\infty
    [\lambda uv+(1-\lambda)u'v']p_\theta\diff x.
\]
Restrictions of functions in \(C_c^\infty(\R)\) are dense in \(V_\theta\), by cutoff and mollification.

Since \(\log X\) has the exponential distribution with rate \(\theta\) under \(P_\theta\), we have
\[
    \Ebb_\theta\!\left[
    \left(\frac1\theta-\log X\right)^2
    \right]=\frac1{\theta^2}.
\]
Thus, for every \(f\in V_\theta\),
\[
    \begin{aligned}
        \left|
        \int_1^\infty
        \left(\frac1\theta-\log x\right)f\,p_\theta\diff x
        \right|
        &\le\frac1\theta\|f\|_{L^2(P_\theta)}\\
        &\le\frac1{\theta\sqrt\lambda}
        \left(
        \int_1^\infty
        [\lambda f^2+(1-\lambda)|f'|^2]p_\theta\diff x
        \right)^{1/2}\\
        &=\frac1{\theta\sqrt\lambda}
        \langle f,f\rangle_{\theta,\lambda}^{1/2}.
    \end{aligned}
\]
The Riesz representation theorem therefore gives a unique \(\phi_{\theta,\lambda}\in V_\theta\) satisfying
\begin{equation}\label{eq:pareto-variational}
    \int_1^\infty
    [\lambda\phi_{\theta,\lambda}f
    +(1-\lambda)\phi'_{\theta,\lambda}f']p_\theta\diff x
    =
    \int_1^\infty
    \left(\frac1\theta-\log x\right)f\,p_\theta\diff x
\end{equation}
for every \(f\in V_\theta\).

On each interval \([1,R]\), \(p_\theta\) is smooth and bounded away from zero.
Equation~\eqref{eq:pareto-variational} implies \(p_\theta\phi'_{\theta,\lambda}\in H^1([1,R])\), and hence \(\phi_{\theta,\lambda}\in H^2([1,R])\).
Taking smooth test functions supported in \((1,\infty)\) gives the differential equation, while allowing \(f(1)\ne0\) and integrating by parts gives the boundary condition:
\begin{equation}\label{eq:pareto-interior-equation}
    \begin{aligned}
        (1-\lambda)\left(
        \phi''_{\theta,\lambda}(x)
        -\frac{\theta+1}{x}\phi'_{\theta,\lambda}(x)
        \right)
        &=
        \lambda\phi_{\theta,\lambda}(x)
        -\left(\frac1\theta-\log x\right),
        \qquad x>1,\\
        \phi'_{\theta,\lambda}(1)&=0.
    \end{aligned}
\end{equation}
Thus the transport flux has no boundary Dirac mass at \(1\), and \(\phi_{\theta,\lambda}\) solves the \(\HK_\lambda\) equation in the sense of distributions on \(\R\).

\paragraph{Verification of regularity.}
The endpoint functions admit locally Lipschitz extensions obtained by extending them constantly to $x<1$.
Their energies are
\begin{equation}\label{eq:pareto-endpoint-energy}
    \begin{aligned}
        \|\phi_{\theta,0}\|_{\mathcal H_0^1(P_\theta)}^2
        &=\Ebb_\theta[\phi'_{\theta,0}(X)^2]
        =\frac{2}{\theta(\theta-2)^3},\\
        \|\phi_{\theta,1}\|_{\mathcal H_1^1(P_\theta)}^2
        &=\Ebb_\theta[\phi_{\theta,1}(X)^2]
        =\frac1{\theta^2}.
    \end{aligned}
\end{equation}

For \(0<\lambda<1\), the local \(H^2\) regularity and the boundary condition in~\eqref{eq:pareto-interior-equation} give a \(C^1\) extension by setting \(\phi_{\theta,\lambda}(x)=\phi_{\theta,\lambda}(1)\) for \(x<1\).
Hence \(\phi_{\theta,\lambda}\in\mathcal H_\lambda^1(P_\theta)\).
Taking \(f=1\) in~\eqref{eq:pareto-variational} gives \(\Ebb_\theta[\phi_{\theta,\lambda}(X)]=0\).
Taking \(f=\phi_{\theta,\lambda}\) gives
\[
    \begin{aligned}
        \|\phi_{\theta,\lambda}\|_{\mathcal H_\lambda^1(P_\theta)}^2
        &=
        \Ebb_\theta\!\left[
        \left(\frac1\theta-\log X\right)\phi_{\theta,\lambda}(X)
        \right]\\
        &\le
        \frac1{\theta\sqrt\lambda}
        \|\phi_{\theta,\lambda}\|_{\mathcal H_\lambda^1(P_\theta)}.
    \end{aligned}
\]
Therefore
\begin{equation}\label{eq:pareto-interior-energy}
    \|\phi_{\theta,\lambda}\|_{\mathcal H_\lambda^1(P_\theta)}^2
    \le\frac1{\lambda\theta^2}.
\end{equation}

We next verify parameter measurability for fixed \(0<\lambda<1\).
Let \(\theta_k\to\theta>2\), with \(\theta_k>2\), and choose a compact interval \(K\subset(2,\infty)\) containing \(\theta\) and all \(\theta_k\).
For every \(R>1\), the map \((\theta,x)\mapsto p_\theta(x)\) is positive and continuous on \(K\times[1,R]\), so \(p_{\theta_k}(x)\) has a positive lower bound independent of \(k\) on \([1,R]\).
Combining this lower bound with~\eqref{eq:pareto-interior-energy} gives
\begin{equation}\label{eq:pareto-local-H1-bound}
    \sup_k\int_1^R
    \left(
    |\phi_{\theta_k,\lambda}(x)|^2
    +|\phi'_{\theta_k,\lambda}(x)|^2
    \right)\diff x<\infty.
\end{equation}
Thus the solutions are uniformly bounded in \(H^1([1,R])\).
Moreover, \eqref{eq:pareto-interior-equation} gives
\[
    \phi''_{\theta_k,\lambda}(x)
    =
    \frac{\theta_k+1}{x}\phi'_{\theta_k,\lambda}(x)
    +
    \frac{\lambda}{1-\lambda}\phi_{\theta_k,\lambda}(x)
    +
    \frac{\log x-1/\theta_k}{1-\lambda}.
\]
The coefficients \((\theta_k+1)/x\) and \(\lambda/(1-\lambda)\), together with the last term, are uniformly bounded for \(\theta_k\in K\) and \(x\in[1,R]\).
Hence~\eqref{eq:pareto-local-H1-bound} also bounds \(\phi''_{\theta_k,\lambda}\) in \(L^2([1,R])\), giving
\[
    \sup_k\|\phi_{\theta_k,\lambda}\|_{H^2([1,R])}<\infty.
\]

On each fixed interval \([1,R]\), the uniform \(H^2\) bound and the compact embedding \(H^2([1,R])\hookrightarrow C^1([1,R])\) allow us to extract a subsequence along which both the functions and their first derivatives converge uniformly.

We choose a single subsequence for which this convergence holds on every bounded interval as follows.
First extract a subsequence that converges in \(C^1([1,2])\).
From this subsequence, extract a further subsequence that converges in \(C^1([1,3])\); it still converges on \([1,2]\).
Continuing on \([1,4]\), \([1,5]\), and so on gives nested subsequences.
Choose the \(j\)-th element of the \(j\)-th nested subsequence and denote the resulting diagonal subsequence by \(\{\phi_{\theta_{k_j},\lambda}\}\).
For every fixed integer \(m\ge2\), all sufficiently late terms of this diagonal subsequence belong to the subsequence chosen for \([1,m]\).
Thus both the functions and their first derivatives converge uniformly on \([1,m]\).

The limits on overlapping intervals agree and therefore define a single function \(\psi\in C^1([1,\infty))\).
Every interval \([1,R]\) is contained in some \([1,m]\), so
\begin{equation}\label{eq:pareto-local-C1-convergence}
    \phi_{\theta_{k_j},\lambda}\longrightarrow\psi
    \quad\text{in }C^1([1,R])
    \quad\text{for every }R>1.
\end{equation}

To retain the finite-energy condition on the whole half-line, we apply Fatou's lemma and~\eqref{eq:pareto-interior-energy}:
\[
    \begin{aligned}
        &\int_1^\infty
        [\lambda\psi^2+(1-\lambda)|\psi'|^2]p_\theta\diff x\\
        &\quad\le
        \liminf_{j\to\infty}
        \int_1^\infty
        [\lambda\phi_{\theta_{k_j},\lambda}^2
        +(1-\lambda)|\phi'_{\theta_{k_j},\lambda}|^2]
        p_{\theta_{k_j}}\diff x\\
        &\quad\le\frac1{\lambda\theta^2}.
    \end{aligned}
\]
Therefore \(\psi\in V_\theta\).

For \(f\in C_c^\infty(\R)\), the integrals in~\eqref{eq:pareto-variational} involve only a bounded interval.
On that interval, \eqref{eq:pareto-local-C1-convergence} gives uniform convergence of the functions and their first derivatives, while \(p_{\theta_{k_j}}\to p_\theta\) uniformly.
We may therefore pass to the limit in~\eqref{eq:pareto-variational}, obtaining
\begin{equation}\label{eq:pareto-limit-variational}
    \int_1^\infty
    [\lambda\psi f+(1-\lambda)\psi'f']p_\theta\diff x
    =
    \int_1^\infty
    \left(\frac1\theta-\log x\right)f\,p_\theta\diff x.
\end{equation}
By density of the restrictions of \(C_c^\infty(\R)\) in \(V_\theta\), \eqref{eq:pareto-limit-variational} holds for every \(f\in V_\theta\).
The uniqueness of the variational solution in~\eqref{eq:pareto-variational} therefore gives \(\psi=\phi_{\theta,\lambda}\).

The same argument applies to every subsequence of \(\{\theta_k\}\).
Thus every subsequence of \(\{\phi_{\theta_k,\lambda}\}\) has a further subsequence converging to \(\phi_{\theta,\lambda}\), so the full sequence converges in \(C^1_{\mathrm{loc}}([1,\infty))\).
This proves continuity of \(\theta\mapsto\phi_{\theta,\lambda}\) in that space.
The constant extensions to \(x<1\), together with \(\phi'_{\theta,\lambda}(1)=0\), then give jointly continuous representatives of the functions and their spatial derivatives.
In particular, these representatives are jointly measurable.

Together with the explicit endpoint formulas, this gives jointly measurable representatives of the functions and their gradients for every fixed \(0\le\lambda\le1\).
Equations~\eqref{eq:pareto-endpoint-energy} and~\eqref{eq:pareto-interior-energy} show that the energy is locally bounded in \(\theta\), verifying condition (i) of Definition~\ref{def:HK_regular}.

For every parameter path $\theta+th$ contained in $(2,\infty)$, set $\phi_t=h\phi_{\theta+th,\lambda}$, with $0\le\lambda\le1$.
The verified weak equations and \(\partial_t p_{\theta+th}=h\partial_\theta p_{\theta+th}\) show that \(\phi_t\) solves the transport-source equation~\eqref{HK-cont-eq} along the parameter path.
The local energy bounds give
\[
    \int_0^1
    \|\phi_t\|_{\mathcal H_\lambda^1(P_{\theta+th})}^2
    \diff t<\infty.
\]
Proposition~\ref{prop:Sobolev} and Proposition~\ref{prop:ans-cont-unique-solution} give uniqueness up to the \(\mathcal H_\lambda^1\) norm, verifying (ii).
The form \(\phi_t=h\phi_{\theta+th,\lambda}\) then verifies (iii).
Hence the Pareto family is \(\HK_\lambda\) regular for all \(0\le\lambda\le1\), and in particular is \(\HK\) regular.

Finally, $\phi_{\theta,0}(x)\sim-x^2\log x/(2\theta)$ as $x\to\infty$, so $\Ebb_\theta[\phi_{\theta,0}(X)^2]<\infty$ if and only if $\theta>4$.

\paragraph{Remaining assumptions of Theorem~\ref{thm:HK-CR-bound}.}
At the endpoints, direct integration of~\eqref{eq:phi_pareto_potentials} gives
\begin{align*}
    \Ebb_\theta[\phi_{\theta,0}(X)^2]
    &=
    \frac{5\theta^2-28\theta+40}
    {\theta(\theta-2)^4(\theta-4)^3},
    &&\theta>4,\\
    \Ebb_\theta[|\phi'_{\theta,1}(X)|^2]
    &=\frac{\theta}{\theta+2},
    &&\theta>2.
\end{align*}
Together with~\eqref{eq:pareto-endpoint-energy}, these expressions verify the continuity assumptions of Theorem~\ref{thm:HK-CR-bound} on \((4,\infty)\) when \(\lambda=0\), and on \((2,\infty)\) when \(\lambda=1\).

Now fix \(0<\lambda<1\), let \(\theta_k\to\theta>2\), and choose \(2<a<b\) such that \(\theta,\theta_k\in[a,b]\) for every \(k\).
For every fixed \(R>1\), the continuity in \(\theta\) established above gives
\begin{equation}\label{eq:pareto-local-uniform-convergence}
    \begin{aligned}
        \sup_{x\in[1,R]}
        |\phi_{\theta_k,\lambda}(x)-\phi_{\theta,\lambda}(x)|
        &\longrightarrow0,\\
        \sup_{x\in[1,R]}
        |\phi'_{\theta_k,\lambda}(x)-\phi'_{\theta,\lambda}(x)|
        &\longrightarrow0
    \end{aligned}
\end{equation}
as \(k\to\infty\).
Thus both the functions and their first derivatives converge uniformly on each fixed bounded interval.
To prove continuity of their expectations over \([1,\infty)\), we first establish convergence of the total energy.

Taking \(f=\phi_{\theta_k,\lambda}\) in~\eqref{eq:pareto-variational} gives
\begin{equation}\label{eq:pareto-energy-identity-continuity}
    \|\phi_{\theta_k,\lambda}\|_{\mathcal H_\lambda^1(P_{\theta_k})}^2
    =
    \int_1^\infty
    \left(\frac1{\theta_k}-\log x\right)
    \phi_{\theta_k,\lambda}(x)p_{\theta_k}(x)\diff x.
\end{equation}
For every fixed \(R>1\), uniform convergence on \([1,R]\) gives
\[
    \begin{aligned}
        &\int_1^R
        \left(\frac1{\theta_k}-\log x\right)
        \phi_{\theta_k,\lambda}(x)p_{\theta_k}(x)\diff x\\
        &\qquad\longrightarrow
        \int_1^R
        \left(\frac1\theta-\log x\right)
        \phi_{\theta,\lambda}(x)p_\theta(x)\diff x.
    \end{aligned}
\]
It remains to control the integrals over \((R,\infty)\), uniformly in \(k\).

The energy bound~\eqref{eq:pareto-interior-energy} implies
\[
    \lambda\int_1^\infty
    \phi_{\theta_k,\lambda}(x)^2p_{\theta_k}(x)\diff x
    \le
    \|\phi_{\theta_k,\lambda}\|_{\mathcal H_\lambda^1(P_{\theta_k})}^2
    \le\frac1{\lambda a^2},
\]
so \(\|\phi_{\theta_k,\lambda}\|_{L^2(P_{\theta_k})}\le1/(\lambda a)\).
Also, \(p_{\theta_k}(x)=\theta_kx^{-\theta_k-1}\le bx^{-a-1}\) for \(x\ge1\).
Cauchy--Schwarz therefore gives
\begin{equation}\label{eq:pareto-tail-control}
    \begin{aligned}
        &\left|
        \int_R^\infty
        \left(\frac1{\theta_k}-\log x\right)
        \phi_{\theta_k,\lambda}(x)p_{\theta_k}(x)\diff x
        \right|\\
        &\quad\le
        \left(
        \int_R^\infty
        \phi_{\theta_k,\lambda}(x)^2p_{\theta_k}(x)\diff x
        \right)^{1/2}
        \left(
        \int_R^\infty
        \left(\frac1{\theta_k}-\log x\right)^2
        p_{\theta_k}(x)\diff x
        \right)^{1/2}\\
        &\quad\le
        \frac1{\lambda a}
        \left[
        b\int_R^\infty
        \left(\frac1a+\log x\right)^2x^{-a-1}\diff x
        \right]^{1/2}.
    \end{aligned}
\end{equation}
The bound in~\eqref{eq:pareto-tail-control} is independent of \(k\) and tends to zero as \(R\to\infty\).
Since \(\theta\in[a,b]\), the same calculation also bounds the tail integral with \(\theta_k\) replaced by \(\theta\).

Let \(\varepsilon>0\).
Choose \(R\) large enough that each of these tail integrals has absolute value less than \(\varepsilon/3\), uniformly in \(k\).
Keep this \(R\) fixed.
On \([1,R]\), \(\phi_{\theta_k,\lambda}\to\phi_{\theta,\lambda}\) and \(p_{\theta_k}\to p_\theta\) uniformly, while \(1/\theta_k\to1/\theta\).
Therefore, for all sufficiently large \(k\),
\[
    \left|
    \int_1^R
    \left[
    \left(\frac1{\theta_k}-\log x\right)
    \phi_{\theta_k,\lambda}p_{\theta_k}
    -
    \left(\frac1\theta-\log x\right)
    \phi_{\theta,\lambda}p_\theta
    \right]\diff x
    \right|<\frac{\varepsilon}{3}.
\]
Splitting the integrals in~\eqref{eq:pareto-energy-identity-continuity} at \(R\) and applying the triangle inequality gives
\begin{equation}\label{eq:pareto-energy-splitting}
    \begin{aligned}
        &\left|
        \|\phi_{\theta_k,\lambda}\|_{\mathcal H_\lambda^1(P_{\theta_k})}^2
        -
        \|\phi_{\theta,\lambda}\|_{\mathcal H_\lambda^1(P_\theta)}^2
        \right|\\
        &\quad\le
        \left|
        \int_1^R
        \left[
        \left(\frac1{\theta_k}-\log x\right)
        \phi_{\theta_k,\lambda}p_{\theta_k}
        -
        \left(\frac1\theta-\log x\right)
        \phi_{\theta,\lambda}p_\theta
        \right]\diff x
        \right|\\
        &\qquad+
        \left|
        \int_R^\infty
        \left(\frac1{\theta_k}-\log x\right)
        \phi_{\theta_k,\lambda}p_{\theta_k}\diff x
        \right|\\
        &\qquad+
        \left|
        \int_R^\infty
        \left(\frac1\theta-\log x\right)
        \phi_{\theta,\lambda}p_\theta\diff x
        \right|
        <\varepsilon.
    \end{aligned}
\end{equation}
Since \(\varepsilon\) is arbitrary, we obtain
\begin{equation}\label{eq:pareto-energy-continuity}
    \|\phi_{\theta_k,\lambda}\|_{\mathcal H_\lambda^1(P_{\theta_k})}^2
    \longrightarrow
    \|\phi_{\theta,\lambda}\|_{\mathcal H_\lambda^1(P_\theta)}^2.
\end{equation}

We now prove convergence of the function and derivative expectations separately.
The uniform convergence in~\eqref{eq:pareto-local-uniform-convergence}, together with \(p_{\theta_k}\to p_\theta\), gives pointwise convergence of the nonnegative functions \(\phi_{\theta_k,\lambda}^2p_{\theta_k}\) and \(|\phi'_{\theta_k,\lambda}|^2p_{\theta_k}\).
Applying Fatou's lemma with respect to Lebesgue measure on \([1,\infty)\) gives
\begin{equation}\label{eq:pareto-moment-liminf}
    \begin{aligned}
        \liminf_k\Ebb_{\theta_k}[\phi_{\theta_k,\lambda}(X)^2]
        &\ge\Ebb_\theta[\phi_{\theta,\lambda}(X)^2],\\
        \liminf_k\Ebb_{\theta_k}[|\phi'_{\theta_k,\lambda}(X)|^2]
        &\ge\Ebb_\theta[|\phi'_{\theta,\lambda}(X)|^2].
    \end{aligned}
\end{equation}
These inequalities establish lower semicontinuity of \(\theta\mapsto\Ebb_\theta[\phi_{\theta,\lambda}(X)^2]\) and \(\theta\mapsto\Ebb_\theta[|\phi'_{\theta,\lambda}(X)|^2]\).
The energy decomposes as
\[
    \begin{aligned}
        \|\phi_{\theta_k,\lambda}\|_{\mathcal H_\lambda^1(P_{\theta_k})}^2
        &=
        \lambda\Ebb_{\theta_k}[\phi_{\theta_k,\lambda}(X)^2]\\
        &\quad+(1-\lambda)
        \Ebb_{\theta_k}[|\phi'_{\theta_k,\lambda}(X)|^2].
    \end{aligned}
\]
Solving this identity for the function contribution and using~\eqref{eq:pareto-energy-continuity} yields
\begin{equation}\label{eq:pareto-moment-limsup}
    \begin{aligned}
        \lambda\limsup_k
        \Ebb_{\theta_k}[\phi_{\theta_k,\lambda}(X)^2]
        &=
        \|\phi_{\theta,\lambda}\|_{\mathcal H_\lambda^1(P_\theta)}^2
        -(1-\lambda)\liminf_k
        \Ebb_{\theta_k}[|\phi'_{\theta_k,\lambda}(X)|^2]\\
        &\le
        \|\phi_{\theta,\lambda}\|_{\mathcal H_\lambda^1(P_\theta)}^2
        -(1-\lambda)\Ebb_\theta[|\phi'_{\theta,\lambda}(X)|^2]\\
        &=\lambda\Ebb_\theta[\phi_{\theta,\lambda}(X)^2].
    \end{aligned}
\end{equation}
The same calculation for the derivative contribution gives
\[
    \begin{aligned}
        (1-\lambda)\limsup_k
        \Ebb_{\theta_k}[|\phi'_{\theta_k,\lambda}(X)|^2]
        &=
        \|\phi_{\theta,\lambda}\|_{\mathcal H_\lambda^1(P_\theta)}^2
        -\lambda\liminf_k
        \Ebb_{\theta_k}[\phi_{\theta_k,\lambda}(X)^2]\\
        &\le
        \|\phi_{\theta,\lambda}\|_{\mathcal H_\lambda^1(P_\theta)}^2
        -\lambda\Ebb_\theta[\phi_{\theta,\lambda}(X)^2]\\
        &=(1-\lambda)\Ebb_\theta[|\phi'_{\theta,\lambda}(X)|^2].
    \end{aligned}
\]
Since \(\lambda>0\) and \(1-\lambda>0\), dividing the limit-superior estimates by these constants proves upper semicontinuity of \(\theta\mapsto\Ebb_\theta[\phi_{\theta,\lambda}(X)^2]\) and \(\theta\mapsto\Ebb_\theta[|\phi'_{\theta,\lambda}(X)|^2]\).
Together with the lower semicontinuity established in~\eqref{eq:pareto-moment-liminf}, this proves continuity.
In particular,
\[
    \begin{aligned}
        \Ebb_{\theta_k}[\phi_{\theta_k,\lambda}(X)^2]
        &\longrightarrow\Ebb_\theta[\phi_{\theta,\lambda}(X)^2],\\
        \Ebb_{\theta_k}[|\phi'_{\theta_k,\lambda}(X)|^2]
        &\longrightarrow\Ebb_\theta[|\phi'_{\theta,\lambda}(X)|^2].
    \end{aligned}
\]
Thus the continuity assumptions of Theorem~\ref{thm:HK-CR-bound} hold for each fixed \(0<\lambda<1\).

\section{Proofs of Main Results}

\subsection{Proofs from Section~\ref{sec:HK-background}}
\label{app:ProofsSection2}

Recall in Definition~\ref{def:dynamic-cstr} that our dynamical formulation of the metric $\dHKlambda$ explicitly required $(\xi_t,v_t)$ to lie in the $\HK_{\lambda}$ tangent space of $\mu_t$ for Lebesgue a.e. $0\le t\le 1$.
As we state next, the value of the metric stays the same even if we remove this constraint; this unconstrained representation is useful in some of our proofs below.
For $0<\lambda<1$, the result follows from \cite[Proposition~3.1]{Fleissner} and \cite[Theorem~8.17]{LieroMielkeSavare}.
For $\lambda=0$, it follows from \cite[Theorem~8.3.1 and Lemma~8.4.2]{AmbrosioGigliSavare}.
For $\lambda=1$, finiteness of the integral gives $\xi_t\in L^2(\mu_t)$ for almost every $t$, so the tangent space constraint is automatic by the density of $C_c^\infty(\R^m)$ in $L^2(\mu_t)$.

\begin{lemma}\label{lem:dynamic-free}
    We have
    \begin{equation*}
        \dHKlambda^2(\nu_0,\nu_1):= \inf\left\{\int_{0}^{1}\int_{\mathbb{R}^m}\left(\lambda|\xi_t|^2 +(1-\lambda)\|v_t\|^2\right)\diff \mu_t\diff t: \begin{matrix}
            \{(\mu_t,\xi_t,v_t)\}_{0\le t\le 1}\textnormal{ solves \eqref{eqn:HK-differential},} \\ \textnormal{and }\mu_0=\nu_0,\mu_1=\nu_1
        \end{matrix}\right\}
    \end{equation*}
    for all $\nu_0,\nu_1$.
\end{lemma}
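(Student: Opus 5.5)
The inequality ``$\le$'' is immediate: the unconstrained infimum (call it $\widetilde d^2$) ranges over a larger admissible set than the one in Definition~\ref{def:dynamic-cstr}, since every constrained triple also solves \eqref{eqn:HK-differential}. Hence $\widetilde d^{\,2}\le \dHKlambda^2$. For the reverse inequality, I will fix an arbitrary admissible triple $\{(\mu_t,\xi_t,v_t)\}$ with finite energy; if the energy is infinite there is nothing to prove. The goal is then to produce a constrained triple along the same curve $\{\mu_t\}$ with no larger energy. The natural tool is orthogonal projection onto the tangent space. For a.e.\ $t$, let $(\hat\xi_t,\hat v_t)$ be the $\mathcal H_\lambda^1(\mu_t)$-orthogonal projection of $(\xi_t,v_t)$ onto the closed subspace $\Tan_{\mu_t}^{\HK_\lambda}$ of $L^2(\lambda\mu_t)\times L^2((1-\lambda)\mu_t;\R^m)$.

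Two things must be checked for the projected pair. First, the energy does not increase, which holds because projections are contractions. Second, the projected pair still solves \eqref{eqn:HK-differential}. For the second point, take $f\in C_c^\infty$. The right-hand side of \eqref{eqn:HK-integral} equals the $\mathcal H^1_\lambda(\mu_t)$ inner product of $(\xi_t,v_t)$ with $(f,\nabla f)$, and $(f,\nabla f)$ lies in the tangent space by definition. The residual $(\xi_t-\hat\xi_t,v_t-\hat v_t)$ is orthogonal to the tangent space, so it contributes zero. Consequently the weak equation is unchanged when $(\xi_t,v_t)$ is replaced by $(\hat\xi_t,\hat v_t)$, and taking the infimum gives $\dHKlambda^2\le\widetilde d^{\,2}$.

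The main obstacle is joint measurability of $(t,x)\mapsto(\hat\xi_t,\hat v_t)$, which is needed for the projected triple to be an admissible competitor. I would handle this as follows. Pick a countable family $\{f_k\}\subset C_c^\infty$ that is dense in $C^1$ on compacts. Apply Gram--Schmidt to $\{(f_k,\nabla f_k)\}$ in $\mathcal H^1_\lambda(\mu_t)$; the coefficients are measurable in $t$, because $t\mapsto\mu_t$ is narrowly continuous for curves of finite action. This writes the projection as an $L^2$-limit of measurable finite sums, hence as a measurable object. Alternatively, this measurable-selection step can be taken from the cited results: \cite[Proposition~3.1]{Fleissner} and \cite[Theorem~8.17]{LieroMielkeSavare} for $0<\lambda<1$, and \cite[Theorem~8.3.1, Lemma~8.4.2]{AmbrosioGigliSavare} for $\lambda=0$.

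The case $\lambda=1$ is simpler. There the tangent space is the closure of $C_c^\infty$ in $L^2(\mu_t)$, which is all of $L^2(\mu_t)$. So the constraint holds automatically for every finite-energy competitor, and no projection is needed.
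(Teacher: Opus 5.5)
Your proof is correct, and it is more self-contained than the paper's. For $0\le\lambda<1$ the paper gives no argument: it invokes \cite[Proposition~3.1]{Fleissner}, \cite[Theorem~8.17]{LieroMielkeSavare} and \cite[Theorem~8.3.1, Lemma~8.4.2]{AmbrosioGigliSavare}. It treats $\lambda=1$ exactly as you do, by density of $C_c^\infty$ in $L^2(\mu_t)$.

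Those cited results are the characterization of $\AC^2$ curves behind Proposition~\ref{prop:ans-cont-unique-solution}, so the cited route compares both infima with the metric speed. Any finite-action solution of \eqref{eqn:HK-differential}, constrained or not, has metric speed dominated by its action density, and the tangent-space field attains that speed. You instead stay on the fixed curve and never use metric derivatives. The single observation that the right-hand side of \eqref{eqn:HK-integral} is the $\mathcal H^1_\lambda(\mu_t)$ inner product with $(f,\nabla f)$ shows that orthogonal projection onto $\Tan_{\mu_t}^{\HK_\lambda}$ preserves the equation and cannot raise the energy. This is the mechanism inside the cited minimality lemmas, made uniform in $\lambda\in[0,1)$ and using only Hilbert-space structure. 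What the citations buy is that measurability of the optimal field comes for free, whereas you must supply it.

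Your measurability sketch works, but it can be tightened in three places.
\begin{itemize}
\item Narrow continuity is not needed. The Gram entries are $t\mapsto\int g\,\diff\mu_t$ with $g\in C_c^\infty$, which are continuous by the very definition of a solution.
\item The data coefficients $\langle(\xi_t,v_t),(f_k,\nabla f_k)\rangle_{\mathcal H^1_\lambda(\mu_t)}$ equal $\frac{\diff}{\diff t}\int f_k\,\diff\mu_t$ for a.e.\ $t$. They are therefore measurable, and the projected field depends only on the curve.
\item The $t$-dependent degeneracies in Gram--Schmidt are handled cleanly by the pseudo-inverse of the Gram matrix. The partial projections are dominated by $\|(\xi_t,v_t)\|_{\mathcal H^1_\lambda(\mu_t)}$, so they are Cauchy in $L^2(\diff\mu_t\diff t)$ by dominated convergence. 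Their limit there is the jointly measurable representative you need.
\end{itemize}
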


\begin{proof}[Proof of Lemma~\ref{lem:metric-bounds}]
    The first claim is precisely the Benamou--Brenier theorem, and the second claim follows from the dynamic formulation of Hellinger metric \cite[Equation (2.11)]{MielkeZhu2025HKGradientFlows}. 
    For the last claim, consider setting $v_t \equiv 0$ in the definition of $\dHKlambda$ to get:
    \begin{align*}
        \dHKlambda^2(\nu_0,\nu_1)&= \inf\left\{\int_{0}^{1}\int_{\mathbb{R}^d}\left(\lambda|\xi_t|^2+(1-\lambda)\|v_t\|^2\right)\diff\mu_t\diff t: \begin{matrix}
            \{(\mu_t,\xi_t,v_t)\}_{0\le t\le 1}\textnormal{ solves \eqref{eqn:HK-differential},} \\ \textnormal{and }\mu_0=\nu_0,\mu_1=\nu_1
        \end{matrix}\right\} \\
        &\le \inf\left\{\lambda\int_{0}^{1}\int_{\mathbb{R}^d}|\xi_t|^2\diff \mu_t\diff t: \begin{matrix}
            \{(\mu_t,\xi_t,0)\}_{0\le t\le 1}\textnormal{ solves \eqref{eqn:HK-differential},} \\ \textnormal{and }\mu_0=\nu_0,\mu_1=\nu_1
        \end{matrix}\right\} \\
        &= \inf\left\{\lambda\int_{0}^{1}\int_{\mathbb{R}^d}|\xi_t|^2\diff \mu_t\diff t: \begin{matrix}
            \{(\mu_t,\lambda\xi_t,0)\}_{0\le t\le 1}\textnormal{ solves \eqref{eqn:RE-differential}} \\ \textnormal{and }\mu_0=\nu_0,\mu_1=\nu_1
        \end{matrix}\right\} \\
        &= \frac{1}{\lambda}\inf\left\{\int_{0}^{1}\int_{\mathbb{R}^d}|\lambda\xi_t|^2\diff \mu_t\diff t: \begin{matrix}
            \{(\mu_t,\lambda\xi_t,0)\}_{0\le t\le 1}\textnormal{ solves \eqref{eqn:RE-differential}} \\ \textnormal{and }\mu_0=\nu_0,\mu_1=\nu_1
        \end{matrix}\right\} \\
        &= \frac{1}{\lambda}\dH^2(\nu_0,\nu_1),
    \end{align*}
    where in the last line we used $d_{\HK_1} = \dH$.
    Similarly, consider setting $\xi_t \equiv 0$ to get
    \begin{align*}
        \dHKlambda^2(\nu_0,\nu_1)&= \inf\left\{\int_{0}^{1}\int_{\mathbb{R}^d}\left(\lambda|\xi_t|^2+(1-\lambda)\|v_t\|^2\right)\diff \mu_t\diff t: \begin{matrix}
            \{(\mu_t,\xi_t,v_t)\}_{0\le t\le 1}\textnormal{ solves \eqref{eqn:HK-differential},} \\ \textnormal{and }\mu_0=\nu_0,\mu_1=\nu_1
        \end{matrix}\right\} \\
        &\le \inf\left\{(1-\lambda)\int_{0}^{1}\int_{\mathbb{R}^d}\|v_t\|^2\diff \mu_t\diff t: \begin{matrix}
            \{(\mu_t,0,v_t)\}_{0\le t\le 1}\textnormal{ solves \eqref{eqn:HK-differential},} \\ \textnormal{and }\mu_0=\nu_0,\mu_1=\nu_1
        \end{matrix}\right\} \\
        &= \inf\left\{(1-\lambda)\int_{0}^{1}\int_{\mathbb{R}^d}\|v_t\|^2\diff \mu_t \diff t: \begin{matrix}
            \{(\mu_t,0,(1-\lambda) v_t)\}_{0\le t\le 1}\textnormal{ solves \eqref{eqn:CE-differential}} \\ \textnormal{and }\mu_0=\nu_0,\mu_1=\nu_1
        \end{matrix}\right\} \\
        &= \frac{1}{1-\lambda}\inf\left\{\int_{0}^{1}\int_{\mathbb{R}^d}\|(1-\lambda)v_t\|^2\diff \mu_t\diff t: \begin{matrix}
            \{(\mu_t,(1-\lambda)v_t)\}_{0\le t\le 1}\textnormal{ solves \eqref{eqn:CE-differential}} \\ \textnormal{and }\mu_0=\nu_0,\mu_1=\nu_1
        \end{matrix}\right\} \\
        &= \frac{1}{1-\lambda}\dW^2(\nu_0,\nu_1)
    \end{align*}
    where in the last line we used $d_{\HK_0} = \dW$.
\end{proof}

\begin{proof}[Proof of Proposition~\ref{prop:abs-cont-HK}]
    Suppose that $\{\mu_t\}_{0\le t\le 1}$ is absolutely continuous with respect to $\dH$.
    By definition there exists a function $g\in L^2[0,1]$ such that we have
    \begin{equation*}
        \dH(\mu_s,\mu_t) \le \int_{s}^{t}g(u)\diff u
    \end{equation*}
    for all $0\le s\le t\le 1$.
    Now use Lemma~\ref{lem:metric-bounds} to get $\dHKlambda \le \frac{1}{\sqrt{\lambda}}\dH$ so that the above displays becomes
    \begin{equation*}
        \dHKlambda(\mu_s,\mu_t) \le \int_{s}^{t}\frac{g(u)}{\sqrt{\lambda}}\diff u.
    \end{equation*}
    Since \(g/\sqrt{\lambda}\in L^2[0,1]\), this proves that the curve is \(2\)-absolutely continuous with respect to \(d_{\HK_\lambda}\).
    The proof for $\dW$ is identical, using
    \[
    d_{\HK_\lambda}\le \frac{1}{\sqrt{1-\lambda}}\,\dW.
    \]
\end{proof}

\subsection{Proofs from Section~\ref{sec:LB}}

\begin{proof}[Proof of Proposition~\ref{prop:HK-reg-suff}]
    We focus on DQM, and the proof is identical for DWS.
    So, suppose $\Pcal$ is DQM and that its score function $\dot{\ell}_{\theta}$ is such that $\theta\mapsto \|\dot{\ell}_{\theta}\|^2_{L^2(P_{\theta})}$ is locally integrable.
    Then, Proposition~\ref{prop:ans-cont-unique-solution} shows that $\{P_{\theta+th}\}_{0\le t\le 1}$ is absolutely continuous with respect to $d_{\HK_1}=\dH$, hence Proposition~\ref{prop:abs-cont-HK} shows that it is also absolutely continuous with respect to $d_{\HK_{\lambda}}$ for all $0<\lambda<1$.
    Thus, applying Proposition~\ref{prop:ans-cont-unique-solution} in the reverse direction implies the uniqueness.
\end{proof}

\begin{proof}[Proof of Theorem~\ref{thm:HK-CR-bound}]
    Fix $\theta\in\Theta$ such that $I_\lambda(\theta)$ is invertible,
    and let $a,b\in\R^p$ be arbitrary. Since $\Theta$ is open, there
    exists $\varepsilon>0$ such that
    $\theta_t:=\theta+tb\in\Theta$ for all $0\leq t\leq\varepsilon$.

    By $\HK_\lambda$ regularity, the continuity assumptions, and tensorization, the product curve
    $t\mapsto P_{\theta_t}^{\otimes n}$ is in $\AC_\lambda^2$ and has
    product potential
    \[
        (x_1,\ldots,x_n)
        \longmapsto
        \sum_{i=1}^n
        b^\top\phi_{\theta_t,\lambda}(x_i).
    \]
    By the continuity assumptions, $t\mapsto\|b^\top\phi_{\theta_t,\lambda}\|_{\mathcal H_\lambda^1(P_{\theta_t})}^2$ is locally integrable. These assumptions also ensure that the locally Lipschitz function $a^\top T_n$ satisfies the integrability conditions of Theorem~\ref{thm:loc-Lip-test-funcs}.

    Writing $X_1,\ldots,X_n\stackrel{\mathrm{iid}}{\sim}P_{\theta_t}$,
    unbiasedness gives
    $\Ebb_{\theta_t}[T_n]=\theta_t$, while the centering convention from Remark~\ref{remark:center-conv} gives $\Ebb_{\theta_t}[\phi_{\theta_t,\lambda}(X)]=0$. Hence, for almost
    every $t\in(0,\varepsilon)$,
    \begin{equation*}
        a^\top b
        =
        \lambda
        \Ebb_{\theta_t}
        \left[
            a^\top(T_n-\theta_t)
            \sum_{i=1}^n
            b^\top\phi_{\theta_t,\lambda}(X_i)
        \right] +
        (1-\lambda)
        \Ebb_{\theta_t}
        \left[
            \sum_{i=1}^n
            a^\top D_{x_i}T_n
            \left(D\phi_{\theta_t,\lambda}(X_i)\right)^\top b
        \right].
    \end{equation*}
    By weighted Cauchy--Schwarz, using only the gradient terms when $\lambda=0$ and only the function terms when $\lambda=1$,
    \begin{align*}
        (a^\top b)^2
        &\leq
        \left\{
            \lambda
            \Ebb_{\theta_t}
            \left[
                \big(a^\top(T_n-\theta_t)\big)^2
            \right]
            +
            (1-\lambda)
            \Ebb_{\theta_t}
            \left[
                \sum_{i=1}^n
                \|a^\top D_{x_i}T_n\|^2
            \right]
        \right\} \\
        &\quad\times
        \left\{
            \lambda
            \Ebb_{\theta_t}
            \left[
                \left(
                    \sum_{i=1}^n
                    b^\top\phi_{\theta_t,\lambda}(X_i)
                \right)^2
            \right]
            +
            (1-\lambda)
            \Ebb_{\theta_t}
            \left[
                \sum_{i=1}^n
                \left\|
                    \left(D\phi_{\theta_t,\lambda}(X_i)\right)^\top b
                \right\|^2
            \right]
        \right\}.
    \end{align*}
    The first bracket is
    \[
        a^\top
        \left[
            \lambda\Var_{\theta_t}(T_n)
            +(1-\lambda)\Sen_{\theta_t}(T_n)
        \right]a.
    \]
    By independence of the $X_i$'s and the centering of
    $\phi_{\theta_t,\lambda}$, the second bracket is
    $n\,b^\top I_\lambda(\theta_t)b$. Therefore, for Lebesgue almost every $t\in(0,\varepsilon)$,
    \[
        (a^\top b)^2
        \leq
        a^\top
        \left[
            \lambda\Var_{\theta_t}(T_n)
            +(1-\lambda)\Sen_{\theta_t}(T_n)
        \right]a
        \cdot
        n\,b^\top I_\lambda(\theta_t)b.
    \]

    Choose a sequence $t_k\downarrow0$ from the full Lebesgue measure set on which the preceding inequality holds. By the continuity assumptions,
    letting $k\to\infty$ gives
    \[
        (a^\top b)^2
        \leq
        a^\top
        \left[
            \lambda\Var_\theta(T_n)
            +(1-\lambda)\Sen_\theta(T_n)
        \right]a
        \cdot
        n\,b^\top I_\lambda(\theta)b.
    \]
    Taking $b=I_\lambda(\theta)^{-1}a$ yields
    \[
        \left(
            a^\top I_\lambda(\theta)^{-1}a
        \right)^2
        \leq
        a^\top
        \left[
            \lambda\Var_\theta(T_n)
            +(1-\lambda)\Sen_\theta(T_n)
        \right]a
        \cdot
        n\,a^\top I_\lambda(\theta)^{-1}a.
    \]
    For $a\neq0$, the invertibility of $I_\lambda(\theta)$ implies
    $a^\top I_\lambda(\theta)^{-1}a>0$, while the case $a=0$ is immediate.
    Thus, for every $a\in\R^p$,
    \[
        a^\top
        \left[
            \lambda\Var_\theta(T_n)
            +(1-\lambda)\Sen_\theta(T_n)
        \right]a
        \geq
        \frac1n
        a^\top I_\lambda(\theta)^{-1}a.
    \]
    Hence
    \[
        \lambda\Var_\theta(T_n)
        +(1-\lambda)\Sen_\theta(T_n)
        \succeq
        \frac1n I_\lambda(\theta)^{-1}.
    \]

    Finally, by the definitions of $I_\lambda(\theta)$,
    $V_\lambda(\theta)$, and $S_\lambda(\theta)$,
    \[
        \lambda V_\lambda(\theta)
        +(1-\lambda)S_\lambda(\theta)
        =
        I_\lambda(\theta)^{-1}.
    \]
    Therefore,
    \[
        \lambda\Var_\theta(T_n)
        +(1-\lambda)\Sen_\theta(T_n)
        \succeq
        \frac1n
        \left[
            \lambda V_\lambda(\theta)
            +(1-\lambda)S_\lambda(\theta)
        \right].
    \]
     Multiplying by $n$ and rearranging gives
    \[
        (1-\lambda)
        \left(
            n\Sen_\theta(T_n)-S_\lambda(\theta)
        \right)
        \succeq
        \lambda
        \left(
            V_\lambda(\theta)-n\Var_\theta(T_n)
        \right).
    \]
    If $n\Var_\theta(T_n)\preceq V_\lambda(\theta)$, then the right-hand side is positive semidefinite. Since $1-\lambda>0$, it follows that
    \[
        n\Sen_\theta(T_n)\succeq S_\lambda(\theta).
    \]
    The other implication is proved similarly.
\end{proof}

\begin{proof}[Proof of Corollary~\ref{cor:HK-CR-function}]
    Fix $\theta\in\Theta$ such that $I_\lambda(\theta)$ is invertible.
    For $a\in\R^q$ and $b\in\R^p$, repeat the argument in the proof of Theorem~\ref{thm:HK-CR-bound} along $\theta_t=\theta+tb$, now using $\frac{\diff}{\diff t}\Ebb_{\theta_t}[a^\top T_n]=a^\top D\chi(\theta_t)b$.
    By the continuity assumptions and $\chi\in C^1$, passing to $t=0$ gives
    \[
        \bigl(a^\top D\chi(\theta)b\bigr)^2
        \le
        n\,a^\top\left[
            \lambda\Var_\theta(T_n)+(1-\lambda)\Sen_\theta(T_n)
        \right]a\,b^\top I_\lambda(\theta)b.
    \]
    If $D\chi(\theta)^\top a\ne0$, take $b=I_\lambda(\theta)^{-1}D\chi(\theta)^\top a$ and divide by $a^\top D\chi(\theta)I_\lambda(\theta)^{-1}D\chi(\theta)^\top a>0$.
    If $D\chi(\theta)^\top a=0$, the required inequality follows from positive semidefiniteness of the variance and sensitivity matrices.
    Since $a$ is arbitrary, this proves~\eqref{eq:HK-CR-function}.
\end{proof}

\begin{proof}[Proof of Corollary~\ref{cor:HK-CR-asymptotic}]
    Since $T_n$ is unbiased for $\chi_n(\theta)$, Corollary~\ref{cor:HK-CR-function} gives
    \[
        n\left[\lambda\Var_\theta(T_n)+(1-\lambda)\Sen_\theta(T_n)\right]
        \succeq
        D\chi_n(\theta)I_\lambda(\theta)^{-1}D\chi_n(\theta)^\top.
    \]
    Taking the limit and using $D\chi_n(\theta)\to D\chi(\theta)$ proves~\eqref{eq:HK-CR-asymptotic}.
\end{proof}

\begin{proposition}\label{prop:Poincare-diagram}
    If $\Pcal=\{P_{\theta}:\theta\in\Theta\}\subseteq\Pcal_{2,\textnormal{ac}}(\R^d)$ has the property that $P_{\theta}$ satisfies the dimension-free Poincar\'e inequality with constant $C_{\theta}>0$, then we have $\diagram(\theta)\subseteq\{(v,s):s\ge v/C_{\theta}\}$.
\end{proposition}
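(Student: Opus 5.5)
The argument has two steps. First, apply the Poincaré inequality to each finite-$n$ diagram $\diagram_n(\theta)$. Second, check that the resulting linear constraint survives the rescaling by $n$ and the Kuratowski outer limit in Definition~\ref{def:VS-diagram}.

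\emph{Finite-$n$ step.} Fix $n$ and $T_n\in\estims_n$. By the standing assumption, $P_\theta$ satisfies a dimension-free Poincaré inequality with constant $C_\theta$. In the form used in the text this gives, for locally Lipschitz unbiased $T_n$,
\begin{equation*}
\Var_\theta(T_n)\le C_\theta\,\Sen_\theta(T_n).
\end{equation*}
To justify this, apply the dimension-free inequality to the product measure $P_\theta^{\otimes n}$ on $(\R^d)^n$, whose Poincaré constant is still $C_\theta$ by tensorization. The function is $T_n$, and its gradient energy is exactly $\sum_i\|\nabla_{x_i}T_n\|^2$. Unbiasedness makes $\Ebb_\theta[(T_n-\theta)^2]$ the genuine variance of $T_n$. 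If $\Sen_\theta(T_n)=\infty$ the inequality is trivial; otherwise the Poincaré inequality extends from smooth functions to locally Lipschitz functions with finite energy by a standard truncation and mollification argument. The conclusion is
\begin{equation*}
\diagram_n(\theta)\subseteq H:=\{(v,s)\in\R^2: s\ge v/C_\theta\}.
\end{equation*}

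\emph{Rescaling and limit step.} The set $H$ is a closed convex cone, so $nH=H$. Hence $n\diagram_n(\theta)\subseteq H$ for every $n$, and therefore $\bigcup_{n\ge m}n\diagram_n(\theta)\subseteq H$ for every $m$. Since $H$ is closed, taking closures gives $\overline{\bigcup_{n\ge m}n\diagram_n(\theta)}\subseteq H$. Intersecting over $m$ then gives $\diagram(\theta)\subseteq H$, as claimed.

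\emph{Main obstacle.} The only nontrivial point is making the Poincaré inequality valid for the whole class $\estims_n$, meaning merely locally Lipschitz functions on the product space. This requires tensorization to preserve the constant, which is what "dimension-free" means here, and the approximation argument above. If the paper's hypothesis is read literally as already holding for all locally Lipschitz unbiased $T_n$ and all $n$, this step is immediate, and the proof reduces to the cone-and-closure observation.
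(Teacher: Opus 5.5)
Your proposal is correct and follows the paper's argument: apply $\Var_\theta(T_n)\le C_\theta\Sen_\theta(T_n)$ and pass to the limit. The paper takes that inequality for all locally Lipschitz unbiased $T_n$ and all $n$ as the definition of the dimension-free Poincar\'e inequality, so your tensorization and approximation step is unnecessary. The only other difference is cosmetic: the paper extracts estimators $T_k$ with $n_k\Var_\theta(T_k)\to v$ and $n_k\Sen_\theta(T_k)\to s$, whereas you use directly that the half-plane is a closed cone.
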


\begin{proof}
    If $(v,s)\in \diagram(\theta)$ then there exists a subsequence $\{n_k\}_{k\in\N}$ and an unbiased estimator $T_k:(\R^d)^{n_k}\to \R$ for each $k\in\N$ such that we have $n_k\Var_{\theta}(T_k)\to v$ and $n_k\Sen_{\theta}(T_k) \to s$ as $k\to\infty$.
    Applying the Poincar\'e inequality to $T_k$ yields $n_k\Var_{\theta}(T_k)\le C_{\theta}n_k\Sen_{\theta}(T_k)$ for each $k\in\N$, so sending $k\to\infty$ yields $v\le C_{\theta}s$ as needed.
\end{proof}

\begin{proof}[Proof of Lemma~\ref{lem:Info-cvx}]
    Fix $\theta\in\Theta$, and define the linear functional
    $\ell_{\theta}:C_c^{\infty}(\R^d)\to\R$ via $\ell_{\theta}(\phi):=\frac{\diff}{\diff\theta}
    \Ebb_{\theta}[\phi(X)]$.
    Using the transport-source equation, Cauchy--Schwarz,
    and the elementary inequality $\lambda\sqrt{ab}+(1-\lambda)\sqrt{AB}\le\sqrt{(\lambda a+
    (1-\lambda)A)(\lambda b+(1-\lambda)B)}$ for $a,b,A,B\ge0$, one computes for $\phi\in
    C_c^{\infty}(\R^d)$ that
    \begin{equation}\label{eqn:op-norm-calc}
        \begin{split}
            |\ell_{\theta}(\phi)| &= \left|\frac{\diff}{\diff \theta}\Ebb_{\theta}[\phi(X)]\right| \\
        &= \left|\lambda\int_{\R^d}\phi_{\theta,\lambda}\phi\diff P_{\theta} + (1-\lambda)\int_{\R^d}(\nabla\phi_{\theta,\lambda})^{\top}\nabla\phi\diff P_{\theta}\right| \\
        &\le \lambda\int_{\R^d}|\phi_{\theta,\lambda}\phi|\diff P_{\theta} + (1-\lambda)\int_{\R^d}|(\nabla\phi_{\theta,\lambda})^{\top}\nabla\phi|\diff P_{\theta} \\
        &\le \lambda\sqrt{\int_{\R^d}|\phi_{\theta,\lambda}|^2\diff P_{\theta}\int_{\R^d}|\phi|^2\diff P_{\theta}}+ (1-\lambda)\sqrt{\int_{\R^d}\|\nabla\phi_{\theta,\lambda}\|^2\diff P_{\theta} \int_{\R^d}\|\nabla\phi\|^2\diff P_{\theta}}\\        &\le\|\phi_{\theta,\lambda}\|_{\mathcal{H}^1_{\lambda}(P_{\theta})}\|\phi\|_{\mathcal{H}^1_{\lambda}(P_{\theta})}.
        \end{split}
    \end{equation}
    This shows that $\ell_{\theta}$ is a bounded linear functional on
    $C_c^{\infty}(\R^d)$ with respect to $\mathcal{H}^1_{\lambda}(P_{\theta})$, and hence it admits a unique continuous extension to $\mathcal{H}^1_{\lambda}(P_{\theta})$, since $C_c^{\infty}(\R^d)$ is
    dense in $\mathcal{H}^1_{\lambda}(P_{\theta})$.

    We first show that $\lambda\mapsto I_{\lambda}(\theta)$ is convex.
    To do this, define $Q_{\theta,\lambda}:\mathcal{H}^1_{\lambda}(P_{\theta})
    \to\R$ as the unique continuous extension of
    \begin{equation*}
        Q_{\theta,\lambda}(\phi):=2\frac{\partial}{\partial\theta}\Ebb_{\theta}[\phi]
        -\lambda\int_{\R^d}|\phi|^2\diff P_{\theta}
        -(1-\lambda)\int_{\R^d}\|\nabla\phi\|^2\diff P_{\theta},
        \qquad \phi\in C_c^{\infty}(\R^d).
    \end{equation*}
    By the transport-source equation and continuity, for every
    $\phi\in\mathcal{H}^1_{\lambda}(P_{\theta})$ we have
    \begin{equation}\label{eqn:Q-completed-square}
        Q_{\theta,\lambda}(\phi)
        =I_\lambda(\theta)
        -\|\phi-\phi_{\theta,\lambda}\|_{\mathcal H^1_\lambda(P_\theta)}^2.
    \end{equation}
    Since $\phi_{\theta,\lambda}\in\mathcal H^1_\lambda(P_\theta)$ by $\HK$ regularity,
    equation~\eqref{eqn:Q-completed-square} shows that it is the unique maximizer and
    $Q_{\theta,\lambda}(\phi_{\theta,\lambda})=I_\lambda(\theta)$.
    By density of $C_c^\infty(\R^d)$ in $\mathcal H^1_\lambda(P_\theta)$, it follows that
    \begin{equation}\label{eqn:I-sup}
        I_{\lambda}(\theta)
        =
        \sup\left\{2\frac{\partial}{\partial\theta}\Ebb_{\theta}[\phi]
        -\lambda\int_{\R^d}|\phi|^2\diff P_{\theta}
        -(1-\lambda)\int_{\R^d}\|\nabla\phi\|^2\diff P_{\theta}:\phi\in C_c^{\infty}(\R^d)\right\}.
    \end{equation}
    The right-hand side is a supremum of functions that are affine in $\lambda$, whence convexity.

    We next show that $\lambda\mapsto(I_{\lambda}(\theta))^{-1}$ is concave; this is not automatic,
    since the reciprocal of a general convex function need not be concave, and it relies instead on a
    dual variational formula. From the calculation~\eqref{eqn:op-norm-calc} and Cauchy--Schwarz, we have
    $$\ell_{\theta}(\phi)^2\le I_{\lambda}(\theta)\|\phi\|^2_{\mathcal{H}^1_{\lambda}(P_{\theta})}$$ for
    all $\phi\in\mathcal H^1_\lambda(P_\theta)$, with equality if and only if
    $\phi=c\phi_{\theta,\lambda}$ in $\mathcal H^1_\lambda(P_\theta)$ for some $c\in\R$.
    In particular, $\phi_{\theta,\lambda}/I_\lambda(\theta)$ satisfies
    $\ell_\theta(\phi_{\theta,\lambda}/I_\lambda(\theta))=1$
    and has squared norm $(I_\lambda(\theta))^{-1}$.
    Choose $\phi_k\in C_c^\infty(\R^d)$ converging to $\phi_{\theta,\lambda}$
    in $\mathcal H^1_\lambda(P_\theta)$. Then
    $\ell_\theta(\phi_k)\to I_\lambda(\theta)>0$, so
    $\phi_k/\ell_\theta(\phi_k)$ is well defined for all sufficiently large $k$,
    satisfies the same constraint, and has squared norm converging to
    $(I_\lambda(\theta))^{-1}$.
    Together with \eqref{eqn:op-norm-calc}, this proves
    \begin{equation}\label{eqn:Minv-inf}
        (I_{\lambda}(\theta))^{-1}
        =
        \inf\Big\{\|\phi\|^2_{\mathcal{H}^1_{\lambda}(P_{\theta})}
        :\phi\in C_c^{\infty}(\R^d),\ \ell_{\theta}(\phi)=1\Big\}.
    \end{equation}
    The feasible set $\{\phi\in C_c^{\infty}(\R^d):\ell_{\theta}(\phi)=1\}$ does not depend on
    $\lambda$, while for each fixed feasible $\phi$ the value $$\|\phi\|^2_{\mathcal{H}^1_{\lambda}
    (P_{\theta})}=\lambda\int|\phi|^2\diff P_{\theta}+(1-\lambda)\int\|\nabla\phi\|^2\diff P_{\theta}$$
    is a function affine in $\lambda$. Thus the right-hand side of \eqref{eqn:Minv-inf} is an infimum of functions
    affine in $\lambda$, whence concavity. 
    The differentiability claim follows from the fact that convex and concave functions in one dimension are differentiable at all but countably many points.

    For the second part, let $\lambda \in (0,1)$ be a point of differentiability of $ I_{\lambda}(\theta)$. For any other $0<\lambda'<1$, the spaces
    $\mathcal H^1_\lambda(P_\theta)$ and $\mathcal H^1_{\lambda'}(P_\theta)$
    coincide with equivalent norms, and the continuous extensions of $\ell_\theta$ agree.
    Applying \eqref{eqn:Q-completed-square} with $\lambda'$ in place of $\lambda$ gives
    \begin{equation}\label{eqn:I-supporting-line}
        \begin{aligned}
            I_{\lambda'}(\theta)
            &\ge Q_{\theta,\lambda'}(\phi_{\theta,\lambda})\\
            &=I_\lambda(\theta)+(\lambda'-\lambda)
            \left(
                \int_{\R^d}\|\nabla\phi_{\theta,\lambda}\|^2\diff P_\theta
                -\int_{\R^d}|\phi_{\theta,\lambda}|^2\diff P_\theta
            \right).
        \end{aligned}
    \end{equation}
    Dividing \eqref{eqn:I-supporting-line} by $\lambda'-\lambda$ after subtracting
    $I_\lambda(\theta)$, and letting $\lambda'$ approach $\lambda$ from both sides, gives
    \begin{equation}\label{eqn:slope-identity-1}
        -\partial_{\lambda}I_{\lambda}(\theta)
        =
        \int_{\R^d}|\phi_{\theta,\lambda}|^2\diff P_{\theta}
        -\int_{\R^d}\|\nabla\phi_{\theta,\lambda}\|^2\diff P_{\theta},
    \end{equation}
    which after multiplying by $(I_{\lambda}(\theta))^{-2}$ and using Definition~\ref{def:HK-info}
    yields
    \begin{equation}\label{eqn:slope-identity-2}
        -\frac{\partial_{\lambda}I_{\lambda}(\theta)}{(I_{\lambda}(\theta))^2}
        =V_{\lambda}(\theta)-S_{\lambda}(\theta).
    \end{equation}
    Combining \eqref{eqn:slope-identity-2} with the identity $\lambda V_{\lambda}(\theta)+(1-\lambda)
    S_{\lambda}(\theta)=(I_{\lambda}(\theta))^{-1}$ from Definition~\ref{def:HK-info} and solving the
    resulting linear system gives \eqref{eqn:VS-from-I}.
    Finally, the geometric claim follows simply by writing
    $\partial_{\lambda}(I_{\lambda}(\theta))^{-1}=-\partial_{\lambda}I_{\lambda}(\theta)/
    (I_{\lambda}(\theta))^2$,  so that the identities \eqref{eqn:VS-from-I} become
    \begin{equation*}
        V_{\lambda}(\theta)=(I_{\lambda}(\theta))^{-1}+(1-\lambda)\partial_{\lambda}(I_{\lambda}(\theta))^{-1}
        \qquad\textnormal{and}\qquad
        S_{\lambda}(\theta)=(I_{\lambda}(\theta))^{-1}-\lambda\,\partial_{\lambda}(I_{\lambda}(\theta))^{-1}.
    \end{equation*}
    The right-hand sides are the values at $z=1$ and $z=0$, respectively, of the tangent line
    $z\mapsto(I_{\lambda}(\theta))^{-1}+(z-\lambda)\partial_{\lambda}(I_{\lambda}(\theta))^{-1}$ to
    $\lambda\mapsto(I_{\lambda}(\theta))^{-1}$ at the point $\lambda$, as claimed.
\end{proof}

\begin{proof}[Proof of Lemma~\ref{lem:bdry-slope}]
        Fix $\theta\in\Theta$ and write $M_\lambda(\theta):=(I_\lambda(\theta))^{-1}$.

    Let $\ell_\theta$ be the linear functional defined in the proof of Lemma~\ref{lem:Info-cvx}.
    For $0<\lambda,\lambda_0<1$, we have $\ell_\theta(\phi_{\theta,\lambda}/I_\lambda(\theta))=1$.
    Applying \eqref{eqn:op-norm-calc} at $\lambda_0$ to $\phi_{\theta,\lambda}/I_\lambda(\theta)$ gives
    \begin{equation}\label{eqn:bdry-supp}
        \lambda_0 V_\lambda(\theta)+(1-\lambda_0)S_\lambda(\theta)
        \ge
        M_{\lambda_0}(\theta),
    \end{equation}
    with equality at $\lambda=\lambda_0$. In other words, the line
    $\{(v,s):\lambda_0 v+(1-\lambda_0)s=M_{\lambda_0}(\theta)\}$ passes through the point
    $(V_{\lambda_0}(\theta),S_{\lambda_0}(\theta))$, has slope $-\lambda_0/(1-\lambda_0)$, and the
    entire set $\bdry(\theta)$ lies in the half-plane
    $\{(v,s):\lambda_0 v+(1-\lambda_0)s\ge M_{\lambda_0}(\theta)\}$.

    We next check continuity in $\lambda$. Since
    $\ell_\theta(\phi_{\theta,\lambda}/I_\lambda(\theta))=1$, the transport-source equation gives
    \begin{equation}\label{eqn:bdry-score-continuity}
        \begin{aligned}
            \left\|\frac{\phi_{\theta,\lambda}}{I_\lambda(\theta)}
            -\frac{\phi_{\theta,\lambda_0}}{I_{\lambda_0}(\theta)}\right\|_{\mathcal H^1_{\lambda_0}(P_\theta)}^2
            &=\lambda_0V_\lambda(\theta)+(1-\lambda_0)S_\lambda(\theta)-M_{\lambda_0}(\theta)\\
            &=M_\lambda(\theta)-M_{\lambda_0}(\theta)
            +(\lambda_0-\lambda)(V_\lambda(\theta)-S_\lambda(\theta)).
        \end{aligned}
    \end{equation}
    By Lemma~\ref{lem:Info-cvx}, $M_\lambda(\theta)$ is continuous in $\lambda$.
    The identity $\lambda V_\lambda(\theta)+(1-\lambda)S_\lambda(\theta)=M_\lambda(\theta)$
    also shows that $V_\lambda(\theta)$ and $S_\lambda(\theta)$ are locally bounded on $(0,1)$.
    Thus \eqref{eqn:bdry-score-continuity} implies that
    $\phi_{\theta,\lambda}/I_\lambda(\theta)\to\phi_{\theta,\lambda_0}/I_{\lambda_0}(\theta)$
    in $\mathcal H^1_{\lambda_0}(P_\theta)$ as $\lambda\to\lambda_0$.
    Since $0<\lambda_0<1$, both the functions and their gradients converge in $L^2(P_\theta)$,
    proving continuity of $V_\lambda(\theta)$ and $S_\lambda(\theta)$.

    We now deduce that the coordinates are monotone. Fix $\lambda_1<\lambda_2$ in $(0,1)$ and abbreviate $\Delta V=V_{\lambda_2}(\theta)-
    V_{\lambda_1}(\theta)$ and $\Delta S=S_{\lambda_2}(\theta)-S_{\lambda_1}(\theta)$. Applying
    \eqref{eqn:bdry-supp} with $(\lambda_0,\lambda)=(\lambda_1,\lambda_2)$ and with
    $(\lambda_0,\lambda)=(\lambda_2,\lambda_1)$, and subtracting the respective equality cases at
    $\lambda=\lambda_0$, yields
    \begin{equation*}
        \lambda_1\Delta V+(1-\lambda_1)\Delta S\ge0
        \qquad\textnormal{and}\qquad
        \lambda_2\Delta V+(1-\lambda_2)\Delta S\le0.
    \end{equation*}
    Eliminating $\Delta S$ between these two inequalities gives $(\lambda_1-\lambda_2)\Delta V\ge0$,
    hence $\Delta V\le0$; eliminating $\Delta V$ gives $\Delta S\ge0$. Thus
    $\lambda\mapsto V_\lambda(\theta)$ is non-increasing and $\lambda\mapsto S_\lambda(\theta)$ is
    non-decreasing on $(0,1)$. Moreover, the two
    displayed inequalities force $\Delta S=0$ whenever $\Delta V=0$, so each value of $V$ corresponds
    to a single value of $S$; hence $\bdry(\theta)$ is the graph of a non-increasing function
    $h_\theta$ on $\{V_\lambda(\theta):0<\lambda<1\}$, which is an interval by continuity of $V_\lambda(\theta)$.

    Finally, by \eqref{eqn:bdry-supp}, the graph of $h_\theta$ lies on or above the line through
    $(V_{\lambda_0}(\theta),S_{\lambda_0}(\theta))$ of slope $-\lambda_0/(1-\lambda_0)$
    for every $0<\lambda_0<1$. Thus $h_\theta$ is the supremum of these affine functions on its
    domain and is therefore convex. If $V_{\lambda_0}(\theta)$ is an interior point of the domain
    at which $h_\theta$ is differentiable, the derivative equals the supporting slope, giving
    $h_\theta'(V_{\lambda_0}(\theta))=-\lambda_0/(1-\lambda_0)$.
\end{proof}

\subsection{Proofs from Section~\ref{sec:achieve}}\label{app:proofs-achieve}

\begin{proof}[Proof of Theorem~\ref{thm:z-est}]
    For each \(n\), let \(U_n\) be a neighborhood around $(X_1, \dots, X_n)$ in which   equation~\eqref{eq:EstiamtingEqn2} admits solutions and the solution map is continuous. Also,
    consider the event
    \[
        E_n=\left\{\det\left(\frac1n\sum_{i=1}^n
                D_\theta s(T_n^s(X_1, \dots, X_n),X_i)\right)\ne0
        \right\}.
    \]
    We first show that \(\Pbb_{P}(E_n^c)\to0\).

    Choose \(0<\delta<\varepsilon\) and set
    \(J={B(\theta_0,\delta)}\).
    By \(C^1\) regularity, both empirical maps
    \(t\mapsto n^{-1}\sum_{i=1}^nD_t s(t,X_i)\) and
    \(t\mapsto n^{-1}\sum_{i=1}^nD_xs(t,X_i)D_xs(t,X_i)^\top\)
    are continuous on \(J\).
    The entrywise ULLNs give a common subsequence along which
    these empirical maps converge uniformly on \(J\), almost surely,
    to their respective population expectations.
    As uniform limits of continuous functions, the maps
    \(t\mapsto\Ebb_{P}[D_t s(t,X)]\) and
    \(t\mapsto\Ebb_{P}[D_xs(t,X)D_xs(t,X)^\top]\)
    are therefore continuous on \(J\).
    By consistency of $T_n^s(X_1, \dots, X_n)$ and continuity of $T_n^s$, we may shrink the neighborhood $U_n$ if necessary to conclude that  \(T_n^s(x_1, \dots, x_n)\in J\) for all $(x_1, \dots, x_n) \in U_n$, with probability tending to one.
    Applying the ULLNs on the event \(\{T_n^s(X_1, \dots, X_n)\in J\}\) and using continuity of the
    population expectations at \(\theta_0\), we obtain
    \begin{equation}\label{eqn:Z-empirical-limits}
        \begin{aligned}
            \frac1n\sum_{i=1}^nD_\theta s(T_n^s,X_i)
            &\longrightarrow
            \Ebb_{P}[D_\theta s(\theta_0,X)],\\
            \frac1n\sum_{i=1}^n
                D_xs(T_n^s,X_i)D_xs(T_n^s,X_i)^\top
            &\longrightarrow
            \Ebb_{P}[D_xs(\theta_0,X)D_xs(\theta_0,X)^\top]
        \end{aligned}
    \end{equation}
    in probability.
    By \eqref{eqn:Z-empirical-limits}, continuity of the determinant, and the fact that 
    \(\det\Ebb_{P}[D_\theta s(\theta_0,X)]\ne0\) we conclude that
    \(\Pbb_{P}(E_n^c)\to0\).

    Next we write \(F_n(\theta,x)=n^{-1}\sum_{i=1}^n s(\theta,x_i)\),
    where \(x=(x_1,\ldots,x_n)\).
    Since \(F_n\) is \(C^1\) and its parameter Jacobian at
    \((T_n^s(X_1, \dots, X_n),(X_1, \dots, X_n))\) is invertible in the event $E_n$, the classical implicit function
    theorem (see \cite[Corollary~3.10]{Sood2020} for a Lipschitz
    extension) gives a unique function \(g:W\to B_r( T_n^s(X_1, \dots, X_n)) \subseteq \Theta\), for $W$ some
open neighborhood of \((X_1, \dots, X_n)\) and $r>0$, such that $F_n(g(x),x)=0$ for all $x \in W$ and $g(X_1, \dots, X_n) = T_n^s(X_1, \dots, X_n)$; the theorem also guarantees that $g$ is $C^1$. On the other hand, continuity of $T_n^s$ guarantees that $T_n^s(x) \in B_r( T_n^s(X_1, \dots, X_n))  $ for all $x \in U_n$ (shrinking $U_n$ further if necessary). The uniqueness of $g$ then implies that  \(g(x)=T_n^s(x)\) for all \(x\in U_n \cap W\). This implies that $T_n^s$ is $C^1$ around $(X_1, \dots, X_n)$ in the event $E_n$.

  In this event, we can thus differentiate with respect to \(x_j\) both sides of the equation
  \[ \frac{1}{n}\sum_{i=1}^n s(T_n^s(x), x_i) =0 \] 
  to obtain
    \begin{equation}\label{eqn:Z-chain-rule}
        \left(\frac1n\sum_{i=1}^nD_\theta s(T_n^s,X_i)\right)
        D_{x_j}T_n^s+\frac1nD_xs(T_n^s,X_j)=0.
    \end{equation}
  Using the fact that the matrix in front of \(D_{x_j}T_n^s\) is invertible (because we are assuming we are in the event \(E_n\)), multiplying each data Jacobian
    by its transpose, and then summing over \(j\) yields
    \begin{equation}\label{eqn:Z-prelimit}
        \begin{aligned}
            n\sum_{i=1}^nD_{x_i}T_n^s(D_{x_i}T_n^s)^\top
            &=
            \left(\frac1n\sum_{i=1}^n
                D_\theta s(T_n^s,X_i)\right)^{-1}\\
            &\quad{}\times
            \left(\frac1n\sum_{i=1}^n
                D_xs(T_n^s,X_i)D_xs(T_n^s,X_i)^\top\right)
            \left[\left(\frac1n\sum_{i=1}^n
                D_\theta s(T_n^s,X_i)\right)^{-1}\right]^\top.
        \end{aligned}
    \end{equation}
    Since \(\Ebb_{P}[D_\theta s(\theta_0,X)]\) is invertible,
    continuity of matrix inversion, \eqref{eqn:Z-prelimit},
    and \eqref{eqn:Z-empirical-limits} give the claimed limit on \(E_n\). Since $\mathbb{P}_{P}(E_n^c) \rightarrow 0$, this completes the proof.
    
\end{proof}

\begin{proof}[Proof of Example~\ref{ex:huber}]
    For every point \(x=(x_1, \dots, x_n)\), the empirical Huber loss is convex
    and coercive, so its minimizers form a nonempty compact interval.
    Since \(s_\delta(t,x_i)\) is nondecreasing in \(x_i\) and
    nonincreasing in \(t\), both endpoints of this interval, and
    hence its midpoint, are nondecreasing in each observation.
    Shifting every observation and the parameter by the same
    \(h\in\R\) leaves the residuals unchanged.
    The minimizing interval therefore shifts by \(h\), so
    \(T_n^\delta(x+h\mathbf{1})=T_n^\delta(x)+h\),
    where \(\mathbf{1}=(1,\ldots,1)\).

    For two points \(x,y \in \R^n\), set \(r:=\max_i|x_i-y_i|\).
    Then \(x-r\mathbf{1}\le y\le x+r\mathbf{1}\) coordinatewise,
    and monotonicity gives
    \[
        T_n^\delta(x)-r
        =T_n^\delta(x-r\mathbf{1})
        \le T_n^\delta(y)
        \le T_n^\delta(x+r\mathbf{1})
        =T_n^\delta(x)+r;
    \]
    in the above, $\mathbf{1}$ represents the vector with all entries equal to one. Hence \(|T_n^\delta(x)-T_n^\delta(y)|
    \le r=\max_i|x_i-y_i|\), so \(T_n^\delta\) is globally Lipschitz.
    Rademacher's theorem and the absolute continuity of
    \(P^{\otimes n}\) imply that its gradient exists almost surely.

    Write \(q:=\Pbb_P(|X-\theta|<\delta)>0\).
    For every \(\varepsilon>0\) and \(x\in\R\),
    \(s_\delta(\theta-\varepsilon,x)\ge s_\delta(\theta,x)
    \ge s_\delta(\theta+\varepsilon,x)\), with strict inequalities when \(|x-\theta|<\delta\).
    Since \(\Ebb_P[s_\delta(\theta,X)]=0\) and \(q>0\), it follows that
    \(\Ebb_P[s_\delta(\theta-\varepsilon,X)]>0\) and
    \(\Ebb_P[s_\delta(\theta+\varepsilon,X)]<0\).
    Since the score associated to the Huber loss is bounded, the law of large numbers applied for each of the two parameters $\theta+ \varepsilon$ and $\theta-\varepsilon$ implies that, with probability tending
    to one, every empirical root lies between
    \(\theta-\varepsilon\) and \(\theta+\varepsilon\).
    In particular, \(T_n^\delta\to\theta\) in probability.

    Let \(k_n:=\sum_{i=1}^n
    \mathbf{1}\{|X_i-T_n^\delta|<\delta\}\).
    For \(0<\varepsilon<\delta\), on
    \(|T_n^\delta-\theta|<\varepsilon\) we have
    \[
        P_n((\theta-\delta+\varepsilon,\theta+\delta-\varepsilon))
        \le\frac{k_n}{n}
        \le
        P_n((\theta-\delta-\varepsilon,\theta+\delta+\varepsilon)),
    \]
    where $P_n$ denotes the empirical measure. Applying the law of large numbers on each side of the above inequality
    and then letting \(\varepsilon\downarrow0\), and then using the absolute continuity
    of \(P\) and consistency of \(T_n^\delta\) for \(\theta\)
    give \(k_n/n\to q\) in probability.
    Consequently, \(\Pbb_P(k_n=0)\to0\).

    We next exclude observations at the clipping points in the event
    \(\{k_n>0\}\).
    Set \(A:=\{i:|X_i-T_n^\delta|<\delta\}\) and define
    \[
        c:=\#\{i:X_i-T_n^\delta\ge\delta\}
        -\#\{i:X_i-T_n^\delta\le-\delta\}.
    \]
    Since \(T_n^\delta\) minimizes the empirical
    Huber loss (which is differentiable), first-order optimality conditions give
    \begin{equation}\label{eqn:Huber-root-decomposition}
        0=\sum_{i=1}^n s_\delta(T_n^\delta,X_i)
        =\sum_{i\in A}(X_i-T_n^\delta)+c\delta.
    \end{equation}
    Rearranging \eqref{eqn:Huber-root-decomposition} gives
    \(|A|T_n^\delta=\sum_{i\in A}X_i+c\delta\).
    If \(X_j-T_n^\delta=\pm\delta\), then \(j\notin A\) and
    \[
        |A|X_j-\sum_{i\in A}X_i=(c\pm|A|)\delta.
    \]
    For each choice of \(A,j,c\) and sign, the above is a nontrivial
    affine constraint on the sample.
    Since there are only finitely many such choices, absolute
    continuity of \(P^{\otimes n}\) thus implies that the probability that any observation coincides with any of the two clipping points is zero.

    On \(\{k_n>0\}\), away from this null event, continuity of
    \(T_n^\delta\) ensures that the active and clipped observations
    remain unchanged in a neighborhood of the sample.
    Differentiating \eqref{eqn:Huber-root-decomposition}
    with these sets fixed gives
    \begin{equation}\label{eqn:Huber-sample-sensitivity}
        \nabla_{x_i}T_n^\delta
        =\frac{\mathbf{1}\{|X_i-T_n^\delta|<\delta\}}{k_n},
        \qquad
        n\sum_{i=1}^n\|\nabla_{x_i}T_n^\delta\|^2
        =\frac{n}{k_n}.
    \end{equation}
    Since \(k_n/n\to q>0\) and \(\Pbb_P(k_n=0)\to0\),
    \eqref{eqn:Huber-sample-sensitivity} gives the claimed
    sensitivity limit.
    Finally, \(q=\Pbb_P(|X-\theta|\le\delta)\) by absolute
    continuity of \(P\).
    The stated central limit theorem follows from \cite[Lemma~4]{Huber1964}.
\end{proof}

\begin{proof}[Proof of Lemma~\ref{lem:HK-information-identity}]
    We first consider score increments with the sampling distribution
    fixed at \(P_{\theta_0}\).
    Take \(X_1,X_2,\ldots\) i.i.d. from \(P_{\theta_0}\).
    As shown in the proof of Theorem~\ref{thm:z-est},
    joint \(C^1\) regularity and the ULLN for the matrix of derivatives with respect to the parameter imply
    continuity of
    \(t\mapsto\Ebb_{\theta_0}[D_t\phi_{t,\lambda}(X)]\)
    within an open ball centered at \(\theta_0\) (we will refer to this neighborhood as \(U\)).
    After shrinking $U$ if necessary, this map can be assumed to be bounded on \(U\),
    and the ULLN for the matrix of derivatives with respect to the parameter gives
    \begin{equation}\label{eqn:HK-info-ULLN}
        \sup_{t\in U}
        \left\|
            \frac1n\sum_{i=1}^nD_t\phi_{t,\lambda}(X_i)
            -\Ebb_{\theta_0}[D_t\phi_{t,\lambda}(X)]
        \right\|_{\mathrm F}
        \longrightarrow0
    \end{equation}
    in probability, as \(n\to\infty\).

    Fix \(h\in\R^p\) such that \(\theta_0+h\in U\).
    Since \(U\) is a ball, \(\theta_0+uh\in U\) for \(0\le u\le1\).
    Applying the fundamental theorem of calculus to
    \(u\mapsto\phi_{\theta_0+uh,\lambda}(X_i)\) gives
    \begin{equation}\label{eqn:HK-info-empirical-FTC}
        \frac1n\sum_{i=1}^n
        \left[
            \phi_{\theta_0+h,\lambda}(X_i)
            -\phi_{\theta_0,\lambda}(X_i)
        \right]
        =
        \left(
            \int_0^1\frac1n\sum_{i=1}^n
            D_\theta\phi_{\theta_0+uh,\lambda}(X_i)\,\diff u
        \right)h.
    \end{equation}
    Replacing the empirical Jacobian in
    \eqref{eqn:HK-info-empirical-FTC} on the right hand side of the above expression by its expectation introduces an error no larger than
    \[
        \|h\|\sup_{t\in U}
        \left\|
            \frac1n\sum_{i=1}^nD_t\phi_{t,\lambda}(X_i)
            -\Ebb_{\theta_0}[D_t\phi_{t,\lambda}(X)]
        \right\|_{\mathrm F},
    \]
    which converges to zero in probability by
    \eqref{eqn:HK-info-ULLN}.
    Applying the law of large numbers to the score increment,
    using (i), then yields
    \begin{equation}\label{eqn:HK-info-population-FTC}
        \Ebb_{\theta_0}
        \left[
            \phi_{\theta_0+h,\lambda}(X)
            -\phi_{\theta_0,\lambda}(X)
        \right]
        =
        \left(
            \int_0^1
            \Ebb_{\theta_0}
            [D_\theta\phi_{\theta_0+uh,\lambda}(X)]\,\diff u
        \right)h.
    \end{equation}
    Continuity of the expected parameter Jacobian at \(\theta_0\)
    now gives
    \begin{equation}\label{eqn:HK-info-fixed-law-limit}
        \Ebb_{\theta_0}
        \left[
            \phi_{\theta_0+h,\lambda}(X)
            -\phi_{\theta_0,\lambda}(X)
        \right]
        =
        \Ebb_{\theta_0}[D_\theta\phi_{\theta_0,\lambda}(X)]h
        +o(\|h\|).
    \end{equation}

    By \(\HK_\lambda\) regularity, the path
    \(u\mapsto P_{\theta_0+uh}\) solves the transport-source equation
    with
    \[
        \xi_u=h^\top\phi_{\theta_0+uh,\lambda},
        \qquad
        v_u=(D\phi_{\theta_0+uh,\lambda})^\top h.
    \]
    Assumption (i) and Proposition~\ref{prop:Sobolev} show that
    this pair belongs to the tangent space and has bounded energy.
    Proposition~\ref{prop:ans-cont-unique-solution} therefore gives
    \(2\)-absolute continuity of this path.
    Assumption (i) also verifies the test-function conditions of
    Theorem~\ref{thm:loc-Lip-test-funcs} for every component
    of each fixed \(\phi_{t,\lambda}\), \(t\in U\).

    For \(r,t\in U\), write
    \[
        K(r,t):=
        \lambda\Ebb_r[
            \phi_{t,\lambda}(X)\phi_{r,\lambda}(X)^\top]
        +(1-\lambda)\Ebb_r[
            D\phi_{t,\lambda}(X)D\phi_{r,\lambda}(X)^\top].
    \]
    Apply 
    Theorem~\ref{thm:loc-Lip-test-funcs} componentwise to
    \(x\mapsto\phi_{\theta_0+h,\lambda}(x)\),
    keeping this function fixed while \(u\) varies along
    \(P_{\theta_0+uh}\).
    Using the centering convention in Remark~\ref{remark:center-conv},
    we obtain
    \begin{equation}\label{eqn:HK-info-transport-difference}
        \begin{aligned}
            \Ebb_{\theta_0}
            \left[
                \phi_{\theta_0+h,\lambda}(X)
                -\phi_{\theta_0,\lambda}(X)
            \right]
            &=
            -\left(
                \int_0^1K(\theta_0+uh,\theta_0+h)\,\diff u
            \right)h\\
            &=-I_\lambda(\theta_0)h+o(\|h\|).
        \end{aligned}
    \end{equation}
    The last equality follows from (ii), since
    \(K(\theta_0+uh,\theta_0+h)\) converges to
    \(K(\theta_0,\theta_0)=I_\lambda(\theta_0)\)
    uniformly for \(u\in[0,1]\) as \(h\to0\).
    Comparing \eqref{eqn:HK-info-fixed-law-limit} and
    \eqref{eqn:HK-info-transport-difference} along each coordinate
    direction gives
    \(\Ebb_{\theta_0}[D_\theta\phi_{\theta_0,\lambda}(X)]
    =-I_\lambda(\theta_0)\), as desired.
\end{proof}

\begin{proof}[Proof of Theorem \ref{thm:HK-Z-attainment}]
    We derive the asymptotic variance and sensitivity of the \(Z\)-estimator.
    Write \(\hat\theta_\lambda=T_{n,\lambda}\), and let \(U\) be the
    neighborhood used in the proof of Lemma~\ref{lem:HK-information-identity}.
    By consistency, \(\hat\theta_\lambda\in U\) with probability
    tending to one as $n \rightarrow \infty$.
    On this event, \(\theta_0+u(\hat\theta_\lambda-\theta_0)\in U\)
    for every \(u\in[0,1]\).
    Applying the fundamental theorem of calculus to
    \(u\mapsto
    \phi_{\theta_0+u(\hat\theta_\lambda-\theta_0),\lambda}(X_i)\)
    gives
    \begin{equation}\label{eqn:HK-Z-linearization}
        \begin{aligned}
            0
            &=\frac1n\sum_{i=1}^n
                \phi_{\hat\theta_\lambda,\lambda}(X_i)\\
            &=\frac1n\sum_{i=1}^n
                \phi_{\theta_0,\lambda}(X_i)
            +\left(
                \int_0^1\frac1n\sum_{i=1}^n
                D_\theta
                \phi_{\theta_0+u(\hat\theta_\lambda-\theta_0),\lambda}(X_i)
                \,\diff u
            \right)(\hat\theta_\lambda-\theta_0).
        \end{aligned}
    \end{equation}
    We first replace the empirical parameter Jacobian in
    \eqref{eqn:HK-Z-linearization} by its expectation under
    \(P_{\theta_0}\).
    In the expectations below, the parameter is held fixed when
    integrating over \(X\), and the random parameter is then substituted.
    On the event \(\{\hat\theta_\lambda\in U\}\),
    \begin{equation}\label{eqn:HK-Z-integral-ULLN}
        \begin{aligned}
            &\left\|
                \int_0^1\frac1n\sum_{i=1}^n
                D_\theta
                \phi_{\theta_0+u(\hat\theta_\lambda-\theta_0),\lambda}(X_i)
                \,\diff u
                -
                \int_0^1\Ebb_{\theta_0}\left[
                D_\theta
                \phi_{\theta_0+u(\hat\theta_\lambda-\theta_0),\lambda}(X)
                \right]\,\diff u
            \right\|_{\mathrm F}\\
            &\qquad\le
            \sup_{t\in U}
            \left\|
                \frac1n\sum_{i=1}^nD_t\phi_{t,\lambda}(X_i)
                -\Ebb_{\theta_0}[D_t\phi_{t,\lambda}(X)]
            \right\|_{\mathrm F}
            \longrightarrow0
        \end{aligned}
    \end{equation}
    in probability as \(n\to\infty\), by \eqref{eqn:HK-info-ULLN}.
    By consistency of $\hat{\theta}_\lambda$,
    \(\theta_0+u(\hat\theta_\lambda-\theta_0)\) converges to
    \(\theta_0\) uniformly for \(u\in[0,1]\), in probability.
    Together with the continuity at \(\theta_0\) of
    \(t\mapsto\Ebb_{\theta_0}[D_t\phi_{t,\lambda}(X)]\),
    which was established in the proof of Lemma~\ref{lem:HK-information-identity},
    this implies that the expectation integral in
    \eqref{eqn:HK-Z-integral-ULLN} converges in probability to
    \(\Ebb_{\theta_0}[D_\theta\phi_{\theta_0,\lambda}(X)]\).
    Combining this convergence with
    \eqref{eqn:HK-Z-integral-ULLN} and with the information identity
    \eqref{eq:HK-information-identity} gives
    \begin{equation}\label{eqn:HK-Z-derivative-limit}
        \int_0^1\frac1n\sum_{i=1}^n
        D_\theta
        \phi_{\theta_0+u(\hat\theta_\lambda-\theta_0),\lambda}(X_i)
        \,\diff u
        \longrightarrow
        \Ebb_{\theta_0}[D_\theta\phi_{\theta_0,\lambda}(X)]
        =-I_\lambda(\theta_0)
    \end{equation}
    in probability.
    Since \(I_\lambda(\theta_0)\) is invertible, the integral
    term in \eqref{eqn:HK-Z-linearization} is invertible
    with probability tending to one.

    Condition (i) of Lemma~\ref{lem:HK-information-identity}
    and the centering convention in Remark~\ref{remark:center-conv}
    give
    \(\Ebb_{\theta_0}[\|\phi_{\theta_0,\lambda}(X)\|^2]<\infty\)
    and \(\Ebb_{\theta_0}[\phi_{\theta_0,\lambda}(X)]=0\).
    Hence Chebyshev's inequality yields
    \(n^{-1/2}\sum_{i=1}^n\phi_{\theta_0,\lambda}(X_i)=O_p(1)\).
    Substituting \eqref{eqn:HK-Z-derivative-limit} into
    \eqref{eqn:HK-Z-linearization} and multiplying by \(\sqrt n\)
    gives
    \begin{equation}\label{eqn:HK-Z-substituted-equation}
        0=
        n^{-1/2}\sum_{i=1}^n\phi_{\theta_0,\lambda}(X_i)
        +\bigl(-I_\lambda(\theta_0)+o_p(1)\bigr)
        \sqrt n(\hat\theta_\lambda-\theta_0).
    \end{equation}
    Rearranging \eqref{eqn:HK-Z-substituted-equation}, we obtain
    \[
        \begin{aligned}
            \sqrt n(\hat\theta_\lambda-\theta_0)
            &=
            \bigl(I_\lambda(\theta_0)+o_p(1)\bigr)^{-1}
            n^{-1/2}\sum_{i=1}^n\phi_{\theta_0,\lambda}(X_i)\\
            &=
            I_\lambda(\theta_0)^{-1}
            n^{-1/2}\sum_{i=1}^n\phi_{\theta_0,\lambda}(X_i)
            +o_p(1),
        \end{aligned}
    \]
    using continuity of matrix inversion and the tightness of
    the normalized score sum.
    The multivariate central limit theorem, Slutsky's theorem,
    and symmetry of \(I_\lambda(\theta_0)\) now give
    \[
        \begin{aligned}
            \sqrt n(\hat\theta_\lambda-\theta_0)
            &\overset{\mathrm d}{\to}
            \mathcal N\left(
                0,\,
                I_\lambda(\theta_0)^{-1}
                \Ebb_{\theta_0}\left[
                    \phi_{\theta_0,\lambda}(X)
                    \phi_{\theta_0,\lambda}(X)^\top
                \right]
                I_\lambda(\theta_0)^{-1}
            \right)\\
            &=\mathcal N\bigl(0,V_\lambda(\theta_0)\bigr).
        \end{aligned}
    \]

    Note that the Jacobians in Theorem~\ref{thm:HK-Z-attainment} are well-defined for sufficiently large $n$ almost surely.
    Applying Theorem~\ref{thm:z-est} with
    \(s(\theta,x)=\phi_{\theta,\lambda}(x)\) and using
    \eqref{eq:HK-information-identity} yields
    \[
        \begin{aligned}
            n\sum_{i=1}^n
            D_{x_i}\hat\theta_\lambda
            (D_{x_i}\hat\theta_\lambda)^\top
            &\longrightarrow
            I_\lambda(\theta_0)^{-1}
            \Ebb_{\theta_0}\left[
                D\phi_{\theta_0,\lambda}(X)
                D\phi_{\theta_0,\lambda}(X)^\top
            \right]
            I_\lambda(\theta_0)^{-1}\\
            &=S_\lambda(\theta_0)
        \end{aligned}
    \]
    in probability, where the last equality follows from
    Definition~\ref{def:HK-info}.
\end{proof}

\begin{proof}[Proof of Proposition~\ref{prop:Pareto-extremal}]
    We prove the first implication; the second follows by interchanging the roles of the variance and the sensitivity.
    For each $0<\lambda<1$, use Theorem~\ref{thm:HK-CR-bound} to get
    \begin{equation*}
        \lambda n\Var_{\theta}(T_n)+(1-\lambda)n\Sen_{\theta}(T_n) \ge \lambda V_{\lambda}(\theta) + (1-\lambda) S_{\lambda}(\theta),
    \end{equation*}
    and the bound holds for every $n$.
    Since $n\Var_\theta(T_n)$ converges, taking the lower limit as $n\to\infty$ gives
    \begin{align*}
        \lambda\lim_{n\to\infty} &n\Var_{\theta}(T_n)+(1-\lambda)\liminf_{n\to\infty}n\Sen_{\theta}(T_n) \\
        &=\liminf_{n\to\infty}\left(\lambda n\Var_{\theta}(T_n)+(1-\lambda)n\Sen_{\theta}(T_n)\right) \\
        &\ge \lambda V_{\lambda}(\theta) + (1-\lambda) S_{\lambda}(\theta).
    \end{align*}
    By assumption, $\lim_{n\to\infty}n\Var_\theta(T_n)=\inf_{0<\lambda<1}V_\lambda(\theta)$, so $V_\lambda(\theta)-\lim_{n\to\infty}n\Var_\theta(T_n)\ge0$ for every $0<\lambda<1$.
    Rearranging and using $\lambda/(1-\lambda)>0$ therefore gives
    \begin{equation*}
        \liminf_{n\to\infty}n\Sen_{\theta}(T_n) \ge \frac{\lambda}{1-\lambda}\left(V_{\lambda}(\theta)-\lim_{n\to\infty}n\Var_{\theta}(T_n)\right) + S_{\lambda}(\theta) \ge S_{\lambda}(\theta).
    \end{equation*}
    Since $0<\lambda<1$ was arbitrary, taking the supremum over $0<\lambda<1$ finishes the proof.
\end{proof}

\begin{proof}[Proof of Corollary~\ref{cor:extremal-efficiency}]
    The argument is immediate from Theorem~\ref{thm:HK-Z-attainment} in the cases $\lambda\in\{0,1\}$, along with the assumptions $\inf_{0<\lambda<1}V_\lambda(\theta)=V_1(\theta)$, $\sup_{0<\lambda<1}S_\lambda(\theta)=S_1(\theta)$, $\sup_{0<\lambda<1}V_\lambda(\theta)=V_0(\theta)$, and $\inf_{0<\lambda<1}S_\lambda(\theta)=S_0(\theta)$.
\end{proof}

\section{Proof of Technical Results}\label{app:technical}

\begin{lemma}[Tangent-space identifications]\label{lem:HK-tangent-identification}
For $\mu\in\Pcal_{2,\textnormal{ac}}(\R^m)$, the space defined in \eqref{eqn:def-HK-tangent-space} is canonically isometrically isomorphic to
\[
    \begin{cases}
        \overline{\{\nabla\phi:\phi\in C_c^\infty(\R^m)\}}^{L^2(\mu;\R^m)},
            &\lambda=0,\\[4pt]
        \overline{\{(\phi,\nabla\phi):\phi\in C_c^\infty(\R^m)\}}^{L^2(\mu;\R^{m+1})},
            &0<\lambda<1,\\[4pt]
        L^2(\mu),&\lambda=1.
    \end{cases}
\]
\end{lemma}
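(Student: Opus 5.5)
The plan is to treat the three regimes of $\lambda$ separately, in each case building an explicit map from the abstract completion in \eqref{eqn:def-HK-tangent-space} into a closed subspace of an $L^2$ space. The map is defined first on the dense set $\{(\phi,\nabla\phi):\phi\in C_c^\infty(\R^m)\}$ and then extended by continuity. By construction, $\Tan_\mu^{\HK_\lambda}$ is the completion of $C_c^\infty(\R^m)$ with respect to the (semi)norm $\|\cdot\|_{\mathcal H^1_\lambda(\mu)}$, modulo null vectors. So in each case it suffices to find a linear map $J_\lambda$ on $C_c^\infty$ with $\|J_\lambda\phi\|_{L^2}=\|\phi\|_{\mathcal H^1_\lambda(\mu)}$. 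Its unique continuous extension is then an isometry from the completion onto the closure of the image, which gives the claimed canonical isometric isomorphism.

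For $0<\lambda<1$, I take $J_\lambda\phi:=(\sqrt{\lambda}\,\phi,\sqrt{1-\lambda}\,\nabla\phi)\in L^2(\mu;\R^{m+1})$. This is an isometry by \eqref{eqn:def-Sobolev}. The diagonal scaling $(a,b)\mapsto(a/\sqrt\lambda,b/\sqrt{1-\lambda})$ is a linear homeomorphism of $L^2(\mu;\R^{m+1})$ carrying the closure of $J_\lambda(C_c^\infty)$ onto $\overline{\{(\phi,\nabla\phi)\}}^{L^2}$. With respect to the weighted $L^2$ norm $\lambda\|a\|^2+(1-\lambda)\|b\|^2$, this identification is exactly isometric. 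I expect the intended reading of ``isometric'' to be this weighted norm, and I would state that convention explicitly.

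For $\lambda=0$, the seminorm is $\|\nabla\phi\|_{L^2(\mu)}$. I take $J_0\phi:=\nabla\phi$, which kills exactly the null vectors (those $\phi$ with $\nabla\phi=0$ $\mu$-a.e.). Hence the Hausdorff completion is isometric to $\overline{\{\nabla\phi\}}^{L^2(\mu;\R^m)}$, the Wasserstein tangent space of \cite{AmbrosioGigliSavare}. The first component of a pair in the closure is irrelevant here, since it carries zero norm.

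For $\lambda=1$, the norm is $\|\phi\|_{L^2(\mu)}$ and $J_1\phi:=\phi$. The closure of $C_c^\infty(\R^m)$ in $L^2(\mu)$ is all of $L^2(\mu)$, because $\mu$ is a finite Borel measure on $\R^m$ (a Radon measure). This follows from the standard density of $C_c(\R^m)$ in $L^2(\mu)$ combined with uniform approximation of $C_c$ functions by mollification, which converges in $L^2(\mu)$ since $\mu$ is finite and the supports stay in a fixed compact set.

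The main obstacle is the bookkeeping of what ``the space defined in \eqref{eqn:def-HK-tangent-space}'' means for the degenerate cases. For $\lambda=0$ the seminorm forces passage to a quotient. For $\lambda=1$ the second component $\nabla\phi$ of a limit pair need not exist as an element of anything, since only the first component is controlled. In both cases I would define the completion as the quotient completion and identify elements only through the controlled component. Well-definedness and injectivity of the extended map then follow directly from the isometry on the dense subspace. Absolute continuity of $\mu$ plays no essential role beyond ensuring consistency with Definition~\ref{def:Sobolev}.
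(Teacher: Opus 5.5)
Your proposal is correct and follows essentially the same route as the paper: split into the three regimes of $\lambda$ and identify the completion through the controlled component(s), namely $\nabla\phi$ for $\lambda=0$, the pair $(\phi,\nabla\phi)$ with the weighted norm for $0<\lambda<1$, and $\phi$ for $\lambda=1$, with the last case reduced to density of $C_c(\R^m)$ in $L^2(\mu)$ plus uniform approximation by mollification. The only cosmetic difference is in the interior case, where the paper notes directly that the weighted norm is equivalent to the product $L^2$ norm instead of passing through your rescaled map $J_\lambda$; your weighted-norm convention there is exactly the one the paper states after the lemma.
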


The endpoint spaces carry their usual $L^2$ norms, while in the interior case the squared norm of $(\xi,v)$ is
\[
    \int_{\R^m}\left(\lambda|\xi|^2+(1-\lambda)\|v\|^2\right)\diff\mu.
\]
These are the velocity, transport-source, and source representations of the Wasserstein, Hellinger--Kantorovich, and Hellinger tangent spaces, respectively, with the normalization in \eqref{eqn:def-Sobolev}.

\begin{proof}
For $\lambda=0$, \eqref{eqn:def-Sobolev} gives $\|\phi\|_{\mathcal H_0^1(\mu)}=\|\nabla\phi\|_{L^2(\mu;\R^m)}$.
Thus the map $(\phi,\nabla\phi)\mapsto\nabla\phi$ gives the first identification.
Only the gradient component is retained; convergence of the scalar component is not required.

For $0<\lambda<1$, the weighted energy norm is equivalent to the product $L^2$ norm, so the completion identifies isometrically with the stated closure equipped with the weighted norm.
This is the tangent-field closure appearing in \cite[Proposition~3.1 and equation~(3.6)]{Fleissner}, with $\Lambda=1-\lambda$, $\Sigma=\lambda$, and the component order reversed.
In particular, its underlying set is independent of $\lambda\in(0,1)$, although its norm depends on $\lambda$.

For $\lambda=1$, \eqref{eqn:def-Sobolev} gives $\|\phi\|_{\mathcal H_1^1(\mu)}=\|\phi\|_{L^2(\mu)}$, so the relevant map is $(\phi,\nabla\phi)\mapsto\phi$.
Since $\mu$ is a finite Borel measure, $C_c(\R^m)$ is dense in $L^2(\mu)$, and mollification approximates every $C_c(\R^m)$ function uniformly by functions in $C_c^\infty(\R^m)$.
Uniform convergence implies $L^2(\mu)$ convergence because $\mu$ is finite.
Thus $C_c^\infty(\R^m)$ is dense in $L^2(\mu)$, which proves the last identification.
Only the scalar component is retained; convergence of the gradient component is not required.
\end{proof}

\begin{proof}[Proof of Proposition~\ref{prop:Sobolev}]
Fix $0\le\lambda\le1$ and $\phi\in\mathcal H_\lambda^1(\mu)$, and choose a locally Lipschitz extension $\widetilde\phi:\R^m\to\R$ as in Definition~\ref{def:Sobolev}. Before continuing with the proof, we make the following remark.
\begin{remark}\label{rem:Sobolev-gradient}
    Let $\widetilde\psi$ be another such extension, so $\widetilde\phi=\widetilde\psi$ on $\operatorname{supp}\mu$.
    At every Lebesgue density point of $\operatorname{supp}\mu$ where $\widetilde\phi-\widetilde\psi$ is differentiable, its gradient is zero. The Lebesgue density theorem and Rademacher's theorem therefore imply that the two gradients agree Lebesgue-almost everywhere on $\operatorname{supp}\mu$, and hence $\mu$-almost everywhere because $\mu\ll\mathcal L^m$.
\end{remark}
The above remark implies that there is no ambiguity when defining the gradient of $\phi$, as any Lipschitz extension of $\phi$ can be used for that purpose. Continuing with the proof, we note that, by Definition~\ref{def:Sobolev},
\[
    \int_{\R^m}\left(\lambda|\widetilde\phi|^2
       +(1-\lambda)\|\nabla\widetilde\phi\|^2\right)\diff\mu
    =\|\widetilde\phi\|_{\mathcal H_\lambda^1(\mu)}^2<\infty.
\]
We now construct $\varphi_n\in C_c^\infty(\R^m)$ such that
\[
    \|\varphi_n-\widetilde\phi\|_{\mathcal H_\lambda^1(\mu)}
    \longrightarrow0.
\]
This would prove the desired result. We split the proof into three cases.

\medskip
\noindent\textbf{Case 1: $0<\lambda<1$.}
The integrability condition gives
\begin{align*}
    \|\widetilde\phi\|_{L^2(\mu)}^2
    &\le\frac1\lambda\|\widetilde\phi\|_{\mathcal H_\lambda^1(\mu)}^2<\infty,\\
    \|\nabla\widetilde\phi\|_{L^2(\mu;\R^m)}^2
    &\le\frac1{1-\lambda}\|\widetilde\phi\|_{\mathcal H_\lambda^1(\mu)}^2<\infty.
\end{align*}
We proceed in three steps.

\medskip
\noindent\textbf{Step 1: Approximation for globally Lipschitz functions with compact support.}
Let $g:\R^m\to\R$ be globally Lipschitz and compactly supported, and let $L:=\operatorname{Lip}(g)$.
Choose a nonnegative $\eta\in C_c^\infty(\R^m)$ with $\int_{\R^m}\eta(z)\diff z=1$ and $\operatorname{supp}(\eta)\subseteq B_1$, and set
\[
    \eta_\varepsilon(x):=\varepsilon^{-m}\eta(x/\varepsilon),
    \qquad g_\varepsilon(x):=(g*\eta_\varepsilon)(x)
    =\int_{\R^m}g(x-y)\eta_\varepsilon(y)\diff y.
\]
Then $g_\varepsilon\in C_c^\infty(\R^m)$, and
\begin{align*}
    |g_\varepsilon(x)-g(x)|
    &\le\int_{\R^m}|g(x-y)-g(x)|\eta_\varepsilon(y)\diff y\\
    &=\int_{\R^m}|g(x-\varepsilon z)-g(x)|\eta(z)\diff z\\
    &\le L\varepsilon\int_{\R^m}|z|\eta(z)\diff z
    =:  L C_\eta\varepsilon.
\end{align*}
Thus $g_\varepsilon\to g$ uniformly as $\varepsilon\downarrow0$. Since $\mu$ is finite,
\[
    \|g_\varepsilon-g\|_{L^2(\mu)}^2
    \le L^2\varepsilon^2  C_\eta^2\mu(\R^m)\longrightarrow0,
\]
as $ \varepsilon\downarrow 0$. To control the gradients, note that
\begin{align*}
    |g_\varepsilon(x)-g_\varepsilon(y)|
    &\le\int_{\R^m}|g(x-\varepsilon z)-g(y-\varepsilon z)|\eta(z)\diff z\\
    &\le L|x-y|.
\end{align*}
Hence $\|\nabla g_\varepsilon\|\le L$ everywhere, while Rademacher's theorem gives $\|\nabla g\|\le L$ Lebesgue-almost everywhere.
For each $j=1,\ldots,m$, weak differentiation commutes with mollification \cite[proof of Lemma~11.25]{Leoni2017}:
\[
    \partial_jg_\varepsilon=(\partial_jg)*\eta_\varepsilon.
\]
At every Lebesgue point $x$ of $\partial_jg$, $\partial_j g_\varepsilon(x)=((\partial_jg)*\eta_\varepsilon)(x)$ converges to $\partial_jg(x)$ as $\varepsilon\downarrow0$ \cite[Theorem~C.16(ii)]{Leoni2017}.
Consequently, $\nabla g_\varepsilon\to\nabla g$ Lebesgue-a.e., and also $\mu$-a.e. because $\mu\ll\mathcal L^m$.
The bound
\[
    \|\nabla g_\varepsilon(x)-\nabla g(x)\|^2\le4L^2
    \qquad \text{for }\mu\text{-a.e. } x
\]
and the dominated convergence now yield
\[
    \|\nabla g_\varepsilon-\nabla g\|_{L^2(\mu;\R^m)}
    \longrightarrow0.
\]
Thus, $g$ and its gradient can be approximated, w.r.t. $L^2(\mu)$ norms, by $C_c^\infty(\R^m)$ functions and their gradients.

\medskip
\noindent\textbf{Step 2: Cutoff approximation.}
For $R>0$, define
\[
    \chi_R(x):=
    \begin{cases}
        1,& |x|\le R,\\
        2-|x|/R,& R<|x|<2R,\\
        0,& |x|\ge2R,
    \end{cases}
    \qquad
    \widetilde\phi_R:=\widetilde\phi\chi_R.
\]
We have $0\le\chi_R\le1$, $\operatorname{supp}(\chi_R)\subseteq\overline B_{2R}$, and $\|\nabla\chi_R\|\le R^{-1}$ Lebesgue-almost everywhere.
Since $\widetilde\phi$ is Lipschitz and bounded on $\overline B_{2R}$, the function $\widetilde\phi_R$ is globally Lipschitz and compactly supported.
Moreover,
\[
    \|\widetilde\phi_R-\widetilde\phi\|_{L^2(\mu)}^2
    \le\int_{\{|x|>R\}}|\widetilde\phi|^2\diff\mu
    \longrightarrow0
\]
as $R\to\infty$, by the square integrability of $\widetilde\phi$.
The product rule gives
\[
    \nabla\widetilde\phi_R-\nabla\widetilde\phi
    =(\chi_R-1)\nabla\widetilde\phi+\widetilde\phi\nabla\chi_R
    \qquad\mu\text{-a.e}.
\]
These two terms satisfy
\begin{align*}
    \|(\chi_R-1)\nabla\widetilde\phi\|_{L^2(\mu;\R^m)}^2
    &\le\int_{\{|x|>R\}}\|\nabla\widetilde\phi\|^2\diff\mu
    \longrightarrow0,\\
    \|\widetilde\phi\nabla\chi_R\|_{L^2(\mu;\R^m)}^2
    &\le\frac1{R^2}\int_{\{R<|x|<2R\}}|\widetilde\phi|^2\diff\mu\\
    &\le\frac1{R^2}\|\widetilde\phi\|_{L^2(\mu)}^2
    \longrightarrow0.
\end{align*}
The triangle inequality therefore gives
\[
    \|\nabla\widetilde\phi_R-\nabla\widetilde\phi\|_{L^2(\mu;\R^m)}
    \longrightarrow0.
\]

\medskip
\noindent\textbf{Step 3: Diagonal construction.}
Let $n \in \N$. By Step~2 and \eqref{eqn:def-Sobolev}, we may choose $R_n\ge n$ such that
\[
    \|\widetilde\phi_{R_n}-\widetilde\phi\|_{\mathcal H_\lambda^1(\mu)}<\frac1{2n}.
\]
Step~1 gives
$\|\widetilde\phi_{R_n}*\eta_\varepsilon-\widetilde\phi_{R_n}\|_{\mathcal H_\lambda^1(\mu)}\to0$ as $\varepsilon\downarrow0$.
Choose $0<\varepsilon_n<1/n$ so that
\[
    \|\widetilde\phi_{R_n}*\eta_{\varepsilon_n}
       -\widetilde\phi_{R_n}\|_{\mathcal H_\lambda^1(\mu)}<\frac1{2n},
\]
and set $\varphi_n:=\widetilde\phi_{R_n}*\eta_{\varepsilon_n}\in C_c^\infty(\R^m)$.
Then
\begin{align*}
    \|\varphi_n-\widetilde\phi\|_{\mathcal H_\lambda^1(\mu)}
    &\le\|\varphi_n-\widetilde\phi_{R_n}\|_{\mathcal H_\lambda^1(\mu)}
       +\|\widetilde\phi_{R_n}-\widetilde\phi\|_{\mathcal H_\lambda^1(\mu)}\\
    &<\frac1n.
\end{align*}
This proves the claim in the interior case.
We also record that Steps~1--3 apply to any globally defined locally Lipschitz function that is square integrable, as well as its gradient, with respect to $\mu$: every such function can be approximated by $C_c^\infty(\R^m)$ functions in the $\mathcal H_\lambda^1(\mu)$ norm for each fixed $0<\lambda<1$.
We will use this observation in Case~3 below.

\medskip
\noindent\textbf{Case 2: $\lambda=1$.}
Here the integrability condition gives $\widetilde\phi\in L^2(\mu)$, and in fact we  don't even require that $\tilde{\phi}$ is locally Lipschitz or that it has a weak gradient. 
Using the cutoffs from Step~2 in Case~1, set $\widetilde\phi_R:=\widetilde\phi\chi_R$.
These functions are globally Lipschitz and compactly supported, and
\[
    \|\widetilde\phi_R-\widetilde\phi\|_{L^2(\mu)}^2
    \le\int_{\{|x|>R\}}|\widetilde\phi|^2\diff\mu
    \longrightarrow0.
\]
For each fixed $R$, the function approximation in Step~1 gives
$\|\widetilde\phi_R*\eta_\varepsilon-\widetilde\phi_R\|_{L^2(\mu)}\to0$ as $\varepsilon\downarrow0$.
For $n \in \N$, choose $R_n\ge n$ and $0<\varepsilon_n<1/n$ such that
\[
    \|\widetilde\phi_{R_n}-\widetilde\phi\|_{L^2(\mu)}<\frac1{2n},
    \qquad
    \|\widetilde\phi_{R_n}*\eta_{\varepsilon_n}
       -\widetilde\phi_{R_n}\|_{L^2(\mu)}<\frac1{2n}.
\]
For $\varphi_n:=\widetilde\phi_{R_n}*\eta_{\varepsilon_n}$, the triangle inequality gives
\[
    \|\varphi_n-\widetilde\phi\|_{\mathcal H_1^1(\mu)}
    =\|\varphi_n-\widetilde\phi\|_{L^2(\mu)}
    <\frac1n.
\]
This proves the claim for $\lambda=1$.

\medskip
\noindent\textbf{Case 3: $\lambda=0$.}
In this case we only know that $\nabla\widetilde\phi\in L^2(\mu;\R^m)$.
Since $\widetilde\phi$ need not belong to $L^2(\mu)$, we first truncate its values, while approximately preserving its gradient. For $M>0$, define
\[
    \widetilde\phi_M:=M\arctan(\widetilde\phi/M).
\]
This function is locally Lipschitz and satisfies $|\widetilde\phi_M|\le\pi M/2$, so
\[
    \|\widetilde\phi_M\|_{L^2(\mu)}^2
    \le\frac{\pi^2M^2}{4}\mu(\R^m)<\infty.
\]
The chain rule gives
\[
    \nabla\widetilde\phi_M
    =\frac{\nabla\widetilde\phi}{1+(\widetilde\phi/M)^2}
    \qquad\mu\text{-a.e.}
\]
In particular, $\|\nabla\widetilde\phi_M\|\le\|\nabla\widetilde\phi\|$ pointwisely, and hence $\nabla\widetilde\phi_M\in L^2(\mu;\R^m)$.
As $M\to\infty$, the multiplier $1/(1+(\widetilde\phi/M)^2)$ converges to one pointwise.
Moreover,
\[
    \|\nabla\widetilde\phi_M-\nabla\widetilde\phi\|^2
    \le4\|\nabla\widetilde\phi\|^2
    \qquad\mu\text{-a.e}.
\]
The right-hand side is $\mu$-integrable, so dominated convergence gives
\[
    \|\nabla\widetilde\phi_M-\nabla\widetilde\phi\|_{L^2(\mu;\R^m)}
    \longrightarrow0,
\]
as $M \rightarrow \infty$. 

Next for each $n\in \N$ choose $M_n\ge n$ such that
\[
\|\nabla\widetilde\phi_{M_n}-\nabla\widetilde\phi\|_{L^2(\mu;\R^m)}
    <\frac1{2n}.
\]
Each $\widetilde\phi_{M_n}$ is globally defined and locally Lipschitz, and it is square integrable, as well as its gradient, with respect to $\mu$.
Applying the approximation established in Case~1 with parameter $1/2$, choose $\varphi_n\in C_c^\infty(\R^m)$ such that
\[
    \|\varphi_n-\widetilde\phi_{M_n}\|_{\mathcal H_{1/2}^1(\mu)}<\frac1{2\sqrt2\,n}.
\]
By \eqref{eqn:def-Sobolev},
\[
    \|\nabla\varphi_n-\nabla\widetilde\phi_{M_n}\|_{L^2(\mu;\R^m)}
    \le\sqrt2\,\|\varphi_n-\widetilde\phi_{M_n}\|_{\mathcal H_{1/2}^1(\mu)}
    <\frac1{2n}.
\]
Consequently,
\begin{align*}
    \|\varphi_n-\widetilde\phi\|_{\mathcal H_0^1(\mu)}
    &=\|\nabla\varphi_n-\nabla\widetilde\phi\|_{L^2(\mu;\R^m)}\\
    &\le\|\nabla\varphi_n-\nabla\widetilde\phi_{M_n}\|_{L^2(\mu;\R^m)}\\
    &\quad+\|\nabla\widetilde\phi_{M_n}-\nabla\widetilde\phi\|_{L^2(\mu;\R^m)}\\
    &<\frac1n.
\end{align*}
This proves the claim for $\lambda=0$.

\medskip
In all three cases, we have constructed $\varphi_n\in C_c^\infty(\R^m)$ with
$\|\varphi_n-\widetilde\phi\|_{\mathcal H_\lambda^1(\mu)}\to0$.
The triangle inequality gives
\[
    \|\varphi_n-\varphi_k\|_{\mathcal H_\lambda^1(\mu)}
    \le\|\varphi_n-\widetilde\phi\|_{\mathcal H_\lambda^1(\mu)}
       +\|\varphi_k-\widetilde\phi\|_{\mathcal H_\lambda^1(\mu)}
    \longrightarrow0
\]
as $n,k\to\infty$.
Thus the generating pairs $(\varphi_n,\nabla\varphi_n)$ are Cauchy in the energy seminorm and determine an element of the completion in \eqref{eqn:def-HK-tangent-space}.
If $\psi_n\in C_c^\infty(\R^m)$ is another sequence with the same approximation property, then
\[
    \|\varphi_n-\psi_n\|_{\mathcal H_\lambda^1(\mu)}
    \le\|\varphi_n-\widetilde\phi\|_{\mathcal H_\lambda^1(\mu)}
       +\|\psi_n-\widetilde\phi\|_{\mathcal H_\lambda^1(\mu)}
    \longrightarrow0.
\]
The element of the completion is therefore independent of the approximating sequence.
The same argument shows that functions whose difference has zero energy determine the same element.
With these identifications,
\[
    (\widetilde\phi,\nabla\widetilde\phi)\in\Tan_\mu^{\HK_\lambda}(\mathcal M_+(\R^m)).
\]
By Remark~\ref{rem:Sobolev-gradient}, this element is independent of the chosen extension of $\phi$, which proves the proposition.
\end{proof}

\subsection{Proof of Theorem \ref{thm:loc-Lip-test-funcs}. }

\begin{proof}
    \textbf{Step 1}: Let $ f$ be globally Lipschitz with compact support. Consider constructing a sequence of functions $ f_n$ using standard mollifiers. In particular, let $ f_n =  f * \eta_{\epsilon_n}(x) := \frac{1}{\epsilon_n^d}\int  f(y)\eta\left(\frac{ x - y }{\epsilon_n}\right)\diff y$ and $\epsilon_n = 1/n$. Here, $\eta \in C_c^{\infty}$ satisfies  $\eta \geq 0$, $\int \eta(x)\diff x = 1$, and $\text{supp}(\eta) \subset B(0, 1)$. Moreover, for a path \((\mu_t)_{0\le t\le 1}\), we write
    \[
    \diff \gamma(x,t):=\diff \mu_t(x)\diff t
    \]
    for the associated space-time measure on \(\mathbb R^d\times[0,1]\).
    For a scalar function \(F(x,t)\), we write \(F\in L^2(\gamma)\) if
    \[
    \|F\|_{L^2(\gamma)}^2
    :=
    \int_0^1\int_{\mathbb R^d}|F(x,t)|^2\diff \mu_t(x)\diff t
    <\infty.
    \]
    For a vector field \(G(x,t)\), we write \(G\in L^2(\gamma;\mathbb R^d)\) if
    \[
    \|G\|_{L^2(\gamma)}^2
    :=
    \int_0^1\int_{\mathbb R^d}\|G(x,t)\|^2\diff \mu_t(x)\diff t
    <\infty.
    \]

    We first note that \(\gamma\) is finite on compact sets. Since
    \(t\mapsto\mu_t\) is assumed to be absolutely continuous with respect to
    \(d_{\HK_\lambda}\), it is continuous on \([0,1]\) with respect to the topology induced by $\dHKlambda$. For \(\lambda>0\),
    \[
        d_{\HK_\lambda}^2(\mu_t,0)
        =
        \frac4\lambda\,\mu_t(\R^d),
    \]
    where $0$ here denotes the zero measure (recall that $\dHKlambda$ can be defined for all non-negative measures). Therefore, 
    \[
        \sup_{t\in[0,1]}\mu_t(\R^d)<\infty.
    \]
    At \(\lambda=0\), the Wasserstein metric is restricted to a fixed total mass, so \(t\mapsto\mu_t(\R^d)\) is constant. Hence, in all cases,
    \[
        \gamma(K\times[0,1])
        =
        \int_0^1\mu_t(K)\diff t
        \leq
        \sup_{t\in[0,1]}\mu_t(\R^d)
        <\infty
    \]
    for every compact set \(K\subset\R^d\).

    Also note that, by the finite-action assumption, when \(0<\lambda<1\),
    \[
        v\in L^2(\gamma;\R^d),
        \qquad
        \xi\in L^2(\gamma).
    \]
    Under the pure transport case with \(\lambda=0\), the source field does not enter the continuity equation and may be replaced by \(\xi\equiv0\). Similarly, under the pure source case with \(\lambda=1\), the transport field may be replaced by \(v\equiv0\). Thus, without loss of generality, we may assume throughout that
    \[
        v\in L^2(\gamma;\R^d),
        \qquad
        \xi\in L^2(\gamma).
    \]
    
    \textbf{Claim 1}: \(f_n\to f\) in \(L^2(\gamma)\).

    Let \(L:=\operatorname{Lip}(f)\). Changing variables \(y=x-\epsilon_n z\) gives
    \[
        f_n(x)=\int_{\mathbb R^d}f(x-\epsilon_n z)\eta(z)\diff z.
    \]
    Using the Lipschitz bound, we obtain
    \begin{align*}
        |f_n(x)-f(x)|
        &\le \int_{\mathbb R^d}|f(x-\epsilon_n z)-f(x)|\eta(z)\diff z\\
        &\le L\epsilon_n\int_{\mathbb R^d}|z|\eta(z)\diff z
        =: L C_\eta\epsilon_n.
    \end{align*}
     Thus \(\|f_n-f\|_\infty\le L\epsilon_n\to0\) as \(n\to\infty\).
    Since \(f\) and the mollifier have compact support, there is a compact set
    \(K\subset\mathbb R^d\) containing
    \(\operatorname{supp}(f_n)\cup\operatorname{supp}(f)\)
    for all sufficiently large \(n\).
    Using \(\gamma(K\times[0,1])<\infty\), we obtain
    \[
        \|f_n-f\|_{L^2(\gamma)}^2
        \le \|f_n-f\|_\infty^2\,\gamma(K\times[0,1])
        \longrightarrow0
        \qquad\text{as }n\to\infty.
    \]
    \textbf{Claim 2}: \(\operatorname{Lip}(f_n)\le\operatorname{Lip}(f)\)
    and \(\|\nabla f_n(x)\|\le\operatorname{Lip}(f)\)
    for every \(x\in\mathbb R^d\).

    For \(x,\widetilde{x}\in\mathbb R^d\),
    \begin{align*}
        |f_n(\widetilde{x})-f_n(x)|
        &\le \int_{B(0,1)}
        |f(\widetilde{x}-\epsilon_n z)-f(x-\epsilon_n z)|
        \eta(z)\diff z\\
        &\le \operatorname{Lip}(f)|\widetilde{x}-x|.
    \end{align*}
    It follows that \(\operatorname{Lip}(f_n)\le\operatorname{Lip}(f)\).
    Since \(f_n\in C^\infty(\mathbb R^d)\), we also have
    \[
        \|\nabla f_n\|_\infty
        \le\operatorname{Lip}(f_n)
        \le\operatorname{Lip}(f).
    \]
    \textbf{Claim 3}: \(\grad f_n \rightarrow \grad f\) in \(L^2(\gamma; \mathbb{R}^d)\). 
    
    Similar to Case~1, Step~1 of the proof of Proposition~\ref{prop:Sobolev},
    weak differentiation commutes with mollification
    \cite[proof of Lemma~11.25]{Leoni2017}, and thus
    \[
        \nabla f_n=(\nabla f)*\eta_{\epsilon_n}
        \longrightarrow\nabla f
        \qquad\text{Lebesgue-a.e.}
    \]
    Since \(\mu_t\ll\mathcal L^d\) for every \(t\in[0,1]\), this convergence
    also holds \(\gamma\)-almost everywhere. With \(L\) and \(K\) as in
    Claim~1, Claim~2 and Rademacher's theorem give, for all sufficiently large \(n\),
    \[
        \|\nabla f_n-\nabla f\|^2\le4L^2\mathbf1_K
        \qquad\gamma\text{-almost everywhere}.
    \]
    Since \(\gamma(K\times[0,1])<\infty\), dominated convergence yields
    \[
        \|\nabla f_n-\nabla f\|_{L^2(\gamma;\mathbb R^d)}
        \longrightarrow0.
    \]
    Since \(f_n\in C_c^\infty(\mathbb R^d)\), the weak formulation of the transport-source equation gives, for every \(0\le s\le t\le 1\),
    \[
    \int_{\mathbb R^d} f_n\diff \mu_t-\int_{\mathbb R^d} f_n\diff \mu_s
    =
    \int_s^t
    \left[
    (1-\lambda)\int_{\mathbb R^d} \nabla f_n\cdot v_r\diff \mu_r
    +
    \lambda\int_{\mathbb R^d} f_n\,\xi_r\diff \mu_r
    \right]\diff r.
    \]
    We now pass to the limit \(n\to\infty\). Since \(f_n\to f\) uniformly and \(\mu_s,\mu_t\) are finite measures,
    \[
    \int_{\mathbb R^d} f_n\diff \mu_t\to \int_{\mathbb R^d} f\diff \mu_t,
    \qquad
    \int_{\mathbb R^d} f_n\diff \mu_s\to \int_{\mathbb R^d} f\diff \mu_s.
    \]
    For the source term, by Cauchy--Schwarz,
    \[
    \left|
    \int_s^t\int_{\mathbb R^d} (f_n-f)\xi_r\diff \mu_r\diff r
    \right|
    \le
    \|f_n-f\|_{L^2(\gamma)}\|\xi\|_{L^2(\gamma)}
    \to 0.
    \]
    Similarly, for the transport term,
    \[
    \left|
    \int_s^t\int_{\mathbb R^d} (\nabla f_n-\nabla f)\cdot v_r\diff \mu_r\diff r
    \right|
    \le
    \|\nabla f_n-\nabla f\|_{L^2(\gamma)}\|v\|_{L^2(\gamma)}
    \to 0.
    \]
    Therefore,
    \[
    \int_{\mathbb R^d} f\diff \mu_t-\int_{\mathbb R^d} f\diff \mu_s
    =
    \int_s^t
    \left[
    (1-\lambda)\int_{\mathbb R^d} \nabla f\cdot v_r\diff \mu_r
    +
    \lambda\int_{\mathbb R^d} f\,\xi_r\diff \mu_r
    \right]\diff r.
    \]
    Thus the weak formulation extends to every globally Lipschitz, compactly supported test function \(f\).
    
    \textbf{Step 2.}
    Now let \(f\) be locally Lipschitz and satisfy the assumptions of the theorem.
    For \(R>0\), use the cutoffs \(\chi_R\) from Case~1, Step~2
    of the proof of Proposition~\ref{prop:Sobolev}, and set
    \(f_R(x):=f(x)\chi_R(x)\).
    Since \(f\) is locally Lipschitz, \(f_R\) is globally Lipschitz and compactly
    supported. Hence Step~1 applies to \(f_R\).
    
    Since \(f_R=f\) on \(B_R\),
    \[
        |f_R-f|^2
        \leq
        |f|^2\mathbf 1_{\{|x|>R\}}.
    \]
    Because \(f\in L^2(\gamma)\), it follows that
    \[
        \|f_R-f\|_{L^2(\gamma)}^2
        \leq
        \int_0^1
        \int_{\{|x|>R\}}
            |f(x)|^2\diff \mu_t(x)\diff t
        \overset{R\rightarrow \infty}\longrightarrow0.
    \]
    
    Moreover, almost everywhere,
    \[
        \nabla f_R
        =
        \chi_R\nabla f
        +
        f\nabla\chi_R,
    \]
    and hence
    \[
        \nabla f_R-\nabla f
        =
        (\chi_R-1)\nabla f
        +
        f\nabla\chi_R.
    \]
    For the first term,
    \[
        \|(\chi_R-1)\nabla f\|_{L^2(\gamma)}^2
        \leq
        \int_0^1
        \int_{\{|x|>R\}}
            \|\nabla f(x)\|^2\diff \mu_t(x)\diff t
        \overset{R\rightarrow \infty}\longrightarrow0,
    \]
    because \(\nabla f\in L^2(\gamma;\R^d)\). For the second term,
    \begin{align*}
        \|f\nabla\chi_R\|_{L^2(\gamma)}^2
        &\leq
        \frac1{R^2}
        \int_0^1
        \int_{\{R<|x|<2R\}}
            |f(x)|^2\diff \mu_t(x)\diff t\\
        &\leq
        \frac1{R^2}
        \|f\|_{L^2(\gamma)}^2
        \overset{R\rightarrow \infty}\longrightarrow0.
    \end{align*}
    Therefore,
    \[
        \nabla f_R
        \longrightarrow
        \nabla f
        \qquad
        \textnormal{in }L^2(\gamma;\R^d).
    \]
    
    Since each \(f_R\) is globally Lipschitz with compact support, Step~1 gives, for every \(0\leq s\leq t\leq1\),
    \[
        \int_{\R^d}f_R\diff \mu_t
        -
        \int_{\R^d}f_R\diff \mu_s
        =
        \int_s^t
        \left[
            (1-\lambda)
            \int_{\R^d}
                \nabla f_R\cdot v_r\diff \mu_r
            +
            \lambda
            \int_{\R^d}
                f_R\xi_r\diff \mu_r
        \right]\diff r.
    \]
    
    We now let \(R\to\infty\). Since \(f_R\to f\) pointwise and
    \(|f_R|\leq|f|\), assumption~(i) and the dominated convergence theorem give
    \[
        \int_{\R^d}f_R\diff \mu_t
        \longrightarrow
        \int_{\R^d}f\diff \mu_t,
        \qquad
        \int_{\R^d}f_R\diff \mu_s
        \longrightarrow
        \int_{\R^d}f\diff \mu_s.
    \]
    For the source term, Cauchy--Schwarz gives
    \[
        \left|
            \int_s^t
            \int_{\R^d}
                (f_R-f)\xi_r\diff \mu_r\diff r
        \right|
        \leq
        \|f_R-f\|_{L^2(\gamma)}
        \|\xi\|_{L^2(\gamma)}
        \longrightarrow0.
    \]
    Similarly, for the transport term,
    \[
        \left|
            \int_s^t
            \int_{\R^d}
                (\nabla f_R-\nabla f)\cdot v_r\diff \mu_r\diff r
        \right|
        \leq
        \|\nabla f_R-\nabla f\|_{L^2(\gamma;\R^d)}
        \|v\|_{L^2(\gamma;\R^d)}
        \longrightarrow0.
    \]
    Therefore,
    \[
        \int_{\R^d}f\diff \mu_t
        -
        \int_{\R^d}f\diff \mu_s
        =
        \int_s^t
        \left[
            (1-\lambda)
            \int_{\R^d}
                \nabla f\cdot v_r\diff \mu_r
            +
            \lambda
            \int_{\R^d}
                f\xi_r\diff \mu_r
        \right]\diff r.
    \]
    
    Finally, set
    \[
    F(t):=\int_{\mathbb R^d} f\diff \mu_t,
    \]
    and
    \[
    h(t):=
    (1-\lambda)\int_{\mathbb R^d}\nabla f\cdot v_t\diff \mu_t
    +
    \lambda\int_{\mathbb R^d}f\,\xi_t\diff \mu_t.
    \]
    By Cauchy--Schwarz,
    \[
    \int_0^1
    \left|
    \int_{\mathbb R^d}\nabla f\cdot v_t\diff \mu_t
    \right|\diff t
    \le
    \|\nabla f\|_{L^2(\gamma)}\|v\|_{L^2(\gamma)},
    \]
    and
    \[
    \int_0^1
    \left|
    \int_{\mathbb R^d}f\,\xi_t\diff \mu_t
    \right|\diff t
    \le
    \|f\|_{L^2(\gamma)}\|\xi\|_{L^2(\gamma)}.
    \]
    Hence \(h\in L^1(0,1)\). Since
    \[
    F(t)-F(s)=\int_s^t h(r)\diff r
    \qquad
    \text{for all }0\le s\le t\le1,
    \]
    it follows that \(F\) is absolutely continuous on \([0,1]\), and
    \[
    F'(t)=h(t)
    \qquad
    \text{for a.e. }t\in(0,1).
    \]
\end{proof}

\end{document}